\newcommand{\BA}{{\mathbb {A}}}\newcommand{\BC}{{\mathbb {C}}}
\newcommand{\BH}{{\mathbb {H}}}
\newcommand{\BK}{{\mathbb {K}}}
\newcommand{\BQ}{{\mathbb {Q}}}\newcommand{\BR}{{\mathbb {R}}}
\newcommand{\BZ}{{\mathbb {Z}}}
\newcommand{\bfA}{{\mathbf {A}}}
\newcommand{\CI}{{\mathcal {I}}}
\newcommand{\CO}{{\mathcal {O}}}\newcommand{\CP}{{\mathcal {P}}}
\newcommand{\CS}{{\mathcal {S}}}
\newcommand{\msp}{\mathscr{P}}
\newcommand{\fa}{{\mathfrak{a}}} \newcommand{\fc}{{\mathfrak{c}}} 
 \newcommand{\fg}{{\mathfrak{g}}} \newcommand{\fh}{{\mathfrak{h}}}
  \newcommand{\fl}{{\mathfrak{l}}}
\newcommand{\fm}{{\mathfrak{m}}} \newcommand{\fn}{{\mathfrak{n}}}\newcommand{\fo}{{\mathfrak{o}}} \newcommand{\fp}{{\mathfrak{p}}}
\newcommand{\fq}{{\mathfrak{q}}} \newcommand{\fr}{{\mathfrak{r}}}\newcommand{\fs}{{\mathfrak{s}}} 
 \newcommand{\fv}{{\mathfrak{v}}}
 \newcommand{\fS}{{\mathfrak{S}}}
                      \newcommand{\Ad}{{\mathrm{Ad}}}
\newcommand{\adeles}{ad\`{e}les~}
                      \newcommand{\bs}{\backslash}
\newcommand{\diag}{{\mathrm{diag}}}
\newcommand{\Hom}{{\mathrm{Hom}}}
\newcommand{\ideles}{id\`{e}les~}
                      		\newcommand{\Mat}{{\mathrm{Mat}}}
			\newcommand{\Nrd}{{\mathrm{Nrd}}}
\newcommand{\ov}{\overline}
                  \newcommand{\ra}{\rightarrow}
         	\newcommand{\Prd}{{\mathrm{Prd}}}
                                  \newcommand{\Res}{{\mathrm{Res}}}
\newcommand{\Spec}{{\mathrm{Spec}}}
 \newcommand{\sk}{\medskip}                      \newcommand{\s}{\sk\noindent}
\newcommand{\Trd}{{\mathrm{Trd}}}
\newcommand{\vol}{{\mathrm{vol}}}
\newcommand{\wt}{\widetilde}                        \newcommand{\wh}{\widehat}
\newtheorem{thm}{Theorem}[section]
\newtheorem{coro}[thm]{Corollary}
\newtheorem{lem}[thm]{Lemma}
\newtheorem{prop}[thm]{Proposition}
\newtheorem{defn}[thm]{Definition}
\theoremstyle{definition}
\theoremstyle{remark}
\newtheorem{remark}[thm]{Remark}
\numberwithin{equation}{subsection}
\def\mat(#1,#2,#3,#4){
  \begin{pmatrix}
  #1 & #2 \\ #3 & #4
  \end{pmatrix}
}
\begin{document}
\title{An infinitesimal variant of Guo-Jacquet trace formula I: the case of $(GL_{2n, D}, GL_{n, D}\times GL_{n, D})$}
\author{Huajie Li}
\date{\today}
\maketitle

\begin{abstract}
We establish an infinitesimal variant of Guo-Jacquet trace formula for the case of $(GL_{2n, D}, $\newline$GL_{n, D}\times GL_{n, D})$. It is a kind of Poisson summation formula obtained by an analogue of Arthur's truncation process. It consists in the equality of the sums of two types of distributions which are non-equivariant in general: one type is associated to rational points in the categorical quotient, while the other type is the Fourier transform of the first type. For regular semi-simple points in the categorical quotient, we obtain weighted orbital integrals. 
\end{abstract}

\tableofcontents


\section{\textbf{Introduction}}

The Guo-Jacquet conjecture proposed in \cite{MR1382478} is a possible generalisation in higher dimensions of Waldspurger's well-known theorem on central values of automorphic $L$-functions for $GL_2$. We briefly recall it as follows. Let $E/F$ be a quadratic extension of number fields and $\eta$ the quadratic character of $\BA^\times/F^\times$ attached to it, where $\BA$ denotes the ring of \adeles of $F$. Consider the group $G = GL_{2n}$ and its subgroup $H = GL_n \times GL_n$ defined over $F$. Let $\pi$ be a cuspidal automorphic representation of $G(\BA)$ with trivial central character. We say that $\pi$ is $H$-distinguished if the two linear forms (called ``periods'') on it
$$ \CP_{H}: \phi \mapsto \int_{H(F)Z(\BA)\backslash H(\BA)} \phi(h) dh $$
and
$$ \CP_{H,\eta}: \phi \mapsto \int_{H(F)Z(\BA)\backslash H(\BA)} \phi(h)\eta(\det(h)) dh $$
are both non-zero, where $Z$ denotes the centre of $G$. This property is directly connected with the non-vanishing of some central $L$-values (see Friedberg-Jacquet's work \cite{MR1241129}). We also need to deal with another pair of groups. Let $X(E)$ denote the set of isomorphic classes of quaternion algebras $D/F$ in which $E$ embeds. For any $D\in X(E)$, let $G_D=GL_{n,D}$ be the algebraic group defined over $F$ whose $F$-points are $GL_n(D)$ and $H_{D} = Res_{E/F}GL_{n,E}$ be its subgroup. Let $\pi_{D}$ be a cuspidal automorphic representation of $G_D(\BA)$ with trivial central character. We say that $\pi_{D}$ is $H_{D}$-distinguished if the linear form on it
$$ \CP_{H_D}: \phi \mapsto \int_{H_{D}(F)Z(\BA)\backslash H_{D}(\BA)} \phi(h) dh, $$
is not zero,  where we identify the centre of $G_{D}$ with $Z$. One part of the Guo-Jacquet conjecture says that if $\pi_D$ is $H_D$-distinguished and $\pi$ is transferred from $\pi_D$ by the Jacquet-Langlands correspondence, then $\pi$ is $H$-distinguished. We can also expect a converse at least when $n$ is odd. For $n=1$, these were known by Waldspurger \cite{MR783511} and reproved by Jacquet \cite{MR868299}. 

Now we formally describe the approach of relative trace formulae following Jacquet \cite{MR868299}. This was adopted by Feigon-Martin-Whitehouse \cite{MR3805647} to obtain some partial results. Let $f^G$ be a smooth function on $G(\BA)$ with compact support. As an analogue of Arthur-Selberg trace formula, the relative trace formula for the case $(G,H)$ roughly says that there are two ways to write the integral (viewed as a distribution)
$$ \int_{H(F)\bs H(\BA)\cap G(\BA)^1} \int_{H(F)\bs H(\BA)\cap G(\BA)^1} \BK_{f^G}(x,y)\eta(\det(x))dxdy, $$
where $G(\BA)^1$ denotes the elements in $G(\BA)$ with absolute-value-$1$ determinant and $\BK_{f^G}(x,y)=\sum_{\gamma\in G(F)} \newline f^G(x^{-1}\gamma y)$. The geometric side is expected to be a sum of relative (weighted) orbital integrals while the spectral side should be an expansion of periods. Similarly there is also another formula for the case of $(G_D,H_D)$. Then the comparison of periods of different pairs of groups predicted by the Guo-Jacquet conjecture is reduced to the comparison of relative (weighted) orbital integrals, for which there are already some works such as Guo's fundamental lemma \cite{MR1382478} and Zhang's transfer \cite{MR3414387}. 

However, we have neglected analytic difficulty in the above discussion. That is to say, the double integral above is not convergent and neither are two ways of its expansions. This is the reason why some restrictive local conditions are needed in the main results of \cite{MR3805647} though they seem kind of artificial. The aim of this article is to solve this kind of problem at the level of Lie algebras for the case of $(G,H)$. 
Notice that such a double integral can be formally written as a single integral
$$ \int_{H(F)\bs H(\BA)\cap G(\BA)^1} K_{f^{G/H}}(x)\eta(\det(x))dx, $$
where $f^{G/H}(x)=\int_{H(\BA)\cap G(\BA)^1} f^G(xy) dy$ defines a smooth function on $(G/H)(\BA)$ with compact support and $K_{f^{G/H}}(x)=\sum_{\gamma\in (G/H)(F)} f^{G/H}(x^{-1}\gamma x)$. Replacing the symmetric space $G/H$ by its tangent space $\fs\simeq\fg\fl_n\oplus\fg\fl_n$ at the neutral element, we are faced with the divergence of the integral
$$ \int_{H(F)\bs H(\BA)\cap G(\BA)^1} k_{f}(x)\eta(\det(x))dx, $$
where $f$ is a Bruhat-Schwartz function on $\fs(\BA)$ and $k_{f}(x)=\sum_{\gamma\in \fs(F)} f(x^{-1}\gamma x)$. 

Our main results can be described as follows. 

First of all, as in \cite{MR518111}, we replace $k_{f}(x)$ with some explicit $k_{f}^T(x)$ (see its definition in (\ref{deftruncation1}) and (\ref{deftruncation2})) to make the last integral absolutely convergent, where $T\in\BR^{2n}$ is a truncation parameter. Moreover, there is a relation of equivalence on $\fs(F)$ defined by the categorical quotient $\fs//H$; we denote by $\CO$ the set of classes of equivalence. For each class $\fo\in\CO$, we define $k_{f,\fo}^T(x)$ and its integral similarly by replacing $\fs(F)$ with $\fo$. Then we have
$$ k_{f}^T(x)=\sum_{\fo\in\CO}k_{f,\fo}^T(x),  $$
and prove the following theorem which gives the geometric expansion of
$$ \int_{H(F)\bs H(\BA)\cap G(\BA)^1} k_{f}^T(x)\eta(\det(x))dx. $$

\begin{thm}[see Theorem \ref{convergence1}]
For $T$ in a suitable cone in $\BR^{2n}$, 
$$ \sum_{\fo\in\CO} \int_{H(F)\bs H(\BA)\cap G(\BA)^1} k_{f,\fo}^T(x)\eta(\det(x))dx $$
is absolutely convergent. 
\end{thm}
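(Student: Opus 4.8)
The plan is to follow Arthur's approach to the convergence of the coarse geometric expansion of the trace formula \cite{MR518111}, adapted to the infinitesimal symmetric-space setting. The core of the argument is to bound $|k_{f,\fo}^T(x)|$ uniformly, so that the sum over $\fo$ and the integral over $H(F)\bs H(\BA)\cap G(\BA)^1$ converge simultaneously. First I would fix a minimal parabolic subgroup of $H$ (equivalently, work with the standard maximal torus and the associated roots of $H$ acting on $\fs$), and recall that the truncated kernel $k_f^T(x)$ is obtained by alternating sums over parabolic subgroups $P$ of $H$ of partial kernels attached to the Levi decomposition, with the characteristic function $\hat\tau_P$ of a cone cutting out the region where $H_P(x)-T$ is dominant. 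The decomposition $k_f^T(x)=\sum_{\fo\in\CO}k_{f,\fo}^T(x)$ is then immediate from the partition $\fs(F)=\coprod_{\fo\in\CO}\fo$, since each $\fo$ is stable under the relevant conjugation and under the passage to Levi/nilpotent parts.

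The key steps, in order, are as follows. (1) Reduce the integral over $H(F)\bs H(\BA)\cap G(\BA)^1$ to an integral over a Siegel domain $\fS$ for $H$, using reduction theory; on $\fS$ one writes $x=namk$ in an Iwasawa-type decomposition relative to the fixed minimal parabolic, and the measure and the characteristic functions become explicit in the $a$-variable. (2) Establish the standard combinatorial identity (Arthur's ``key lemma'', using the Langlands partition of the positive chamber via pairs of parabolics $P_1\subseteq P_2$) that rewrites $\sum_P (-1)^{\dim A_P/A_H}\hat\tau_P$ in a form where, for each term, one can freely extend the sum over lattice points $\gamma$ in the nilpotent radical $\fn_{P_1}(F)$ to \emph{all} of $\fn_{P_1}(F)$ rather than just the ``large'' ones. (3) Apply Poisson summation on $\fn_{P_1}(\BA)$ in those variables: the rapid decay of $f$ and its Fourier transform turns the sum over $\gamma\in\fn_{P_1}(F)$ into something dominated, after the $\hat\tau$ cutoff, by a quantity that decays rapidly in the relevant coordinates of $a$, precisely because on the support of $\hat\tau_{P_1}(H(x)-T)$ the roots in $\fn_{P_1}$ are bounded below. (4) Bound the remaining ``semisimple/Levi'' part of the kernel: here one uses that $f$ has compact support, so only finitely many $\fo\in\CO$ contribute a nonzero Levi-part at a given point, and one gets a bound independent of $\fo$ after summing, with the contribution of each $\fo$ controlled by a Schwartz seminorm of $f$. (5) Assemble: the integral over the Siegel domain of the resulting bound converges because the rapid decay from Poisson summation beats the polynomial growth of $e^{\langle\rho,H\rangle}$ (the modulus character) and of the weight factors, provided $T$ lies in a suitable positive cone so that the cutoff regions are nonempty and the decay directions are the right ones.

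The main obstacle I expect is step (2)–(3): getting the combinatorial manipulation of the alternating sum over parabolics of $H$ to interact correctly with the decomposition of $\fs$ under $H$-conjugation, i.e. checking that for each $\fo$ the ``parabolic descent'' $\fo\cap(\fm_P\oplus\fn_P)$ behaves well enough that Arthur's interchange of summation and the subsequent Poisson summation go through class-by-class, with all bounds uniform in $\fo$. Unlike the group case, here $\fs$ is a linear space and the relevant ``Jordan-type'' decomposition of elements of $\fs(F)$ relative to $H$-orbits must be set up carefully; one must verify that the nilpotent directions along which Poisson summation is applied are exactly the root spaces in $\fn_P$ acting on $\fs$, and that the $\eta(\det x)$ twist (which is an automorphic character trivial on $H(\BA)^1\cap$ the relevant unipotents) does not disrupt any of the estimates. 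A secondary technical point is the precise shape of the cone in $\BR^{2n}$: one must track how the condition ``$T$ sufficiently positive'' propagates through the reduction-theory inequalities, and this is where the explicit identification $\fs\simeq\fg\fl_n\oplus\fg\fl_n$ and the combinatorics of the $H$-roots (those of $GL_n\times GL_n$ acting on $\Mat_{n\times n}\oplus\Mat_{n\times n}$) enter concretely.
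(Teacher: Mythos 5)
Your outline has the correct global shape (a Langlands-type partition plus a Poisson-summation argument on the nilpotent directions, followed by estimates as in Chaudouard's Lie algebra case), but it misses the key structural feature that makes this specific convergence theorem nontrivial, and it therefore does not engage with the actual obstacles the paper has to overcome. The truncation in \eqref{deftruncation1} is \emph{not} an alternating sum over parabolic subgroups of $H$; it is an alternating sum over \emph{relatively standard parabolic subgroups of} $G$, i.e.\ parabolics $P\supseteq\wt P_0$, weighted by $(-1)^{\dim(A_P/A_G)}$ and by the $G$-characteristic function $\wh\tau_P^G(H_P(\cdot)-T_P)$. Although $P\mapsto P\cap H$ is a bijection onto standard parabolics of $H$, the spaces $\fa_P$ and $\fa_{P\cap H}$ are genuinely different (for the block sizes $p_i,q_i$ one has $\dim\fa_{P_H}^H\approx 2(\dim\fa_P^G)$), so the $H$-version of the Langlands partition you invoke is simply not the identity needed here.

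The identity that is actually needed is Lemma \ref{combinatoriclemma1}: the sum of $F^P(\delta x,T)\,\tau_P^Q(H_P(\delta x)-T_P)$ over relatively standard $P\subseteq Q$ and over $\delta\in P_H(F)\bs Q_H(F)$ equals $1$ for $x\in H(\BA)$. This is the restriction to $H(\BA)$ of Arthur's partition of $G(\BA)$, and proving it is a genuine new input. The paper does it via the theory of $T$-canonical pairs: one must show that the unique $T$-canonical pair $(P,\delta)$ for $x\in H(\BA)=G^\theta(\BA)$ satisfies $\delta\in P(F)\bs P(F)\Omega^G H(F)$. This is established using the $\theta$-equivariance of the canonical pair together with a Jordan-decomposition argument applied to $\delta_0\epsilon\delta_0^{-1}\in P(F)$, combined with a careful bookkeeping of the double cosets $P(F)\bs P(F)\Omega^G H(F)$ versus $\bigsqcup_s (s^{-1}P)_H(F)\bs H(F)$. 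Your proposal flags a vague ``obstacle'' at this step but does not identify it, and treating it as Arthur's usual key lemma for $H$ is where the argument would break down. Two further points you do not address: (i) reduction theory is not routine either, since no Siegel set of $H$ is contained in a Siegel set of $G$; the paper needs Lemma \ref{IY(2.5)} (an Ichino--Yamana-type statement) that any $a\in A_{\wt P_0}^\infty(H,t_0)$ lies in $A_B^\infty(G,t_0)$ for some $B\in\CP(\wt P_0,G)$ whose choice depends on $a$, and then Corollary \ref{zydcor2.5} assembles the relevant compact set as a union over such $B$; (ii) the final estimate requires replacing $\rho_R$ by $\rho_{R,+}$ (half-sum of $A_0$-weights on $\fn_R\cap\fs$) and tracking the resulting discrepancy with the bound $(4.14)$ of the Lie algebra case in \cite{MR1893921}, which is not merely cosmetic when $p\neq q$ or $s\neq 0$. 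Without the correct choice of parabolics and without Lemma \ref{combinatoriclemma1}, the plan does not establish the statement for the kernel $k^T_{f,\fo}$ as actually defined.
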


Moreover, we see that each summand in the geometric expansion is a sum of products of polynomials and exponential functions in $T$. In fact, most (namely regular semi-simple) terms are simply polynomial distributions. 

\begin{thm}[see Corollary \ref{polynomial1}]
For $T$ in a suitable cone in $\BR^{2n}$ and each $\fo\in\CO$, define 
$$ J_\fo^T(\eta, f):=\int_{H(F)\bs H(\BA)\cap G(\BA)^1} k_{f,\fo}^T(x)\eta(\det(x))dx. $$
Then $T\mapsto J_\fo^T(\eta, f)$ is the restriction of an exponential polynomial in $T$. In particular, if $\fo$ is regular semi-simple, it is the restriction of a polynomial in $T$. 
\end{thm}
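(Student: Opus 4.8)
The plan is to adapt Arthur's method for establishing the polynomiality of truncated trace-formula distributions to this infinitesimal and relative situation, in the spirit of the Lie-algebra variants of the Jacquet--Rallis trace formula. Fix a minimal parabolic subgroup $P_0$ of $H$ (so that $\fa_{P_0}\simeq\BR^{2n}$ houses the parameter $T$). The first step is an Arthur-type expansion of the modified kernel, of the shape
$$ k_{f,\fo}^T(x)=\sum_{P\supseteq P_0}(-1)^{\dim\fa_P/\fa_H}\sum_{\delta\in P(F)\backslash H(F)}\hat\tau_P(H_P(\delta x)-T)\,k_{f,\fo,P}(\delta x), $$
where $k_{f,\fo,P}$ is a partial (``constant term'') kernel built from the parabolic descent of $f$ to the Levi factor $M_P$ and from the part of $\fo$ relevant to $P$; the crucial point is that $k_{f,\fo,P}$ does not depend on $T$, so that all the $T$-dependence is carried by the cutoffs $\hat\tau_P(\cdot-T)$. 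This identity, and the convergence of all the integrals below, is underwritten by Theorem~\ref{convergence1}.

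Substituting this expansion into $J_\fo^T(\eta,f)$ and collapsing the sum over $\delta\in P(F)\backslash H(F)$ into the outer integral (legitimate since $\eta\circ\det$ is trivial on $H(F)$) yields
$$ J_\fo^T(\eta,f)=\sum_{P\supseteq P_0}(-1)^{\dim\fa_P/\fa_H}\int_{P(F)\backslash(H(\BA)\cap G(\BA)^1)}\hat\tau_P(H_P(x)-T)\,k_{f,\fo,P}(x)\,\eta(\det x)\,dx. $$
Using an Iwasawa decomposition adapted to $P$ one integrates out the unipotent radical, the semisimple part of the Levi, and the maximal compact subgroup; since $\hat\tau_P(H_P(x)-T)$ depends only on the coordinate in the central split torus $\fa_P$, what remains is an integral over $\fa_P$ of an auxiliary function $\phi_{P,\fo}$ against $\hat\tau_P(\cdot-T)$. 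The analytic heart of the argument is to show that $\phi_{P,\fo}$ is, up to a term that decays rapidly in all directions, a finite linear combination of terms of the form $q(H)e^{\langle\mu,H\rangle}$ with $q$ a polynomial and $\mu$ an exponent coming from the eigenvalue characters of the semisimple parts of elements of $\fo$ together with $\eta$; this is where one must analyse the partial kernels $k_{f,\fo,P}$ in detail, via the Jordan decomposition of elements of $\fs(F)$ along $P$ and Poisson summation in the nilpotent directions. Granting this description, the rest is elementary: the integral over a shifted simplicial (obtuse) cone of a polynomial times an exponential is again a polynomial times an exponential in the translation $T$, and summing the finitely many contributions shows $J_\fo^T(\eta,f)$ is an exponential polynomial in $T$. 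Establishing the polynomial-exponential shape of the $\phi_{P,\fo}$, with enough decay to make the cone integrals converge and the exponents identifiable, is the step I expect to be the main obstacle; the combinatorial bookkeeping needed to see that the individually divergent pieces of the alternating sum over $P$ reorganise into convergent, polynomial-exponential ones is delicate, but follows Arthur's template and is controlled by the convergence theorem.

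For the last assertion, assume $\fo$ is regular semisimple, represented by some $\gamma$ whose centralizer $H_\gamma$ in $H$ is a torus. Then, unfolding directly, $J_\fo^T(\eta,f)$ equals a weighted orbital integral
$$ J_\fo^T(\eta,f)=\int_{H_\gamma(\BA)\backslash(H(\BA)\cap G(\BA)^1)}f(x^{-1}\gamma x)\,w_T(x)\,\eta(\det x)\,dx, $$
where the weight $w_T(x)$ is Arthur's weight function, i.e. the volume of a convex polytope depending on $x$ and $T$; since such a volume is a polynomial in $T$, so is the integral. Equivalently, in the language of the previous paragraph, for regular semisimple $\fo$ all the exponents $\mu$ that could a priori occur are forced to vanish: the nonzero exponents are exactly those attached to coincidences among eigenvalues of the semisimple part of elements of $\fo$, and no such coincidence occurs in the regular semisimple case, so the exponential polynomial degenerates to a genuine polynomial. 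The explicit structure of the symmetric pair $(GL_{2n},GL_n\times GL_n)$, of its tangent space $\fs\simeq\fg\fl_n\oplus\fg\fl_n$, of the categorical quotient $\fs//H$, and of the behaviour of $\eta\circ\det$ under the relevant parabolic descents, is used throughout this analysis.
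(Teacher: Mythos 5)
Your overall strategy is in the right ballpark, but the proposal is missing the single mechanism that makes the argument actually work, and two of its structural claims are wrong.

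The paper's proof (Theorem~\ref{exppol1}) does \emph{not} try to show that each $P$-term of the alternating sum is individually a convergent integral of a polynomial-exponential function against $\wh\tau_P(\cdot-T)$. That cannot work: already the $P=G$ term is $\int_{H(F)\backslash H(\BA)\cap G(\BA)^1} k_{f,\fo,G}(x)\eta(\cdots)\,dx$, which diverges for general $\fo$. The key device the paper uses, and which your proposal omits, is Arthur's function $\Gamma_Q(T_1,T_2)$: one fixes a sufficiently regular $T'$ and writes
$$\wh\tau_P^G(H-T_P)=\sum_{Q\supseteq P}(-1)^{\dim(A_Q/A_G)}\wh\tau_P^Q(H-T'_P)\,\Gamma_Q(H-T'_P,\,T_P-T'_P).$$
The point is that $T_1\mapsto\Gamma_Q(T_1,T_2)$ is compactly supported on $\fa_Q^G$ (Proposition~\ref{propofp_Q,s}), so after regrouping and carrying out the $A_Q^{G,\infty}$-integral one gets $J_\fo^{G,T}(\eta,s,f)=\sum_Q p_{Q,s}(T_Q-T'_Q)\,e^{(2\rho_{Q,+}-2\rho_{Q_H}+s\Sigma_1)((T')_Q^G)}\,J_\fo^{Q,T'}(\eta,\{s^Q_i+s\},f_Q^\eta)$, where the $J_\fo^{Q,T'}$ are \emph{fixed constants} and all the $T$-dependence sits in the exponential polynomials $p_{Q,s}(T_Q-T'_Q)$. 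Your ``auxiliary function $\phi_{P,\fo}$'' gambit never gets to this point because the individual $P$-pieces cannot be integrated at all; the $\Gamma_Q$-regrouping is precisely the ``combinatorial bookkeeping'' you wave at but do not carry out, and it is the mathematical content of the proof, not an afterthought.

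Second, your identification of the exponents is wrong. They are not ``attached to coincidences among eigenvalues of the semisimple part of elements of $\fo$''; they are the characters $2\rho_{Q,+}-2\rho_{Q_H}$ restricted to $\fa_Q^G$, where $\rho_{Q,+}$ is the half-sum of weights of $A_0$ on $\fn_Q\cap\fs$. Lemma~\ref{balanced1} shows this character vanishes on $\fa_Q^G$ exactly when $Q$ is $\omega$-stable. The reason regular semisimple (more generally, $\fo\in\CO^\times$) orbits give genuine polynomials is Proposition~\ref{propOinvert1}: for such $\fo$, if $\fo\cap\fm_Q(F)\neq\emptyset$ then $Q$ is $\omega$-stable, so only vanishing exponents occur. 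Your eigenvalue heuristic does not match this. Finally, the ``direct unfolding'' to a weighted orbital integral for regular semisimple $\fo$ is not direct: the kernel $k_{f,\fo}^T$ integrates over $\fn_P\cap\fs(\BA)$, and converting that into a sum over $N_{P_H}(F)$ requires the isomorphism $N_{P_H}\to\fn_P\cap\fs$, $n\mapsto n^{-1}Xn-X$ for regular semisimple $X$ (Lemma~\ref{regsslemma1}), plus the nontrivial identity $\int j^T_{f,\fo}=\int k^T_{f,\fo}$ (Theorem~\ref{secondkernel1}). That is the content of Section~\ref{secondmodifiedkernel} and Theorem~\ref{woi1}, an entire section's worth of work that cannot be elided.
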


This property allows us to take the constant term $J_\fo(\eta, f)$ of $J_\fo^T(\eta, f)$ to eliminate the truncation parameter. In the infinitesimal setting, the geometric expansion of the Fourier transform of $f$ plays the role of the original spectral side (cf. \cite{MR1893921}). Our infinitesimal variant of Guo-Jacquet trace formula equating the geometric developments of $f$ and its Fourier transform (defined by (\ref{fourier}) and denoted by $\hat{f}$) is the following, which essentially comes from the Poisson summation formula. 

\begin{thm}[see Theorem \ref{tf1}]
For a Bruhat-Schwartz function $f$ on $\fs(\BA)$, we have the equality
$$ \sum_{\fo\in\CO} J_\fo(\eta, f)=\sum_{\fo\in\CO} J_\fo(\eta, \hat{f}). $$
\end{thm}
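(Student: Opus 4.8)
The plan is to deduce the trace‑formula identity from the Poisson summation formula for the lattice $\fs(F)\subset\fs(\BA)$, applied after the truncation and integration that define $J_\fo^T(\eta,f)$. The key point is that the non‑truncated kernel satisfies, at least formally,
$$ \sum_{\gamma\in\fs(F)} f(x^{-1}\gamma x) = \sum_{\gamma\in\fs(F)} \hat f(x^{-1}\gamma x), $$
because conjugation by $x\in H(\BA)\cap G(\BA)^1$ preserves Haar measure on $\fs(\BA)$ and the self‑dual pairing used to define $\hat f$ (this is where the hypothesis that $\eta(\det x)$ is the right weight, and that $x$ normalises the pairing, enters). So $k_f(x)=k_{\hat f}(x)$ as functions of $x$, and one wants to integrate this identity against $\eta(\det x)\,dx$ over $H(F)\bs H(\BA)\cap G(\BA)^1$. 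The obstacle, already emphasised in the introduction, is that neither side converges; the truncation $k_f^T$ is introduced precisely to repair this, but $k_f^T$ and $k_{\hat f}^T$ are \emph{not} equal pointwise — truncation breaks the Poisson symmetry.

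\textbf{Step 1.} Fix $T$ in the cone supplied by Theorem~\ref{convergence1}, so that $J^T(\eta,f):=\sum_{\fo}J_\fo^T(\eta,f)$ and $J^T(\eta,\hat f)$ both converge absolutely; by Theorem~\ref{polynomial1}/Corollary~\ref{polynomial1} each is an exponential polynomial in $T$, and $J_\fo(\eta,f)$, $J_\fo(\eta,\hat f)$ are by definition their purely polynomial (constant‑term) parts, extracted by the standard procedure of Arthur.

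\textbf{Step 2.} Show that the \emph{difference} $k_f^T(x)-k_f(x)$ (and likewise for $\hat f$) is, after integration against $\eta(\det x)\,dx$, negligible in a controlled sense. Concretely, I would follow the Arthur–Jacquet–Rallis template: $k_f^T(x)$ is obtained from $k_f(x)$ by subtracting contributions indexed by proper parabolic subgroups $P\supsetneq$ the relevant minimal one, each of which involves constant terms of $f$ along the unipotent radical of $P$; the Poisson summation formula applies on each such Levi piece as well. The truncated kernel can therefore be rewritten, via the combinatorial identity $\sum_{P}(-1)^{\dim A_P/A_G}(\cdots)=\mathbf 1$ governing Arthur's truncation, as an alternating sum over $P$ of integrals of $f_P$ (the constant term) against truncated characteristic functions $\hat\tau_P$. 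Doing the same computation starting from $\hat f$ and using Poisson on each Levi, one gets the \emph{same} alternating sum with $f$ replaced by $\hat f$ and the roles of "$P$" and "opposite $P$" (equivalently, the variable and its Fourier dual) interchanged. This is the heart of the argument and the step I expect to be the main obstacle: making the interchange of $\sum_P$ rigorous requires the absolute convergence from Theorem~\ref{convergence1} together with a dominated‑convergence / Fubini justification for swapping the $H(F)$‑sum, the $x$‑integral, and the Fourier transform, and it requires checking that the self‑dual pairing on $\fs$ intertwines the parabolic descent on the $f$‑side with that on the $\hat f$‑side (i.e. that $\hat{f_P}=(\hat f)_{\bar P}$ up to the correct normalisation of measures on the unipotent radicals). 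Decomposition along $\fo\in\CO$ is compatible with all of this because $\CO$ is defined by the $H$‑invariant categorical quotient $\fs//H$, which is preserved by the Fourier transform on the level of the "boundary" terms.

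\textbf{Step 3.} Conclude. Steps 1–2 give an equality of exponential polynomials in $T$, valid for all $T$ in a cone, hence an equality of exponential polynomials:
$$ \sum_{\fo\in\CO} J_\fo^T(\eta,f)=\sum_{\fo\in\CO} J_\fo^T(\eta,\hat f) \qquad\text{for all such }T. $$
Two exponential polynomials that agree on a cone agree identically, so their constant terms agree; taking constant terms of both sides (which by Corollary~\ref{polynomial1} picks out $\sum_\fo J_\fo(\eta,f)$ on the left and $\sum_\fo J_\fo(\eta,\hat f)$ on the right) yields
$$ \sum_{\fo\in\CO} J_\fo(\eta,f)=\sum_{\fo\in\CO} J_\fo(\eta,\hat f), $$
which is the assertion of Theorem~\ref{tf1}. \qed
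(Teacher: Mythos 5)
Your Steps 1 and 3 match the paper's argument, but Step 2 is where the proof actually lives, and there you both misstate the comparison to be made and then substitute a more elaborate strategy that you do not carry out and that the paper does not use.

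The comparison ``$k_f^T(x)-k_f(x)$ is, after integration, negligible'' is not well posed: $\int_{H(F)\bs H(\BA)\cap G(\BA)^1}k_f(x)\eta(\Nrd(x))\,dx$ does not converge, which is the whole reason for truncating. The correct statement, and the single ingredient that makes the paper's proof short, is Corollary~\ref{comparewithnaive1}: for $T$ sufficiently regular in a cone,
\[
\int_{H(F)\bs H(\BA)\cap G(\BA)^1}\bigl|k_{f}^{T}(x)-F^{G}(x,T)\,k_{f,G}(x)\bigr|\,|\Nrd(x_1)|_\BA^s\,dx\ \le\ C\,e^{-N\|T\|}.
\]
Here the insertion of the compactly supported cutoff $F^G(\cdot,T)$ is essential: it makes the ``naive'' truncated integral $\int F^G(x,T)\,k_{f,G}(x)\,\eta(\Nrd(x))\,dx$ convergent, and that integral is \emph{manifestly} Poisson-symmetric because $k_{f,G}(x)=k_{\hat f,G}(x)$ pointwise. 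Subtracting gives $|J^{G,T}(\eta,s,f)-J^{G,T}(\eta,s,\hat f)|\le C'e^{-N\|T\|}$, and since both sides are exponential polynomials in $T$ (Corollary~\ref{polynomial1}) and $N$ may be taken as large as desired, they coincide identically; taking constant terms finishes. This corollary is a by-product of the proof of Theorem~\ref{convergence1} (Proposition~\ref{proppfofconv1} with $P_1=P_2=G$ gives the main term $F^G\,k_{f,G}$, all other $(P_1,P_2)$ contribute $O(e^{-N\|T\|})$), so no further work is needed.

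Your ``Arthur--Jacquet--Rallis template'' for Step 2 --- rewriting $k_f^T$ as an alternating sum over $P$ of constant terms of $f$ against $\hat\tau_P$, applying Poisson on each Levi, and matching $P$ with $\bar P$ after checking $\widehat{f_P}=(\hat f)_{\bar P}$ --- is a different and much heavier route. You acknowledge this is ``the main obstacle,'' and indeed you do not carry it out; nor does the paper need it. In particular, the required Fubini/dominated-convergence interchanges, and the compatibility of the Fourier transform with the decomposition $\fn_P\oplus\fm_P\oplus\fn_{\bar P}$ of $\fs$, would all have to be verified. The paper's route via $F^G(\cdot,T)k_{f,G}$ bypasses this entirely. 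So the gap is concrete: you are missing Corollary~\ref{comparewithnaive1} (or an equivalent), and without it your Step 2 is only a plan, not a proof.
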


Such a formula should be of interest for at least two reasons. For one thing, since this formula is close to but easier than its analogue for the symmetric space, it gives us a clue to the original relative trace formula (cf. Zydor's work \cite{MR4195660} on Jacquet-Rallis trace formulae). For another, a simplified version of this formula (see \cite[Theorem 8.4 and p. 1875]{MR3414387}) has been used in Zhang's proof of the smooth transfer. 

The distributions $J_\fo(\eta, \cdot)$ on $\fs(\BA)$ that we obtained are non-equivariant under the conjugation of $H(\BA)\cap G(\BA)^1$ in general, which is close to the situation in \cite{MR518111} and different from that in \cite{MR3835522}. In fact, we have the following formula of non-equivariance. The lack of equivariance may add difficulty to the comparison of Guo-Jacquet trace formulae (cf. \cite[\S22]{MR2192011}). 

\begin{prop}[see Corollary \ref{corofnonequivar1}]
For a Bruhat-Schwartz function $f$ on $\fs(\BA)$ and $y\in H(\BA)\cap G(\BA)^1$, we denote $f^y(x):=f(yxy^{-1})$. Then
$$ J_\fo(\eta, f^y)=\eta(\det(y)) \sum_{Q} J_\fo^{Q}(\eta, f^\eta_{Q,y}), $$
where the sum on $Q$ runs over all $\omega$-stable relatively standard parabolic subgroups of $G$ (defined in Section \ref{defomgstb}). Here $J_\fo^{Q}(\eta,\cdot)$ is an analogue of $J_\fo(\eta,\cdot)$ with $G$ replaced by $Q$, and $f^\eta_{Q,y}$ is defined by (\ref{twconst1}) with $s=0$. 
\end{prop}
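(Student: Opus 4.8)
The plan is to adapt the standard computation of the effect of conjugation on a truncated kernel (cf. \cite[§8]{MR518111}) to the present infinitesimal, relative setting, carrying it out entirely inside the poset of $\omega$-stable relatively standard parabolic subgroups. I would start from the definition
$J_\fo^T(\eta,f^y)=\int_{H(F)\bs H(\BA)\cap G(\BA)^1}k_{f^y,\fo}^T(x)\,\eta(\det x)\,dx$,
perform the change of variables $x\mapsto xy$ (a measure-preserving bijection of $H(F)\bs H(\BA)\cap G(\BA)^1$ since $y\in H(\BA)\cap G(\BA)^1$), which pulls out the factor $\eta(\det y)$, and then insert the explicit formula (\ref{deftruncation1})--(\ref{deftruncation2}) for the truncated kernel. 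The integrand becomes a signed sum over $\omega$-stable relatively standard $Q$ of $\hat\tau_Q(H_Q(\delta xy)-T)$ times the associated constant-term kernel $k_{f^y,Q,\fo}(\delta xy)$, the sum over $\delta$ running over $Q(F)\bs G(F)$.

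The core of the argument is then combinatorial. There are two obstructions to simplifying directly, and they are the same phenomenon seen from two sides: $H_Q$ is not additive, so $H_Q(\delta xy)\neq H_Q(\delta x)+H_Q(y)$; and $y$ does not normalise $\fn_Q$, so the $\fn_Q(\BA)$-integral and the $\gamma$-sum concealed in $k_{f^y,Q,\fo}$ produce, after applying $\Ad(y)$, an integral over the twisted subspace $\Ad(y)\fn_Q(\BA)$ weighted by the Jacobian $|\det(\Ad(y)|_{\fn_Q})|_\BA$, rather than the original object for $f$. Both are resolved simultaneously by writing $H_Q(\delta xy)$ through the Iwasawa decomposition of $\delta x$ along $Q$ and applying Arthur's splitting identity for $\hat\tau_Q$: one replaces $\hat\tau_Q(\,\cdot\,-T)$ at the shifted point by a sum over $R\supseteq Q$ of products $\Gamma_Q^R(\cdots)\,\hat\tau_R(H_R(\delta x)-T)$. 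After interchanging the (finitely many, absolutely convergent by Theorem \ref{convergence1}) sums and integrals, the $R$-part reassembles into the truncated-kernel shape for the descent to $R$, while the $Q$-within-$R$ part, together with the $\fn_Q$-integral, the $\gamma$-sum, the Jacobian, and the factor $\eta$, is to be recognised as precisely the twisted constant term $f^\eta_{Q,y}$ of (\ref{twconst1}) at $s=0$. That the whole manipulation stays within $\omega$-stable relatively standard parabolics is built in: the truncated kernel only involves such $Q$, and this sub-poset is stable under the passage $Q\subseteq R$ needed in the splitting identity (Section \ref{defomgstb}).

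This yields an identity of the form $J_\fo^T(\eta,f^y)=\eta(\det y)\sum_{Q}J_\fo^{Q,T'}(\eta,f^\eta_{Q,y})$ at the level of the truncated distributions, with each truncation parameter $T'$ on the right differing from $T$ by a quantity bounded in terms of $y$. To reach the stated formula I would apply Corollary \ref{polynomial1}, to $G$ and to each $Q$, so that both sides are exponential polynomials in $T$; extracting the constant term is a linear operation, hence commutes with the finite sum over $Q$, and a bounded shift of $T$ does not change it. Taking constant terms on both sides then gives $J_\fo(\eta,f^y)=\eta(\det y)\sum_{Q}J_\fo^{Q}(\eta,f^\eta_{Q,y})$.

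I expect the main obstacle to be the bookkeeping of the second paragraph: matching the contribution of each pair $Q\subseteq R$ produced by the splitting identity — with all the weight functions coming both from $\Gamma_Q^R$ and from the failure of $y$ to normalise $\fn_Q$ — exactly with the definition (\ref{twconst1}) of $f^\eta_{Q,y}$ specialised at $s=0$, and checking that the character $\eta$ distributes correctly over the $\omega$-stable relatively standard parabolics. The secondary technical point is the routine but necessary justification of the interchanges of sums and integrals and the verification that the shifted parameters $T'$ remain in the cone where Theorem \ref{convergence1} and Corollary \ref{polynomial1} apply.
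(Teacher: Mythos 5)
Your outline correctly identifies the mechanism that the paper uses at the level of the truncated distributions (change of variables $x\mapsto xy$, Arthur's splitting identity for $\wh{\tau}_Q$ in terms of $\Gamma$-functions, reassembly of the inner sum into a Levi-level truncated kernel, and absorption of the $\Gamma$-weight into the twisted constant term $f^\eta_{Q,s,y}$), and the passage to constant terms via Corollary \ref{polynomial1}. However, there is a genuine gap in the claim that \emph{"the whole manipulation stays within $\omega$-stable relatively standard parabolics is built in: the truncated kernel only involves such $Q$."} This is false for general $\fo\in\CO$: the definition (\ref{deftruncation1}) of $k_{f,\fo}^T$ sums over \emph{all} relatively standard parabolic subgroups, and the vanishing of $k_{f,P,\fo}$ for non-$\omega$-stable $P$ holds only when $\fo\in\CO^\times$ (Proposition \ref{propOinvert1}). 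Corollary \ref{corofnonequivar1} is stated for arbitrary $\fo\in\CO$, so the argument as written does not apply to the case it needs to cover.

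As a consequence, you cannot expect an identity at the truncated level of the form $J_\fo^{G,T}(\eta,0,f^y)=\eta(\Nrd y)\sum_Q J_\fo^{Q,T'}(\eta,f^\eta_{Q,0,y})$ with the sum only over $\omega$-stable $Q$ and $T'$ a bounded shift of $T$. What Proposition \ref{nonequivariance1} actually delivers is a sum over \emph{all} relatively standard $Q$, with the same $T$ on the right, but with two extra ingredients that your proposal omits: an exponential factor $e^{(2\rho_{Q,+}-2\rho_{Q_H}+s\Sigma_1)(T_Q^G)}$ in $T_Q$, and a twisted tuple of parameters $\{s_i^Q+s\}$ in place of $\{0\}$. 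Specialising to $s=0$ and taking constant terms, the non-$\omega$-stable $Q$ drop out not by a combinatorial closure property of the poset but because $2\rho_{Q,+}-2\rho_{Q_H}\big|_{\fa_Q^G}$ is nonzero exactly when $Q$ fails to be $\omega$-stable (Lemma \ref{balanced1}), so that the exponential factor contributes nothing to the purely polynomial part; and for $\omega$-stable $Q$ one further needs $s_i^Q=0$ (Proposition \ref{propexplicitcst}) to recover $J_\fo^Q(\eta,\cdot)=J_\fo^Q(\eta,\{0\},\cdot)$. Both facts are computed in the paper precisely to make the constant-term extraction work; without them, even for $\fo\in\CO^\times$, your argument would produce $J_\fo^{Q,T}(\eta,\{s_i^Q\},\cdot)$ rather than the unweighted distribution, and would not justify discarding the exponential factors.
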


Nevertheless, we can write regular semi-simple terms as explicit weighted orbital integrals whose weights are the restriction to $H(\BA)$ of Arthur's in \cite{MR518111} for $G(\BA)$. 

\begin{thm}[see Theorem \ref{woi1}]
Let $\fo\in\CO$ be a regular semi-simple class, $P_1$ an $\omega$-stable relatively standard parabolic subgroup of $G$ and $X_1\in\fo$ an elliptic element relative to $P_1$ (defined in Section \ref{regssterms1}). For a Bruhat-Schwartz function $f$ on $\fs(\BA)$, we have
$$ J_\fo(\eta,f)=\vol([H_{X_1}])\cdot\int_{H_{X_1}(\BA)\bs H(\BA)}f(x^{-1}X_1 x)v_{P_1}(x)\eta(\det(x))dx, $$
where $H_{X_1}$ denotes the centraliser of $X_1$ in $H$, $\vol([H_{X_1}])$ is its associated volume and $v_{P_1}(x)$ is the volume of some convex hull. 
\end{thm}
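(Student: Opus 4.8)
The plan is to start from the truncated distribution $J_\fo^T(\eta,f)$, unfold it along the closed $H$-orbit of the given elliptic representative $X_1$, identify the residual truncation weight on $H(\BA)$ with the restriction of Arthur's weight $v_{P_1}$, and then pass to the constant term in $T$. First I would record the structural input from Section~\ref{regssterms1}: for the given $P_1 = M_1 N_1$ and $X_1$, the hypothesis that $X_1$ is elliptic relative to $P_1$ amounts to $H_{X_1}\subseteq M_1\cap H$ together with $A_{H_{X_1}} = A_{M_1}$. Substituting the definitions (\ref{deftruncation1})--(\ref{deftruncation2}) of $k_{f,\fo}^T$, one has schematically
$$ k_{f,\fo}^T(x)=\sum_{Q}(-1)^{\dim(\fa_Q/\fa_G)}\sum_{\delta\in Q(F)\bs G(F)}\hat\tau_Q(H_Q(\delta x)-T)\,k_{f,\fo,Q}(\delta x), $$
the sum over $\omega$-stable relatively standard parabolic subgroups $Q$, with $k_{f,\fo,Q}$ the associated $Q$-kernel. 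Because $\fo$ is regular semi-simple, $k_{f,\fo,Q}$ is supported on the $(M_Q\cap H)(F)$-conjugates of $X_1$ lying in $\fm_Q$, so only those $Q$ whose Levi contains a conjugate of $M_1$ contribute. After conjugating everything back to $P_1$, collapsing the inner sums, and unfolding against $H_{X_1}(F)$ — which leaves the factor $\eta(\det(\cdot))$ untouched since $\eta$ is trivial on $F^\times$ — I expect to arrive at
$$ J_\fo^T(\eta,f)=\vol([H_{X_1}])\int_{H_{X_1}(\BA)\bs(H(\BA)\cap G(\BA)^1)}f(x^{-1}X_1x)\,\eta(\det(x))\,w_{P_1}^T(x)\,dx, $$
where $w_{P_1}^T(x)=\sum_{Q\supseteq P_1}(-1)^{\dim(\fa_Q/\fa_G)}\sum_{\delta\in(Q\cap H)(F)\bs H(F)}\hat\tau_Q(H_Q(\delta x)-T)$ is a truncation weight living on $H$; the equality $A_{H_{X_1}}=A_{M_1}$ is what allows the $G(\BA)^1$-condition and the quotient by $H_{X_1}(\BA)$ to be traded for an integral over $H_{X_1}(\BA)\bs H(\BA)$.

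The second step is to identify $w_{P_1}^T$. I would check that the points $\{-H_Q(\delta x)\}$, as $Q$ ranges over $\omega$-stable relatively standard parabolics above $P_1$ and $\delta$ over the relevant cosets, form a $(G,M_1)$-orthogonal family in the sense of \cite{MR518111}, and that — because the $\omega$-stable relatively standard parabolics of $G$ above $P_1$ correspond, compatibly with measures, to the parabolic subgroups of $G$ above $M_1$ — it is precisely the family defining Arthur's weight. Then Arthur's combinatorial lemmas (the passage from the alternating sum of the $\hat\tau_Q$ to the functions $\Gamma_Q'$, and the geometric lemma computing the latter) would give that $w_{P_1}^T(x)$ coincides with Arthur's truncated weight for $G$, restricted to $x\in H(\BA)$, and in particular is a polynomial in $T$ with constant term $v_{P_1}(x)$, the volume of the convex hull of that family.

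The last step is to take constant terms in $T$. By Corollary~\ref{polynomial1}, $J_\fo^T(\eta,f)$ is a polynomial in $T$ whose constant term is $J_\fo(\eta,f)$. On the right-hand side of the displayed identity, $x$ is constrained, through the factor $f(x^{-1}X_1x)$, to a subset of $H_{X_1}(\BA)\bs H(\BA)$ with compact image under $x\mapsto x^{-1}X_1x$ (the orbit of the regular semi-simple element $X_1$ being closed), while $w_{P_1}^T(x)$ is bounded there by a fixed polynomial in $\|x\|$, uniformly for $T$ in the relevant cone; hence the integral is absolutely convergent, uniformly in such $T$, and one may extract the constant term under the integral sign. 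Combining this with the second step yields the asserted formula, and the same estimates prove that its right-hand side converges absolutely.

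I expect the main obstacle to be the second step: one must carry out Arthur's combinatorial reduction intrinsically within the $\omega$-relative root system of the symmetric space and verify that the $(G,M_1)$-family it produces on $H(\BA)$ really is Arthur's — that is, that restricting the bookkeeping of parabolics to the $\omega$-stable ones above $P_1$, and the chambers $H_Q(x)$ to arguments $x\in H(\BA)$, does not distort the convex hull. Equivalently, the delicate points are the compatibility between the relative structure ($\omega$-stable parabolics and their spaces $\fa_Q$) and the absolute $(G,M_1)$-family, and the normalisation of Haar measures needed to make the unfolding of the first step an exact identity rather than one up to a constant.
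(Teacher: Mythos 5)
There is a genuine gap in the first step, and it concerns precisely the part you dismiss with ``collapsing the inner sums and unfolding against $H_{X_1}(F)$.'' The kernel you start from is
$$ k_{f,P,\fo}(x)=\sum_{X\in\fm_P(F)\cap\fo} \int_{(\fn_P\cap\fs)(\BA)} f\bigl(x^{-1}(X+U)x\bigr)\, dU, $$
and that \emph{continuous} integral over $(\fn_P\cap\fs)(\BA)$ does not unfold against a discrete group. You observe correctly that the sum over $X$ is a sum over $M_{P_H}(F)$-conjugates of $\omega_s X_1\omega_s^{-1}$, and you want to combine it with the sum over $\delta\in P_H(F)\bs H(F)$ into a sum over $H_{X_1}(F)\bs H(F)$. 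But after the $X$-sum is opened up, you still have $\int_{(\fn_P\cap\fs)(\BA)} f(x^{-1}(X+U)x)\,dU$ sitting inside, and no amount of Fubini and reindexing converts this directly into a single factor $f(x^{-1}X_1 x)$. In the group-theoretic setting the analogous unfolding works because the inner sum is a \emph{sum} over $N_P(F)$ that glues onto $P(F)\bs G(F)$; here the object is an \emph{integral} over a vector space.

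The missing mechanism is exactly what the paper's Section~\ref{secondmodifiedkernel} supplies. Lemma~\ref{regsslemma1} shows that for regular semi-simple $X_1\in(\fm_P\cap\fs)(F)$ (with $P$ $\omega$-stable), the map $n\mapsto n^{-1}X_1 n - X_1$ is an $F$-isomorphism $N_{P_H}\to\fn_P\cap\fs$ that preserves Haar measures on $\BA$-points. This is what lets one replace the continuous integral over $(\fn_P\cap\fs)(\BA)$ by a discrete sum over $N_{P_H}(F)$ --- i.e.\ pass from $k_{f,\fo}^T$ to the ``second modified kernel'' $j_{f,\fo}^T$ of that section --- and only \emph{then} does the unfolding you describe go through, producing the $\chi_T(\delta x)$ weight and ultimately $v_{P_1}(x,T)$ after integrating over $A_G^\infty\bs A_{P_1}^\infty$. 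The equality of the two distributions (Theorem~\ref{secondkernel1}) is itself nontrivial and uses a Fubini argument over $N_{P_{1,H}}(F)\bs N_{P_{1,H}}(\BA)$ together with Lemma~\ref{regsslemma1} again. Your second step --- identifying the weight by comparing $\Omega^H(\fa_{P_1};P)$ with $\Omega^G(\fa_{P_1};P)$ and matching with Arthur's $(G,M_1)$-orthogonal family --- is the right thing to do and is indeed established in the paper (Lemma~\ref{lemid1} and the surrounding discussion). But without the second modified kernel as a bridge, the computation cannot even reach that stage.
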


This paper is organised in the following way. Section \ref{notation} and \ref{symmetricpair} are devoted to standard notation in Arthur's work on trace formulae and characterisation of $\CO$ in the specific symmetric pair that we consider respectively. We define the truncated kernel $k_{f,\fo}^T(x)$ and prove its integrability in Section \ref{integrability}. This key definition is partly inspired by \cite{MR3334892} \cite{MR3835522} \cite{10.1007/978-3-319-94833-1_3} (for the decomposition of groups) and \cite{MR1363072} (for the decomposition of linear spaces) apart from Arthur's pioneering work \cite{MR518111} and its Lie algebra variant \cite{MR1893921}. Section \ref{dist1} is about the quantitive behaviour of our distributions with respect to the truncation parameter $T$. In Section \ref{nonequi}, we study their variance under the conjugation of $H(\BA)\cap G(\BA)^1$. In Section \ref{traceformula}, the infinitesimal Guo-Jacquet trace formula for the case of $(GL_{2n},GL_n\times GL_n)$ is given. Section \ref{secondmodifiedkernel} and \ref{weightedorbitalintegral} aim to express the regular semi-simple distribution as weighted orbital integrals. 

Here are two final remarks. Firstly, actually we study the more general symmetric pair $(GL_{p+q,D}, $\newline$GL_{p,D}\times GL_{q,D})$ instead of $(GL_{2n},GL_n\times GL_n)$ and add an extra term $|\Nrd (x_1)|_\BA^s$ to the integrand in most of this article. Not only do we prefer more general results (including the case considered in \cite{MR3299843} for instance) or possible applications (cf. \cite{2018arXiv180307553L} for the study of the first derivative of $L$-functions), but the study of the case where $p=q$ and $s=0$ also yields consideration on a more general setting including the cases where $p\neq q$ or $s\neq 0$ (see Theorem \ref{exppol1} for example). A simple reason for this comes from the structure of the intersection of $H$ and semi-standard Levi subgroups of $G$. Secondly, there are some similarities between our case and the twisted trace formula (cf. \cite{MR3026269}) for $(GL_n\times GL_n)\rtimes \sigma$ where $\sigma$ exchanges two copies of $GL_n$. In fact, we obtain the same weights for regular semi-simple orbits. However, we shall see that more parabolic subgroups will be needed to define the truncation here. We shall return to this discussion at the end of this paper. 

\s{\textbf{Acknowledgement. }}I would like to express my great appreciation to my PhD advisor Professor Pierre-Henri Chaudouard for introducing me to this problem and valuable suggestions during my preparation of this work. Part of this paper was revised during my visit to the Institute for Mathematical Sciences at the National University of Singapore and I would like to thank their hospitality. This work was supported by grants from R\'{e}gion Ile-de-France. 


\section{\textbf{Notation}}\label{notation}

\subsection{\textbf{Roots and weights}}

Let $F$ be a number field and $G$ a reductive group defined over $F$. Denote by $Z_G$ the centre of $G$. Fix a minimal Levi $F$-subgroup $M_0$ of $G$. All the following groups are assumed to be defined over $F$ without further mention. We call a parabolic subgroup $P$ of $G$ semi-standard if $M_0\subseteq P$. For any semi-standard parabolic subgroup $P$ of $G$, we usually write $M_P$ for the Levi factor containing $M_0$ and $N_P$ the unipotent radical. Denote by $A_P$ the maximal $F$-split torus in the centre of $M_P$. Let $X(M_P)_F$ be the group of characters of $M_P$ defined over $F$. Then define
$$ \fa_P:=\Hom_\BZ(X(M_P)_F, \BR) $$
and its dual space
$$ \fa_P^*:=X(M_P)_F\otimes_\BZ \BR, $$
which are both $\BR$-linear spaces of dimension $\dim(A_P)$. Notice that the restriction $X(M_P)_F\hookrightarrow X(A_P)_F$ induces an isomorphism
$$ \fa_P^*\simeq X(A_P)_F\otimes_\BZ \BR. $$

Suppose that  $P_1\subseteq P_2$ are a pair of semi-standard parabolic subgroups of $G$. The restriction $X(M_{P_2})_F\hookrightarrow X(M_{P_1})_F$ induces $\fa_{P_2}^*\hookrightarrow\fa_{P_1}^*$  and its dual map $\fa_{P_1}\twoheadrightarrow\fa_{P_2}$. Denote by $\fa_{P_1}^{P_2}$ the kernel of the latter map $\fa_{P_1}\twoheadrightarrow\fa_{P_2}$. The restriction $X(A_{P_1})_F\twoheadrightarrow X(A_{P_2})_F$ induces $\fa_{P_1}^*\twoheadrightarrow\fa_{P_2}^*$ and its dual map $\fa_{P_2}\hookrightarrow\fa_{P_1}$. The latter map $\fa_{P_2}\hookrightarrow\fa_{P_1}$  provides a section of the previous map $\fa_{P_1}\twoheadrightarrow\fa_{P_2}$. Thus we have decompositions
$$ \fa_{P_1}=\fa_{P_2}\oplus\fa_{P_1}^{P_2} $$
and
$$ \fa_{P_1}^*=\fa_{P_2}^*\oplus(\fa_{P_1}^{P_2})^*. $$
When $P_1$ is a minimal semi-standard parabolic subgroup, since $\fa_{P_1}$ (resp. $A_{P_1}$) and $\fa_{P_1}^{P_2}$ are independent of the choice of $P_1$, we write them as $\fa_0$ (resp. $A_0$) and $\fa_0^{P_2}$ respectively. 

For a pair of semi-standard parabolic subgroups $P_1\subseteq P_2$ of $G$, write $\Delta_{P_1}^{P_2}$ for the set of simple roots for the action of $A_{P_1}$ on $N_{P_1}^{P_2}:=N_{P_1}\cap M_{P_2}$. Notice that $\Delta_{P_1}^{P_2}$ is a basis of $(\fa_{P_1}^{P_2})^*$. Let
$$ (\wh{\Delta}_{P_1}^{P_2})^\vee:=\{\varpi_\alpha^\vee:\alpha\in\Delta_{P_1}^{P_2}\} $$
be the basis of $\fa_{P_1}^{P_2}$ dual to $\Delta_{P_1}^{P_2}$. 
If $B$ is a minimal semi-standard parabolic subgroup contained in $P_1$, one has the coroot $\beta^\vee$ associated to any $\beta\in\Delta_B^{P_2}$. For every $\alpha\in\Delta_{P_1}^{P_2}$, let $\alpha^\vee$ be the projection of $\beta^\vee$ to $\fa_{P_1}^{P_2}$, where $\beta\in\Delta_B^{P_2}$ whose restriction to $\fa_{P_1}^{P_2}$ is $\alpha$. Such $\alpha^\vee$ is independent of the choice of $B$. Define
$$ (\Delta_{P_1}^{P_2})^\vee:=\{\alpha^\vee:\alpha\in\Delta_{P_1}^{P_2}\}, $$
which is a basis of $\fa_{P_1}^{P_2}$. Denote by
$$ \wh{\Delta}_{P_1}^{P_2}:=\{\varpi_\alpha:\alpha\in\Delta_{P_1}^{P_2}\} $$
the basis of $(\fa_{P_1}^{P_2})^*$ dual to $ (\Delta_{P_1}^{P_2})^\vee$. 

For a semi-standard parabolic subgroup $P$ of $G$, set
$$ \fa_{P}^+:=\{T\in\fa_P: \forall \alpha\in\Delta_P^G, \alpha(T)>0\}. $$
For $P_1\subseteq P_2$ as above, define $\tau_{P_1}^{P_2}$ and $\wh{\tau}_{P_1}^{P_2}$ as the characteristic functions of
$$ \{T\in\fa_0: \forall \alpha\in\Delta_{P_1}^{P_2}, \alpha(T)>0\} $$
and
$$ \{T\in\fa_0: \forall \varpi\in\wh{\Delta}_{P_1}^{P_2}, \varpi(T)>0\} $$
respectively. 

\subsection{\textbf{The functions $H_P$ and $F^P$}}

Let $\BA$ be the ring of \adeles of $F$ and $|\cdot|_\BA$ the product of normalised local absolute values on the group of \ideles $\BA^*$. Fix a maximal compact subgroup $K$ of $G(\BA)$ that is admissible relative to $M_0$ in the sense of \cite[p. 9]{MR625344}. In this paper, we choose the standard maximal compact subgroup for inner forms of $GL_n$ (see \cite[p. 191 and 199]{MR1344916} for example). More concretely, suppose that $G(F)=GL_n(D)$, where $D$ is a central division algebra over $F$. For every place $v$ of $F$, fix an isomorphism $D\otimes_{F}F_v\simeq \fg\fl_{r_v}(D_v)$, where $D_v$ is a central division algebra over $F_v$. Under this isomorphism, the completion at $v$ of $G(F)$ is $G_v\simeq GL_{n_v}(D_v)$, where $n_v=nr_v$. For $v$ a finite place of $F$, let $K_v\simeq GL_{n_v}(\CO_{D_v})$, where $\CO_{D_v}$ is the ring of integers of $D_v$; for $v$ an infinite place of $F$, we choose $K_v$ to be the orthogonal group, unitary group and compact symplectic group (see \cite[Chapter 1.2.8]{MR3331229} for example) for $G_v\simeq GL_{n_v}(\BR)$, $GL_{n_v}(\BC)$ and $GL_{n_v}(\BH)$ respectively; let $K:=\prod_{v} K_v$.  Suppose that $P$ is a semi-standard parabolic subgroup of $G$. If $m\in M_P(\BA)$, define $H_P(m)\in\fa_P$ by
$$ \langle H_P(m), \chi\rangle=\log(|\chi(m)|_\BA) $$ 
for all $\chi\in X(M_P)_F$. Write $M_P(\BA)^1$ for the kernel of $H_P$ and $A_P^\infty$ for the neutral component for the topology of $\BR$-manifolds of the group of $\BR$-points of the maximal $\BQ$-split torus in $\Res_{F/\BQ}A_P$. Then any element $x\in G(\BA)$ can be written as $x=nmak$, where $n\in N_P(\BA)$, $m\in M_P(\BA)^1$, $a\in A_P^\infty$ and $k\in K$. We can define a continuous map $H_P: G(\BA)\ra\fa_P$ by setting $H_P(x):=H_P(a)$ with respect to this decomposition. Notice that $H_P$ induces an isomorphism from $A_P^\infty$ to $\fa_P$. If $P\subseteq Q$ are a pair of semi-standard parabolic subgroups, write
$$ A_P^{Q, \infty}:=A_P^\infty\cap M_Q(\BA)^1. $$
Then $H_P$ also induces an isomorphism from $A_P^{Q, \infty}$ to $\fa_P^Q$. 

Denote by $\Omega^G$ the Weyl group of $(G, A_0)$. In the cases to be considered in this paper, for every $s\in\Omega^G$, we can always choose one representative $\omega_s\in G(F)\cap K$. 
In fact, we are dealing with the case of $G=GL_n$ or its inner forms, thus we can choose $\Omega^G$ to be the group of permutation matrices. For an $F$-subgroup $H$ of $G$ and $s\in\Omega^G$, we usually write $sH:=\omega_s H\omega_s^{-1}$. Let $P_1$ and $P_2$ be a pair of semi-standard parabolic subgroups of $G$. Denote by $\Omega^G(\fa_{P_1},\fa_{P_2})$ the (perhaps empty) set of distinct isomorphisms from $\fa_{P_1}$ to $\fa_{P_2}$ obtained by restriction of elements in $\Omega^G$. Denote by $\Omega^G(\fa_{P_1}; P_2)$ the (perhaps empty) subset of double classes in $\Omega^{M_{P_2}}\bs\Omega^G/\Omega^{M_{P_1}}$ of elements $s\in\Omega^G$ such that $s(\fa_{P_1})\supseteq\fa_{P_2}$. Suppose additionally that $P_1$ and $P_2$ contain a common minimal semi-standard parabolic subgroup $P_0$ of $G$. We can talk about positive roots and standard parabolic subgroups with respect to $P_0$. By \cite[Lemme 1.3.6]{MR3026269}, each $s\in\Omega^G(\fa_{P_1},\fa_{P_2})$ admits a unique representative (still denoted by $s$) in $\Omega^G$ such that $s^{-1}\alpha>0$ for all $\alpha\in\Delta_{P_0}^{P_2}$. By \cite[Lemme 1.3.7]{MR3026269}, each $s\in\Omega^G(\fa_{P_1}; P_2)$ admits a unique representative (still denoted by $s$) in $\Omega^G$ such that $s^{-1}\alpha>0$ for all $\alpha\in\Delta_{P_0}^{P_2}$. If $Q$ is a parabolic subgroup of $G$ containing $P_1\cup P_2$ and $\Omega^{M_Q}(\fa_{P_1}, \fa_{P_2})\neq\emptyset$, we say that $P_1$ and $P_2$ are $M_Q$-associated. There is a bijection between $\Omega^G(\fa_{P_1}; P_2)$ and the disjoint union of quotients $\Omega^{M_{P_2}}(\fa_R, \fa_R)\bs\Omega^G(\fa_{P_1}, \fa_R)$ where $R$ runs over standard parabolic subgroups of $G$ contained in $P_2$, modulo $M_{P_2}$-association (see \cite[Lemme 1.3.7]{MR3026269}). 

From the reduction theory (see \cite[p. 941]{MR518111}), we know that there exists a real number $t_0<0$ and a compact subset $\varrho_B\subseteq N_B(\BA)M_0(\BA)^1$ for each minimal semi-standard parabolic subgroup $B$ of $G$ such that for any semi-standard parabolic subgroup $P$ of $G$ containing $B$, we have
$$ G(\BA)=P(F)\fS_B^P(\varrho_B, t_0). $$
Here the Siegel set $\fS_B^P(\varrho_B, t_0)$ is defined by
$$ \fS_B^P(\varrho_B, t_0):=\varrho_B A_B^\infty(P, t_0) K, $$
where 
$$ A_B^\infty(P, t_0):=\{a\in A_B^\infty: \forall \alpha\in\Delta_B^P, \alpha(H_B(a))>t_0\}. $$
We shall fix such $t_0$ and $\varrho_B$. Additionally, we are authorised to assume that $\varrho_{sB}=\omega_s\varrho_B\omega_s^{-1}$ for $s\in\Omega^G$. Moreover, we require that $(M_P(\BA)\cap\varrho_B, M_P(\BA)\cap K, B\cap M_P, t_0)$ will play the role of $(\varrho_B, K, B, t_0)$ for any semi-standard parabolic subgroup $P$ of $G$ containing $B$. 

Let $B\subseteq P$ and $t_0$ be as above. For $T\in\fa_0$, define the truncated Siegel set
$$ \fS_B^P(\varrho_B, t_0, T):=\varrho_B A_B^\infty(P, t_0, T) K, $$
where
$$ A_B^\infty(P, t_0, T):=\{a\in A_B^\infty(P, t_0): \forall \varpi\in\wh{\Delta}_B^P, \varpi(H_B(a)-T)\leq 0\}. $$
It is known that $A_B^\infty(P, t_0, T)\cap M_P(\BA)^1$ has compact closure (see \cite[Lemme 1.8.1]{MR3026269}). Denote by $F_B^P(\cdot, T)$ the characteristic function of the projection of $\fS_B^P(\varrho_B, t_0, T)$ to $P(F)\bs G(\BA)$. 

\subsection{\textbf{Bruhat-Schwartz functions and Haar measures}}\label{BSandHaar1}

Write $\fg$ for the Lie algebra of $G$. For an $F$-linear subspace $\fs$ of $\fg$, denote by $\CS(\fs(\BA))$ the Bruhat-Schwartz space of $\fs(\BA)$, namely the $\BC$-linear space of functions on $\fs(\BA)$ generated by $f_\infty\otimes\chi^\infty$, where $f_\infty$ is a Schwartz function on $\fs(F\otimes_\BQ \BR)$ and $\chi^\infty$ is the characteristic function of an open compact subset of $\fs(\BA^\infty)$, where we denote by $\BA^\infty$ the ring of finite \adeles of $F$. 

Let $P$ be a semi-standard parabolic subgroup of $G$. For every algebraic subgroup $V$ of $N_P$ (resp. every subspace $\fh$ of $\fg$), choose the unique Haar measure on $V(\BA)$ (resp. on $\fh(\BA)$) such that $\vol(V(F)\bs V(\BA))=1$ (resp. $\vol(\fh(F)\bs\fh(\BA))=1$). We also take the Haar measure on $K$ such that $\vol(K)=1$. 

Fix a Euclidean norm $\|\cdot\|$ on $\fa_0$ invariant by the group $\Omega^G$ and Haar measures on all subspaces of $\fa_0$ compatible with this norm. If $P\subseteq Q$ are a pair of semi-standard parabolic subgroups, we obtain Haar measures on $A_P^\infty$ and $A_P^{Q, \infty}$ via the isomorphism $H_P$. 

Denote by $\rho_P\in (\fa_P^G)^*$ the half of the sum of weights  (with multiplicities) for the action of $A_P$ on $\fn_P$. We choose compatible Haar measures on $G(\BA)$ and its Levi subgroups by requiring that for any $f\in L^1(G(\BA))$, 
  \[\begin{split}
   \int_{G(\BA)}f(x)dx&=\int_{N_P(\BA)} \int_{M_P(\BA)} \int_K f(nmk) e^{-2\rho_P(H_P(m))} dndmdk \\
                              &=\int_{N_P(\BA)} \int_{M_P(\BA)^1} \int_{A_P^\infty} \int_K f(nmak) e^{-2\rho_P(H_P(a))} dndmdadk.
  \end{split}\]


\section{\textbf{The symmetric pair}}\label{symmetricpair}

Let $F$ be a number field and $D$ a central division algebra over $F$. Let $d$ be the degree of $D$, i.e., $\dim_F(D)=d^2$. Denote by $GL_{n, D}$ the reductive group over $F$ whose $F$-points are $GL_n(D)$. For $x\in GL_n(D)$, we write $\Nrd (x)$ for its reduced norm, $\Trd (x)$ for its reduced trace and $\Prd_x$ for its reduced characteristic polynomial. For $x\in GL_{p}(D)\times GL_{q}(D)$, denote by $x_1$ (resp. $x_2$) its projection to the first (resp. second) component. Until further notice, we shall work in a more general setting than that of Guo-Jacquet for later use, i.e., we shall study the case of $(GL_{p+q, D}, GL_{p, D}\times GL_{q, D})$ and add an additional term $|\Nrd (x_1)|_\BA^s$ in the integral of the modified kernel. 

\subsection{\textbf{Groups and linear spaces}}

Let $G:=GL_{p+q, D}$ and $H:=GL_{p, D}\times GL_{q, D}$ its subgroup by diagonal embedding. Define an involution $\theta$ on $G$ by $\theta(g)=\epsilon g \epsilon^{-1}$, where $\epsilon=
\left( \begin{array}{cc}
1_p & 0 \\
0 & -1_q \\
\end{array} \right)$. Thus $H=G^{\theta}$, where $G^\theta$ denotes the $\theta$-invariant subgroup of $G$.

Define an anti-involution $\iota$ on $G$ by $\iota(g)=\theta(g^{-1})$. Denote by $S$ 
the $\iota$-invariant subvariety of $G$. There is a symmetrization map
$$ s: G \ra S, s(g):=g\iota(g), $$
by which one can regard the symmetric space $G/H$ as a subvariety of $S$. We see that $H\times H$ acts on $G$ by left and right translation and that $H$ acts on $S$ by conjugation.

Let $\fg:=Lie(G)$ and $\fh:=Lie(H)$. Denote by $d\theta$ the differential of $\theta$. Thus 
$$ \fh=\{X\in\fg: (d\theta)(X)=X\}. $$ 
Let $\fs$ be the tangent space of $S$ at the neutral element. We shall always view $\fs$ as a subspace of $\fg$. Then 
$$ \fs=\{X\in\fg: (d\theta)(X)=-X\}, $$ 
and 
$$ \fs(F)=\bigg\{
\left( \begin{array}{cc}
0 & A \\
B & 0 \\
\end{array} \right): A\in \Mat_{p\times q}(D), B\in \Mat_{q\times p}(D)\bigg\} \simeq \Mat_{p\times q}(D)\oplus \Mat_{q\times p}(D). $$ 
There is an $H(F)$-action on $\fs(F)$ by conjugation, i.e., 
$$ (h_{1},h_{2})\cdot(A,B)=(h_{1}Ah_{2}^{-1},h_{2}Bh_{1}^{-1}). $$

\subsection{\textbf{Semi-simple elements}}

We say that an element $X\in\fs$ is semi-simple if the orbit $H\cdot X$ is Zariski closed in $\fs$. By a regular element $X\in\fs$, we mean that the stabiliser $H_X$ has minimal dimension.

\begin{prop}\label{ss1}
An element $X$ of $\fs(F)$ is semi-simple if and only if it is $H(F)$-conjugated to an element of the form
$$ X(A):=
\left( \begin{array}{cccc}
0 & 0 & 1_{m} & 0 \\
0 & 0 & 0 & 0 \\
A & 0 & 0 & 0 \\
0 & 0 & 0 & 0 \\
\end{array} \right) $$
with $A\in GL_{m}(D)$ being semi-simple in the usual sense. More precisely, the set of $H(F)$-conjugacy classes of semi-simple elements of $\fs(F)$ is bijective to the set of pairs $(m, \{A\})$ where $0\leq m\leq \min\{p, q\}$ is an integer and $\{A\}$ is a semi-simple conjugacy class in $GL_{m}(D)$. Moreover, $X(A)$ is regular semi-simple if and only if $m=\min\{p, q\}$ and $A$ is regular semi-simple in $GL_{\min\{p, q\}}(D)$ in the usual sense.
\end{prop}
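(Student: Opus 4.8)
The plan is to reduce the classification of semi-simple $H$-orbits in $\fs$ to the classification of semi-simple conjugacy classes in a smaller general linear group over $D$, by a combination of linear algebra (analysing the pair $(A,B)$ and the associated operators $AB$ and $BA$) and a closed-orbit criterion. Concretely, identify $X\in\fs(F)$ with a pair $(A,B)$, $A\in\Mat_{p\times q}(D)$, $B\in\Mat_{q\times p}(D)$, on which $(h_1,h_2)\in H$ acts by $(h_1Ah_2^{-1},h_2Bh_1^{-1})$. The point is that the ``invariants'' of the orbit are controlled by the operator $AB\in\Mat_{p\times p}(D)$ (equivalently $BA\in\Mat_{q\times q}(D)$): note $X^2=\mathrm{diag}(AB,BA)$, and $H$-conjugation of $X$ induces simultaneous $GL_p(D)$-conjugation of $AB$ and $GL_q(D)$-conjugation of $BA$, with $AB$ and $BA$ having the same nonzero part of their reduced characteristic polynomials. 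The first step is therefore to show that, after conjugating by $H(F)$, one may bring $X$ to a normal form: decompose $D^p$ and $D^q$ using the (generalized) kernel and image of the $D$-linear maps given by $A$ and $B$, and use the fact that on the part where $AB$ (resp.\ $BA$) is invertible, $A$ becomes (up to $H$-conjugacy) an isomorphism onto its image — this is where one extracts the integer $m=\mathrm{rank}$ of the ``invertible part'' and the conjugacy class $\{A\}\subseteq GL_m(D)$, the nilpotent part being absorbed into the off-diagonal zero blocks.

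**Second**, I would pin down exactly which normal forms are semi-simple in the sense of the definition (closed $H$-orbit). For a nilpotent direction — i.e.\ when $AB$ or $BA$ has a nontrivial nilpotent summand — one exhibits a one-parameter subgroup of $H$ (a cocharacter $t\mapsto(t\cdot 1,1)$ on a suitable block, say) along which $X$ degenerates to a strictly smaller element, so the orbit is not closed; this shows a semi-simple $X$ must have $AB$, $BA$ both of the form (invertible part) $\oplus\,0$. Conversely, for $X=X(A)$ with $A\in GL_m(D)$ semi-simple, I would verify $H\cdot X$ is Zariski closed: the orbit map factors through the orbit of $(A\ \text{in}\ GL_m(D))$ under conjugation — which is closed by the classical theory of semi-simple elements — together with the rigid ``isomorphism'' blocks $1_m$, and one checks the stabiliser computation / uses the Hilbert–Mumford criterion over $\bar F$ to conclude closedness descends. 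The bijection with pairs $(m,\{A\})$ then follows by showing the invariants $m$ and $\{A\}$ are complete: two such normal forms $X(A)$, $X(A')$ are $H(F)$-conjugate iff $m=m'$ and $A\sim A'$ in $GL_m(D)$, which one reads off from the induced conjugation action on $X^2$ and on the image blocks.

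**Third**, for the regularity statement I would compute $\dim H_X$ for $X=X(A)$. Writing out the centraliser: an element $(h_1,h_2)\in H_X$ must satisfy $h_1A h_2^{-1}=A$ on the relevant blocks and commute with the nilpotent structure elsewhere; the generic (minimal-dimensional) situation forces the nilpotent blocks to vanish, i.e.\ $m=\min\{p,q\}$, and then $H_{X(A)}\cong Z_{GL_m(D)}(A)$ embedded diagonally (roughly), whose dimension is minimized exactly when $A$ is regular semi-simple in $GL_m(D)$. So ``$X$ regular semi-simple'' $\iff$ ``$m=\min\{p,q\}$ and $A$ regular s.s.''.

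**The main obstacle** I expect is the closed-orbit analysis over a division algebra $D$ rather than a field: the Hilbert–Mumford / Kempf–Rousseau machinery applies to the algebraic group $H$ over $\bar F$ (where $D$ splits and $GL_{n,D}$ becomes $GL_{nd}$), so one must be careful that ``semi-simple'' for $X(A)$, defined via closedness of the $H$-orbit, corresponds correctly to ``$A$ semi-simple in $GL_m(D)$'' in the usual sense — i.e.\ that the extra $GL_d$-worth of splitting directions do not create new degenerations. Concretely one needs: (i) over $\bar F$, $X(A)$ semi-simple $\iff$ $A$ semi-simple in $GL_{md}(\bar F)$, via an explicit cocharacter argument, and (ii) a Galois-descent / $F$-rationality argument ensuring the $H(F)$-classification matches the geometric one (no issue here since $GL_{n,D}$ has trivial Galois cohomology obstruction for these orbits, semi-simple conjugacy classes in $GL_m(D)$ being cut out by characteristic polynomials with coefficients in $F$). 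The rest — the linear-algebra normal form and the dimension count — is routine once the framework is set up, so I would devote the bulk of the write-up to the cocharacter computations establishing closedness and its failure.
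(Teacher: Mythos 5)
Your proposal is a correct outline, but note that the paper itself gives no argument here: it simply observes that the case $D=F$ is Jacquet--Rallis (Proposition 2.1 and Lemma 2.1 of their 1996 paper) and the case $p=q$ over a general $D$ is in MR3299843, and asserts that the same proofs extend verbatim. What you have written is, in essence, a reconstruction of the kind of argument those references use: pass to the pair $(A,B)$ with the twisted action $(h_1,h_2)\cdot(A,B)=(h_1Ah_2^{-1},h_2Bh_1^{-1})$; observe $X^2=\diag(AB,BA)$; split $D^p$ and $D^q$ by the generalized kernel of $AB$ (resp.\ $BA$) and reduce to the invertible block, where one can normalize the $A$-part to $1_m$; detect the non-closed orbits by a cocharacter that scales the nilpotent blocks; and read off regularity from the dimension of the centraliser. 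So this is not a ``genuinely different route'' --- it is the route the paper silently invokes by citation.

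Two minor remarks to tighten the write-up. First, the cocharacter you propose for detecting non-closed orbits, $t\mapsto(t\cdot 1,1)$ on a suitable block, is the right idea, but you should be explicit that it must be inserted \emph{after} the kernel/image decomposition so that it scales only the nilpotent off-diagonal pieces and fixes the invertible $m\times m$ block; as written, the global cocharacter $(t\cdot 1_p,1_q)$ scales $(A,B)\mapsto(tA,t^{-1}B)$, which does not obviously degenerate anything. Second, for the closedness of the orbit of $X(A)$ with $A\in GL_m(D)$ semi-simple, the cleanest path is the one you gesture at: over $\ov{F}$ the orbit fibers over the (closed) adjoint orbit of $A$ in $GL_{md}(\ov{F})$, and since for a reductive group in characteristic zero an $F$-orbit is closed if and only if its base change to $\ov{F}$ is closed, the $F$-statement follows with no extra Galois cohomology input. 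With those two points made explicit, your plan yields a complete proof.
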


\begin{proof}
  The case $D=F$ is \cite[Proposition 2.1 and Lemma 2.1]{MR1394521} while the case $p=q$ is \cite[Proposition 5.2]{MR3299843}. This proposition is nothing but a slightly more general one combining both cases, whose proofs are similar and still work here. 
\end{proof}

\begin{prop}\label{prdofrs1}
If $p\leq q$, an element $\mat(0,A,B,0)\in\fs$ is regular semi-simple if and only if $\Prd_{AB}$ is separable and $\Prd_{AB}(0)\neq 0$. 
If $p> q$, an element $\mat(0,A,B,0)\in\fs$ is regular semi-simple if and only if $\Prd_{BA}$ is separable and $\Prd_{BA}(0)\neq 0$. 
\end{prop}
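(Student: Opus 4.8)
The plan is to reduce the statement to Proposition \ref{ss1} via the elementary remark that, for $X=\mat(0,A,B,0)\in\fs$ with $p\le q$, the reduced characteristic polynomial $\Prd_{AB}$ is an invariant of the $H(F)$-conjugacy class of $X$: under the action $(h_1,h_2)\cdot(A,B)=(h_1Ah_2^{-1},h_2Bh_1^{-1})$ one has $AB\mapsto h_1(AB)h_1^{-1}$, so the $GL_p(D)$-conjugacy class of $AB\in\Mat_{p\times p}(D)$, and a fortiori $\Prd_{AB}$, is unchanged. I will use two standard facts about $GL_p(D)$: first, $\Prd_{AB}(0)=\pm\Nrd(AB)$, so that ``$\Prd_{AB}(0)\ne0$'' is equivalent to $AB\in GL_p(D)$; second, an element of $GL_p(D)$ is regular semi-simple in the usual sense exactly when its reduced characteristic polynomial is separable. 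The case $p>q$ will follow by symmetry: conjugation of $G$ by the permutation matrix exchanging the two diagonal blocks identifies $(GL_{p+q,D},GL_{p,D}\times GL_{q,D})$ with $(GL_{q+p,D},GL_{q,D}\times GL_{p,D})$ and carries $\mat(0,A,B,0)$ to $\mat(0,B,A,0)$, so the $p>q$ statement (phrased through $BA$) becomes, for the relabelled pair with $p$ and $q$ interchanged, the already-proved $p\le q$ statement (phrased through $AB$); this conjugation preserves regular semi-simplicity, being an isomorphism of symmetric pairs.

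Assume first that $X$ is regular semi-simple. By Proposition \ref{ss1} it is $H(F)$-conjugate to $X(A')$ with $m=\min\{p,q\}=p$ and $A'\in GL_p(D)$ regular semi-simple. Since $m=p$, reading off the block shape in Proposition \ref{ss1} shows that the $(A,B)$-pair of $X(A')$ is $\big((1_p\mid 0),\left(\begin{smallmatrix}A'\\0\end{smallmatrix}\right)\big)$, whose product equals $A'$. By the invariance above, $\Prd_{AB}=\Prd_{A'}$, which is separable because $A'$ is regular semi-simple, and which satisfies $\Prd_{A'}(0)=\pm\Nrd(A')\ne0$ because $A'\in GL_p(D)$. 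This gives one implication.

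Conversely, assume $\Prd_{AB}$ is separable and $\Prd_{AB}(0)\ne0$, so that $AB\in GL_p(D)$. The key step is to show that $X$ is $H(F)$-conjugate to $X(AB)$, i.e.\ to the element $X(A')$ of Proposition \ref{ss1} with $A'=AB$ and $m=p$. As $AB$ is invertible, $B$ is injective on column vectors, hence of full column rank $p$; Gaussian elimination over the division ring $D$ yields $h_2\in GL_q(D)$ with $h_2B=\left(\begin{smallmatrix}1_p\\0\end{smallmatrix}\right)$. Conjugating by $(1,h_2)$ and writing $Ah_2^{-1}=(A_1\mid A_2)$, invariance of the product forces $A_1=AB$; a unipotent operation on the last $q-p$ columns (conjugation by $(1,h_2')$ for a suitable block-upper-triangular unipotent $h_2'$) replaces $A$ by $(AB\mid 0)$ while fixing $B$; and finally conjugation by $(h_1,1)$ with $h_1=(AB)^{-1}$ brings the pair to $\big((1_p\mid 0),\left(\begin{smallmatrix}AB\\0\end{smallmatrix}\right)\big)$, which is precisely the $(A,B)$-pair of $X(AB)$. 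Since $\Prd_{AB}$ is separable, $AB$ is regular semi-simple in $GL_p(D)$, and since $m=p=\min\{p,q\}$, Proposition \ref{ss1} gives that $X(AB)$, hence $X$, is regular semi-simple.

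The main obstacle is the normal-form reduction of the last paragraph, namely producing the conjugating elements $h_2$, $h_2'$ and $h_1$; but this is only Gaussian elimination over $D$ followed by one unipotent correction, so it is routine, and everything else is bookkeeping with Proposition \ref{ss1}. I would also point out that the two hypotheses are genuinely independent: $\Prd_{AB}$ may be separable with $0$ a simple root, in which case $AB\notin GL_p(D)$ and the first implication already shows $X$ is not regular semi-simple, so the condition $\Prd_{AB}(0)\ne0$ cannot be dropped.
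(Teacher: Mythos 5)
Your proof is correct, and it reaches the conclusion by a genuinely different route from the paper in the key direction. For the implication ``$\Prd_{AB}$ separable and $\Prd_{AB}(0)\neq 0$ $\Rightarrow$ regular semi-simple,'' the paper passes to the algebraic closure $\ov{F}$, takes the Jordan decomposition $X=X_s+X_n$ in $\fg$, observes that $X_s$ and $X_n$ lie in $\fs$, normalises $X_s$ to an $X(C)$ with $C\in GL_{dp}(\ov F)$ invertible (the invertibility being where $\Prd_{AB}(0)\neq 0$ enters, somewhat implicitly), and then argues that the commutation relation $X_sX_n=X_nX_s$ together with nilpotency forces $X_n=0$. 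You instead stay over $F$ and $D$: from $AB\in GL_p(D)$ you get $B$ of full column rank, and a short chain of $H(F)$-conjugations (Gaussian elimination on $B$, a unipotent correction on $A$, a final $GL_p$-twist) brings $X$ directly into the normal form $X(AB)$, after which Proposition \ref{ss1} applies because $AB$ is regular semi-simple in $GL_p(D)$ precisely when $\Prd_{AB}$ is separable. Your argument avoids both the base change to $\ov F$ and the Jordan decomposition machinery, so it is somewhat more elementary and self-contained; the paper's approach has the advantage of being the standard template for such results and of generalising more readily to settings where an explicit normal form is less accessible. For the other implication (regular semi-simple $\Rightarrow$ separability and nonvanishing), the paper simply declares it a direct consequence of Proposition \ref{ss1}, and your write-up supplies the details, using the observation that $\Prd_{AB}$ is an $H(F)$-conjugation invariant (which is also what Proposition \ref{propcatquot1} in the paper formalises). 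The symmetry reduction for $p>q$ matches the paper's. One small stylistic point: when you write $\Prd_{AB}(0)=\pm\Nrd(AB)$, you could note that only the nonvanishing is needed, which holds since $\Prd_{AB}(0)$ and $\Nrd(AB)$ differ by a sign $(-1)^{dp}$.
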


\begin{proof}
We only consider the case $p\leq q$ since the other case can be deduced by symmetry. We may study the proposition over an algebraic closure $\ov{F}$ of $F$. For $A\in \Mat_{dp\times dq}(\ov{F})$ and $B\in \Mat_{dq\times dp}(\ov{F})$, we see that
 \begin{equation}\label{errorpf}
 \det\left(\lambda I_{d(p+q)}-\mat(0,A,B,0)\right)=\lambda^{d(q-p)} \det(\lambda^2 I_{dp}-AB). 
 \end{equation}
 Let $X:=\mat(0,A,B,0)\in\fs$ and denote by $\Prd_X$ the reduced polynomial of $X$ viewed as an element of $\fg$. Then $\Prd_X$ and $\Prd_{AB}$ determine each other. 

Suppose that $\Prd_{AB}$ is separable and $\Prd_{AB}(0)\neq 0$. Let $X=X_s+X_n$ be the Jordan decomposition in $\fg$, where $X_s$ is semi-simple, $X_n$ is nilpotent and $X_sX_n=X_nX_s$. By the uniqueness of the Jordan decomposition, we see that $X_s, X_n\in\fs$. From Proposition \ref{ss1}, up to conjugation by $H$, we may suppose that
$X_s=
\left( \begin{array}{ccc}
  0                                      & 1_{dp}  & 0  \\
  C                                      & 0         & 0  \\
  0                                      & 0         & 0  \\
\end{array} \right)$, 
where $C\in GL_{dp}(\ov{F})$ is semi-simple. Since $\Prd_X=\Prd_{X_s}$, we deduce that $\Prd_{AB}=\Prd_C$. Then $\Prd_C$ is separable and $\Prd_C(0)\neq 0$ by our assumption. By linear algebra, $C$ is regular semi-simple in $GL_{dp}(\ov{F})$, which implies that $X_s\in\fs$ is regular semi-simple by Proposition \ref{ss1}. Since $X_sX_n=X_nX_s$, simple computation (cf. \cite[Lemma 2.1]{MR1394521}) shows that 
$X_n=
\left( \begin{array}{ccc}
  0                                      & D         & 0  \\
  DC                                    & 0         & 0  \\
  0                                      & 0         & 0  \\
\end{array} \right)$, 
where $D\in\fg\fl_{dp}(\ov{F})$ and $DC=CD$. On the one hand, because $X_n$ is nilpotent and $C$ is invertible, we see that $D$ is nilpotent. On the other hand, because $DC=CD$ and $C$ is regular semi-simple, we see that $D$ is semi-simple. Hence, we have $D=0$ and thus $X_n=0$. Therefore, $X=X_s\in\fs$ is regular semi-simple. 

The other direction is a direct consequence of Proposition \ref{ss1}. 
\end{proof}

\subsection{\textbf{Invariants}}\label{inv1}

Denote by $\fc$ the affine space $\bfA^{d\min\{p, q\}}$. Define a morphism $\pi:\fs\ra\fc$ by mapping $\mat(0,A,B,0)\in\fs$ to the coefficients of the reduced characteristic polynomial of $AB$. It is constant on $H$-orbits. Denote by $\fc_{rs}$ the subset of $(c_i)_{0\leq i\leq d\min\{p, q\}-1}\in\fc$ such that the polynomial
$$ P(\lambda):=\lambda^{d\min\{p, q\}}+\sum_{i=0}^{d\min\{p, q\}-1} c_i \lambda^i $$
is separable and $c_0\neq 0$. It is a principal Zariski open subset of $\fc$. Denote by $\fc^\times$ the subset of $(c_i)_{0\leq i\leq d\min\{p, q\}-1}\in\fc$ such that $c_0\neq 0$. Then $\fc_{rs}\subseteq\fc^\times$. 

\begin{prop}\label{propcatquot1}
The pair $(\fc, \pi)$ defines a categorical quotient of $\fs$ by $H$ over $F$. 
\end{prop}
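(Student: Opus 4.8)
The plan is to reduce the assertion to a computation in classical invariant theory, performed after base change to an algebraic closure in which the division algebra splits.

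\emph{Reduction to the ring of invariants.} First I would use that $H=GL_{p,D}\times GL_{q,D}$ is a connected reductive group over the number field $F$, hence linearly reductive; since $\fs$ is an affine $F$-scheme, the affine GIT quotient $\Spec(F[\fs]^{H})$ together with the canonical projection is a categorical quotient of $\fs$ by $H$ in the category of all $F$-schemes. As $\pi$ is $H$-invariant by its very definition (Section~\ref{inv1}), it then suffices to show that the comorphism $\pi^{*}\colon F[\fc]\to F[\fs]^{H}$ is an isomorphism; equivalently, writing $c_{0},\dots,c_{d\min\{p,q\}-1}$ for the pullbacks to $\fs$ of the coordinate functions on $\fc\simeq\bfA^{d\min\{p,q\}}$, that these functions are algebraically independent over $F$ and generate $F[\fs]^{H}$.

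\emph{Descent to $\ov F$.} Next I would fix an algebraic closure $\ov F$ of $F$ and an isomorphism $D\otimes_{F}\ov F\simeq\Mat_{d}(\ov F)$, which identifies $H_{\ov F}\simeq GL_{dp}\times GL_{dq}$ and $\fs_{\ov F}\simeq\Mat_{dp\times dq}(\ov F)\oplus\Mat_{dq\times dp}(\ov F)$ with the action $(h_{1},h_{2})\cdot(A,B)=(h_{1}Ah_{2}^{-1},h_{2}Bh_{1}^{-1})$. Because $H$ is linearly reductive, the formation of invariants commutes with the flat base change $F\to\ov F$, so $F[\fs]^{H}\otimes_{F}\ov F\simeq\ov F[\fs_{\ov F}]^{H_{\ov F}}$. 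The $c_{i}$ are $F$-rational (the coefficients of a reduced characteristic polynomial are $F$-rational polynomial functions) and lie in $F[\fs]^{H}$, so by faithfully flat descent it is enough to prove that $\ov F[\fs_{\ov F}]^{H_{\ov F}}$ is the polynomial algebra on the images of the $c_{i}$.

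\emph{The first fundamental theorem.} Assuming $p\le q$, the case $p>q$ being symmetric with $BA$ in place of $AB$, the $c_{i}$ become the coefficients of $\det(\lambda\cdot 1_{dp}-AB)$ with $AB\in\Mat_{dp}(\ov F)$. I would compute $\ov F[\fs_{\ov F}]^{H_{\ov F}}=(\ov F[\fs_{\ov F}]^{GL_{dq}})^{GL_{dp}}$ in two stages: regarding $A$ as a system of $dp$ linear forms on the standard representation of $GL_{dq}$ and $B$ as a system of $dp$ vectors in it, the first fundamental theorem of invariant theory for the general linear group shows that $\ov F[\fs_{\ov F}]^{GL_{dq}}$ is generated by the entries of $AB$; the remaining copy of $GL_{dp}$ then acts on this matrix by conjugation, so $\ov F[\fs_{\ov F}]^{H_{\ov F}}=\ov F[\Mat_{dp}]^{GL_{dp}}$, which by the classical description of $\Mat_{dp}/\!/GL_{dp}$ is generated by $\tr(AB),\dots,\tr((AB)^{dp})$, i.e.\ by the coefficients of $\det(\lambda\cdot 1_{dp}-AB)$. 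Algebraic independence of these coefficients follows from the surjectivity of $(A,B)\mapsto AB$ onto $\Mat_{dp}$ (take $A=(1_{dp}\mid 0)$ and $B$ arbitrary). This identifies $\ov F[\fs_{\ov F}]^{H_{\ov F}}$ with $\ov F[c_{0},\dots,c_{dp-1}]$ and completes the argument.

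The only non-formal step is the third one, namely the classical computation of the invariants of $GL_{a}\times GL_{b}$ acting on $\Mat_{a\times b}\oplus\Mat_{b\times a}$; the first two steps are routine, resting on standard facts about GIT quotients of affine schemes by reductive groups and on the compatibility of invariants with flat base change.
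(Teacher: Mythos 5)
Your proof is correct, but it takes a genuinely different route from the paper's. Both arguments base-change to $\ov F$ (where $D$ splits) and then descend, but the key step is different: the paper uses Igusa's criterion, whose input is geometric---surjectivity of $\pi$ via an explicit section (the companion-matrix section of Proposition~\ref{propcatquot1}) together with the fact, from Propositions~\ref{ss1} and~\ref{prdofrs1}, that the fibre over a generic point of $\fc_{\ov F}$ contains a unique closed orbit. You instead compute $\ov F[\fs_{\ov F}]^{H_{\ov F}}$ directly, peeling off the $GL_{dq}$-action by the first and second fundamental theorems for $GL$ (the hypothesis $dp\le dq$ is exactly what makes the determinantal relations vacuous, so $\ov F[\fs_{\ov F}]^{GL_{dq}}\simeq\ov F[\Mat_{dp}]$ via $(A,B)\mapsto AB$), and then identifying the residual conjugation invariants with the coefficients of the characteristic polynomial. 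Your approach is more explicit and avoids the orbit analysis entirely, at the price of invoking classical invariant theory; it also yields the algebraic independence of the $c_i$ directly from the surjectivity of the multiplication map, rather than reading it off from Igusa's criterion. Both arguments are complete; the paper's fits more naturally with the surrounding material, since it reuses the classification of semi-simple orbits that is needed elsewhere anyway.
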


\begin{proof}
It suffices to consider the case $p\leq q$ since the case $p>q$ can be obtained by symmetry. 

We first extend the base field to an algebraic closure $\ov{F}$ of $F$. Then $H_{\ov{F}}\simeq GL_{dp, \ov{F}}\times GL_{dq, \ov{F}}$ and $\fs_{\ov{F}}\simeq \Mat_{dp\times dq, \ov{F}}\oplus \Mat_{dq\times dp, \ov{F}}$. For $(c_i)_{0\leq i\leq dp-1}\in\fc_{\ov{F}}$, denote by $A((c_i)_{0\leq i\leq dp-1})\in GL_{pd}$ its companion matrix
$$ A((c_i)_{0\leq i\leq dp-1}):=
\left( \begin{array}{ccccc}
  0        & 0        & \cdots & 0         & -c_0         \\
  1        & 0        & \cdots & 0         & -c_1         \\
  0        & 1        & \ddots & \vdots  & -c_2         \\
  \vdots & \ddots & \ddots & 0         & \vdots       \\
  0        & \cdots & 0        & 1          & -c_{dp-1} \\
\end{array} \right). $$
Define a morphism $\fc_{\ov{F}}\ra\fs_{\ov{F}}$ by mapping $(c_i)_{0\leq i\leq dp-1}$ to
$$ \left( \begin{array}{ccc}
  0                                      & 1_{dp} & 0  \\
  A((c_i)_{0\leq i\leq dp-1})  & 0         & 0  \\
  0                                      & 0         & 0  \\
\end{array} \right). $$
This is a section of $\pi$, so $\pi$ is surjective. By Propositions \ref{prdofrs1} and \ref{ss1}, the fibre of any point in the non-empty open subset $\fc_{\ov{F}, rs}\subseteq\fc_{\ov{F}}$ contains exactly one closed orbit. We may use Igusa's criterion (see \cite[Theorem 4.13]{zbMATH00007924} and Remark \ref{rmkigusa} below) to show that the pair $(\fc_{\ov{F}}, \pi)$ defines a categorical quotient of $\fs_{\ov{F}}$ by $H_{\ov{F}}$. 

The morphism $\pi: \fs\ra\fc$ defined over $F$ factors through the categorical quotient $\Spec(F[\fs]^H)$ of $\fs$ by $H$ over $F$. This induces a dual morphism $F[\fc]\ra F[\fs]^H$ of $F$-algebras. We have shown that after the base change to $\ov{F}$, it is an isomorphism of $\ov{F}$-algebras. By Galois descent, we deduce that the morphism $F[\fc]\ra F[\fs]^H$ is an isomorphism of $F$-algebras, i.e., the pair $(\fc, \pi)$ defines a categorical quotient of $\fs$ by $H$ over $F$. 
\end{proof}

\begin{remark}\label{rmkigusa}
We notice that $\fc_{\ov{F}}$ can be of dimension $1$ (when $D=F$ and $\min\{p,q\}=1$) in the proof of Proposition \ref{propcatquot1} above, so the first condition in \cite[Theorem 4.13]{zbMATH00007924} may not be satisfied. However, as is evident from the proof of Igusa's criterion, this condition can be replaced with the surjectivity of $\pi$. 
\end{remark}

The categorical quotient $(\fc, \pi)$ defines a relation of equivalence on $\fs(F)$, where two elements are in the same class if and only if they have the same image under $\pi$. We denote by $\CO$ the set of equivalent classes for this relation. By Proposition \ref{ss1}, two semi-simple elements of $\fs(F)$ belong to the same class of $\CO$ if and only if they are conjugate by $H(F)$. Denote by $\CO_{rs}$ the subset of $\CO$ with images in $\fc_{rs}$. By Proposition \ref{prdofrs1}, each class in $\CO_{rs}$ is a regular semi-simple $H(F)$-orbit in $\fs(F)$. Denote by $\CO^\times$ the subset of $\CO$ with images in $\fc^\times$. Then $\CO_{rs}\subseteq \CO^\times$. 

\subsection{\textbf{Relatively standard parabolic subgroups}}\label{relstdpar1}

Fix $\wt{P}_0$ a minimal parabolic subgroup of $H$ defined over $F$ and $M_0$ a Levi factor of $\wt{P}_0$ defined over $F$. Then $M_0$ is also a minimal Levi subgroup of $G$ defined over $F$. For a semi-standard parabolic subgroup $P$ of $G$ (namely $M_0\subseteq P$), we say that $P$ is ``relatively standard'' if $\wt{P}_0\subseteq P$, i.e., $P\cap H$ is a standard parabolic subgroup of $H$ (namely $\wt{P}_0\subseteq P\cap H$). We shall suppose that $\varrho_{\wt{P}_0}\subseteq\varrho_B$ for all relatively standard minimal parabolic subgroup $B$ of $G$. Denote by $K$ the standard maximal compact subgroup of $G(\BA)$ and by $K_H:=H(\BA)\cap K$ the maximal compact subgroup of $H(\BA)$. Up to conjugation by $G(F)$, we may assume that $M_0$ is the subgroup of diagonal matrices in $G$ and that $\wt{P}_0$ is the product of groups of upper triangular matrices. 

We can describe the embedding $H\hookrightarrow G$ via $D$-bimodules.  Let $V:=\langle e_1,\cdot\cdot\cdot,e_p\rangle_D$ (resp. $W:=\langle f_1,\cdot\cdot\cdot,f_q\rangle_D$) be the free $D$-bimodule generated by the basis $\{e_1,\cdot\cdot\cdot,e_p\}$ (resp. $\{f_1,\cdot\cdot\cdot,f_q\}$). Set $GL(V)$ to be the group of $F$-linear automorphisms on V, which acts on $V$ on the left. Denote by $GL(V)_D$ the subgroup of $GL(V)$ which respects the right $D$-module structure on $V$. Put $G:=GL(V\oplus W)_D$ and $H:=GL(V)_D\times GL(W)_D$. Then $M_0$ is the stabiliser in $G$ (or in $H$) of the $D$-lines $\langle e_i\rangle_D, 1\leq i\leq p$ and $\langle f_i\rangle_D, 1\leq i\leq q$. Suppose that $\wt{P}_0$ is the direct product of the stabiliser in $GL(V)_D$ of the flag
  $$ 0\subsetneq\langle e_1\rangle_D\subsetneq\langle e_1,e_2\rangle_D\subsetneq\cdot\cdot\cdot\subsetneq\langle e_1,\cdot\cdot\cdot,e_p\rangle_D=:V $$
and the stabiliser in $GL(W)_D$ of the flag
  $$ 0\subsetneq\langle f_1\rangle_D\subsetneq\langle f_1,f_2\rangle_D\subsetneq\cdot\cdot\cdot\subsetneq\langle f_1,\cdot\cdot\cdot,f_q\rangle_D=:W. $$
A relative standard parabolic subgroup $P$ of $G$ can be interpretated as the stabiliser in $G$ of the flag
  \[\begin{split}
    0&\subsetneq\langle e_1,\cdot\cdot\cdot,e_{p_1},f_1,\cdot\cdot\cdot,f_{q_1}\rangle_D
    \subsetneq\langle e_1,\cdot\cdot\cdot,e_{p_1},f_1,\cdot\cdot\cdot,f_{q_1},
    e_{p_1+1},\cdot\cdot\cdot,e_{p_1+p_2},f_{q_1+1},\cdot\cdot\cdot,f_{q_1+q_2}\rangle_D \\
    &\subsetneq\cdot\cdot\cdot\subsetneq\langle e_1,\cdot\cdot\cdot,e_{p_1},f_1,\cdot\cdot\cdot,f_{q_1},\cdot\cdot\cdot,
    e_{p-p_l+1},\cdot\cdot\cdot,e_{p},f_{q-q_l+1},\cdot\cdot\cdot,f_{q}\rangle_D=:V\oplus W, 
  \end{split}\]
where $\sum\limits_{i=1}^{l}p_i=p$, $\sum\limits_{i=1}^{l}q_i=q$ and we allow $p_i$ or $q_i$ to be zero. In particular, we have
  $$ M_P\simeq GL_{p_1+q_1, D}\times\cdot\cdot\cdot\times GL_{p_l+q_l, D} $$
  and
  $$ M_{P_H}\simeq GL_{p_1, D}\times\cdot\cdot\cdot\times GL_{p_l, D} \times GL_{q_1, D}\times\cdot\cdot\cdot\times GL_{q_l, D}. $$

\begin{prop}\label{propofpipar1}
  Let $P$ be a relative standard parabolic subgroup of $G$. For all $X\in(\fm_P\cap\fs)(F)$ and $U\in(\fn_P\cap\fs)(F)$, we have
  $$ \pi(X)=\pi(X+U). $$
\end{prop}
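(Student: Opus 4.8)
The plan is to use that the invariant map $\pi$ is a morphism of $F$-varieties which is constant on $H$-orbits, together with the fact that the split central torus $A_P$ of $M_P$ is contained in $H$, and to \emph{degenerate} $X+U$ to $X$ inside the closure of a single $H$-orbit.

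First note that $A_P\subseteq H$: indeed $A_P$ is a subtorus of the diagonal torus $M_0$, which lies in $H$ by construction of the symmetric pair. Choose a cocharacter $\lambda\colon\mathbb{G}_m\to A_P$ whose image in $\fa_P$ lies in the open chamber $\fa_P^+$, e.g.\ the one acting by the scalar $s^{l-i+1}$ on the $i$-th block of $M_P\simeq\prod_{i=1}^{l}GL_{p_i+q_i,D}$; then $\langle\lambda,\alpha\rangle>0$ for every weight $\alpha$ of $A_P$ occurring in $\fn_P$. Since $\fm_P$ is centralised by $A_P$ while $\fn_P$ is a sum of $A_P$-weight spaces of strictly positive $\lambda$-weight, for $X\in(\fm_P\cap\fs)(F)$ and $U\in(\fn_P\cap\fs)(F)$ we get $\Ad(\lambda(s))(X+U)=X+\sum_{\alpha}s^{\langle\lambda,\alpha\rangle}U_\alpha$, where $U=\sum_\alpha U_\alpha$ is the weight decomposition; all exponents appearing are $>0$, so $s\mapsto\Ad(\lambda(s))(X+U)$ extends to a morphism $\varphi\colon\bfA^1\to\fs$ with $\varphi(0)=X$ (and $\varphi(\bfA^1)\subseteq\fs$ because $\lambda(s)\in H$ and $H$ preserves $\fs$).

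Composing with $\pi$ gives a morphism $\pi\circ\varphi\colon\bfA^1\to\fc$. For $s\neq0$ we have $\lambda(s)\in A_P\subseteq H$, so by $H$-invariance of $\pi$ the map $\pi\circ\varphi$ equals the constant $\pi(X+U)$ on the dense open subset $\mathbb{G}_m\subseteq\bfA^1$. A morphism from $\bfA^1$ to an affine space that is constant on a dense open subset is globally constant; evaluating at $s=0$ therefore yields $\pi(X)=\pi(\varphi(0))=\pi(X+U)$, as desired.

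I do not expect a genuine obstacle here; the points that deserve a line of justification are that $\lim_{s\to0}\Ad(\lambda(s))(X+U)$ is really $X$ (and not merely a point sharing its invariants, which is what is being proved) and the harmless rigidity statement about morphisms out of $\bfA^1$. Should one prefer an argument with no limits at all, the statement also follows from a direct block computation: writing $A=(A_{ij})$, $B=(B_{ij})$, $A'=(A'_{ij})$, $B'=(B'_{ij})$ with respect to the flag defining $P$, the conditions $X\in\fm_P$ and $U\in\fn_P$ force $A+A'$ to carry the $W$-flag into the $V$-flag and $B+B'$ to carry the $V$-flag into the $W$-flag, so that $(A+A')(B+B')$ stabilises the $V$-flag with $i$-th graded piece $A_{ii}B_{ii}$; the multiplicativity of the reduced characteristic polynomial along a stable flag (verified after extension of scalars to a splitting field of $D$) then gives $\Prd_{(A+A')(B+B')}=\prod_{i=1}^{l}\Prd_{A_{ii}B_{ii}}=\Prd_{AB}$, i.e.\ $\pi(X+U)=\pi(X)$. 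The case $p>q$ is symmetric, using $BA$ in place of $AB$.
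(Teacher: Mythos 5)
Your first argument is correct and takes a genuinely different route from the paper's. The paper's own proof is a direct computation: it observes that over $\ov F$ one has
$\det\bigl(\lambda I_{d(p+q)}-\bigl(\begin{smallmatrix}0&A\\B&0\end{smallmatrix}\bigr)\bigr)=\lambda^{d(q-p)}\det(\lambda^2 I_{dp}-AB)$,
so that $\pi(X)$ is a function of the reduced characteristic polynomial of $X$ viewed in $\fg$, and then invokes the standard fact that adding $U\in\fn_P$ to $X\in\fm_P$ does not change that characteristic polynomial (the paper also notes the result is a consequence of a lemma of Jacquet--Rallis). Your second, alternative argument (the block computation with the $V$- and $W$-flags and multiplicativity of $\Prd$ along a stable flag) is essentially this same computation, phrased directly in terms of $AB$ rather than of the full $2n\times 2n$ matrix.

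By contrast your first argument — contracting $X+U$ to $X$ along a one-parameter subgroup $\lambda$ of $A_P\subseteq M_0\subseteq H$ chosen in the positive chamber, using $H$-invariance of $\pi$ on $\BG_m$ and rigidity of morphisms from $\bfA^1$ to an affine variety — is a GIT-style degeneration argument that uses no coordinates at all. It buys generality: it works verbatim for any $H$-invariant morphism to an affine scheme and any relatively standard $P$ with $A_P\subseteq H$, whereas the paper's computation is tied to the explicit $GL$-picture. What the paper's route buys is self-containedness and no appeal to the rigidity lemma. Both of your arguments are sound; the small details you flag (the limit being exactly $X$ because $\fm_P$ is the zero-weight space of $A_P$ and all weights of $A_P$ on $\fn_P$ are $\lambda$-positive; constancy of a morphism $\bfA^1\to\fc$ agreeing with a constant on a dense open) are indeed exactly the points to spell out, and you have them right.
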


\begin{proof}
  It is a consequence of \cite[Lemma 2.1]{MR1363072}. We can also give a direct proof as follows. 
Because of \eqref{errorpf}, for any $X\in\fs(F)$, $\pi(X)$ is determined by the coefficients of the reduced characteristic polynomial of $X$ regarded as an element of $\fg(F)$. The proposition follows from the easy fact: for $X\in\fm_P(F)$ and $U\in\fn_P(F)$, the reduced characteristic polynomial of $X+U$ is equal to that of $X$.
\end{proof}

\begin{coro}\label{orbit1}
  Let $P$ be a relative standard parabolic subgroup of $G$ and $\fo\in\CO$. For all subsets $S_1\subseteq(\fm_P\cap\fs)(F)$ and $S_2\subseteq(\fn_P\cap\fs)(F)$, we have $\fo\cap(S_1\oplus S_2)=(\fo\cap S_1)\oplus S_2$.
\end{coro}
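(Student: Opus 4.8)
The plan is to deduce this purely formally from Proposition \ref{propofpipar1} together with the description of $\fo$ as a fibre of $\pi$. Recall that, by the definition of $\CO$, a class $\fo$ is exactly the set $\{Y\in\fs(F):\pi(Y)=c_\fo\}$ for the common value $c_\fo\in\fc(F)$ taken by $\pi$ on the elements of $\fo$. Note also that $\fm_P\cap\fn_P=0$, so the sum $S_1\oplus S_2:=\{X+U:X\in S_1,\ U\in S_2\}$ is an internal direct sum, and writing $Y=X+U$ with $X\in\fm_P$ and $U\in\fn_P$ determines $X$ and $U$ uniquely; thus membership in $S_1\oplus S_2$ unambiguously means that the $\fm_P$-component lies in $S_1$ and the $\fn_P$-component in $S_2$. (That $\fm_P\cap\fs$ and $\fn_P\cap\fs$ are meaningful is already built into the statement of Proposition \ref{propofpipar1}, since for $P$ relatively standard the element $\epsilon$ lies in $M_0\subseteq M_P$ and hence $\theta$ stabilises $M_P$ and $N_P$.)

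For the inclusion $\fo\cap(S_1\oplus S_2)\subseteq(\fo\cap S_1)\oplus S_2$: take $Y$ in the left-hand side and write $Y=X+U$ with $X\in S_1\subseteq(\fm_P\cap\fs)(F)$ and $U\in S_2\subseteq(\fn_P\cap\fs)(F)$. Proposition \ref{propofpipar1} gives $\pi(X)=\pi(X+U)=\pi(Y)=c_\fo$, so $X\in\fo$, i.e.\ $X\in\fo\cap S_1$, and therefore $Y=X+U\in(\fo\cap S_1)\oplus S_2$.

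For the reverse inclusion $(\fo\cap S_1)\oplus S_2\subseteq\fo\cap(S_1\oplus S_2)$: take $Y=X+U$ with $X\in\fo\cap S_1$ and $U\in S_2$. Then $Y\in S_1\oplus S_2$ by construction, and by Proposition \ref{propofpipar1} again $\pi(Y)=\pi(X+U)=\pi(X)=c_\fo$ since $X\in\fo$; hence $Y\in\fo$, so $Y\in\fo\cap(S_1\oplus S_2)$.

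There is no real obstacle here: the corollary is a formal manipulation of the definition of $\CO$ and a double application of Proposition \ref{propofpipar1}. The only point requiring (minimal) care is the uniqueness of the decomposition $Y=X+U$ along $\fm_P\oplus\fn_P$, which is what makes the two sides of the claimed equality well-defined as sets of the stated form; this is automatic from $\fm_P\cap\fn_P=0$.
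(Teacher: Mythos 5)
Your proof is correct, and it is precisely the deduction the paper intends (the corollary is stated without proof, as an immediate consequence of Proposition \ref{propofpipar1} and the definition of $\CO$ as the partition of $\fs(F)$ into fibres of $\pi$). The two points you flag — uniqueness of the decomposition along $\fm_P\oplus\fn_P$, and the well-posedness of $\fm_P\cap\fs$ and $\fn_P\cap\fs$ via $\epsilon\in M_0\subseteq M_P$ — are exactly the right ones to note, and both hold for the reasons you give.
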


\subsection{\textbf{Fourier transform}}\label{secft}

  Fix a nontrivial unitary character $\Psi$ of $\BA/F$. Let $\langle\cdot,\cdot\rangle$ be the non-degenerate $H(\BA)$-invariant bilinear form on $\fs(\BA)$ defined by
\begin{equation}\label{bilform}
  \langle X_1,X_2\rangle:=\Trd (X_1 X_2)
\end{equation}
for all $X_1,X_2\in\fs(\BA)$. 
  For $f\in\CS(\fs(\BA))$, its Fourier transform $\hat{f}\in\CS(\fs(\BA))$ is defined by
\begin{equation}\label{fourier}
  \hat{f}(\wh{X}):=\int_{\fs(\BA)} f(X)\Psi(\langle X,\wh{X}\rangle) dX
\end{equation}
for all $\wh{X}\in\fs(\BA)$. 


\section{\textbf{Integrability of the modified kernel}}\label{integrability}

Fix a minimal semi-standard parabolic subgroup $P_0$ of $G$. For any semi-standard parabolic subgroup $P$ of $G$ and $T\in\fa_0$, denote by $T_P$ the projection of $sT$ in $\fa_P$, where $s$ is any element in $\Omega^G$ such that $sP_0\subseteq P$. Notice that this definition is independent of the choice of $s$. 

For a semi-standard parabolic subgroup $P$ of $G$, $x\in G(\BA)$ and $T\in\fa_0$, define
$$ F^{P}(x,T):=F_{sP_0}^{P}(x,T_{sP_0}), $$
where $s$ is any element in $\Omega^G$ such that $sP_0\subseteq P$. 

\begin{lem}
	The above definition of $F^{P}(x,T)$ is independent of the choice of $s$. 
\end{lem}

\begin{proof}
	For any $s\in\Omega^G$ and any minimal semi-standard parabolic subgroup $B\subseteq P$, since we choose $\omega_s\in G(F)\cap K$, we have 
	$$ F_{sB}^{sP}(x,T)=F_B^P(\omega_s^{-1}x,s^{-1}T). $$
	
	 Let $s, s'\in\Omega^G$ be such that $sP_0, s'P_0\subseteq P$. Then $s's^{-1}\in\Omega^{M_P}$. By the last equality and the left $M_P(F)$-invariance of $F_{sP_0}^{P}(\cdot,T_{sP_0})$, we have 
	 $$ F_{s'P_0}^{P}(x,T_{s'P_0})=F_{sP_0}^{P}(\omega_{s's^{-1}}^{-1}x, s{s'}^{-1}T_{s'P_0})=F_{sP_0}^{P}(x,T_{sP_0}). $$
	 This completes the proof of the lemma. 
\end{proof}

Let $f\in\CS(\fs(\BA))$, $P$ be a relatively standard parabolic subgroup of $G$ and $\fo\in\CO$. Write $P_H:=P\cap H$. For $x\in M_{P_H}(F) N_{P_H}(\BA)\bs H(\BA)$, define
$$ k_{f,P,\fo}(x):=\sum_{X\in\fm_P(F)\cap\fo} \int_{(\fn_P\cap\fs)(\BA)} f(x^{-1}(X+U)x) dU. $$
For $T\in\fa_0$ and $x\in H(F)\bs H(\BA)$, define
\begin{equation}\label{deftruncation1}
 k_{f,\fo}^T(x):=\sum_{\{P:\wt{P}_0\subseteq P\}} (-1)^{\dim(A_P/A_{G})} \sum_{\delta\in P_H(F)\bs H(F)} \wh{\tau}_P^G(H_{P}(\delta x)-T_P)\cdot k_{f,P,\fo}(\delta x). 
\end{equation}
We know that the sum over $\delta\in P_H(F)\bs H(F)$ is finite from the following lemma. 

\begin{lem}\label{artlem51}
	Let $P$ be a semi-standard parabolic subgroup of $G$. For all $x\in G(\BA)$ and $T\in\fa_P$, the sum 
	$$ \sum_{\delta\in P(F)\bs G(F)} \wh{\tau}_P^G(H_{P}(\delta x)-T) $$
	is finite. 
\end{lem}

\begin{proof}
	This is a particular case of \cite[Lemma 5.1]{MR518111}. 
\end{proof}

\subsection{Reduction theory}

\begin{lem}\label{artlem64}
  There exists a point $T_+\in\fa_{P_0}^+$ such that for any semi-standard parabolic subgroup $Q$ of $G$, any minimal semi-standard parabolic subgroup $B$ of $G$ contained in $Q$, any $T\in T_+ +\fa_{P_0}^+$ and any $x\in G(\BA)$, we have
  $$ \sum_{\{P:B\subseteq P\subseteq Q\}} \sum_{\delta\in P(F)\bs Q(F)} F^P(\delta x, T) \tau_P^Q(H_P(\delta x)-T_P)=1. $$
\end{lem}

\begin{proof}
  This is \cite[Lemma 6.4]{MR518111} in our case.
\end{proof}

We shall fix such a $T_+$. If $T\in T_+ +\fa_{P_0}^+$, we shall say that $T$ is sufficiently regular. 

\begin{lem}\label{combinatoriclemma1}
  For any relatively standard parabolic subgroup $Q$ of $G$, any sufficiently regular $T$ and any $x\in H(\BA)$, we have
  $$ \sum_{\{P:\wt{P}_0\subseteq P\subseteq Q\}} \sum_{\delta\in P_H(F)\bs Q_H(F)} F^{P}(\delta x, T) \tau_{P}^Q(H_P(\delta x)-T_P)=1. $$
\end{lem}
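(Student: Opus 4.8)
The statement is the analogue, for the symmetric-pair setting, of Arthur's fundamental partition-of-unity identity (\cite[Lemma 6.4]{MR518111}), relativised along the subgroup $H$. The plan is to reduce it to Arthur's identity on the ambient group $G$ (or rather on $H$ itself, using the reduction theory that has been set up so that $(\omega_B\cap M_P, K\cap M_P, B\cap M_P, t_0)$ plays the role of $(\omega_B, K, B, t_0)$). First I would fix a relatively standard minimal parabolic $B$ of $G$ contained in $Q$; the relatively standard parabolic subgroups $P$ with $\wt P_0\subseteq P\subseteq Q$ are in bijection with the standard parabolic subgroups of $Q_H$ over $\wt P_0\cap M_{Q_H}$, via $P\mapsto P_H=P\cap H$, and under this bijection $\fa_P=\fa_{P_H}$, $\Delta_P^Q$ corresponds to $\Delta_{P_H}^{Q_H}$ (this is exactly the structural input recorded in Section \ref{relstdpar1}, where $M_P\simeq\prod GL_{p_i+q_i,D}$ and $M_{P_H}\simeq\prod GL_{p_i,D}\times\prod GL_{q_i,D}$), and $H_P(\delta x)$ for $\delta x\in H(\BA)$ coincides with $H_{P_H}(\delta x)$ computed inside $H$. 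Likewise $\tau_P^Q$, restricted to the relevant chamber, equals $\tau_{P_H}^{Q_H}$, and $F^P(\cdot,T)$ restricted to $H(\BA)$ is the function $F^{P_H}(\cdot, T_{P_H})$ for $H$ (this uses $\omega_{\wt P_0}\subseteq\omega_B$ and the compatibility of Siegel sets, together with the left $M_P(F)$-invariance and the definition of $F^P(x,T)$ via $F_{sP_0}^P$).

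With these identifications, the left-hand side becomes
$$ \sum_{\{P':\wt P_0\cap M_{Q_H}\subseteq P'\subseteq Q_H\}} \sum_{\delta\in P'(F)\bs Q_H(F)} F^{P'}(\delta x, T)\,\tau_{P'}^{Q_H}(H_{P'}(\delta x)-T_{P'}), $$
which is precisely Arthur's identity \cite[Lemma 6.4]{MR518111} applied to the reductive group $Q_H$ (a product of general linear groups over $D$) with its fixed minimal parabolic $\wt P_0\cap M_{Q_H}$, maximal compact $K_H\cap M_{Q_H}$, Siegel data inherited as above, and truncation parameter the projection of $T$; that identity asserts the sum equals $1$ for $T$ sufficiently regular. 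Since $T$ sufficiently regular (in the sense fixed just before the lemma, $T\in T_+querschnitt+\fa_{P_0}^+$) forces its projection into $Q_H$'s positive chamber to be sufficiently regular for $Q_H$, the conclusion follows. One must also check that every term on the left genuinely comes from a relatively standard $P$ and that no ambient parabolic of $G$ is being missed: this is automatic because the sum is indexed by $\{P:\wt P_0\subseteq P\subseteq Q\}$ from the start, matching exactly the standard parabolics of $Q_H$.

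The main obstacle, and the part requiring care rather than a one-line citation, is verifying that the three pieces of data attached to a relatively standard $P$ of $G$ — namely $H_P$, $\tau_P^Q$, and $F^P(\cdot,T)$ — really do restrict on $H(\BA)$ to the corresponding data for $P_H$ inside $H$. For $H_P$ this is the statement that the $\BR$-linear map $\fa_P\hookrightarrow\fa_{P_H}$ coming from $X(M_P)_F\hookrightarrow X(M_{P_H})_F$ is compatible with the two height functions; one checks it on characters, using that $\Nrd$ of a block-diagonal matrix in $M_P$ is the product of the $\Nrd$'s of its $GL_{p_i}(D)$- and $GL_{q_i}(D)$-blocks. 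For $\tau_P^Q$ and $\wh\tau$ one needs that the simple roots $\Delta_P^Q$, restricted to $\fa_{P_H}$, are exactly $\Delta_{P_H}^{Q_H}$ (with the same coroots up to the identification), which again follows from the explicit block structure. For $F^P(\cdot,T)$ the key point is the compatibility of the chosen Siegel sets and maximal compacts under intersection with $H$, which is precisely what was arranged in Section \ref{notation} and Section \ref{relstdpar1}; once this is in place, $F^P(x,T)=F^{P_H}_H(x,T_{P_H})$ for $x\in H(\BA)$ by unwinding the definitions. After these identifications the lemma is Arthur's, and there is nothing further to prove.
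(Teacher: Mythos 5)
Your proposal collapses at the first structural step: the map $P\mapsto P_H=P\cap H$ from $\{P:\wt{P}_0\subseteq P\subseteq Q\}$ to standard parabolic subgroups of $Q_H$ is not a bijection, and the claimed identifications $\fa_P=\fa_{P_H}$ and $\Delta_P^Q=\Delta_{P_H}^{Q_H}$ are false whenever $M_P\supsetneq M_{P_H}$. Already for $p=q=1$ (so $G=GL_{2,D}$, $H=GL_{1,D}\times GL_{1,D}$ is the diagonal torus, $\wt{P}_0=M_0=H$, $Q=G$) the three relatively standard parabolics $G$, $B_+$, $B_-$ all satisfy $P\cap H=H$, yet $\fa_G$ is one-dimensional while $\fa_H=\fa_0$ is two-dimensional, and $\Delta_{B_\pm}^G$ is nonempty while $\Delta_{H}^{H}$ is empty. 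In general the very passage of Section \ref{relstdpar1} you cite records $M_P\simeq\prod GL_{p_i+q_i,D}\supsetneq\prod GL_{p_i,D}\times GL_{q_i,D}\simeq M_{P_H}$, so $\fa_P\subsetneq\fa_{P_H}$ whenever some $p_iq_i\neq 0$. Consequently the left-hand side of the lemma is not Arthur's identity for $Q_H$: in the toy example that identity degenerates to the single term $1$, whereas the lemma's sum has three terms governed by $G$-data and only equals $1$ after cancellation.

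The correct reading of the lemma is that it is the restriction to $H(\BA)$ of Arthur's partition of unity for $G$, and the nontrivial content is that for $x\in H(\BA)$ only pairs $(P,\delta)$ with $P$ relatively standard and $\delta\in P(F)\Omega^G H(F)$ contribute. The paper reduces to $Q=G$ by induction, rewrites the sum over $\{(P,\delta):\wt{P}_0\subseteq P,\ \delta\in P_H(F)\bs H(F)\}$ as a sum over $\{(P,\delta):P_0\subseteq P,\ \delta\in P(F)\bs P(F)\Omega^G H(F)\}$, and then invokes the $T$-canonical pair formalism of \cite{10.1007/978-3-319-94833-1_3}: for $x\in H(\BA)=G^\theta(\BA)$, $\theta$-invariance forces $\theta(\delta)=\delta$ for the unique $T$-canonical pair, and a Jordan-decomposition argument shows that $\delta_0\epsilon\delta_0^{-1}\in P(F)$ is $P(F)$-conjugate to $\omega_s\epsilon\omega_s^{-1}$, hence $\delta\in P(F)\bs P(F)\Omega^G H(F)$. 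A symptom of why your reduction cannot work is also flagged explicitly in the text: no Siegel set of $H$ is contained in a Siegel set of $G$, which is exactly why Lemma \ref{IY(2.5)} covers a Siegel chamber $A_{\wt{P}_0}^\infty(P_H,t_0)$ of $H$ only by a \emph{union} of Siegel chambers $A_B^\infty(P,t_0)$ over $B\in\CP(\wt{P}_0,P)$, and why $F^P(\cdot,T)$ restricted to $H(\BA)$ does not reduce to the analogous truncation function for $H$.
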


This is an analogue of \cite[Proposition 2.3]{MR3835522} whose proof relies on \cite[(2.5) in p. 674]{MR3334892} (cf. Lemma \ref{IY(2.5)} below). It is essentially a restricted form to $x\in H(\BA)$ from Lemma \ref{artlem64} for $x\in G(\BA)$. 
We can give a proof close to the steps in an early version of \cite{MR3835522}, which reflects that a main complexity of the truncation here arises from the fact that none of the Siegel sets of $H$ is contained in any Siegel set of $G$, as mentioned in \cite{MR3334892}. However, we shall adopt alternatively the point of view in \cite{10.1007/978-3-319-94833-1_3} to give a more conceptual proof here, which might be useful in other relative trace formulae as well.  

First we introduce a variant (see \cite[\S1.5]{10.1007/978-3-319-94833-1_3}) of some concepts and results in \cite[\S2]{10.1007/978-3-319-94833-1_3} without reproducing proofs. We say that a semi-standard parabolic subgroup $Q$ of $G$ is standard if $P_0\subseteq Q$. For $P\subseteq Q$  a pair of standard parabolic subgroups of $G$, denote by $\rho_P^Q$ the half of the sum of weights (with multiplicities) for the action of $A_P$ on $\fn_P\cap\fm_Q$. We denote by $\ov{\fa_{P_0}^+}$ the closure of $\fa_{P_0}^+$ in $\fa_0$.  

\begin{defn}
  For $g\in G(\BA)$, $Q$ a standard parabolic subgroup of $G$ and $T\in\ov{\fa_{P_0}^+}$, we define the degree of $T$-instability of $g$ with respect to $Q$ by the following formula
  $$ \deg_{T}^Q(g):=\max_{(P,\delta)}\langle \rho_P^Q, H_P(\delta g)-T \rangle $$
where $(P,\delta)$ runs over the pairs of a standard parabolic subgroup $P\subseteq Q$ and an element $\delta\in P(F)\bs Q(F)$. 
\end{defn}

By Lemma \ref{artlem51}, we know that the supremum of $\langle \rho_P^Q, H_P(\delta g)-T \rangle$ in the definition is finite and attainable. 

\begin{lem}[cf. {\cite[Lemme 2.2.1]{10.1007/978-3-319-94833-1_3}}]\label{chau2.2.1}
Let $g\in G(\BA)$, $Q$ be a standard parabolic subgroup of $G$ and $T\in\ov{\fa_{P_0}^+}$. The following two conditions are equivalent: 
\begin{enumerate}[\indent (1)]
\item $\deg_{T}^Q(g)\leq 0$; 
\end{enumerate}
\begin{enumerate}[\indent (2)]
\item for all parabolic subgroup $P\subseteq Q$, all $\delta\in P(F)\bs Q(F)$ and all $\varpi\in\wh{\Delta}_P^Q$, we have $\langle\varpi, H_P(\delta g)-T\rangle\leq 0$. 
\end{enumerate}
\end{lem}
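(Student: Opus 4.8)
The plan is to follow the argument of \cite[Lemme 2.2.1]{10.1007/978-3-319-94833-1_3} adapted to the relatively standard setup, exploiting the fact that the $\rho_P^Q$ and the $\wh\Delta_P^Q$ are related by a change of basis with nonnegative coefficients. First I would fix $g\in G(\BA)$, a standard parabolic $Q$ and $T\in\ov{\fa_{P_0}^+}$, and record the key linear-algebra fact: for $P\subseteq Q$ a pair of standard parabolic subgroups, $\rho_P^Q$ lies in $(\fa_P^Q)^*$ and can be written as a nonnegative linear combination $\rho_P^Q=\sum_{\varpi\in\wh\Delta_P^Q} c_\varpi\,\varpi$ with all $c_\varpi>0$ (this is the standard positivity statement for the half-sum of positive roots expressed in the basis of fundamental weights, which holds verbatim inside $\fm_Q$). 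Hence, for any $\delta$, $\langle\rho_P^Q, H_P(\delta g)-T\rangle=\sum_\varpi c_\varpi\langle\varpi, H_P(\delta g)-T\rangle$.

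The implication $(2)\Rightarrow(1)$ is then immediate: if every $\langle\varpi, H_P(\delta g)-T\rangle\le 0$, then each term $\langle\rho_P^Q,H_P(\delta g)-T\rangle$ is a nonnegative combination of nonpositive numbers, hence $\le 0$; taking the maximum over all pairs $(P,\delta)$ gives $\deg_{i,T}^Q(g)\le 0$. The reverse implication $(1)\Rightarrow(2)$ is the substantive direction. Here I would argue by contradiction: suppose there exist $P\subseteq Q$, $\delta\in P(F)\bs Q(F)$ and $\varpi_{\alpha}\in\wh\Delta_P^Q$ with $\langle\varpi_\alpha, H_P(\delta g)-T\rangle>0$. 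The idea is to pass to the maximal standard parabolic $R$ with $P\subseteq R\subseteq Q$ determined by the simple root $\alpha$ (so that $\wh\Delta_R^Q=\{\varpi_\alpha|_{\fa_R^Q}\}$ after the natural identification, i.e.\ $R$ corresponds to deleting $\alpha$); choosing the image $\delta'$ of $\delta$ in $R(F)\bs Q(F)$, one gets $\langle\varpi_\alpha, H_R(\delta' g)-T\rangle>0$ as well, since $H_R(\delta' g)$ is the projection of $H_P(\delta g)$ and $\varpi_\alpha$ is already defined on $\fa_R$. For a maximal parabolic $R\subsetneq Q$, $\rho_R^Q$ is a positive multiple of the single fundamental weight $\varpi_\alpha$, so $\langle\rho_R^Q, H_R(\delta' g)-T\rangle>0$, contradicting $\deg_{i,T}^Q(g)\le 0$. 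I would also need to handle the degenerate case $R=Q$ (i.e.\ $\wh\Delta_P^Q=\emptyset$) trivially, and recall from \cite[Lemma 5.1]{MR518111} that all suprema involved are finite and attained so that the maximum defining $\deg_{i,T}^Q$ makes sense.

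The main obstacle I anticipate is bookkeeping rather than conceptual: one must be careful that the reduction from a general pair $(P,\delta)$ to the maximal-parabolic pair $(R,\delta')$ is compatible with the coset projection $P(F)\bs Q(F)\to R(F)\bs Q(F)$ and with the restriction maps $\fa_P\twoheadrightarrow\fa_R$ sending $\wh\Delta$'s to $\wh\Delta$'s and $\rho$'s to $\rho$'s correctly; in particular one should check that $\varpi_\alpha\in\wh\Delta_P^Q$ indeed descends to (a positive multiple of) the generator of $\wh\Delta_R^Q$ and that $H_R(\delta'g)$ is the image of $H_P(\delta g)$ under $\fa_P\to\fa_R$. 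Once these compatibilities are in place — all of which are formal properties of the root data recalled in Section \ref{notation} and carry over unchanged to the relatively standard setting — the equivalence follows. I expect the whole argument to be essentially a transcription of \cite[Lemme 2.2.1]{10.1007/978-3-319-94833-1_3}, so I would simply cite that reference for the details after indicating the positivity input above.
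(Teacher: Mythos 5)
Your proof is correct. Note, however, that the paper itself does not give a proof of this lemma: it is introduced with the phrase ``without reproducing proofs'' and cited directly from Chaudouard (\cite[Lemme 2.2.1]{10.1007/978-3-319-94833-1_3}), so there is no in-paper argument to compare against. Your reconstruction — $(2)\Rightarrow(1)$ from the positivity $\rho_P^Q=\sum_{\varpi\in\wh\Delta_P^Q}c_\varpi\varpi$ with $c_\varpi>0$, and $(1)\Rightarrow(2)$ by reducing a violating triple $(P,\delta,\varpi_\alpha)$ to the maximal parabolic $R$ with $\Delta_P^R=\Delta_P^Q\setminus\{\alpha\}$, using that $\varpi_\alpha$ descends to the unique element of $\wh\Delta_R^Q$, that $H_R(\delta g)$ is the $\fa_R$-projection of $H_P(\delta g)$, and that $\rho_R^Q$ is a positive multiple of $\varpi_\alpha$ since $\dim\fa_R^Q=1$ — is the standard argument and is presumably the one the cited reference uses.
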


\begin{defn}
  Let $g\in G(\BA)$ and $T\in\ov{\fa_{P_0}^+}$. We say that a pair $(P, \delta)$ of a standard parabolic subgroup $P\subseteq G$ and an element $\delta\in P(F)\bs G(F)$ is $T$-canonical for $g$ if it satisfies the following two conditions: 
\begin{enumerate}[\indent (1)]
\item $\langle \rho_P^G, H_P(\delta g)-T \rangle=\deg_{T}^G(g)$; 
\end{enumerate}
\begin{enumerate}[\indent (2)]
\item for any parabolic subgroup $Q\supseteq P$ such that $\langle \rho_Q^G, H_Q(\delta g)-T \rangle=\deg_{T}^G(g)$, we have $Q=P$. 
\end{enumerate}
\end{defn}

\begin{lem}[cf. {\cite[Lemme 2.3.2]{10.1007/978-3-319-94833-1_3}}]\label{canpaircond}
  Let $g\in G(\BA)$ and $T\in\ov{\fa_{P_0}^+}$. Then $(P, \delta)$ is a $T$-canonical pair for $g$ if and only if it satisfies the following two conditions: 
\begin{enumerate}[\indent (1)]
\item $\deg_{T}^P(\delta g)\leq 0$; 
\end{enumerate}
\begin{enumerate}[\indent (2)]
\item for any $\alpha\in\Delta_P^G$, we have $\langle \alpha, H_P(\delta g)-T \rangle>0$. 
\end{enumerate}
\end{lem}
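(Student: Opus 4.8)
The plan is to mimic the proof of \cite[Lemme 2.3.2]{10.1007/978-3-319-94833-1_3} by translating the definition of a $T$-canonical pair into the two stated conditions, using Lemma \ref{chau2.2.1} as the main bridge. First I would prove the ``only if'' direction. Assume $(P,\delta)$ is $T$-canonical for $g$, so condition (1) of the definition gives $\langle\rho_P^G, H_P(\delta g)-T\rangle=\deg_{i,T}^G(g)$ and condition (2) says no strictly larger parabolic achieves the maximum. For condition (2) of the lemma, fix $\alpha\in\Delta_P^G$ and let $Q\supsetneq P$ be the standard parabolic with $\Delta_P^Q=\Delta_P^G\setminus\{\alpha\}$; by minimality of $P$ among maximisers we have $\langle\rho_Q^G, H_Q(\delta g)-T\rangle<\deg_{i,T}^G(g)$ (with $\delta$ now viewed in $P(F)\backslash Q(F)\subseteq P(F)\backslash G(F)$, using that $H_Q$ is the projection of $H_P$ to $\fa_Q$). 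Writing $\rho_P^G=\rho_P^Q+\rho_Q^G$ and $H_P=H_P^Q+H_Q$ in the decomposition $\fa_P=\fa_P^Q\oplus\fa_Q$, and noting that $\langle\rho_P^Q, H_P(\delta g)-T\rangle\leq\deg_{i,T}^P(\delta g)\leq\deg_{i,T}^G(g)$ for $P\subseteq Q$ does not quite suffice, I would instead argue directly: the difference $\deg_{i,T}^G(g)-\langle\rho_Q^G,H_Q(\delta g)-T\rangle=\langle\rho_P^Q, H_P(\delta g)-T\rangle$ is a positive combination of the $\langle\alpha',H_P(\delta g)-T\rangle$ for $\alpha'\in\Delta_P^Q$, and varying $Q$ over the maximal parabolics between $P$ and $G$ shows each $\langle\alpha, H_P(\delta g)-T\rangle>0$. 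For condition (1) of the lemma, I would show that if $\deg_{i,T}^P(\delta g)>0$, then some pair $(P',\delta')$ with $P'\subsetneq P$ would satisfy $\langle\rho_{P'}^G, H_{P'}(\delta'\delta g)-T\rangle>\langle\rho_P^G,H_P(\delta g)-T\rangle=\deg_{i,T}^G(g)$ (by adding the contribution $\langle\rho_{P'}^P, H_{P'}(\delta'\delta g)-T\rangle>0$ and using that $H_P$ is the image of $H_{P'}$), contradicting the maximality in the definition of $\deg_{i,T}^G$.

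For the ``if'' direction, assume conditions (1) and (2) of the lemma. Condition (1) together with Lemma \ref{chau2.2.1} gives $\langle\varpi, H_{P'}(\delta'\delta g)-T\rangle\leq 0$ for all $P'\subseteq P$, $\delta'\in P'(F)\backslash P(F)$ and $\varpi\in\wh{\Delta}_{P'}^P$. Condition (2) gives positivity of the simple roots on $H_P(\delta g)-T$ in $\fa_P^G$. I would combine these to show that for every standard $P'\subseteq G$ and every $\delta'\in P'(F)\backslash G(F)$, one has $\langle\rho_{P'}^G, H_{P'}(\delta' g)-T\rangle\leq\langle\rho_P^G, H_P(\delta g)-T\rangle$, with equality only when $P'=P$ (up to the double-coset identification), which is exactly conditions (1) and (2) of the definition of $T$-canonical. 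The standard trick here is: given an arbitrary pair $(P',\delta')$, one can find $\delta''\in P(F)$ and a standard $P''$ adapted to $P$ so that $\langle\rho_{P'}^G, H_{P'}(\delta' g)-T\rangle\leq\langle\rho_{P''}^G, H_{P''}(\delta'' g)-T\rangle$ reduces the comparison to pairs compatible with $P$, after which conditions (1) and (2) finish it.

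The main obstacle I expect is bookkeeping the double-coset representatives and the compatibility of the Levi-decompositions $\fa_{P'}=\fa_{P'}^P\oplus\fa_P$ (for $P'\subseteq P$) with the functions $H_{P'}$, $\rho_{P'}^P$, $\rho_P^G$ — i.e. making sure that ``$\delta$ works simultaneously for the $G$-level and the $P$-level'' computations. This is precisely the kind of issue handled carefully in \cite[\S2]{10.1007/978-3-319-94833-1_3}, and the cleanest route is to quote that the needed decomposition identities ($H_P(\delta g)$ is the projection of $H_{P'}(\delta g)$, and $\rho_P^G$, $\rho_{P'}^P$ add up to $\rho_{P'}^G$) hold verbatim in our setting since we work with $G=GL_{p+q,D}$ and its standard parabolics, and then the argument is formally identical to \emph{loc. cit.} I would therefore keep the proof short, stating that it follows \cite[Lemme 2.3.2]{10.1007/978-3-319-94833-1_3} mutatis mutandis, and only spell out the two directions above in a few lines each.
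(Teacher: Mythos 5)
For context: the paper itself does not prove this lemma. The paragraph preceding the block of statements says explicitly that these are variants of results in \cite[\S 2]{10.1007/978-3-319-94833-1_3} introduced ``without reproducing proofs'', and the lemma is given with a bare ``cf.'' citation. Your final decision to state that the proof is \emph{mutatis mutandis} that of \cite[Lemme 2.3.2]{10.1007/978-3-319-94833-1_3} therefore matches exactly what the paper does.

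That said, the argument you sketch for condition (2) of the ``only if'' direction has a gap that would not survive being written out. You take $Q\supsetneq P$ with $\Delta_P^Q = \Delta_P^G\setminus\{\alpha\}$, the \emph{maximal} proper parabolic above $P$ whose Levi forgets $\alpha$. With that choice, the identity
\[
\deg_{i,T}^G(g) - \langle\rho_Q^G, H_Q(\delta g) - T\rangle \;=\; \langle\rho_P^Q, H_P(\delta g) - T\rangle
\]
together with condition (2) of the definition of a $T$-canonical pair gives only
\[
\sum_{\alpha'\in\Delta_P^G\setminus\{\alpha\}} c_{\alpha'}\,\langle\alpha', H_P(\delta g) - T\rangle > 0, \qquad c_{\alpha'} > 0,
\]
an inequality that does \emph{not} mention $\alpha$. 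Varying $Q$ over the maximal parabolics between $P$ and $G$ produces one such inequality per simple root, each omitting exactly one coordinate; as soon as $\dim(A_P/A_G)\geq 3$, this system does not force every $\langle\alpha, H_P(\delta g)-T\rangle$ to be positive (take one coordinate slightly negative and the others large and positive). The correct choice of $Q$ is the opposite extreme: take $Q\supsetneq P$ \emph{minimal}, with $\Delta_P^Q = \{\alpha\}$. Then $\fa_P^Q$ is one-dimensional, the weights of $A_P$ on $\fn_P\cap\fm_Q$ are all positive multiples of $\alpha$, hence $\rho_P^Q$ restricted to $\fa_P^Q$ is a positive multiple of $\alpha$, and the same identity reads $\deg_{i,T}^G(g) - \langle\rho_Q^G, H_Q(\delta g)-T\rangle = c\,\langle\alpha, H_P(\delta g)-T\rangle$ with $c>0$; since $Q\neq P$, condition (2) of the definition forces the left-hand side to be strictly positive, hence $\langle\alpha, H_P(\delta g)-T\rangle > 0$. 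Your sketch of the ``if'' direction, though left at the level of a plan, points at the right tool, namely Lemma \ref{chau2.2.1}.
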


\begin{prop}[cf. {\cite[Proposition 2.4.1]{10.1007/978-3-319-94833-1_3}}]
  Let $g\in G(\BA)$ and $T\in\ov{\fa_{P_0}^+}$. Then there exists a unique $T$-canonical pair for $g$. 
\end{prop}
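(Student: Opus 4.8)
The plan is to establish existence and uniqueness of the $T$-canonical pair for $g$ following the structure of \cite[Proposition 2.4.1]{10.1007/978-3-319-94833-1_3}, using the reformulation in Lemma \ref{canpaircond} as the working characterisation. For \emph{existence}, first I would use \cite[Lemma 5.1]{MR518111} (invoked right after the definition of $\deg_{i,T}^G$) to know that the maximum defining $\deg_{i,T}^G(g)$ is actually attained; among all pairs $(P,\delta)$ with $\langle\rho_P^G,H_P(\delta g)-T\rangle=\deg_{i,T}^G(g)$, choose one with $P$ maximal (which is possible since there are only finitely many standard parabolic subgroups). By definition this pair is $T$-canonical, so existence is essentially formal once attainability is granted.

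The substantive part is \emph{uniqueness}. I would argue by contradiction: suppose $(P_1,\delta_1)$ and $(P_2,\delta_2)$ are both $T$-canonical for $g$. The first step is to reduce to the case $\delta_1=\delta_2$. Here I expect to use a convexity/exchange argument on the function $P\mapsto\langle\rho_P^G,H_P(\delta g)-T\rangle$: if the two cosets $P_1(F)\delta_1$ and $P_2(F)\delta_2$ (inside $G(F)$) were distinct, one finds a smaller parabolic $R$ and an element $\delta$ lying in $R(F)\bs G(F)$ whose degree of instability is at least as large, built from the Bruhat-type decomposition relating $\delta_1$ and $\delta_2$; combined with Lemma \ref{chau2.2.1} applied to $\delta_i g$ relative to $P_i$ (which holds because condition (1) of Lemma \ref{canpaircond} says $\deg_{i,T}^{P_i}(\delta_i g)\le 0$), this contradicts maximality of the canonical degree or the maximality of $P_i$. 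This is the step I expect to be the main obstacle, since it requires the combinatorial geometry of the $\fa_P$'s — the decompositions $\fa_{P_1}=\fa_{P_2}\oplus\fa_{P_1}^{P_2}$, the positivity of coroot coordinates, and the concavity of $H_P$-type functions under passing to larger parabolics — and it is exactly the point where the analogue in \cite{10.1007/978-3-319-94833-1_3} does the real work.

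Once $\delta_1=\delta_2=:\delta$, uniqueness of $P$ follows more cleanly. Replacing $g$ by $\delta g$, we may assume $\delta=1$ and that both $P_1$ and $P_2$ satisfy: $\deg_{i,T}^{P_i}(g)\le 0$ and $\langle\alpha,H_{P_i}(g)-T\rangle>0$ for all $\alpha\in\Delta_{P_i}^G$. I would then show $P_1=P_2$ by considering $P_0'=P_1\cap P_2$ (the standard parabolic with $\Delta_{P_0'}^G=\Delta_{P_1}^G\cup\Delta_{P_2}^G$, using that both are standard hence comparable via their simple-root sets) and the smallest standard parabolic $R\supseteq P_1,P_2$; the positivity conditions on the $\Delta_{P_i}^G$ force, via Lemma \ref{chau2.2.1} and the projection formulae for $\rho_P^Q$, that $H_{P_1}(g)-T$ and $H_{P_2}(g)-T$ have the same sign pattern on all simple roots, whence $P_1=P_2$. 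Throughout I would use only facts already in the excerpt: the root/weight formalism of Section \ref{notation}, the two cited lemmas of \cite{MR518111}, and Lemmas \ref{chau2.2.1} and \ref{canpaircond}; no new input beyond the standard Arthur-style reduction theory is needed.
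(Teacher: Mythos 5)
The paper does not actually prove this proposition: it is imported from Chaudouard's article with the explicit remark that the results of that section are stated ``without reproducing proofs.'' So there is no in-text argument to compare against, and your proposal has to stand on its own.

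Your existence argument is fine: attainability of the supremum is exactly what the reference to \cite[Lemma 5.1]{MR518111} gives, and picking a pair with $P$ maximal among the finitely many standard parabolics achieving $\deg_{i,T}^G(g)$ produces a $T$-canonical pair by definition. The uniqueness argument, however, contains a genuine gap, and you flag it yourself. Your plan is to first reduce $(P_1,\delta_1),(P_2,\delta_2)$ to a common $\delta$ and then show $P_1=P_2$, but the first reduction is only gestured at (``a convexity/exchange argument,'' ``built from the Bruhat-type decomposition relating $\delta_1$ and $\delta_2$''); you explicitly call this ``the step I expect to be the main obstacle'' and offer no concrete mechanism. That step \emph{is} the content of the proposition. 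What is actually needed — and what Chaudouard's proof supplies — is a quantitative convexity/subadditivity property of the degree functional $(P,\delta)\mapsto\langle\rho_P^G,H_P(\delta g)-T\rangle$ over the lattice of parabolics, controlling how the maximum can be approached when two candidate optimal pairs are in general relative position. Nothing in the excerpt (the root/weight formalism of Section \ref{notation}, Lemmas \ref{chau2.2.1} and \ref{canpaircond}) provides this; it is an extra combinatorial input.

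Even after assuming $\delta_1=\delta_2=1$, your second step is an assertion rather than a deduction. You propose looking at $P_1\cap P_2$ and the smallest common overparabolic, and conclude that the conditions $\deg_{i,T}^{P_i}(g)\le 0$ and $\langle\alpha,H_{P_i}(g)-T\rangle>0$ for $\alpha\in\Delta_{P_i}^G$ force ``the same sign pattern on all simple roots,'' hence $P_1=P_2$. But the characterisation in Lemma \ref{canpaircond} does not directly compare $H_{P_1}(g)$ and $H_{P_2}(g)$ on a common space: condition (1) via Lemma \ref{chau2.2.1} quantifies over \emph{all} standard $P\subseteq P_i$ and \emph{all} $\delta\in P(F)\bs P_i(F)$, and condition (2) lives on $\fa_{P_i}^G$, so extracting a contradiction from $P_1\neq P_2$ requires the same kind of careful Langlands-combinatorial-lemma analysis you are leaving implicit. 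Both halves of the uniqueness argument still need to be written out.
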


Let $T\in \fa_0$ and $Q$ be a standard parabolic subgroup of $G$. Define $\wt{F}^Q(\cdot, T)$ as the characteristic function of $g\in G(\BA)$ such that $\deg_{T}^Q(g)\leq 0$. 

\begin{prop}[cf. {\cite[Proposition 2.5.1]{10.1007/978-3-319-94833-1_3}}]
  For $g\in G(\BA)$, $Q$ a standard parabolic subgroup of $G$ and $T\in\ov{\fa_{P_0}^+}$, we have
\begin{enumerate}[\indent (1)]
\item $$ \sum_{\{P:P_0\subseteq P\subseteq Q\}} \sum_{\delta\in P(F)\bs Q(F)} \wt{F}^{P}(\delta g, T) \tau_{P}^Q(H_P(\delta g)-T_P)=1; $$
\end{enumerate}
\begin{enumerate}[\indent (2)]
\item $$ \wt{F}^Q(g, T)=\sum_{\{P:P_0\subseteq P\subseteq Q\}} (-1)^{\dim(A_P/A_{Q})}  \sum_{\delta\in P(F)\bs Q(F)} \wh{\tau}_P^Q(H_{P}(\delta g)-T_P). $$
\end{enumerate}
\end{prop}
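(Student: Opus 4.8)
The plan is to deduce both identities from the theory of $T$-canonical pairs developed above, together with Langlands' combinatorial lemma. Throughout I use that $Q$, equipped with its standard parabolic subgroups $\{P:P_0\subseteq P\subseteq Q\}$, may be substituted for $G$, so that Lemma~\ref{canpaircond} and the preceding existence-and-uniqueness statement for $T$-canonical pairs apply verbatim relative to $Q$; a pair $(P,\delta)$ with $P_0\subseteq P\subseteq Q$ and $\delta\in P(F)\bs Q(F)$ will be called ``$T$-canonical for $g$ relative to $Q$'' accordingly.

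For part~(1), each summand $\wt F^{P}(\delta g,T)\,\tau_P^Q(H_P(\delta g)-T_P)$ lies in $\{0,1\}$. The first factor equals $1$ iff $\deg_{i,T}^{P}(\delta g)\le 0$, by definition of $\wt F^P$. Since every $\alpha\in\Delta_P^Q$ vanishes on $\fa_0^P$, one has $\alpha(T)=\alpha(T_P)$, so the second factor equals $1$ iff $\langle\alpha,H_P(\delta g)-T\rangle>0$ for all $\alpha\in\Delta_P^Q$. By the $Q$-relative form of Lemma~\ref{canpaircond}, the conjunction of these two conditions says precisely that $(P,\delta)$ is $T$-canonical for $g$ relative to $Q$; by the $Q$-relative uniqueness statement there is exactly one such pair, whence the full sum is $1$.

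For part~(2) I would argue by induction on $\dim(\fa_{P_0}^{Q})$, i.e.\ on the semisimple rank of $M_Q$. The base case $Q=P_0$ is immediate: the right-hand side reduces to its single term $P=P_0$, which is $1$ (empty product), while $\rho_{P_0}^{P_0}=0$ forces $\deg_{i,T}^{P_0}(g)=0$ and hence $\wt F^{P_0}(g,T)=1$. For the inductive step, isolate the term $P=Q$ (for which $\tau_Q^Q\equiv 1$) in part~(1):
\[
\wt F^{Q}(g,T)=1-\sum_{\{P:P_0\subseteq P\subsetneq Q\}}\ \sum_{\delta\in P(F)\bs Q(F)}\wt F^{P}(\delta g,T)\,\tau_P^Q(H_P(\delta g)-T_P).
\]
Substitute the inductive hypothesis for each $\wt F^{P}(\delta g,T)$, re-index the iterated coset sum using that $P'(F)\bs Q(F)$ fibres over $P(F)\bs Q(F)$ with fibres $P'(F)\bs P(F)$ (legitimate because $H_P$, resp.\ $H_{P'}$, is left $P(F)$-, resp.\ $P'(F)$-, invariant), and use that $\wh\tau_{P'}^{P}$, resp.\ $\tau_P^Q$, depends only on the $\fa_{P'}^{P}$-, resp.\ $\fa_P^Q$-, component of its argument. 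Writing $H:=H_{P'}(\delta'g)-T_{P'}\in\fa_{P'}$, the right-hand side becomes
\[
1-\sum_{\{P':P_0\subseteq P'\subsetneq Q\}}\ \sum_{\delta'\in P'(F)\bs Q(F)}\ \sum_{\{P:P'\subseteq P\subsetneq Q\}}(-1)^{\dim(A_{P'}/A_P)}\,\wh\tau_{P'}^{P}(H)\,\tau_P^Q(H).
\]
Fixing $P'$ and $\delta'$, completing the inner sum by the missing term $P=Q$, and invoking Langlands' combinatorial lemma (see~\cite{MR518111})
\[
\sum_{\{P:P'\subseteq P\subseteq Q\}}(-1)^{\dim(A_P/A_Q)}\,\wh\tau_{P'}^{P}(H)\,\tau_P^Q(H)=\begin{cases}1,&P'=Q,\\ 0,&P'\neq Q,\end{cases}
\]
together with $P'\neq Q$, identifies the inner sum with $-(-1)^{\dim(A_{P'}/A_Q)}\,\wh\tau_{P'}^{Q}(H_{P'}(\delta'g)-T_{P'})$. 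Substituting back, and absorbing the constant $1$ as the $P'=Q$ term of the resulting sum, yields exactly the asserted formula for $\wt F^{Q}(g,T)$.

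The substantive input here — existence and uniqueness of the $T$-canonical pair — is already in hand, so from there the argument is formal. The points that require care are: that the canonical-pair results transfer verbatim with $Q$ in place of $G$; the coset re-indexing and the left-invariance of the maps $H_P$ that legitimise it; and the bookkeeping of the signs $(-1)^{\dim(A_P/A_Q)}$ and of the summation ranges ($P\subsetneq Q$ versus $P\subseteq Q$) through the substitution and the use of Langlands' lemma. If there is an obstacle at all, it is merely to record the combinatorial lemma in precisely the ``$\wh\tau$-then-$\tau$'' shape used above; the rest is routine.
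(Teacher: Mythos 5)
The paper itself does not prove this proposition: it is presented as a variant of results from \cite[\S 2]{10.1007/978-3-319-94833-1_3}, and the paper explicitly declines to reproduce the proofs. Your argument is correct and is the natural one. Part (1) is precisely the existence and uniqueness of the $T$-canonical pair for $g$ relative to $Q$, once one observes, as you do, that the conjunction $\wt{F}^P(\delta g,T)=1$ and $\tau_P^Q(H_P(\delta g)-T_P)=1$ is exactly the $T$-canonicity condition of Lemma~\ref{canpaircond} transported from $G$ to $Q$; the transport is legitimate because $\wt F^P$ and $\deg_{i,T}^P$ are intrinsic to $P$ and $T$ enters $\tau_P^Q$ only through its $\fa_P^Q$-component. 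Part (2) then follows from (1) by induction and M\"obius inversion via Langlands' combinatorial lemma, as you set it up. One small point worth making explicit is the sign bookkeeping in the inversion: you state the combinatorial lemma with the exponent $\dim(A_P/A_Q)$ but apply it to an inner sum whose coefficient carries the exponent $\dim(A_{P'}/A_P)$. The two differ only by the constant factor $(-1)^{\dim(A_{P'}/A_Q)}$, which is what produces the coefficient $(-1)^{\dim(A_{P'}/A_Q)}$ in the target identity after absorbing the leading $1$ as the $P'=Q$ term, so the computation does close up as you assert.
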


Since we have similar formulae for $F^Q(\cdot, T)$ for sufficiently regular $T$ (see Lemma \ref{artlem64}), we know that $\wt{F}^Q(\cdot, T)=F_{P_0}^Q(\cdot, T)$ for such $T$. Now we can return to the proof of Lemma \ref{combinatoriclemma1}. 

\begin{proof}[Proof of Lemma \ref{combinatoriclemma1}]
  It is noticeable that the identity is reduced to its analogues for semi-standard Levi factors of $Q$, which is a product of $GL_{p_i+q_i, D}$ whose intersection with $H$ is $GL_{p_i, D}\times GL_{q_i, D}$. By induction on the rank of $G$, it suffices to prove the identity for $Q=G$. 

  For a standard parabolic subgroup $P$ of $G$, fix a set of representatives $\Omega_{P,G}$ in $\{s\in\Omega^G|\wt{P}_0\subseteq s^{-1}P\}$ for the relation $s_1\sim s_2$ if and only if $s_2 s_1^{-1}\in\Omega^{M_P}$. We can rewrite the equality in the lemma as
  $$ \sum_{\{P:P_0\subseteq P\}} \sum_{s\in\Omega_{P,G}} \sum_{\delta\in (s^{-1}P)_H(F)\bs H(F)} F_{P_0}^{P}(\omega_s\delta x, T_{P_0}) \tau_{P}^G(H_P(\omega_s\delta x)-T_P)=1. $$
  In fact, this follows from
  $$ F^{s^{-1}P}(\delta x,T)=F_{s^{-1}P_0}^{s^{-1}P}(\delta x,T_{s^{-1}P_0})=F_{P_0}^P(\omega_s\delta x,T_{P_0}) $$
  and
  $$ \tau_{s^{-1}P}^G(H_{s^{-1}P}(\delta x)-T_{s^{-1}P})=\tau_{P}^G(H_P(\omega_s\delta x)-T_P). $$

  Combining the double sums over $s$ and $\delta$, we claim that the equality above is equivalent to
  $$ \sum_{\{P:P_0\subseteq P\}} \sum_{\delta\in P(F)\bs P(F)\Omega^G H(F)} F_{P_0}^{P}(\delta x, T_{P_0}) \tau_{P}^G(H_P(\delta x)-T_P)=1. $$
  In fact, for any $s\in\Omega_{P,G}$, consider the map
  $$ (s^{-1}P)_H(F)\bs H(F)\ra P(F)\bs P(F)\Omega^G H(F),\delta\mapsto\omega_s\delta. $$
  Firstly, it is well-defined: if $\delta_1=\omega_s^{-1}p\omega_s\delta_2$ with $p\in P(F)$, then $\omega_s\delta_1=p\omega_s\delta_2$. Secondly, it is injective: if $\omega_s\delta_1=p\omega_s\delta_2$ with $p\in P(F)$, then $\delta_1=\omega_s^{-1}p\omega_s\delta_2$ with $\omega_s^{-1}p\omega_s=\delta_1\delta_2^{-1}\in (s^{-1}P)_H(F)$. Thirdly, for $s_1\neq s_2$ in $\Omega_{P,G}$, we have $\omega_{s_1}\delta_1\neq p\omega_{s_2}\delta_2$ with $p\in P(F)$: otherwise, $s_1^{-1}P=(\delta_2\delta_1^{-1})^{-1}(s_2^{-1}P)(\delta_2\delta_1^{-1})$ with $\delta_2\delta_1^{-1}\in H(F)$, so $(s_1^{-1}P)_H=(\delta_2\delta_1^{-1})^{-1}(s_2^{-1}P)_H(\delta_2\delta_1^{-1})$, and then $\delta_2\delta_1^{-1}\in (s_1^{-1}P)_H(F)=(s_2^{-1}P)_H(F)$ for both of $(s_1^{-1}P)_H$ and $(s_2^{-1}P)_H$ are standard parabolic subgroups of $H$, which implies $s_1^{-1}P=s_2^{-1}P$ contradicting $s_1\neq s_2$. Fourthly, any $\wt{s}\in \Omega^G$ appears in the image of the map for some $s\in\Omega_{P,G}$: since $(\wt{s}^{-1}P)_H$ is a semi-standard parabolic subgroup of $H$, there exists an $s_0\in\Omega^H$ such that $\wt{P}_0\subseteq s_{0}^{-1}((\wt{s}^{-1}P)_H)=(s_{0}^{-1}(\wt{s}^{-1}P))_H=((\wt{s}s_0)^{-1}P)_H$, i.e., $\wt{s}s_0\in\Omega_{P, G}$. To sum up, we finish the argument of the claim. 

  It suffices to prove an analogue of the last equality by replacing $F_{P_0}^P$ with $\wt{F}^P$ for $T\in\ov{\fa_{P_0}^+}$, as they are identical for sufficiently regular $T$. That is to say, for $x\in H(\BA)=G^\theta(\BA)$, if $(P, \delta)$ is the unique $T$-canonical pair for $x$, we need to prove that $\delta\in P(F)\bs P(F)\Omega^G H(F)$. Recall that $\theta(g)=\epsilon g\epsilon^{-1}$ for $g\in G(\BA)$, where $\epsilon=
\left( \begin{array}{cc}
1_p & 0 \\
0 & -1_q \\
\end{array} \right)$. Since $\epsilon\in M_0(F)\cap K$, from Lemma \ref{canpaircond}, we deduce that $(P, \delta)$ is the unique $T$-canonical pair for $g\in G(\BA)$ if and only if $(P, \theta(\delta))$ is the unique $T$-canonical pair for $\theta(g)$. In particular, if $(P, \delta)$ is the unique $T$-canonical pair for $x\in H(\BA)$, we have $\delta=\theta(\delta)$. Denote by $\delta_0$ a representative of $\delta\in P(F)\bs G(F)$. Then $\delta_0\epsilon\delta_0^{-1}\in P(F)$. 

Suppose that $\delta_0\epsilon\delta_0^{-1}=mu$, where $m\in M_P(F)$ and $u\in N_P(F)$. Both of $mu$ and $m$ are semi-simple in $G(F)$ (in the classical sense) for $(mu)^2=m^2=1$. Applying \cite[Lemma 2.1]{MR518111} to the characteristic function of the singleton $\{u\}$, one obtains that $mu$ is $N_P(F)$-conjugated to $mu'$ for some $u'\in N_P(F)$ such that $mu'=u'm$. Since both of $mu'$ and $m$ are semi-simple in $G(F)$, by the uniqueness of Jordan decomposition, we have $u'=1$, i.e., $\delta_0\epsilon\delta_0^{-1}$ is $N_P(F)$-conjugated to $m$. By linear algebra, $m$ is $M_P(F)$-conjugated to a diagonal matrix with entries $\{\pm 1\}$ with expected multiplicities $p$ and $q$ respectively. In sum, $\delta_0\epsilon\delta_0^{-1}$ is $P(F)$-conjugated to $\omega_s\epsilon\omega_s^{-1}$ for some $s\in\Omega^G$. Suppose that $p_0\in P(F)$ satisfies $\delta_0\epsilon\delta_0^{-1}=p_0(\omega_s\epsilon\omega_s^{-1})p_0^{-1}$. Then $\omega_s^{-1}p_0^{-1}\delta_0\in G^\theta(F)=H(F)$, i.e., $\delta=P(F)\delta_0\in P(F)\bs P(F)\Omega^G H(F)$. 
\end{proof}

\begin{lem}\label{IY(2.5)}
  Let $P$ be a relatively standard parabolic subgroup of $G$. For any $a\in A_{\wt{P}_0}^\infty(P_{H},t_0)$, there exists a relatively standard minimal parabolic subgroup $B\subseteq P$ such that $a\in A_B^\infty(P,t_0)$. 
\end{lem}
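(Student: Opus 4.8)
The plan is to translate everything into coordinates on $\fa_0$ and reduce to a combinatorial merging problem. Since $M_0$ is the diagonal torus of $G$, it is a common minimal Levi of $G$ and of $H$, so $A_{\wt P_0}=A_0=A_B$ and $A_{\wt P_0}^\infty=A_0^\infty=A_B^\infty$ for every minimal semi-standard parabolic $B$ of $G$, and the maps $H_{\wt P_0}$ and $H_B$ all agree on $A_0^\infty$. The root system of $G$ relative to $A_0$ is of type $A_{p+q-1}$, so I would identify $\fa_0\cong\BR^{p+q}$ with coordinates $x_1,\dots,x_p$ attached to the lines $\langle e_1\rangle,\dots,\langle e_p\rangle$ and $y_1,\dots,y_q$ attached to $\langle f_1\rangle,\dots,\langle f_q\rangle$, so that all simple roots are among the differences of consecutive entries. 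Writing $P$ as the relatively standard parabolic attached to block data $(p_1,q_1),\dots,(p_l,q_l)$ as in Section \ref{relstdpar1}, with $p=p_1+\cdots+p_l$ and $q=q_1+\cdots+q_l$, the condition $a\in A_{\wt P_0}^\infty(P_H,t_0)$ says precisely this: writing $H_{\wt P_0}(a)=(x_1,\dots,x_p;y_1,\dots,y_q)$, one has $x_i-x_{i+1}>t_0$ whenever $i$ and $i+1$ lie in a common block of the partition of $p$, and $y_j-y_{j+1}>t_0$ whenever $j$ and $j+1$ lie in a common block of the partition of $q$.

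Next I would describe the target. A minimal semi-standard parabolic $B$ of $G$ with $B\subseteq P$ is the stabiliser of a complete flag of $D$-lines refining the flag defining $P$, hence is given by a linear ordering of the $p+q$ basis lines in which the lines of block $k$ all precede those of block $k+1$; such a $B$ is moreover relatively standard exactly when, inside each block, the chosen order interleaves the $e$'s (taken in increasing index order) with the $f$'s (taken in increasing index order). For such a $B$ one has $M_P\cong\prod_k GL_{p_k+q_k, D}$ and $N_B\cap M_P=\prod_k N_{B_k}$ with $B_k$ the minimal parabolic of $GL_{p_k+q_k, D}$ cut out by the block-$k$ order, so $\Delta_B^P$ is exactly the set of consecutive-entry differences along the chosen order within each block. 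Hence the conclusion $a\in A_B^\infty(P,t_0)$ amounts to: in every block, the merged list of $x$- and $y$-coordinates read off in the chosen interleaving has all consecutive differences $>t_0$. To produce such an interleaving, fix a block, let $u_1,\dots,u_r$ be the $x$-coordinates of its $e$'s and $v_1,\dots,v_s$ the $y$-coordinates of its $f$'s (each list in index order, so $u_i-u_{i+1}>t_0$ and $v_j-v_{j+1}>t_0$ by hypothesis), and merge greedily: at each stage emit whichever current head is larger, breaking ties in favour of the $u$'s, and advance that pointer; once one list is used up, append the remainder of the other. This preserves the internal order of each list, so assembling the blocks in order $1,\dots,l$ yields a relatively standard minimal $B\subseteq P$.

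It remains to check the inequalities for this $B$, which is the only real point. Two consecutive emitted values either come from the same list --- then their difference is one of the $u_i-u_{i+1}$ or $v_j-v_{j+1}$, hence $>t_0$ by hypothesis --- or the merge switches lists between them; in the latter case the greedy rule forces the earlier value to dominate the later one (we abandon the $u$-list only at a step where the next remaining $u$ has fallen strictly below the current $v$, which is itself $\le$ the $u$ just emitted; symmetrically for abandoning the $v$-list, where one even gets a strict drop), and the same elementary bookkeeping covers the transitions created by exhausting a list; so the difference is $\ge 0$, hence $>t_0$ since $t_0<0$. This is where the standing hypothesis $t_0<0$ is essential: at a switch the greedy merge only buys a non-negative jump, and indeed the statement fails for $t_0\ge 0$ (already for $p=q=1$, $P=G$, and $a$ with $x_1=y_1$: no interleaving of $\langle e_1\rangle,\langle f_1\rangle$ gives $x_1-y_1>0$, yet $a\in A_{\wt P_0}^\infty(P_H,0)$ vacuously). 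Everything outside this verification --- matching relatively standard minimal $B\subseteq P$ with block-wise shuffles, and identifying $\Delta_B^P$ with consecutive differences inside the Levi blocks $GL_{p_k+q_k, D}$ --- is routine.
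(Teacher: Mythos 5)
Your proof is correct and follows essentially the same approach as the paper's: reduce to a single block (the paper does this by induction on $\dim(A_P)$, you do it by observing directly that both conditions decouple over the $\fm_P$-blocks), then merge two lists that are each "$t_0$-decreasing." The only cosmetic difference is the merge strategy — the paper inserts the $f$-coordinates one at a time by induction on $q$, whereas you do a left-to-right greedy merge — but the key inequality at each cross-list transition (a non-negative drop, hence $>t_0$ since $t_0<0$) is the same in both.
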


\begin{proof}
 This is an analogue of \cite[(2.5) in p. 674]{MR3334892}. 
 To begin with, we reduce ourselves to proving the case $P=G$. For this purpose, suppose that the assertion is true in this case, and consider the case $P\neq G$. Since $A_{\wt{P}_0}^\infty(P_{H},t_0)=A_{\wt{P}_0\cap M_P}^\infty(M_{P_H},t_0)$, by our assumption, there exists a minimal parabolic subgroup $B_\ast$ of $M_P$ containing $\wt{P}_0\cap M_P$ such that $a\in A_{B_\ast}^\infty(M_P,t_0)$. Let $B:=B_\ast N_P$. Then $B$ is a relatively standard minimal parabolic subgroup of $G$ contained in $P$, and $A_{B_\ast}^\infty(M_P,t_0)=A_B^\infty(P,t_0)$. Thus $a\in A_B^\infty(P,t_0)$, which implies the lemma. Therefore, we may and shall only consider the case $P=G$ in the rest of the proof. 

Let $a\in A_{\wt{P}_0}^\infty(H,t_0)$. Then $a=\diag(a_1,\cdot\cdot\cdot,a_{p+q})$, where $\frac{a_i}{a_{i+1}}>e^{t_0}$ for $1\leq i\leq p-1$ and $p+1\leq i\leq p+q-1$. In the definition of Siegel sets, we suppose that $t_0<0$, so $0<e^{t_0}<1$. Note that $A_{P_0}^\infty(G,t_0)=\{\diag(b_1,\cdot\cdot\cdot,b_{p+q})|\frac{b_i}{b_{i+1}}>e^{t_0},\forall 1\leq i\leq p+q-1\}$. Thus we need to show that there exists a permutation $s\in\Omega^G$ such that $s\cdot a=\diag(a_{s^{-1}(1)},\cdot\cdot\cdot,a_{s^{-1}(p+q)})$ satisfies the following two conditions: 
\begin{enumerate}[\indent (1)]
\item $s(i)<s(i+1)$ for $1\leq i\leq p-1$ and $p+1\leq i\leq p+q-1$; 
\end{enumerate}
\begin{enumerate}[\indent (2)]
\item $\frac{a_{s^{-1}(i)}}{a_{s^{-1}(i+1)}}>e^{t_0}$ for $1\leq i\leq p+q-1$. 
\end{enumerate}

Firstly, we show that one can move $a_{p+1}$ to its left hand side in $(a_1,\cdot\cdot\cdot,a_{p+q})$ such that both the first $p+1$ elements and the last $q-1$ ones in the new sequence are in ``good'' order (which means that the quotient of any consecutive pairs is $>e^{t_0}$), while keeping the original relative orders among $(a_1,\cdot\cdot\cdot,a_p)$ and among $(a_{p+1},\cdot\cdot\cdot,a_{p+q})$. If $\frac{a_p}{a_{p+1}}>e^{t_0}$, we are already done (one can take $s=1$). In general, write
  $$ i_1:=\max\left\{0,\max\left\{1\leq i\leq p\bigg|\frac{a_i}{a_{p+1}}>e^{t_0}\right\}\right\}. $$
When $1\leq i_1\leq p-1$, since $e^{t_0}<1$, $\frac{a_{i_1+1}}{a_{p+1}}\leq e^{t_0}$ implies $\frac{a_{p+1}}{a_{i_1+1}}\geq e^{-t_0}>1$; there is an $s\in\Omega^G$ such that $s\cdot a=\diag(a_1,\cdot\cdot\cdot,a_{i_1},a_{p+1},a_{i_1+1},\cdot\cdot\cdot,a_p,a_{p+2},\cdot\cdot\cdot,a_{p+q})$. When $i_1=0$, which implies $a_{p+1}>a_1$, there is an $s\in\Omega^G$ such that $s\cdot a=\diag(a_{p+1},a_1,\cdot\cdot\cdot,a_p,a_{p+2},\cdot\cdot\cdot,a_{p+q})$. 

Secondly, we consider moving $a_{p+2}$ as before. One should check that $a_{p+2}$ will not exceed the new place of $a_{p+1}$, which results from the fact that $\frac{a_{p+1}}{a_{p+2}}>e^{t_0}$. Thus one can move $a_{p+1}$ and $a_{p+2}$ to their left hand side in $(a_1,\cdot\cdot\cdot,a_{p+q})$ such that both the first $p+2$ elements and the last $q-2$ ones in the new sequence are in "good" order, while still keeping the original relative orders among $(a_1,\cdot\cdot\cdot,a_p)$ and among $(a_{p+1},\cdot\cdot\cdot,a_{p+q})$. 

To finish the argument of our claim, it suffices to move $a_{p+3},\cdot\cdot\cdot, a_{p+q}$ one by one as above. After moving $a_{i}$ to its left hand side, where $p+3\leq i\leq p+q$, one requires that both the first $i$ elements and the last $p+q-i$ ones in the new sequence are in "good" order, while the original relative orders among $(a_1,\cdot\cdot\cdot,a_p)$ and among $(a_{p+1},\cdot\cdot\cdot,a_{p+q})$ are kept. As in the second step above, this is possible since $\frac{a_{i-1}}{a_{i}}>e^{t_0}$. After one moves the last element $a_{p+q}$, the sequence satisfies the desired two conditions. 
\end{proof}

\begin{prop}\label{propF^P(a)}
Let $B$ be an arbitrary minimal semi-standard parabolic subgroup of $G$. Let $P$ be a parabolic subgroup of $G$ containing $B$. Suppose that $T$ is sufficiently regular. If $m\in\varrho_B\cap M_P(\BA), a\in A_B^\infty(P, t_0)$ and $k\in K\cap M_P(\BA)$ satisfy $F_B^P(mak, T_B)=1$, then $a\in A_B^\infty(P,t_0,T_B)$. 
\end{prop}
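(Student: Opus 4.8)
The plan is to deduce the statement from the characterisation of the set $\{g:\deg_{i,T}^P(g)\le 0\}$ in Lemma~\ref{chau2.2.1}, using the identity $\wt F^P(\cdot,T)=F_{P_0}^P(\cdot,T)$, which holds for $T$ sufficiently regular.

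First I would reduce to the case $B=P_0$. Choosing $s\in\Omega^G$ with $sP_0=B$ and invoking the relation $F_{sB_1}^{sP_1}(x,T)=F_{B_1}^{P_1}(\omega_s^{-1}x,s^{-1}T)$, the normalisation $\omega_{sB_1}=\omega_s\omega_{B_1}\omega_s^{-1}$, the fact that $\omega_s\in G(F)\cap K$ normalises $A_0$, and the right $K$-invariance of $F_{P_0}^{P''}$ (where $P'':=\omega_s^{-1}P\omega_s$ is standard), one gets $F_B^P(mak,T_B)=F_{P_0}^{P''}(m'a'k',T)$ with $m':=\omega_s^{-1}m\omega_s\in\omega_{P_0}\cap M_{P''}(\BA)$, $a':=\omega_s^{-1}a\omega_s\in A_{P_0}^\infty(P'',t_0)$, $k':=\omega_s^{-1}k\omega_s\in K\cap M_{P''}(\BA)$, and the conclusion $a'\in A_{P_0}^\infty(P'',t_0,T)$ transports back to $a\in A_B^\infty(P,t_0,T_B)$ (using $T_B=sT$, $H_{P_0}(a')=s^{-1}H_B(a)$, and that the passage via $s$ matches $\Delta_B^P,\wh{\Delta}_B^P$ with $\Delta_{P_0}^{P''},\wh{\Delta}_{P_0}^{P''}$). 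So I may assume $B=P_0$, $T_B=T$, $P$ standard; write $g:=mak$. Since $\omega_{P_0}\subseteq N_{P_0}(\BA)M_0(\BA)^1$, I would then write $m=n_m\mu_m$ with $n_m\in N_{P_0}(\BA)$ and $\mu_m\in M_0(\BA)^1=M_{P_0}(\BA)^1$, so that $g=n_m\,\mu_m\,a\,k$ is in the position defining $H_{P_0}$, whence $H_{P_0}(g)=H_{P_0}(a)$. Consequently it is enough to prove $\langle\varpi,H_{P_0}(g)-T\rangle\le 0$ for all $\varpi\in\wh{\Delta}_{P_0}^P$: combined with the hypothesis $a\in A_{P_0}^\infty(P,t_0)$ this is exactly $a\in A_{P_0}^\infty(P,t_0,T)$.

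The heart of the argument is then immediate. By hypothesis $F_{P_0}^P(g,T)=1$, hence $\wt F^P(g,T)=1$ since $T$ is sufficiently regular, i.e.\ $\deg_{i,T}^P(g)\le 0$. Applying Lemma~\ref{chau2.2.1} with $Q=P$, to the parabolic subgroup $P_0\subseteq P$ and to $\delta=1\in P_0(F)\bs P(F)$, yields $\langle\varpi,H_{P_0}(g)-T\rangle\le 0$ for every $\varpi\in\wh{\Delta}_{P_0}^P$, which is what was needed.

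I do not expect a genuine obstacle here: the analytic substance is entirely contained in the already-established equality $\wt F^P=F_{P_0}^P$ and in Lemma~\ref{chau2.2.1}. The two points that require care are the $\Omega^G$-bookkeeping of the reduction (checking that $t_0$, the Siegel-set inequalities and the truncation parameter transform consistently — this is where $\omega_{sB}=\omega_s\omega_B\omega_s^{-1}$ and $\omega_s\in G(F)\cap K$ enter), and making sure that ``sufficiently regular'' for this proposition is no stronger than what is needed for $\wt F^P=F_{P_0}^P$; it is not, so the $T_+$ already fixed in the paper does the job.
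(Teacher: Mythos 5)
Your proof is correct and follows essentially the same route as the paper's: both reduce the statement to the identity $\wt F^P(\cdot,T)=F^P(\cdot,T)$ for sufficiently regular $T$ and then apply Lemma~\ref{chau2.2.1} (with $\delta=1$ at the minimal level), using the Iwasawa decomposition to pass from $H_B(mak)$ to $H_B(a)$. The only cosmetic difference is that you conjugate by $\omega_s$ to reduce to $B=P_0$, whereas the paper simply observes that the whole degree-of-instability formalism applies verbatim with $B$ in the role of $P_0$, which avoids the $\Omega^G$-bookkeeping.
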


\begin{proof}
It results from Lemma \ref{chau2.2.1}, since $\wt{F}_B^P(\cdot, T)=F_B^P(\cdot, T)$ for sufficiently regular $T$. Here we write $\wt{F}_B^P(\cdot, T)$ for $\wt{F}^P(\cdot, T)$ when $B$ plays the role of $P_0$. 
\end{proof}

  For a relatively standard parabolic subgroup $P$ of $G$, denote by $\CP(\wt{P}_0,P)$ the set of relatively standard minimal parabolic subgroups of $G$ contained in $P$. For $B\in\CP(\wt{P}_0,P)$, write  
$$ A_B^{G,\infty}(P,t_0):=A_B^\infty(P,t_0)\cap G(\BA)^1 $$
and for all $T\in\fa_0$, 
$$ A_B^{G,\infty}(P,t_0,T):=A_B^\infty(P,t_0,T)\cap G(\BA)^1. $$

\begin{coro}\label{zydcor2.5}
 Let $P$ be a relatively standard parabolic subgroup of $G$. For sufficiently regular $T$, the following subset of $M_{P_{H}}(\BA)\cap G(\BA)^1$
  $$ \bigcup_{B\in \CP(\wt{P}_0,P)} (\varrho_{\wt{P}_0}\cap M_{P_{H}}(\BA)) \cdot (A_{\wt{P}_0}^\infty(P_{H},t_0)\cap A_B^{G,\infty}(P,t_0,T_B)) \cdot (K_H\cap M_{P_{H}}(\BA)) $$
  projects surjectively on $\{m\in M_{P_{H}}(F)\bs M_{P_{H}}(\BA)\cap G(\BA)^1|F^{P}(m,T)=1\}$.
\end{coro}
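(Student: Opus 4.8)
The plan is to start from an arbitrary representative $m$ of a class in the target set, put it in ``Siegel form'' via reduction theory for the Levi $M_{P_H}$, and then apply Lemma \ref{IY(2.5)} followed by Proposition \ref{propF^P(a)} to show that this Siegel representative already lies in the displayed union. Surjectivity then follows since the new representative lies in the same $M_{P_H}(F)$-class as $m$.

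First I would apply reduction theory inside $H$ to the group $M_{P_H}$, whose minimal parabolic is $\wt{P}_0\cap M_{P_H}$ with Levi $M_0$: by the normalisations of the Siegel data (in particular $\omega_{\wt{P}_0}\cap M_{P_H}(\BA)$, $K_H\cap M_{P_H}(\BA)$, and the fact that $\Delta_{\wt{P}_0}^{P_H}$ is exactly the set of simple roots internal to $M_{P_H}$), one gets $M_{P_H}(\BA)=M_{P_H}(F)\cdot(\omega_{\wt{P}_0}\cap M_{P_H}(\BA))\cdot A_{\wt{P}_0}^\infty(P_H,t_0)\cdot(K_H\cap M_{P_H}(\BA))$. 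So for a chosen representative $m$ I may write $m=\mu\cdot\omega a k$ with $\mu\in M_{P_H}(F)$, $\omega\in\omega_{\wt{P}_0}\cap M_{P_H}(\BA)$, $a\in A_{\wt{P}_0}^\infty(P_H,t_0)$ and $k\in K_H\cap M_{P_H}(\BA)$. Put $y:=\omega a k=\mu^{-1}m$. Then $y$ lies in $M_{P_H}(\BA)\cap G(\BA)^1$ and represents the same class as $m$; moreover $a=\omega^{-1}yk^{-1}\in G(\BA)^1$, since $\omega$ and $k$ lie in $G(\BA)^1$ (the former because $\omega_{\wt{P}_0}$ does). Finally, as $F^P(\cdot,T)$ is left $M_P(F)$-invariant and $\mu\in M_{P_H}(F)\subseteq M_P(F)$, we get $F^P(y,T)=F^P(m,T)=1$.

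Next I would pick the relevant minimal parabolic. Applying Lemma \ref{IY(2.5)} to $a\in A_{\wt{P}_0}^\infty(P_H,t_0)$ produces $B\in\CP(\wt{P}_0,P)$ with $a\in A_B^\infty(P,t_0)$; together with $a\in G(\BA)^1$ this gives $a\in A_B^{G,\infty}(P,t_0)$. To upgrade to the truncated Siegel set I would invoke Proposition \ref{propF^P(a)} for the decomposition $y=\omega a k$. Its hypotheses hold for $T$ sufficiently regular: $B$ is a minimal semi-standard parabolic of $G$ contained in $P$; $\omega\in\omega_{\wt{P}_0}\cap M_{P_H}(\BA)\subseteq\omega_B\cap M_P(\BA)$ by the assumption $\omega_{\wt{P}_0}\subseteq\omega_B$ together with $M_{P_H}\subseteq M_P$; $k\in K_H\cap M_{P_H}(\BA)\subseteq K\cap M_P(\BA)$; and $F_B^P(y,T_B)=1$ because, choosing $s\in\Omega^G$ with $sP_0=B\subseteq P$, one has $F^P(y,T)=F_{sP_0}^P(y,T_{sP_0})=F_B^P(y,T_B)$ by the definition of $F^P$. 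Hence Proposition \ref{propF^P(a)} yields $a\in A_B^\infty(P,t_0,T_B)$, so $a\in A_B^{G,\infty}(P,t_0,T_B)$, and therefore $a\in A_{\wt{P}_0}^\infty(P_H,t_0)\cap A_B^{G,\infty}(P,t_0,T_B)$. Consequently $y=\omega a k$ lies in the displayed union for this $B$, and since $y$ represents the class of $m$, this proves the surjectivity.

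The crux—and essentially the only point requiring genuine care—is the one already emphasised in the text: no Siegel set of $H$ is contained in a Siegel set of $G$, so one cannot reduce $m$ directly inside $G$. Lemma \ref{IY(2.5)} is precisely the device bridging this gap, repackaging a point of $A_{\wt{P}_0}^\infty(P_H,t_0)$ (adapted to $H$) into $A_B^\infty(P,t_0)$ for a suitable relatively standard $B$ (adapted to $G$); and the compatibility $\omega_{\wt{P}_0}\subseteq\omega_B$ is exactly what lets the resulting decomposition $y=\omega a k$ be fed into Proposition \ref{propF^P(a)}. The remaining ingredients—the precise shape of reduction theory for $M_{P_H}$, the identification $F^P(\cdot,T)=F_B^P(\cdot,T_B)$ for relatively standard minimal $B\subseteq P$, and the descent of the condition $F^P(m,T)=1$ to the quotient by $M_{P_H}(F)$—are routine consequences of the conventions fixed in Section \ref{notation}.
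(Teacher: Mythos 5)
Your proposal is correct and follows exactly the same route as the paper's own proof: reduction theory inside $M_{P_H}$ to pick a Siegel representative, Lemma \ref{IY(2.5)} to switch from the $H$-adapted chamber $A_{\wt{P}_0}^\infty(P_H,t_0)$ to a $G$-adapted one $A_B^\infty(P,t_0)$ for some $B\in\CP(\wt{P}_0,P)$, and then Proposition \ref{propF^P(a)} together with the inclusions $\omega_{\wt{P}_0}\subseteq\omega_B$, $K_H\subseteq K$ to impose the truncation. The paper states this much more tersely, so your write-up is essentially a careful expansion of the same argument.
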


\begin{proof}
This is an analogue of \cite[Corollaire 2.5]{MR3835522}. By Lemma \ref{IY(2.5)}, the following subset of $M_{P_{H}}(\BA)\cap G(\BA)^1$
  $$ \bigcup_{B\in \CP(\wt{P}_0,P)} (\varrho_{\wt{P}_0}\cap M_{P_{H}}(\BA)) \cdot (A_{\wt{P}_0}^\infty(P_{H},t_0)\cap A_B^{G,\infty}(P,t_0)) \cdot (K_H\cap M_{P_{H}}(\BA)) $$
  projects surjectively on $M_{P_{H}}(F)\bs M_{P_{H}}(\BA)\cap G(\BA)^1$. Recall that $\varrho_{\wt{P}_0}\subseteq\varrho_B$ for all $B\in \CP(\wt{P}_0,P)$ and that $K_H\subseteq K$ by our choices (see Section \ref{relstdpar1}). Therefore, the statement to be proved follows from Proposition \ref{propF^P(a)}. 
\end{proof}

\subsection{Integrability}

\begin{thm}\label{convergence1}
  For all sufficiently regular $T$ and all $s\in \BR$,
  $$ \sum_{\fo\in\CO} \int_{H(F)\bs H(\BA)\cap G(\BA)^1} |k_{f,\fo}^T(x)| |\Nrd (x_1)|_\BA^s dx < \infty, $$
  where we write $x=(x_1, x_2)\in GL_{p,D}(\BA)\times GL_{q,D}(\BA)$. 
\end{thm}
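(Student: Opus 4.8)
The strategy is the standard Arthur-type one adapted to this infinitesimal relative setting: reduce the integral of the truncated kernel, via the combinatorial identity of Lemma \ref{combinatoriclemma1}, to a sum of integrals over the complement of a truncated region, then bound each term using reduction theory and the rapid decay of Bruhat-Schwartz functions. First I would insert the partition of unity of Lemma \ref{combinatoriclemma1} (taken with $Q=G$) into the definition (\ref{deftruncation1}) of $k_{f,\fo}^T$, unfold the sum over $\delta\in P_H(F)\bs H(F)$, and rearrange so as to obtain an expression of the shape
$$ \sum_{\{P:\wt P_0\subseteq P\}} \sum_{\{P_1:\wt P_0\subseteq P_1\subseteq P\}} (-1)^{\dim(A_P/A_G)} \int_{P_{1,H}(F)\bs H(\BA)\cap G(\BA)^1} F^{P_1}(x,T)\,\sigma_{P_1}^P(H_{P_1}(x)-T_{P_1})\, |k_{f,P,\fo}(x)|\,|\Nrd(x_1)|_\BA^s\,dx, $$
where $\sigma_{P_1}^P$ is the usual combination of $\tau$ and $\widehat\tau$ functions (Arthur's $\sigma$), supported away from the walls; one then sums over $\fo\in\CO$ to bring back $k_{f,P}=\sum_\fo k_{f,P,\fo}$ and it suffices to bound, for each pair $P_1\subseteq P$, the integral with $|k_{f,P,\fo}|$ replaced by $\sum_\fo|k_{f,P,\fo}|\le\sum_{X\in\fm_P(F)}\int_{(\fn_P\cap\fs)(\BA)}|f(x^{-1}(X+U)x)|dU$ (using Corollary \ref{orbit1} so that the orbit decomposition is compatible with the $\fm_P\oplus\fn_P$ splitting).

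Next I would use Corollary \ref{zydcor2.5} to replace the integral over $\{m\in M_{P_{1,H}}(F)\bs M_{P_{1,H}}(\BA)\cap G(\BA)^1 : F^{P_1}(m,T)=1\}$ by an integral over a finite union of truncated Siegel sets $(\omega_{\wt P_0}\cap M_{P_{1,H}}(\BA))\cdot(A_{\wt P_0}^\infty(P_{1,H},t_0)\cap A_B^{G,\infty}(P_1,t_0,T_B))\cdot(K_H\cap M_{P_{1,H}}(\BA))$ for $B\in\CP(\wt P_0,P_1)$, at the cost of a harmless constant; then descend the integral over $P_{1,H}(F)\bs H(\BA)\cap G(\BA)^1$ through $N_{P_1}\cap H$, $M_{P_{1,H}}$ and $A_{P_1}^\infty$ in the usual way. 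On the Siegel set the variable $x$ is essentially controlled by $a\in A_B^{G,\infty}$, and the nilpotent integration variable $U\in(\fn_{P_1}\cap\fs)(\BA)$ as well as the coordinates of $X$ can be integrated out; conjugation by $a$ scales the root components of $X+U$, so one gets the familiar estimate that for $X$ in the range allowed by $F^{P_1}(x,T)$ and $\sigma_{P_1}^P$, the sum over $X\in\fm_P(F)$ of $\int|f(x^{-1}(X+U)x)|dU$ is bounded by $C\,e^{-\langle\lambda,H_B(a)\rangle}$ for some $\lambda$ in the positive cone, using the Bruhat-Schwartz decay of $f$ to absorb the root directions of $N_{P_1}$ and the nontrivially scaled coordinates of $\fm_P\cap\fs$ outside $\fm_{P_1}\cap\fs$. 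This is the Lie algebra analogue of Arthur's key estimate in \cite{MR518111}; the cross term (coming from $\sigma_{P_1}^P$ cutting off a region where $\langle\varpi,H(a)-T\rangle>0$ for some $\varpi\in\wh\Delta_{P_1}^P$ but $\langle\alpha,H(a)-T\rangle$ bounded for $\alpha\in\Delta_P^G$) is what forces one to keep track of the extra $P$ versus $P_1$.

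Finally I would check that the $|\Nrd(x_1)|_\BA^s$ factor causes no trouble: on $G(\BA)^1$ the function $x\mapsto|\Nrd(x_1)|_\BA^s$ is bounded on each truncated Siegel set of $M_{P_{1,H}}(\BA)\cap G(\BA)^1$ by something of polynomial growth in $\|H_B(a)\|$ — indeed $|\Nrd(x_1)|_\BA=e^{\langle\chi,H_{P_{1,H}}(x)\rangle}$ for an explicit character $\chi$, and since we work in $G(\BA)^1$ the relevant exponents lie in $\fa_{P_1}^G$ and are dominated by the exponential decay in $a$ — so it is absorbed, uniformly for $s$ in a compact set (and in fact for all $s\in\BR$ since the exponential gain beats any fixed polynomial-exponential loss once $T$ is sufficiently regular). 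Summing the resulting geometric-type series $\sum_{a} e^{-\langle\lambda,H_B(a)\rangle}$ over the lattice points of $A_B^{G,\infty}(P_1,t_0)$ converges, and the outer sums over $P_1\subseteq P$ and over $B$ are finite, giving the claimed finiteness. The main obstacle is the key estimate in the second-to-last paragraph: one must show that after truncation the sum over $X\in\fm_P(F)$ of the nilpotent integrals genuinely decays exponentially in the Siegel coordinate, which requires combining the support conditions imposed by $F^{P_1}$ and $\sigma_{P_1}^P$ with the scaling behaviour of the bilinear pairing and the Schwartz decay of $f$ exactly as in \cite{MR518111} and its Lie algebra variant \cite{MR1893921} — the bookkeeping of which roots are "large" and which are "bounded" on the relevant region, and verifying that the decay survives restriction from $G$ to $H$, is the delicate point.
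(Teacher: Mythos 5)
Your opening reductions — inserting the partition of unity from Lemma \ref{combinatoriclemma1}, rearranging to a sum over pairs $P_1\subseteq P_2$ weighted by $\chi_{P_1,P_2}^T=F^{P_1}(\cdot,T)\sigma_{P_1}^{P_2}(H_{P_1}(\cdot)-T_{P_1})$, descending to truncated Siegel sets via Corollary \ref{zydcor2.5}, and observing that $|\Nrd(x_1)|_\BA^s$ is a harmless exponential on $G(\BA)^1$ — all match the paper. The fatal problem is the step where you replace the alternating combination over $P$ by $\sum_{X\in\fm_P(F)}\int_{(\fn_P\cap\fs)(\BA)}|f(x^{-1}(X+U)x)|\,dU$ and claim this decays exponentially in the Siegel coordinate $a$ on the support of $\sigma_{P_1}^{P_2}$. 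It does not. Already for a single $\xi\in\fm_{P_1}(F)\cap\fo$ (and $P=P_1$), the change of variable $U\mapsto aUa^{-1}$ produces the Jacobian $e^{2\rho_{P_1,+}(H_B(a))}$, which \emph{grows} as $a$ moves to infinity in the cone cut out by $\sigma_{P_1}^{P_2}$; the Schwartz decay of $f$ in the nilpotent directions cannot compensate because $a^{-1}Ua$ shrinks $U$ rather than expanding it. Once you put absolute value bars around each individual $k_{f,P,\fo}$, you have destroyed exactly the cancellation inside the signed sum $k_{P_1,P_2,\fo}=\sum_{P_1\subseteq P\subseteq P_2}(-1)^{\dim(A_P/A_G)}k_{f,P,\fo}$ that makes the truncated kernel integrable, and the termwise integral diverges.

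The mechanism you are missing, which is the heart of Proposition \ref{proppfofconv1} in the paper and of Arthur's original argument in \cite{MR518111}, is Poisson summation on the intermediate nilpotent lattice. One first splits $\fm_P(F)\cap\fo$ along parabolics $R$ with $P_1\subseteq R\subseteq P$ into $\wt{\fm}_{P_1}^R(F)\cap\fo$ plus a lattice in $\fn_R^P\cap\fs$, then applies Poisson summation to the sum over that lattice: this replaces the sum over $X\in(\fn_R^P\cap\fs)(F)$ by a sum over Fourier-dual lattice points $\wh X\in(\ov{\fn}_R^P\cap\fs)(F)$ of a partial Fourier transform $\Phi_\xi^{x,R}(\wh X)$. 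Crucially, $\Phi_\xi^{x,R}$ is \emph{independent of $P$}, so the alternating sum over $P$ between $R$ and $P_2$ collapses — via \cite[Proposition 1.1]{MR518111} — to a single sum restricted to the set $(\ov{\fn}_R^{P_2})'\cap\fs$, which excludes $\wh X$ lying in any proper $\ov{\fn}_R^Q$. Only then may one take absolute values: the conjugation $\wh X\mapsto a^{-1}\wh X a$ now scales $\wh X$ \emph{outward} (opposite to $U$), so that after integration by parts the bound $\|a^{-1}\wh X a\|^{-m}$ (cf.\ (\ref{equchau4.10}) and \cite[(4.10)]{MR1893921}) yields an exponential gain $e^{-k_\alpha\alpha(H_B(a))}$ with $k_\alpha$ arbitrarily large for $\alpha\in\Delta_B^{P_2}\setminus\Delta_B^R$, which dominates the Jacobian, the $M$-lattice sum, and the $|\Nrd(a_1)|_\BA^s$ factor. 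Without this Fourier-duality step exposing the cancellation, the convergence cannot be established by termwise bounds.
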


\begin{proof}
  Let $P_1\subseteq P_2$ be a pair of relatively standard parabolic subgroups of $G$. Following \cite[\S6]{MR518111}, for $T_1\in \fa_{P_1}$, we define the characteristic function
  $$ \sigma_{P_1}^{P_2}(T_1):=\sum_{\{Q:P_2\subseteq Q\}} (-1)^{\dim(A_{P_2}/A_Q)} \tau_{P_1}^Q(T_1) \wh{\tau}_Q^G(T_1). $$
  Recall that for $P\supseteq P_1$ a relatively standard parabolic subgroup of $G$, we have (see \cite[p. 943]{MR518111})
  $$ \tau_{P_1}^P(T_1)\wh{\tau}_P^G(T_1)=\sum_{\{P_2:P\subseteq P_2\}}\sigma_{P_1}^{P_2}(T_1). $$
  Denote $P_{1,H}:=P_1\cap H$. For $x\in P_{1,H}(F)\bs H(\BA)$, we put
  $$ \chi_{P_1, P_2}^T(x):=F^{P_1}(x, T)\sigma_{P_1}^{P_2}(H_{P_1}(x)-T_{P_1}), $$
  and
  $$ k_{P_1, P_2, \fo}(x):=\sum_{\{P: P_1\subseteq P\subseteq P_2\}} (-1)^{\dim(A_P/A_{G})} k_{f,P,\fo}(x). $$

  Using Lemma \ref{combinatoriclemma1} and the left invariance of $H_P$ and $k_{f,P,\fo}$ by $P_H(F)$, we have
\begin{equation}\label{eqpfint}
 k_{f, \fo}^T(x)=\sum_{\{P_1,P_2: \wt{P}_0\subseteq P_1\subseteq P_2\}} \sum_{\delta\in P_{1,H}(F)\bs H(F)} \chi_{P_1, P_2}^T(\delta x) k_{P_1, P_2, \fo}(\delta x) 
\end{equation}
for $x\in H(F)\bs H(\BA)$. Thus
  \[\begin{split} 
        &\sum_{\fo\in\CO} \int_{H(F)\bs H(\BA)\cap G(\BA)^1} |k_{f,\fo}^T(x)| |\Nrd (x_1)|_\BA^s dx \\
  \leq &\sum_{\fo\in\CO} \sum_{\{P_1,P_2: \wt{P}_0\subseteq P_1\subseteq P_2\}} \int_{P_{1,H}(F)\bs H(\BA)\cap G(\BA)^1} \chi_{P_1, P_2}^T(x) |k_{P_1, P_2, \fo}(x)| |\Nrd (x_1)|_\BA^s dx. 
  \end{split}\]
  It suffices to prove that for any pair of relatively standard parabolic subgroups $P_1\subseteq P_2$ of $G$,
  $$ \sum_{\fo\in\CO} \int_{P_{1,H}(F)\bs H(\BA)\cap G(\BA)^1} \chi_{P_1, P_2}^T(x) |k_{P_1, P_2, \fo}(x)| |\Nrd (x_1)|_\BA^s dx <\infty. $$
  If $P_1=P_2\neq G$, by \cite[Lemma 6.1]{MR518111}, we have $\sigma_{P_1}^{P_2}=0$ and then $\chi_{P_1, P_2}^T=0$, so the integration is zero. 
 If $P_1=P_2=G$,  by Corollary \ref{zydcor2.5}, every $x\in H(F)\bs H(\BA)\cap G(\BA)^1$ with $F^G(x,T)=1$ has a representative in the compact subset
  $$ \bigcup_{B\in \CP(\wt{P}_0,G)} \varrho_{\wt{P}_0} \cdot A_B^{G,\infty}(G,t_0,T_B) \cdot K_H, $$
so the integral is bounded by an integral of a continuous function over a compact subset and thus convergent. Therefore, we reduce ourselves to proving the following proposition.
\end{proof}

\begin{prop}\label{proppfofconv1}
  Let $f\in\CS(\fs(\BA))$, $s\in\BR$ and $P_1\subsetneq P_2$ be two relatively standard parabolic subgroups of $G$. Fix any two positive real numbers $\epsilon_0$ and $N$. Then there exists a constant $C$ such that
  $$ \sum_{\fo\in\CO} \int_{P_{1,H}(F)\bs H(\BA)\cap G(\BA)^1} \chi_{P_1, P_2}^T(x) |k_{P_1, P_2, \fo}(x)| |\Nrd (x_1)|_\BA^s dx\leq Ce^{-N\parallel T\parallel} $$
  for all sufficiently regular $T$ satisfying $\alpha(T)\geq\epsilon_0\parallel T\parallel$ for any $\alpha\in\Delta_{P_0}^G$.
\end{prop}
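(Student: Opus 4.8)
The plan is to adapt, to the present infinitesimal symmetric-space setting, Arthur's proof of the convergence of the truncated kernel (\cite[\S7]{MR518111}, together with its Lie algebra version in \cite{MR1893921}; see also \cite{MR3835522} for the group symmetric-space case). First I would descend the integral over $P_{1,H}(F)\bs H(\BA)\cap G(\BA)^1$ to a Siegel domain. Since $P_1$ is relatively standard, $P_{1,H}=P_1\cap H=M_{P_{1,H}}N_{P_{1,H}}$; using the Iwasawa decomposition relative to $P_1$ and the left $N_{P_{1,H}}(\BA)$- and $M_{P_{1,H}}(F)$-invariance of both $\chi_{P_1,P_2}^T$ and $k_{P_1,P_2,\fo}$, the integral unfolds into an integral over $K_H$, over $M_{P_{1,H}}(F)\bs M_{P_{1,H}}(\BA)^1\cap G(\BA)^1$ (of finite volume), and over $a$ ranging in $A_{P_1}^{G,\infty}$ subject to the constraints imposed by $F^{P_1}(\cdot,T)$ and $\sigma_{P_1}^{P_2}(H_{P_1}(\cdot)-T_{P_1})$. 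By Corollary \ref{zydcor2.5} the $M_{P_{1,H}}$- and $K_H$-variables may be taken in a fixed compact set, so all the decay must come from the $a$-cone, on which $\sigma_{P_1}^{P_2}$ forces $\alpha(H_{P_1}(a)-T_{P_1})>0$ for every $\alpha\in\Delta_{P_1}^{P_2}$ while the complementary coordinates stay bounded. The extra factor $|\Nrd(x_1)|_\BA^s$ is, on this domain, the product of a bounded function with $e^{\langle\mu,H_{P_1}(a)\rangle}$ for a fixed linear form $\mu$, so it only shifts the exponents below by a bounded amount and is harmless.

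The crux is to rewrite $k_{P_1,P_2,\fo}(x)=\sum_{\{P:P_1\subseteq P\subseteq P_2\}}(-1)^{\dim(A_P/A_G)}k_{f,P,\fo}(x)$ in a way that exhibits rapid decay. Observing that $\int_{(\fn_P\cap\fs)(\BA)}f(x^{-1}(X+U)x)\,dU$ is the trivial Fourier coefficient of $U\mapsto f(x^{-1}(X+U)x)$ on $(\fn_P\cap\fs)(\BA)$, Poisson summation in the directions $(\fn_{P_1}^{P_2}\cap\fs)$ — whose dual under the pairing $\langle X_1,X_2\rangle=\Trd(X_1X_2)$ is $(\ov{\fn}_{P_1}^{P_2}\cap\fs)$ — converts each $k_{f,P,\fo}$ into a main term (a genuine lattice sum) plus non-trivial-frequency terms, and the alternating sum over $P$ kills the main terms. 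Using Corollary \ref{orbit1} and Proposition \ref{propofpipar1} to split each $\fo\cap(\fm_P\cap\fs)(F)$ along nilpotent directions so that these manipulations stay compatible with the class $\fo$, one is left with an identity of the shape
$$k_{P_1,P_2,\fo}(x)=\sum_{X\in(\fm_{P_1}\cap\fs)(F)\cap\fo}\ \sum_{\xi}\,\Phi\big(x^{-1}(X+\xi)x\big),$$
where $\Phi$ is a partial Fourier transform of $f$ in the $(\ov{\fn}_{P_1}\cap\fs)$-variable — still a Bruhat–Schwartz function in that variable — and $\xi$ runs over those points of $(\ov{\fn}_{P_1}\cap\fs)(F)$ having a non-zero component in the root spaces indexed by $\Delta_{P_1}^{P_2}$; the content of the cancellation is precisely that only such non-zero frequencies survive.

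The estimate is then routine. For $x=umak$ in the reduced domain, conjugating $\xi$ by $a$ rescales its $\beta$-root-space component by $e^{-\beta(H_{P_1}(a))}$, and since $\xi$ has a non-zero component for some root $\beta$ dominating an $\alpha\in\Delta_{P_1}^{P_2}$ with $\alpha(H_{P_1}(a)-T_{P_1})>0$ and $\alpha(T)\geq\epsilon_0\|T\|$, the Schwartz decay of $\Phi$ in the $\ov{\fn}_{P_1}$-direction gives, for any prescribed order, a bound by $C'e^{-N'\|T\|}(1+\|X\|)^{-N}(1+\|\xi\|)^{-N}$ with $N'$ as large as we wish. Summing over $\xi$ in the relevant lattice and over $X\in(\fm_{P_1}\cap\fs)(F)$ as $\fo$ ranges over $\CO$ (the latter two together amount to summing a Schwartz function over a lattice in $(\fm_{P_1}\cap\fs)(\BA)$, handled by the standard lattice-point count in a Siegel domain), then integrating over the compact $K_H$, the finite-volume $M_{P_{1,H}}(F)\bs M_{P_{1,H}}(\BA)^1\cap G(\BA)^1$, and the $a$-cone (whose polynomial volume growth is absorbed by the exponential), yields the bound $Ce^{-N\|T\|}$ after taking $N'$ large relative to $N$ and $\epsilon_0$.

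The main obstacle will be the second step: carrying out the Poisson-summation-and-cancellation rewriting so that (i) no trivial frequency remains, (ii) the surviving frequencies genuinely lie in the $\Delta_{P_1}^{P_2}$-directions, so that the $\sigma_{P_1}^{P_2}$-constraint dilates them, and (iii) all partial Fourier transforms are taken inside $\fs$ and correctly paired by $\Trd(X_1X_2)$ — all while remaining compatible with the decomposition of classes $\fo$ through $\fm_P\cap\fs$ afforded by Corollary \ref{orbit1}. Once this structural identity is in place, everything else is the familiar chain of Schwartz-decay estimates, the only extra bookkeeping being to keep the decay rate a fixed large multiple of $\|T\|$ using the hypothesis $\alpha(T)\geq\epsilon_0\|T\|$ for all $\alpha\in\Delta_{P_0}^G$.
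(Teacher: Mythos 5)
Your overall strategy matches the paper's exactly: reduce to a Siegel domain via Corollary \ref{zydcor2.5}, apply Poisson summation in the $\fs$-nilpotent directions under the bilinear form $\langle X_1,X_2\rangle=\Trd(X_1X_2)$, let the alternating sum over $P$ kill the trivial-frequency contributions, absorb $|\Nrd(x_1)|_\BA^s$ as a harmless exponential on the $a$-cone, and then run the standard Schwartz-decay estimate against the $\sigma_{P_1}^{P_2}$-constraint. So I will focus on the one place where your account is not just schematic but actually incomplete.

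The structural identity you write down,
$$k_{P_1,P_2,\fo}(x)=\sum_{X\in(\fm_{P_1}\cap\fs)(F)\cap\fo}\ \sum_{\xi}\,\Phi\big(x^{-1}(X+\xi)x\big),$$
with $X\in\fm_{P_1}$ and $\xi$ a frequency in $(\ov{\fn}_{P_1}\cap\fs)(F)$, captures only \emph{one} piece of what the alternating sum produces. For a general $P$ with $P_1\subseteq P\subseteq P_2$, the lattice $\fm_P(F)\cap\fo$ is strictly larger than $(\fm_{P_1}(F)\cap\fo)\oplus(\fn_{P_1}^P\cap\fs)(F)$: it also contains elements whose ``semisimple type'' is genuinely intermediate, i.e.\ that lie in $\fm_R$ but not in $\fm_Q$ for any $P_1\subseteq Q\subsetneq R$, for some $R$ strictly between $P_1$ and $P$. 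The paper makes this precise by first partitioning, via Corollary \ref{orbit1},
$$\fm_P(F)\cap\fo=\bigsqcup_{\{R:\,P_1\subseteq R\subseteq P\}}\big(\wt{\fm}_{P_1}^R(F)\cap\fo\big)\oplus(\fn_R^P\cap\fs)(F),$$
then Poisson-summing in the $\fn_R^P$-direction for each fixed $R$. The crucial technical point — which your draft elides — is that the resulting partial Fourier transform $\Phi_\xi^{x,R}$ becomes \emph{independent of $P$} only after this $R$-indexed splitting (because $\langle U,\wh{X}\rangle=0$ for $U\in\fn_P\cap\fs$, $\wh{X}\in\ov{\fn}_R^P\cap\fs$, so one can repackage it as an integral over $\fn_R\cap\fs$ with no reference to $P$). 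Without introducing $R$, the frequency lattices $\ov{\fn}_{P_1}^P$ vary with $P$ and there is no clean way to pull the alternating sum $\sum_{P}(-1)^{\dim(A_P/A_G)}$ inside and apply \cite[Proposition 1.1]{MR518111}. The correct identity therefore reads
$$k_{P_1,P_2,\fo}(x)=(-1)^{\dim(A_{P_2}/A_G)}\sum_{\{R:\,P_1\subseteq R\subseteq P_2\}}\ \sum_{\xi\in\wt{\fm}_{P_1}^R(F)\cap\fo}\ \sum_{\wh{X}\in((\ov{\fn}_R^{P_2})'\cap\fs)(F)}\Phi_\xi^{x,R}(\wh{X}),$$
and your version is precisely the $R=P_1$ term. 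You flagged this step as ``the main obstacle'' — you are right, and what's missing is the sum over intermediate $R$; once it is in place, the $P$-independence of $\Phi_\xi^{x,R}$ and the collapse of the alternating sum are exactly the mechanism you anticipated, and the subsequent Schwartz-decay estimate proceeds as you describe.
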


For $x\in H(F)\bs H(\BA)$, define
$$ k_{f,G}(x):=\sum_{\fo\in\CO} k_{f,G,\fo}(x)=\sum_{X\in\fs(F)} f(x^{-1}Xx) $$
and
\begin{equation}\label{deftruncation2}
 k_{f}^T(x):=\sum_{\fo\in\CO}k_{f,\fo}^{T}(x). 
\end{equation}

\begin{coro}\label{comparewithnaive1}
  Let $f\in\CS(\fs(\BA))$ and $s\in\BR$. Fix any two positive real numbers $\epsilon_0$ and $N$. Then there exists a constant $C$ such that
  $$ \int_{H(F)\bs H(\BA)\cap G(\BA)^1} |k_{f}^{T}(x)-F^{G}(x,T)k_{f,G}(x)||\Nrd (x_1)|_\BA^s dx\leq Ce^{-N\parallel T\parallel} $$
  for all sufficiently regular $T$ satisfying $\alpha(T)\geq\epsilon_0\parallel T\parallel$ for any $\alpha\in\Delta_{P_0}^G$.
\end{coro}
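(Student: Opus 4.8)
The plan is to derive the corollary directly from Proposition \ref{proppfofconv1} and the expansion of $k_f^T$ established inside the proof of Theorem \ref{convergence1}. First I would recall that in that proof we obtained
$$ k_{f, \fo}^T(x)=\sum_{\{P_1,P_2: \wt{P}_0\subseteq P_1\subseteq P_2\}} \sum_{\delta\in P_{1,H}(F)\bs H(F)} \chi_{P_1, P_2}^T(\delta x)\, k_{P_1, P_2, \fo}(\delta x), $$
and hence, summing over $\fo\in\CO$ and using \eqref{deftruncation2},
$$ k_{f}^T(x)=\sum_{\{P_1,P_2: \wt{P}_0\subseteq P_1\subseteq P_2\}} \sum_{\delta\in P_{1,H}(F)\bs H(F)} \chi_{P_1, P_2}^T(\delta x)\, k_{P_1, P_2, G}(\delta x), $$
where $k_{P_1,P_2,G}(x)=\sum_{\fo}k_{P_1,P_2,\fo}(x)$. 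The key observation is to isolate the single term $P_1=P_2=G$: in that case $P_{1,H}(F)\bs H(F)$ is a point, $\chi_{G,G}^T(x)=F^G(x,T)\sigma_G^G(H_G(x)-T_G)=F^G(x,T)$ (since $\sigma_G^G\equiv 1$), and $k_{G,G,G}(x)=k_{f,G}(x)$ by the definition of $k_{f,G,\fo}$ together with Corollary \ref{orbit1} (for $P=G$ the term $k_{f,G,\fo}(x)=\sum_{X\in\fs(F)\cap\fo}f(x^{-1}Xx)$, so summing over $\fo$ recovers $\sum_{X\in\fs(F)}f(x^{-1}Xx)$). Therefore this distinguished term is exactly $F^G(x,T)k_{f,G}(x)$, and
$$ k_f^T(x)-F^G(x,T)k_{f,G}(x)=\sum_{\substack{\{P_1,P_2: \wt{P}_0\subseteq P_1\subseteq P_2\}\\ (P_1,P_2)\neq(G,G)}} \sum_{\delta\in P_{1,H}(F)\bs H(F)} \chi_{P_1, P_2}^T(\delta x)\, k_{P_1, P_2, G}(\delta x). $$

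Next I would bound the integral of the absolute value. Taking absolute values inside the sum over $(P_1,P_2)$ and over $\delta$, and unfolding the integral over $H(F)\bs H(\BA)\cap G(\BA)^1$ against the sum over $\delta\in P_{1,H}(F)\bs H(F)$ to an integral over $P_{1,H}(F)\bs H(\BA)\cap G(\BA)^1$ (using left $P_{1,H}(F)$-invariance of $\chi_{P_1,P_2}^T$, of $k_{P_1,P_2,G}$, and of $|\Nrd(x_1)|_\BA$), one gets
$$ \int_{H(F)\bs H(\BA)\cap G(\BA)^1} |k_{f}^{T}(x)-F^{G}(x,T)k_{f,G}(x)|\,|\Nrd (x_1)|_\BA^s\, dx \leq \sum_{\substack{(P_1,P_2)\neq(G,G)}} \int_{P_{1,H}(F)\bs H(\BA)\cap G(\BA)^1} \chi_{P_1, P_2}^T(x)\,|k_{P_1, P_2, G}(x)|\,|\Nrd (x_1)|_\BA^s\, dx. $$
For each pair with $P_1=P_2\neq G$ the integrand vanishes since $\sigma_{P_1}^{P_1}=0$ by \cite[Lemma 6.1]{MR518111}. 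For each pair with $P_1\subsetneq P_2$, Proposition \ref{proppfofconv1} (applied with the same $\epsilon_0$ and with $N$) gives a constant $C_{P_1,P_2}$ with the bound $C_{P_1,P_2}e^{-N\|T\|}$, after majorising $|k_{P_1,P_2,G}|=|\sum_\fo k_{P_1,P_2,\fo}|\leq \sum_\fo |k_{P_1,P_2,\fo}|$. Summing the finitely many $C_{P_1,P_2}$ over the pairs $(P_1,P_2)$ with $\wt{P}_0\subseteq P_1\subsetneq P_2$ yields a constant $C:=\sum_{P_1\subsetneq P_2}C_{P_1,P_2}$ and the desired inequality for all sufficiently regular $T$ with $\alpha(T)\geq\epsilon_0\|T\|$ for all $\alpha\in\Delta_{P_0}^G$.

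There is essentially no genuine obstacle here: the content has all been front-loaded into Proposition \ref{proppfofconv1}, and the corollary is a bookkeeping exercise. The only point requiring a little care is the identification of the $(G,G)$-term with $F^G(x,T)k_{f,G}(x)$ — one must check that $\chi_{G,G}^T(x)=F^G(x,T)$ (immediate, as $\sigma_G^G\equiv 1$ and the unique coset $\delta$ is trivial) and that $k_{G,G,G}(x)=k_{f,G}(x)$, which is the definition of $k_{f,G,\fo}$ for $P=G$ summed over $\fo$ (with no $\fn_G$-integration since $N_G$ is trivial). Everything else is the interchange of finite sums with the integral and the triangle inequality, together with the already-proved exponential decay.
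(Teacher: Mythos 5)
Your proof is correct and is exactly the argument the paper leaves implicit: one subtracts off the single term $(P_1,P_2)=(G,G)$ from the decomposition of $k_{f,\fo}^T$ established in the proof of Theorem \ref{convergence1}, observing that $\chi_{G,G}^T = F^G(\cdot,T)$ (since $\sigma_G^G\equiv 1$) and $\sum_{\fo}k_{G,G,\fo}=k_{f,G}$, and then applies Proposition \ref{proppfofconv1} (after the triangle inequality and unfolding) to each remaining pair $P_1\subsetneq P_2$, with the pairs $P_1=P_2\neq G$ contributing nothing because $\sigma_{P_1}^{P_1}\equiv 0$. This is precisely why the paper states the corollary without a separate proof.
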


\begin{proof}
	For $x\in H(F)\bs H(\BA)$, we have 
	$$ k_{f}^{T}(x)-F^{G}(x,T)k_{f,G}(x)=\sum_{\fo\in\CO}k_{f,\fo}^{T}(x)-F^{G}(x,T)\sum_{\fo\in\CO} k_{f,G,\fo}(x)=\sum_{\fo\in\CO} (k_{f,\fo}^{T}(x)-F^{G}(x,T)k_{f,G,\fo}(x)). $$
	By \eqref{eqpfint}, since $\chi_{P_1, P_2}^T=0$ for $P_1=P_2\neq G$, we have 
	$$ k_{f,\fo}^{T}(x)-F^{G}(x,T)k_{f,G,\fo}(x)=\sum_{\{P_1,P_2: \wt{P}_0\subseteq P_1\subsetneq P_2\}} \sum_{\delta\in P_{1,H}(F)\bs H(F)} \chi_{P_1, P_2}^T(\delta x) k_{P_1, P_2, \fo}(\delta x). $$
	Therefore, 
	\[\begin{split}
		&\int_{H(F)\bs H(\BA)\cap G(\BA)^1} |k_{f}^{T}(x)-F^{G}(x,T)k_{f,G}(x)||\Nrd (x_1)|_\BA^s dx \\ 
		\leq&\sum_{\fo\in\CO} \int_{H(F)\bs H(\BA)\cap G(\BA)^1} | k_{f,\fo}^{T}(x)-F^{G}(x,T)k_{f,G,\fo}(x)||\Nrd (x_1)|_\BA^s dx \\ 
		\leq&\sum_{\fo\in\CO} \sum_{\{P_1,P_2: \wt{P}_0\subseteq P_1\subsetneq P_2\}} \int_{P_{1, H}(F)\bs H(\BA)\cap G(\BA)^1} \chi_{P_1, P_2}^T(x) |k_{P_1, P_2, \fo}(x)||\Nrd (x_1)|_\BA^s dx. 
	\end{split}\]
	We now conclude by applying Proposition \ref{proppfofconv1} to the last expression. 
\end{proof}

\begin{proof}[Proof of Proposition \ref{proppfofconv1}]
  Let $P$ be a relatively standard parabolic subgroup of $G$ such that $P_1\subseteq P\subseteq P_2$. For any $X\in\fm_P(F)\cap\fo$, there exists a unique relatively standard parabolic subgroup $R$ of $G$ such that $P_1\subseteq R\subseteq P$ and $X\in (\fm_P(F)\cap\fr(F)\cap\fo)-\left(\bigcup\limits_{P_1\subseteq Q\subsetneq R} \fm_P(F)\cap\fq(F)\cap\fo\right)$. Write
  $$ \wt{\fm}_{P_1}^R:=\fm_R-\left(\bigcup_{\{Q:P_1 \subseteq Q \subsetneq R\}} \fm_R\cap\fq\right) $$
  and
  $$ \fn_R^P:=\fn_R\cap\fm_P. $$
  By Corollary \ref{orbit1}, we have
  $$ (\fm_P(F)\cap\fr(F)\cap\fo)-\left(\bigcup\limits_{P_1\subseteq Q\subsetneq R} \fm_P(F)\cap\fq(F)\cap\fo\right)=(\wt{\fm}_{P_1}^R(F)\cap\fo)\oplus((\fn_R^P\cap\fs)(F)). $$
  Hence
  \[\begin{split}
   k_{f,P,\fo}(x)&=\sum_{X\in\fm_P(F)\cap\fo} \int_{\fn_P\cap\fs(\BA)} f(x^{-1}(X+U)x) dU \\
   &=\sum_{\{R:P_1\subseteq R\subseteq P\}} \sum_{\xi\in\wt{\fm}_{P_1}^R(F)\cap\fo} \sum_{X\in(\fn_R^P\cap\fs)(F)} \int_{(n_P\cap\fs)(\BA)} f(x^{-1}(\xi+X+U)x) dU. 
  \end{split}\]

  Denote by $\ov{P}$ the semi-standard parabolic subgroup of $G$ opposite to $P$ and write
  $$ \ov{\fn}_R^P:=\fn_{\ov{R}}\cap\fm_P. $$
  Note that the restriction of $\langle\cdot,\cdot\rangle$ (defined in (\ref{bilform})) to $((\fn_R^P\cap\fs)(\BA))\times((\ov{\fn}_R^P\cap\fs)(\BA))$ is also non-degenerate. For any $\xi\in(\fm_R\cap\fs)(\BA)$, applying the Poisson summation formula to the Bruhat-Schwartz function $\int_{(\fn_P\cap\fs)(\BA)} f(x^{-1}(\xi+\cdot+U)x) dU$, we get
  $$ \sum_{X\in(\fn_R^P\cap\fs)(F)} \int_{(\fn_P\cap\fs)(\BA)} f(x^{-1}(\xi+X+U)x) dU = \sum_{\wh{X}\in(\ov{\fn}_R^P\cap\fs)(F)} \Phi_\xi^{x,R}(\wh{X}), $$
  where the partial Fourier transform $\Phi_\xi^{x,R}$ of $\int_{(\fn_P\cap\fs)(\BA)} f(x^{-1}(\xi+\cdot+U)x) dU$ is defined by (recall the notation $\Psi$ in Section \ref{secft})
  $$ \Phi_\xi^{x,R}(\wh{X}):=\int_{(\fn_R^P\cap\fs)(\BA)} \left(\int_{(\fn_P\cap\fs)(\BA)} f(x^{-1}(\xi+X+U)x) dU\right) \Psi(\langle X,\wh{X}\rangle) dX $$
  for all $\wh{X}\in (\ov{\fn}_R^P\cap\fs)(\BA)$. 
  Since $\langle U,\wh{X}\rangle=0$ for $U\in (\fn_P\cap\fs)(\BA)$ and $\wh{X}\in(\ov{\fn}_R^P\cap\fs)(\BA)$, as well as $\fn_R\cap\fs=(\fn_P\cap\fs)\oplus(\fn_R^P\cap\fs)$, we have
  $$ \Phi_\xi^{x,R}(\wh{X})=\int_{(\fn_R\cap\fs)(\BA)} f(x^{-1}(\xi+U)x)\Psi(\langle U,\wh{X}\rangle) dU $$
  for all $\wh{X}\in (\ov{\fn}_R^P\cap\fs)(\BA)$, 
  whose expression is actually independent of $P$.

  To sum up,
  $$ k_{f,P,\fo}(x)=\sum_{\{R:P_1\subseteq R\subseteq P\}} \sum_{\xi\in\wt{\fm}_{P_1}^R(F)\cap\fo} \sum_{\wh{X}\in(\ov{\fn}_R^P\cap\fs)(F)} \Phi_\xi^{x,R}(\wh{X}). $$
  Hence
  \[\begin{split}
    k_{P_1, P_2, \fo}(x)&=\sum_{\{P: P_1\subseteq P\subseteq P_2\}} (-1)^{\dim(A_P/A_{G})} k_{f,P,\fo}(x) \\
    &=\sum_{\{P: P_1\subseteq P\subseteq P_2\}} (-1)^{\dim(A_P/A_{G})} \left(\sum_{\{R:P_1\subseteq R\subseteq P\}} \sum_{\xi\in\wt{\fm}_{P_1}^R(F)\cap\fo} \sum_{\wh{X}\in(\ov{\fn}_R^P\cap\fs)(F)} \Phi_\xi^{x,R}(\wh{X})\right) \\
    &=\sum_{\{R:P_1\subseteq R\subseteq P_2\}} \sum_{\xi\in\wt{\fm}_{P_1}^R(F)\cap\fo} \left(\sum_{\{P:R\subseteq P\subseteq P_2\}} (-1)^{\dim(A_P/A_{G})} \sum_{\wh{X}\in(\ov{\fn}_R^P\cap\fs)(F)} \Phi_\xi^{x,R}(\wh{X})\right)
  \end{split}\]
 For a relatively standard parabolic subgroup $P_3$ of $G$ containing $R$, we write
  $$ (\ov{\fn}_R^{P_3})':=\ov{\fn}_R^{P_3}-\left(\bigcup_{\{Q:R\subseteq Q\subsetneq P_3\}}\ov{\fn}_R^Q\right). $$
Then
$$ \sum_{\wh{X}\in(\ov{\fn}_R^P\cap\fs)(F)} \Phi_\xi^{x,R}(\wh{X})=\sum_{\{P_3: R\subseteq P_3\subseteq P\}} \sum_{\wh{X}\in((\ov{\fn}_R^{P_3})'\cap\fs)(F)} \Phi_\xi^{x,R}(\wh{X}). $$
  We have
\[\begin{split} 
&\sum_{\{P:R\subseteq P\subseteq P_2\}} (-1)^{\dim(A_P/A_{G})} \sum_{\wh{X}\in(\ov{\fn}_R^P\cap\fs)(F)} \Phi_\xi^{x,R}(\wh{X}) \\
=&\sum_{\{P:R\subseteq P\subseteq P_2\}} (-1)^{\dim(A_P/A_{G})} \sum_{\{P_3: R\subseteq P_3\subseteq P\}} \sum_{\wh{X}\in((\ov{\fn}_R^{P_3})'\cap\fs)(F)} \Phi_\xi^{x,R}(\wh{X}) \\
=&(-1)^{\dim(A_{P_2}/A_{G})} \sum_{\{P_3: R\subseteq P_3\subseteq P_2\}} \sum_{\wh{X}\in((\ov{\fn}_R^{P_3})'\cap\fs)(F)} \Phi_\xi^{x,R}(\wh{X}) \sum_{\{P:P_3\subseteq P\subseteq P_2\}} (-1)^{\dim(A_P/A_{P_2})}. \\
\end{split}\]
From \cite[Proposition 1.1]{MR518111}, we know that
\begin{displaymath}
\sum_{\{P:P_3\subseteq P\subseteq P_2\}} (-1)^{\dim(A_P/A_{P_2})} = \left\{ \begin{array}{ll}
1, & \textrm{if $P_3=P_2$; }\\
0, & \textrm{otherwise. }\\
\end{array} \right.
\end{displaymath}
We obtain
$$ \sum_{\{P:R\subseteq P\subseteq P_2\}} (-1)^{\dim(A_P/A_{G})} \sum_{\wh{X}\in(\ov{\fn}_R^P\cap\fs)(F)} \Phi_\xi^{x,R}(\wh{X})=(-1)^{\dim(A_{P_2}/A_{G})} \sum_{\wh{X}\in((\ov{\fn}_R^{P_2})'\cap\fs)(F)} \Phi_\xi^{x,R}(\wh{X}). $$
  Thus
  $$ k_{P_1, P_2, \fo}(x)=(-1)^{\dim(A_{P_2}/A_{G})} \sum_{\{R:P_1\subseteq R\subseteq P_2\}} \sum_{\xi\in\wt{\fm}_{P_1}^R(F)\cap\fo} \sum_{\wh{X}\in((\ov{\fn}_R^{P_2})'\cap\fs)(F)} \Phi_\xi^{x,R}(\wh{X}).  $$

  Now
  \[\begin{split}
    &\sum_{\fo\in\CO} \int_{P_{1,H}(F)\bs H(\BA)\cap G(\BA)^1} \chi_{P_1, P_2}^T(x) |k_{P_1, P_2, \fo}(x)| |\Nrd (x_1)|_\BA^s dx \\
    \leq &\sum_{\fo\in\CO} \int_{P_{1,H}(F)\bs H(\BA)\cap G(\BA)^1} \chi_{P_1, P_2}^T(x) \left( \sum_{\{R:P_1\subseteq R\subseteq P_2\}} \sum_{\xi\in\wt{\fm}_{P_1}^R(F)\cap\fo} \sum_{\wh{X}\in((\ov{\fn}_R^{P_2})'\cap\fs)(F)} |\Phi_\xi^{x,R}(\wh{X})| \right) |\Nrd (x_1)|_\BA^s dx \\
    = &\sum_{\{R:P_1\subseteq R\subseteq P_2\}} \int_{P_{1,H}(F)\bs H(\BA)\cap G(\BA)^1} \chi_{P_1, P_2}^T(x) \sum_{\xi\in(\wt{\fm}_{P_1}^R\cap\fs)(F)} \sum_{\wh{X}\in((\ov{\fn}_R^{P_2})'\cap\fs)(F)} |\Phi_\xi^{x,R}(\wh{X})| |\Nrd (x_1)|_\BA^s dx. 
  \end{split}\]
  We reduce ourselves to bounding
  \begin{equation}\label{equation1.1}
    \int_{P_{1,H}(F)\bs H(\BA)\cap G(\BA)^1} \chi_{P_1, P_2}^T(x) \sum_{\xi\in(\wt{\fm}_{P_1}^R\cap\fs)(F)} \sum_{\wh{X}\in((\ov{\fn}_R^{P_2})'\cap\fs)(F)} |\Phi_\xi^{x,R}(\wh{X})| |\Nrd (x_1)|_\BA^s dx
  \end{equation}
  for any fixed relatively standard parabolic subgroup $R$ of $G$ such that $P_1\subseteq R\subseteq P_2$.

  By Iwasawa decomposition and our choices of measures, the integral over $P_{1,H}(F)\bs H(\BA)\cap G(\BA)^1$ can be decomposed as integrals over 
  $$ (n_1,m_1,k)\in N_{P_{1,H}}(F)\bs N_{P_{1,H}}(\BA)\times M_{P_{1,H}}(F)\bs M_{P_{1,H}}(\BA)\cap G(\BA)^1\times K_H. $$
  Then
  \[\begin{split}
    &\int_{P_{1,H}(F)\bs H(\BA)\cap G(\BA)^1} \chi_{P_1, P_2}^T(x) \sum_{\xi\in(\wt{\fm}_{P_1}^R\cap\fs)(F)} \sum_{\wh{X}\in((\ov{\fn}_R^{P_2})'\cap\fs)(F)} |\Phi_\xi^{x,R}(\wh{X})| |\Nrd (x_1)|_\BA^s dx \\
    =&\int_{K_H} \int_{M_{P_{1,H}}(F)\bs M_{P_{1,H}}(\BA)\cap G(\BA)^1} \int_{N_{P_{1,H}}(F)\bs N_{P_{1,H}}(\BA)} F^{P_1}(m_1,T) \sigma_{P_1}^{P_2}(H_{P_1}(m_1)-T_{P_1}) \\
    &\cdot \sum_{\xi\in(\wt{\fm}_{P_1}^R\cap\fs)(F)} \sum_{\wh{X}\in((\ov{\fn}_R^{P_2})'\cap\fs)(F)} |\Phi_\xi^{n_1 m_1 k,R}(\wh{X})| e^{-2\rho_{P_{1,H}}(H_{P_{1,H}}(m_1))} |\Nrd (m_{1, 1})|_\BA^s dn_1 dm_1 dk, 
  \end{split}\]
where we write $m_1=(m_{1,1}, m_{1, 2})\in GL_{p, D}(\BA)\times GL_{q, D}(\BA)$. 

By Corollary \ref{zydcor2.5},  the following subset of $M_{P_{1,H}}(\BA)\cap G(\BA)^1$%
  $$ \bigcup_{B\in \CP(\wt{P}_0,P_1)} (\varrho_{\wt{P}_0}\cap M_{P_{1,H}}(\BA)) \cdot (A_{\wt{P}_0}^\infty(P_{1,H},t_0)\cap A_B^{G,\infty}(P_1,t_0,T_B)) \cdot (K_H\cap M_{P_{1,H}}(\BA)) $$
  projects surjectively on $\{m_1\in M_{P_{1,H}}(F)\bs M_{P_{1,H}}(\BA)\cap G(\BA)^1|F^{P_1}(m_1,T)=1\}$. 
  Let $\Gamma_1\subseteq N_{P_{1,H}}(\BA)$ be a compact subset which projects surjectively on $N_{P_{1,H}}(F)\bs N_{P_{1,H}}(\BA)$. Then $\Gamma_1\cdot (\varrho_{\wt{P}_0}\cap M_{P_{1,H}}(\BA))$ is a compact subset of $N_{\wt{P}_0}(\BA)$. Let $\Gamma_2\subseteq N_{P_{2,H}}(\BA)$, $\Gamma_3\subseteq N_{\wt{P}_0}^{P_{2,H}}(\BA)$ and $\Gamma_4\subseteq M_{\wt{P}_0}(\BA)^1$ be compact subsets (independent of $T$) such that $\Gamma_1\cdot (\varrho_{\wt{P}_0}\cap M_{P_{1,H}}(\BA))\subseteq \Gamma_2\Gamma_3\Gamma_4$. 
We have 
  \[\begin{split}
    &\int_{P_{1,H}(F)\bs H(\BA)\cap G(\BA)^1} \chi_{P_1, P_2}^T(x) \sum_{\xi\in(\wt{\fm}_{P_1}^R\cap\fs)(F)} \sum_{\wh{X}\in((\ov{\fn}_R^{P_2})'\cap\fs)(F)} |\Phi_\xi^{x,R}(\wh{X})| |\Nrd (x_1)|_\BA^s dx \\
    \leq &c_1 \sum_{B\in \CP(\wt{P}_0,P_1)} \int_{K_H} \int_{\Gamma_4} \int_{A_B^{G,\infty}(P_1,t_0,T_B)} \int_{\Gamma_3} \int_{\Gamma_2} \sigma_{P_1}^{P_2}(H_{P_1}(a)-T_{P_1}) \sum_{\xi\in(\wt{\fm}_{P_1}^R\cap\fs)(F)} \sum_{\wh{X}\in((\ov{\fn}_R^{P_2})'\cap\fs)(F)} \\
    &|\Phi_\xi^{n_2 n a m k,R}(\wh{X})| e^{-2\rho_{\wt{P}_0}(H_{B}(a))} |\Nrd (a_1)|_\BA^s dn_2 dn da dm dk,
  \end{split}\]
  where $c_1=\vol(K_H\cap M_{P_{1,H}}(\BA))$ is a constant independent of $T$. 

\begin{lem}\label{invofn_2pf1}
Let $x\in H(\BA), \xi\in(\fm_R\cap\fs)(\BA)$ and $\wh{X}\in(\ov{\fn}_R\cap\fs)(\BA)$. Let $R\subseteq P_2$ be a pair of relatively standard parabolic subgroups of $G$. For $n_2\in N_{P_{2,H}}(\BA)$, we have
  $$ \Phi_{\xi}^{n_2 x,R}(\wh{X})=\Phi_{\xi}^{x,R}(\wh{X}). $$
\end{lem}

\begin{proof}[Proof of Lemma \ref{invofn_2pf1}]
  Let $U_2:=n_2^{-1}\xi n_2-\xi.$ Then
  \[\begin{split}
    \Phi_{\xi}^{n_2 x,R}(\wh{X})&=\int_{(\fn_R\cap\fs)(\BA)} f(x^{-1}n_2^{-1}(\xi+U)n_2 x)\Psi(\langle U,\wh{X}\rangle) dU \\
    &=\int_{(\fn_R\cap\fs)(\BA)} f(x^{-1}(\xi+U_2+n_2^{-1}Un_2)x)\Psi(\langle U,\wh{X}\rangle) dU.
  \end{split}\]
  Since both $U_2$ and $n_2^{-1}Un_2-U$ belong to $(\fn_{P_2}\cap\fs)(\BA)$, we have
  $$ \langle U_2+n_2^{-1}Un_2-U,\wh{X}\rangle=0, $$
  so
  $$ \Phi_{\xi}^{n_2 x,R}(\wh{X})=\int_{(\fn_R\cap\fs)(\BA)} f(x^{-1}(\xi+U_2+n_2^{-1}Un_2)x)\Psi(\langle U_2+n_2^{-1}Un_2,\wh{X}\rangle) dU. $$
  Because the change of variables $U_2+n_2^{-1}Un_2\mapsto U$ does not change the Haar measure, we obtain
  $$ \Phi_{\xi}^{n_2 x,R}(\wh{X})=\Phi_{\xi}^{x,R}(\wh{X}). $$
\end{proof}

  Using Lemma \ref{invofn_2pf1}, we get
$$ \Phi_\xi^{n_2 n a m k,R}(\wh{X})=\Phi_\xi^{n a m k,R}(\wh{X})=\Phi_\xi^{a a^{-1} n a m k,R}(\wh{X}). $$
  By change of variables $a^{-1}Ua\mapsto U$, using the fact that
  $$ \langle U,\wh{X}\rangle=\langle a^{-1}Ua,a^{-1}\wh{X}a\rangle, $$
  we have
  $$ \Phi_\xi^{n_2 n a m k,R}(\wh{X})=e^{2\rho_{R,+}(H_{B}(a))} \Phi_{a^{-1}\xi a}^{a^{-1} n a m k,R}(a^{-1}\wh{X}a), $$
  where we denote by $\rho_{R,+}$ the half of the sum of weights (with multiplicities) for the action of $A_0$ on $\fn_R\cap \fs$.
  From the reduction theory (see \cite[p. 944]{MR518111}), we know that for $a$ satisfying $\sigma_{P_1}^{P_2}(H_{P_1}(a)-T_{P_1})\neq 0$, $a^{-1} n a$ belongs to a compact subset independent of $T$. In sum,
  \[\begin{split}
    &\int_{P_{1,H}(F)\bs H(\BA)\cap G(\BA)^1} \chi_{P_1, P_2}^T(x) \sum_{\xi\in(\wt{\fm}_{P_1}^R\cap\fs)(F)} \sum_{\wh{X}\in((\ov{\fn}_R^{P_2})'\cap\fs)(F)} |\Phi_\xi^{x,R}(\wh{X})||\Nrd (x_1)|_\BA^s dx \\
    \leq &c_2 \sum_{B\in \CP(\wt{P}_0,P_1)} \sup_{y\in \Gamma} \int_{A_B^{G,\infty}(P_1,t_0,T_B)} e^{(2\rho_{R,+}-2\rho_{\wt{P}_0})(H_B(a))} \sigma_{P_1}^{P_2}(H_{P_1}(a)-T_{P_1}) \\
    & \cdot \sum_{\xi\in(\wt{\fm}_{P_1}^R\cap\fs)(F)} \sum_{\wh{X}\in((\ov{\fn}_R^{P_2})'\cap\fs)(F)} |\Phi_{a^{-1}\xi a}^{y,R}(a^{-1}\wh{X}a)||\Nrd (a_1)|_\BA^s da,
  \end{split}\]
  where $c_2$ is a constant independent of $T$, and $\Gamma$ is a compact subset of $H(\BA)\cap G(\BA)^1$ independent of $T$.

  Denote by $\CO_F$ the ring of integers of $F$. Fix an $F$-basis for each weight space for the action of $A_0(F)$ on $\fs(F)$. Since the function $f\in\CS(\fs(\BA))$ is compactly supported at finite places, there exists an $\CO_F$-scheme structure on such weight spaces independent of $T$ such that the sums over $\xi\in(\wt{\fm}_{P_1}^R\cap\fs)(F)$ and $\wh{X}\in((\ov{\fn}_R^{P_2})'\cap\fs)(F)$ can be restricted to $\xi\in\wt{\fm}_{P_1}^R(F)\cap\fs(\CO_F)$ and $\wh{X}\in(\ov{\fn}_R^{P_2})'(F)\cap\fs(\CO_F)$ 
respectively. To see this, one may consult \cite[\S1.9 and p. 363]{MR1893921} for details, and one needs to replace $\fm_R$ and $\fn_R$ in {\it{loc. cit.}} by $\fm_R\cap\fs$ and $\fn_R\cap\fs$ respectively. 

  Fix an $\BR$-basis $\{e_1, \cdots, e_\ell\}$ of the $\BR$-linear space $\fs(F\otimes_{\BQ} \BR)$, whose dimension is denoted by $\ell$, consisting of eigenvectors for the action of $A_B^\infty$. Let $\|\cdot\|$ be the standard Euclidean norm with respect to this basis. Consider a sufficiently large integer $k>0$ to be described precisely at the end of the proof. There exists an even integer $m\geq 0$, a real number $k_\alpha\geq 0$ for each $\alpha\in\Delta_{B}^{P_2}$, and a real number $c_3>0$ satisfying the following conditions (cf. \cite[(4.10) in p. 372]{MR1893921} or \cite[p. 946-947]{MR518111}): 
\begin{enumerate}[\indent (1)]
\item if $R=P_2$, $m=0$; 
\end{enumerate}
\begin{enumerate}[\indent (2)]
\item for all $\alpha\in\Delta_{B}^{P_2}-\Delta_{B}^{R}, k_\alpha\geq k$; 
\end{enumerate}
\begin{enumerate}[\indent (3)]
\item for all $a_0\in A_B^\infty(P_2, t_0)$, 
\begin{equation}\label{equchau4.10}
 \sum_{\wh{X}\in (\ov{\fn}_R^{P_2})'(F)\cap\fs(\CO_F)} \|a_0^{-1}\wh{X}a_0\|^{-m}\leq c_3 \prod_{\alpha\in\Delta_B^{P_2}} e^{-k_\alpha \alpha(H_B(a_0))}. 
\end{equation}
\end{enumerate}
We fix such data. 

We extend any differential operator $\partial$ on $\fs(F\otimes_\BQ \BR)$ to $\fs(\BA)$ by defining $\partial(f_\infty\otimes\chi^\infty):=(\partial f_\infty)\otimes\chi^\infty$ (see Section \ref{BSandHaar1} for the notation). We also write 
  $$ \Phi_\xi^{x,R,\partial}(\wh{X}):=\int_{(\fn_R\cap\fs)(\BA)} (\partial f)(x^{-1}(\xi+U)x) \Psi(\langle U,\wh{X}\rangle) dU. $$
For a multi-index $\overrightarrow{i}=(i_1,\cdots, i_\ell)\in\BZ_{\geq0}^\ell$, denote by $\partial^{\overrightarrow{i}}:=(\frac{\partial}{\partial e_1})^{i_1}\cdots(\frac{\partial}{\partial e_\ell})^{i_\ell}$ the corresponding differential operator on $\fs(F\otimes_\BQ \BR)$. 	
Since $m$ is even, the function $\wh{X}\mapsto\|\wh{X}\|^m$ is a polynomial with $\BZ$-coefficients on $(\ov{\fn}_R^{P_2}\cap\fs)(F\otimes_\BQ \BR)$. 
  Invoking integration by parts (see \cite[Theorem 3.3.1.(f)]{MR2121678} for example), we see that there exists a differential operator $\partial^{(m)}$ on $\fs(F\otimes_\BQ \BR)$ satisfying the following two conditions. 
  \begin{enumerate}[\indent (1)]
\item $\partial^{(m)}$ is a finite $\BZ$-linear combination of $\partial^{\overrightarrow{i}}$'s with the properties: 
\begin{itemize}
	\item the sum of components of $\overrightarrow{i}$ is $m$; 
	\item all components of $\overrightarrow{i}$ are even integers;  
	\item all non-zero components of $\overrightarrow{i}$ correspond to eigenvectors lying in $(\fn_R^{P_2}\cap\fs)(F\otimes_\BQ\BR)$. 
\end{itemize}
\end{enumerate}
  \begin{enumerate}[\indent (2)]
\item    For $\wh{X}\neq 0$, we have 
  $$ |\Phi_{\xi}^{y,R}(\wh{X})|=\|\wh{X}\|^{-m} |\Phi_{\xi}^{y,R,\Ad(y^{-1})\partial^{(m)}}(\wh{X})|. $$
\end{enumerate}
We fix such a $\partial^{(m)}$. Suppose that $\partial^{(m)}=\sum\limits_{\overrightarrow{i}\in\CI}r_{\overrightarrow{i}} \partial^{\overrightarrow{i}}$, where $\CI$ is a finite set of multi-indices and  $r_{\overrightarrow{i}}\in\BZ$. Then for $\wh{X}\neq 0$, we have 
	$$ |\Phi_{a^{-1}\xi a}^{y,R}(a^{-1}\wh{X}a)|\leq c_4(y) \|a^{-1}\wh{X}a\|^{-m} \sum_{\overrightarrow{i}\in\CI} |\Phi_{a^{-1}\xi a}^{y,R,\partial^{\overrightarrow{i}}}(a^{-1}\wh{X}a)|, $$
 where $c_4(y)$ is a continuous function of $y$. 

  Denote by $\Phi(A_{B}, \fm_R\cap\fs)$ the set of weights of $A_{B}$ in $\fm_R\cap\fs$. For any $\mu\in\Phi(A_{B}, \fm_R\cap\fs)$, let $\fm_\mu$ be the corresponding weight space. From \cite[\S41]{MR0165033}, we know that there exists a function $\phi_\mu\in\CS(\fm_\mu(\BA))$ for each $\mu\in\Phi(A_{B}, \fm_R\cap\fs)$ and a function $\phi_{\fn_R\cap\fs}\in\CS((\fn_R\cap\fs)(\BA))$ such that for all $\xi+U\in(\fm_R\cap\fs)(\BA)\oplus(\fn_R\cap\fs)(\BA)$ and $y\in\Gamma$,
  $$ \sum_{\overrightarrow{i}\in\CI} |(\partial^{\overrightarrow{i}} f)(y^{-1}(\xi+U)y)|\leq \left(\prod_{\mu\in\Phi(A_{B}, \fm_R\cap\fs)}\phi_\mu(\xi_\mu)\right) \phi_{\fn_R\cap\fs}(U), $$
where $\xi_\mu$ denotes the projection of $\xi$ to $\fm_\mu(\BA)$. 

Let $a\in A_B^{G,\infty}(P_1,t_0,T_B)$ be such that $\sigma_{P_1}^{P_2}(H_{P_1}(a)-T_{P_1})\neq 0$. It is shown in \cite[p. 375]{MR1893921} that $\alpha(H_B(a))>t_0$ for all $\alpha\in\Delta_B^{P_2}$. 
  Now we have 
  \[\begin{split}
    &\sum_{\xi\in(\wt{\fm}_{P_1}^R\cap\fs)(F)} \sum_{\wh{X}\in((\ov{\fn}_R^{P_2})'\cap\fs)(F)} |\Phi_{a^{-1}\xi a}^{y,R}(a^{-1}\wh{X}a)| \\
    = &\sum_{\xi\in\wt{\fm}_{P_1}^R(F)\cap\fs(\CO_F)} \sum_{\wh{X}\in(\ov{\fn}_R^{P_2})'(F)\cap\fs(\CO_F)} |\Phi_{a^{-1}\xi a}^{y,R}(a^{-1}\wh{X}a)| \\
    \leq &\sum_{\xi\in\wt{\fm}_{P_1}^R(F)\cap\fs(\CO_F)} \sum_{\wh{X}\in(\ov{\fn}_R^{P_2})'(F)\cap\fs(\CO_F)} c_4(y) \|a^{-1}\wh{X}a\|^{-m} \sum_{\overrightarrow{i}\in\CI} |\Phi_{a^{-1}\xi a}^{y,R,\overrightarrow{i}}(a^{-1}\wh{X}a)| \\
    \leq &c_5 \sum_{\xi\in\wt{\fm}_{P_1}^R(F)\cap\fs(\CO_F)}\left(\prod_{\mu\in\Phi(A_{B}, \fm_R\cap\fs)}\phi_\mu(\mu(a)^{-1}\xi_\mu)\right) \cdot \sum_{\wh{X}\in(\ov{\fn}_R^{P_2})'(F)\cap\fs(\CO_F)} \|a^{-1}\wh{X}a\|^{-m} \\
    \leq &c_5 c_3 \sum_{\xi\in\wt{\fm}_{P_1}^R(F)\cap\fs(\CO_F)}\left(\prod_{\mu\in\Phi(A_{B}, \fm_R\cap\fs)}\phi_\mu(\mu(a)^{-1}\xi_\mu)\right) \cdot \prod_{\alpha\in\Delta_B^{P_2}}e^{-k_\alpha \alpha(H_B(a))},
  \end{split}\]
  where $c_5:=\sup\limits_{y\in\Gamma}c_4(y) \int_{(\fn_R\cap\fs)(\BA)}\phi_{\fn_R\cap\fs}(U)dU$, and we have applied (\ref{equchau4.10}) to $a_0=a$ in the last inequality. Thus
  \[\begin{split}
    &\int_{P_{1,H}(F)\bs H(\BA)\cap G(\BA)^1} \chi_{P_1, P_2}^T(x) \sum_{\xi\in(\wt{\fm}_{P_1}^R\cap\fs)(F)} \sum_{\wh{X}\in((\ov{\fn}_R^{P_2})'\cap\fs)(F)} |\Phi_\xi^{x,R}(\wh{X})||\Nrd (x_1)|_\BA^s dx \\
    \leq &c_2 c_5 c_3 \sum_{B\in \CP(\wt{P}_0,P_1)} \int_{A_B^{G,\infty}(P_1,t_0,T_B)} e^{(2\rho_{R,+}-2\rho_{\wt{P}_0})(H_B(a))} \sigma_{P_1}^{P_2}(H_{P_1}(a)-T_{P_1}) \\
    & \cdot \sum_{\xi\in\wt{\fm}_{P_1}^R(F)\cap\fs(\CO_F)}\left(\prod_{\mu\in\Phi(A_{B}, \fm_R\cap\fs)}\phi_\mu(\mu(a)^{-1}\xi_\mu)\right) \cdot \left(\prod_{\alpha\in\Delta_B^{P_2}}e^{-k_\alpha \alpha(H_B(a))}\right) |\Nrd (a_1)|_\BA^s da.
  \end{split}\]

  Denote by $\Sigma_B^{\fm_R\cap\fs}$ the positive weights of $\fm_R\cap\fs$ under the action of $A_B$. Consider the subsets $S$ of $\Sigma_B^{\fm_R\cap\fs}$ with the following property: for all $\alpha\in\Delta_B^R-\Delta_B^{P_1}$, there exists $\mu\in S$ such that its $\alpha$-coordinate is $>0$. Then
  \[\begin{split}
    &\sum_{\xi\in\wt{\fm}_{P_1}^R(F)\cap\fs(\CO_F)}\left(\prod_{\mu\in\Phi(A_{B}, \fm_R\cap\fs)}\phi_\mu(\mu(a)^{-1}\xi_\mu)\right) \\
    \leq &\sum_{S}\left[\prod_{\mu\in S} \left(\sum_{\xi_{-}\in\fm_{-\mu}(\CO_F)-\{0\}} \phi_{-\mu}(\mu(a)\xi_{-})\right)\right] \left[\prod_{\mu\in\Sigma_B^{\fm_R\cap\fs}} \left(\sum_{\xi_{+}\in\fm_{\mu}(\CO_F)} \phi_{\mu}(\mu(a^{-1})\xi_{+})\right)\right]. \\ 
  \end{split}\]
  The rest of the proof is analogous to that of \cite[Proposition 4.4]{MR1893921}, and we shall only sketch main steps and point out additional ingredients. 
By the argument in \cite[p. 373]{MR1893921}, for $a\in A_{B}^\infty(P_2, t_0)$, we have bounds for two factors 
$$ \prod_{\mu\in S} \left(\sum_{\xi_{-}\in\fm_{-\mu}(\CO_F)-\{0\}} \phi_{-\mu}(\mu(a)\xi_{-})\right) \leq c_6 \prod_{\alpha\in\Delta_B^R-\Delta_B^{P_1}}e^{-k\alpha(H_B(a))} $$
and 
$$ \prod_{\mu\in\Sigma_B^{\fm_R\cap\fs}} \left(\sum_{\xi_{+}\in\fm_{\mu}(\CO_F)} \phi_{\mu}(\mu(a^{-1})\xi_{+})\right) \leq c_7 e^{(2\rho_{B,+}-2\rho_{R,+})(H_B(a))}, $$
where $c_6$ and $c_7$ are constants independent of $T$. 
We deduce that 
  \[\begin{split}
	&e^{(2\rho_{R,+}-2\rho_{\wt{P}_0})(H_B(a))} \cdot \sum_{\xi\in\wt{\fm}_{P_1}^R(F)\cap\fs(\CO_F)}\left(\prod_{\mu\in\Phi(A_{B}, \fm_R\cap\fs)}\phi_\mu(\mu(a)^{-1}\xi_\mu)\right) \cdot \left(\prod_{\alpha\in\Delta_B^{P_2}}e^{-k_\alpha \alpha(H_B(a))}\right)|\Nrd (a_1)|_\BA^s \\
    \leq&c_8 e^{(2\rho_{R,+}-2\rho_{\wt{P}_0})(H_B(a))}  \cdot \left(\prod_{\alpha\in\Delta_B^R-\Delta_B^{P_1}}e^{-k\alpha(H_B(a))}\right) e^{(2\rho_{B,+}-2\rho_{R,+})(H_B(a))} \cdot \left(\prod_{\alpha\in\Delta_B^{P_2}}e^{-k_\alpha \alpha(H_B(a))}\right)|\Nrd (a_1)|_\BA^s \\ 
    =&c_8 e^{(2\rho_{B,+}-2\rho_{\wt{P}_0})(H_B(a))}|\Nrd (a_1)|_\BA^s \cdot \left(\prod_{\alpha\in\Delta_B^R-\Delta_B^{P_1}}e^{-k\alpha(H_B(a))}\right) \cdot \left(\prod_{\alpha\in\Delta_B^{P_2}}e^{-k_\alpha \alpha(H_B(a))}\right), \\ 
  \end{split}\]
  where $c_8$ is a constant independent of $T$. 
The last expression only differs from \cite[(4.17) in p. 375]{MR1893921} by a factor 
  $$ e^{(2\rho_{B,+}-2\rho_{\wt{P}_0})(H_B(a))}|\Nrd (a_1)|_\BA^s=e^{(2\rho_{B,+}-2\rho_{\wt{P}_0})(H_B(a))}\left(\frac{|\Nrd (a_1)|_{\BA}^{1/p}}{|\Nrd (a_2)|_{\BA}^{1/q}}\right)^{\frac{pq}{p+q}s} $$
in the form of $e^{\lambda(H_B(a))}$, where $\lambda\in(\fa_B^G)^\ast$. We shall see that this discrepancy will be unimportant when we follow the end of the proof of \cite[Proposition 4.4]{MR1893921} (cf. \cite[end of \S3.2]{MR3835522}). Write 
$$ \lambda=\left(\sum_{\alpha\in\Delta_B^{P_2}} c_\alpha \alpha\right)+\lambda', $$ 
where $c_\alpha\in\BR$ and $\lambda'\in(\fa_{P_2}^G)^\ast$. 
By \cite[Lemma 8.3.(b)]{MR2192011}, there exists a constant $c_9>0$ such that for all $T_1\in\fa_{P_1}^{G}$ satisfying $\sigma_{P_1}^{P_2}(T_1)=1$, we have 
$$ \lambda'(T_2)\leq c_9\left(\sum\limits_{\beta\in\Delta_{P_1}^{P_2}} \beta(T_1^2)\right), $$
where we write $T_1=T_1^2+T_2$ with $T_1^2\in\fa_{P_1}^{P_2}$ and $T_2\in\fa_{P_2}^G$. 
Hence, when $a\in A_B^{G,\infty}(P_1,t_0,T_B)$ satisfies $\sigma_{P_1}^{P_2}(H_{P_1}(a)-T_{P_1})\neq 0$, we have 
\[\begin{split}
	e^{\lambda(H_B(a))}&=\left(\prod_{\alpha\in\Delta_B^{P_2}} e^{c_\alpha \alpha(H_B(a))} \right) \cdot e^{\lambda'(H_{P_2}(a))} \\
	&=\left(\prod_{\alpha\in\Delta_B^{P_2}} e^{c_\alpha \alpha(H_B(a))} \right) \cdot e^{\lambda'(H_{P_2}(a)-T_{P_2}^G)} e^{\lambda'(T_{P_2}^G)} \\
	&\leq \left(\prod_{\alpha\in\Delta_B^{P_2}} e^{c_\alpha \alpha(H_B(a))} \right) \cdot \left(\prod_{\beta\in\Delta_{P_1}^{P_2}} e^{c_9t_\beta}\right) e^{c_{10}\|T\|}, 
\end{split}\]
where $T_{P_2}^G$ is the projection of $T_{P_2}$ to $\fa_{P_2}^G$ via the decomposition $\fa_{P_2}=\fa_{P_2}^G\oplus\fa_G$, we let $\sum\limits_{\beta\in\Delta_{P_1}^{P_2}}t_\beta\varpi_\beta^\vee$ with $t_\beta\in\BR$ be the projection of $H_{P_1}(a)-T_{P_1}$ to $\fa_{P_1}^{P_2}$ via the decomposition $\fa_{P_1}=\fa_{P_1}^{P_2}\oplus\fa_{P_2}$, and $c_{10}$ is a constant independent of $T$. To bound \eqref{equation1.1}, it suffices to plug this extra factor into \cite[(4.18) in p. 375]{MR1893921}. More precisely, set  
\begin{displaymath}
k'_\alpha := \left\{ \begin{array}{ll}
k_\alpha+c_\alpha, & \textrm{for all $\alpha\in\Delta_B^{P_2}-\Delta_B^R$; }\\
k_\alpha+k+c_\alpha, & \textrm{for all $\alpha\in\Delta_B^{R}-\Delta_B^{P_1}$. }\\
\end{array} \right. 
\end{displaymath}
Then there exists a constant $c_{11}>0$ independent of $T$ such that 
\[\begin{split}
	&\int_{P_{1,H}(F)\bs H(\BA)\cap G(\BA)^1} \chi_{P_1, P_2}^T(x) \sum_{\xi\in(\wt{\fm}_{P_1}^R\cap\fs)(F)} \sum_{\wh{X}\in((\ov{\fn}_R^{P_2})'\cap\fs)(F)} |\Phi_\xi^{x,R}(\wh{X})| |\Nrd (x_1)|_\BA^s dx \\
	\leq&c_{11}\vol(A_B^{P_1,\infty}(t_0,T_B)) e^{c_{10}\|T\|} \prod_{\alpha\in\Delta_B^{P_2}-\Delta_B^{P_1}} \left(e^{-k'_\alpha \alpha(T_B)} \int_0^\infty (1+t)^{n_\alpha} e^{(c_9-k'_\alpha) t} dt\right), 
\end{split}\]
where $A_B^{P_1,\infty}(t_0,T_B):=A_B^\infty(P_1, t_0, T_B)\cap M_{P_1}(\BA)^1$, and $n_\alpha$'s are positive integers independent of $T$. We know that $\vol(A_B^{P_1,\infty}(t_0,T_B))$ is of polynomial growth in $T$ and that $k_\alpha\geq k+c_\alpha$ for all $\alpha\in\Delta_B^{P_2}-\Delta_B^{P_1}\neq\emptyset$. For sufficiently regular $T$ satisfying $\alpha(T_B)\geq\epsilon_0\|T\|$ for all $\alpha\in\Delta_B^G$, if we choose sufficiently large $k$, then the last expression is bounded by $Ce^{-N\|T\|}$ with a constant $C$ independent of $T$. 
\end{proof}


\section{\textbf{Exponential polynomial distributions}}\label{dist1}

Let $T$ be sufficiently regular, $\fo\in\CO$ and $\eta$ be the quadratic character of $\BA^\times/F^\times$ attached to a quadratic field extension $E/F$. For $f\in\CS(\fs(\BA))$ and $s\in \BC$, define
\begin{equation}\label{Jo1}
 J_\fo^{G,T}(\eta, s, f):=\int_{H(F)\bs H(\BA)\cap G(\BA)^1} k_{f,\fo}^T(x) \eta(\Nrd (x)) |\Nrd (x_1)|_\BA^s dx 
\end{equation}
and
$$ J^{G,T}(\eta, s, f):=\int_{H(F)\bs H(\BA)\cap G(\BA)^1} k_{f}^T(x) \eta(\Nrd (x)) |\Nrd (x_1)|_\BA^s dx, $$
where $k_{f,\fo}^T(x)$ and $k_{f}^T(x)$ are defined by (\ref{deftruncation1}) and (\ref{deftruncation2}) respectively, and we write $x=(x_1, x_2)\in GL_{p,D}(\BA)\times GL_{q,D}(\BA)$. From Theorem \ref{convergence1}, we know that $J_\fo^{G,T}(\eta,s,\cdot)$ and $J^{G,T}(\eta,s,\cdot)$ are well-defined distributions on $\CS(\fs(\BA))$ and that
$$ J^{G,T}(\eta,s,f)=\sum_{\fo\in\CO}J_\fo^{G,T}(\eta,s,f), $$
which is an analogue of the geometric side of Arthur's trace formula.

\subsection{A generalised case in the product form}\label{pdtform1}

Let $Q$ be a relatively standard parabolic subgroup of $G$. Then
$$ M_Q\simeq GL_{p_1+q_1,D}\times\cdot\cdot\cdot\times GL_{p_l+q_l,D} $$
and
$$ M_{Q_H}\simeq GL_{p_1,D}\times\cdot\cdot\cdot\times GL_{p_l,D} \times GL_{q_1,D}\times\cdot\cdot\cdot\times GL_{q_l,D}, $$
where $\sum\limits_{i=1}^{l}p_i=p, \sum\limits_{i=1}^{l}q_i=q$ and we allow $p_i$ or $q_i$ to be zero. The tangent space of $M_Q/M_{Q_H}$ at the neutral element is
$$ \fm_Q\cap\fs\simeq \bigoplus\limits_{\{1\leq i\leq l|p_iq_i\neq0\}} \mat(0,\Mat_{{p_i}\times{q_i}, D},\Mat_{{q_i}\times{p_i}, D},0). $$
The conjugate action of $M_{Q_H}(F)$ on $(\fm_Q\cap\fs)(F)$ can be described as follows: 
\begin{enumerate}[\indent (1)]
\item if $p_iq_i\neq0$, $\mat(GL_{p_i}(D),,,{GL_{q_i}(D)})$ acts on $\mat(0,\Mat_{{p_i}\times{q_i}}(D),\Mat_{{q_i}\times{p_i}}(D),0)$ by conjugation; 
\end{enumerate}
\begin{enumerate}[\indent (2)]
\item if $p_iq_i=0$, $\mat(GL_{p_i}(D),,,{GL_{q_i}(D)})$ acts on $0$ (viewed as a 0-dimensional vector space) trivially. 
\end{enumerate}
We may generalise integrability in last section to the product setting here whose proof is similar. 

Define a relation of equivalence on $(\fm_Q\cap\fs)(F)$ which is similar to that on $\fs(F)$ on each component. We denote by $\CO^{\fm_Q\cap\fs}$ the set of equivalent classes for this relation. For $\fo\in\CO$, the intersection $\fo\cap\fm_Q(F)$ is a finite (perhaps empty) union of classes $\fo_1,\cdot\cdot\cdot,\fo_t\in\CO^{\fm_Q\cap\fs}$. Fix the minimal parabolic subgroup $\wt{P}'_0:=\wt{P}_0\cap M_{Q_H}=\wt{P}_0\cap M_Q$ of $M_{Q_H}$ and its Levi factor $M_0$. We say that a parabolic subgroup $P'$ of $M_Q$ is semi-standard (resp. relatively standard) if $M_0\subseteq P'$ (resp. $\wt{P}'_0\subseteq P'$). Notice that there exists a bijection from the set of semi-standard (resp. relatively standard) parabolic subgroups of $G$ contained in $Q$ to the set of semi-standard (resp. relatively standard) parabolic subgroups of $M_Q$ given by $P\mapsto P\cap M_Q$, whose inverse is given by $P'\mapsto P'N_Q$. 

Choose $\varsigma_Q\in\Omega^G$ (not unique) such that $\varsigma_Q P_0\subseteq Q$. Fix the minimal semi-standard parabolic subgroup $P'_0:=(\varsigma_Q P_0)\cap M_Q$ of $M_Q$ depending on the choice of $\varsigma_Q$. For any semi-standard parabolic subgroup $P'$ of $M_Q$ and $T\in\fa_0$, denote by $T_{P'}$ the projection of $sT$ in $\fa_{P'}$, where $s\in\Omega^{M_Q}$ such that $s P'_0\subseteq P'$. For $s\in\Omega^{M_Q}$ and a semi-standard parabolic subgroup $P\subseteq Q$ of $G$, we see that $sP'_0\subseteq P\cap M_Q$ if and only if $s\varsigma_Q P_0\subseteq P$. Then $(\varsigma_Q T)_{P\cap M_Q}=T_P$ which is independent of the choice of $\varsigma_Q$. 
If $T\in\fa_{P_0}^+$ is sufficiently regular with respect to $P_0\subseteq G$, then $\varsigma_Q T\in\fa_{P'_0}^+$ is sufficiently regular with respect to $P'_0\subseteq M_Q$. 

Let $f'\in\CS((\fm_Q\cap\fs)(\BA))$, $P'$ be a relatively standard parabolic subgroup of $M_Q$ and $1\leq j\leq t$. Write $P'_H:=P'\cap M_{Q_H}=P'\cap H$. For $x\in M_{P'_H}(F) N_{P'_H}(\BA)\bs M_{Q_H}(\BA)$, define
\begin{equation}\label{LevikfPo1}
 k^{M_Q}_{f',P',\fo_j}(x):=\sum_{X\in\fm_{P'}(F)\cap\fo_j} \int_{(\fn_{P'}\cap\fs)(\BA)} f'(x^{-1}(X+U)x) dU. 
\end{equation}
For $T\in\fa_0$ and $x\in M_{Q_H}(F)\bs M_{Q_H}(\BA)$, define
$$ k_{f',\fo_j}^{Q,T}(x):=\sum_{\{P':\wt{P}'_0\subseteq P'\}} (-1)^{\dim(A_{P'}/A_{M_Q})} \sum_{\delta\in P'_H(F)\bs M_{Q_H}(F)} \wh{\tau}_{P'}^{M_Q}(H_{P'}(\delta x)-T_{P'})\cdot k^{M_Q}_{f',P',\fo_j}(\delta x). $$
For sufficiently regular $T\in\fa_{P_0}^+$ and $\{s_i\}_{1\leq i\leq l}\in\BC^l$, define
$$ J_{\fo_j}^{Q,T}(\eta,\{s_i\},f'):=\int_{M_{Q_H}(F)\bs M_{Q_H}(\BA)\cap M_Q(\BA)^1} k_{f',\fo_j}^{Q,\varsigma_Q T}(x)\eta(\Nrd(x))\prod_{1\leq i\leq l}|\Nrd (x_{i,1})|_{\BA}^{s_i} dx, $$
where we write $x=(x_1,...,x_l)\in GL_{p_1+q_1,D}(\BA)\times\cdot\cdot\cdot\times GL_{p_l+q_l,D}(\BA)$ and $x_i=(x_{i,1}, x_{i,2})\in GL_{p_i,D}(\BA)\times GL_{q_i,D}(\BA)$. As explained above, $k_{f',\fo_j}^{Q,\varsigma_Q T}$ and $J_{\fo_j}^{Q,T}$ are independent of the choice of $\varsigma_Q$. Then we have well-defined distributions $J_{\fo_j}^{Q,T}(\eta,\{s_i\},\cdot)$ on $\CS((\fm_Q\cap\fs)(\BA))$. It only depends on the projection of $\varsigma_Q T$ to $\fa_{\varsigma_Q P_0}^{Q}$ and does not depend on $T_Q$. Now we define
\begin{equation}\label{LeviJo1}
 J_\fo^{Q,T}:=\sum_{j=1}^{t} J_{\fo_j}^{Q,T} 
\end{equation}
and
$$ J^{Q,T}:=\sum_{\fo\in\CO} J_{\fo}^{Q,T}. $$

For $f\in\CS(\fs(\BA))$, define $f_Q^\eta\in\CS((\fm_Q\cap\fs)(\BA))$ by
\begin{equation}\label{equation1.2}
 f_Q^\eta(X):=\int_{K_H} \int_{(\fn_Q\cap\fs)(\BA)} f(k^{-1}(X+V)k) \eta(\Nrd (k)) dVdk
\end{equation}
for all $X\in(\fm_Q\cap\fs)(\BA)$. 

\subsection{$\omega$-stable parabolic subgroups}\label{defomgstb}

In our case, we can embed $G$ into $\fg$ in the standard way. For any linear subspace $\fv$ of $\fg$, we denote by $\fv^\times$ the intersection of $\fv$ and $G$ in $\fg$. Assume that $p=q$. Let us denote $n:=p=q$. Then $\fs^\times(F)$ is the union of classes in $\CO^\times$. Let $\omega:=\mat(0,1_n,1_n,0)\in G(F)$. In this section, we shall freely use the notation in Section \ref{relstdpar1}. Then $\omega$ is the element in $G$ exchanging $e_i$ and $f_i$ for all $1\leq i\leq n$. We see that $\omega\wt{P}_0\omega^{-1}=\wt{P}_0$. We say that a semi-standard parabolic subgroup $Q$ of $G$ is ``$\omega$-stable'' if $\omega Q\omega^{-1}=Q$. By Chevalley's theorem, this condition is equivalent to $\omega\in Q$. 
Recall that a relatively standard parabolic subgroup $Q$ of $G$ is understood as the stabiliser in $G$ of the flag
  $$ 0\subsetneq\langle e_1,\cdot\cdot\cdot,e_{p_1},f_1,\cdot\cdot\cdot,f_{q_1}\rangle_D
    \subsetneq\langle e_1,\cdot\cdot\cdot,e_{p_1+p_2},f_1,\cdot\cdot\cdot,f_{q_1+q_2}\rangle_D
    \subsetneq...\subsetneq\langle e_1,\cdot\cdot\cdot,e_{p_1+\cdot\cdot\cdot+p_{l}},f_1,\cdot\cdot\cdot,f_{q_1+\cdot\cdot\cdot+q_{l}}\rangle_D, $$
    where $\sum\limits_{i=1}^l p_i=\sum\limits_{i=1}^l q_i=n$ and we allow $p_i$ or $q_i$ to be zero. 
    
\begin{prop}\label{omegapq}
	Assume that $p=q=n$. Let $Q$ be a relatively standard parabolic subgroup of $G$. Then $Q$ is $\omega$-stable if and only if $p_i=q_i$ for all $1\leq i\leq l$. 
\end{prop}

\begin{proof}
	Since $\omega\in G$ exchanges $e_i$ and $f_i$ for all $1\leq i\leq n$, the parabolic subgroup $\omega Q\omega^{-1}$ of $G$ is the stabiliser in $G$ of the flag
  $$ 0\subsetneq\langle f_1,\cdot\cdot\cdot,f_{p_1},e_1,\cdot\cdot\cdot,e_{q_1}\rangle_D
    \subsetneq\langle f_1,\cdot\cdot\cdot,f_{p_1+p_2},e_1,\cdot\cdot\cdot,e_{q_1+q_2}\rangle_D
    \subsetneq...\subsetneq\langle f_1,\cdot\cdot\cdot,f_{p_1+\cdot\cdot\cdot+p_{l}},e_1,\cdot\cdot\cdot,e_{q_1+\cdot\cdot\cdot+q_{l}}\rangle_D. $$
    Then $\omega Q\omega^{-1}=Q$ if and only if the flags associated to $\omega Q\omega^{-1}$ and $Q$ are the same, which is equivalent to $p_i=q_i$ for all $1\leq i\leq l$. 
\end{proof}

An illustrating example of $\omega$-stable relatively standard parabolic subgroups of $G$ looks like (when $l=2$)
$$ Q=
\left( \begin{array}{cccc}
* & * & * & * \\
0 & * & 0 & * \\
* & * & * & * \\
0 & * & 0 & * \\
\end{array} \right)^\times. $$

\begin{prop}
	Assume that $p=q=n$. The map $P_n\mapsto\mat(\fp_{n},\fp_{n},\fp_{n},\fp_{n})^\times$ induces a bijection from the set of standard parabolic subgroups in $GL_{n,D}$ (namely containing the group of upper triangular matrices) to the set of $\omega$-stable relatively standard parabolic subgroups in $G$. 
\end{prop}

\begin{proof}
	It is known that there is a bijection between the set of standard parabolic subgroups in $GL_{n,D}$ and the set of partitions of $n$. By Proposition \ref{omegapq}, the latter set is in bijection with the set of $\omega$-stable relatively standard parabolic subgroups in $G$. The composition of these bijection is exactly given by the map $P_n\mapsto\mat(\fp_{n},\fp_{n},\fp_{n},\fp_{n})^\times$. 
\end{proof}

\begin{prop}
	Assume that $p=q=n$. Let $Q\subseteq R$ be a pair of relatively standard parabolic subgroups of $G$. If $Q$ is $\omega$-stable, then $R$ is $\omega$-stable. 
\end{prop}

\begin{proof}
	Suppose that $R$ is the stabiliser in $G$ of the flag
  $$ 0\subsetneq\langle e_1,\cdot\cdot\cdot,e_{r_1},f_1,\cdot\cdot\cdot,f_{t_1}\rangle_D
    \subsetneq\langle e_1,\cdot\cdot\cdot,e_{r_1+r_2},f_1,\cdot\cdot\cdot,f_{t_1+t_2}\rangle_D
    \subsetneq...\subsetneq\langle e_1,\cdot\cdot\cdot,e_{r_1+\cdot\cdot\cdot+r_{l'}},f_1,\cdot\cdot\cdot,f_{t_1+\cdot\cdot\cdot+t_{l'}}\rangle_D. $$
  The condition $Q\subseteq R$ tells us that the partition $(p_1,\cdot\cdot\cdot,p_l)$ (resp. $(q_1,\cdot\cdot\cdot,q_l)$) is a refinement of the partition $(r_1,\cdot\cdot\cdot,r_{l'})$ (resp. $(t_1,\cdot\cdot\cdot,t_{l'})$) of $n$, and that for all $1\leq j\leq l'$, $r_j$ and $t_j$ are divided into the same number of segments in these two refinements. Hence, if $p_i=q_i$ for all $1\leq i\leq l$, then $r_j=t_j$ for all $1\leq j\leq l'$. Thus this proposition results from Proposition \ref{omegapq}. 
\end{proof}

For any relative standard parabolic subgroup $Q$ of $G$, define
$$ \ov{Q}^{\omega\text{-st}}:=\bigcap_{\{R:Q\subseteq R, \omega R\omega^{-1}=R\}} R, $$
which is the minimal $\omega$-stable parabolic subgroup of $G$ containing $Q$. 

\begin{prop}\label{propOinvert1}
Assume that $p=q=n$. Let $\fo\in\CO$. The following three conditions are equivalent: 
\begin{enumerate}[\indent (1)]
\item $\fo\in\CO^\times$; 
\end{enumerate}
\begin{enumerate}[\indent (2)]
\item for all relatively standard parabolic subgroup $Q$ of $G$, if $\fo\cap\fq(F)\neq\emptyset$, then $Q$ is $\omega$-stable; 
\end{enumerate}
\begin{enumerate}[\indent (3)]
\item for all relatively standard parabolic subgroup $Q$ of $G$, if $\fo\cap\fm_Q(F)\neq\emptyset$, then $Q$ is $\omega$-stable. 
\end{enumerate}
\end{prop}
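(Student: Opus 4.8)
The strategy is to establish the cycle of implications $(1)\Rightarrow(2)\Rightarrow(3)\Rightarrow(1)$. Throughout I use the fact, recalled just before the statement, that $\fs^\times(F)$ is the union of the classes in $\CO^\times$; equivalently, $\fo\in\CO^\times$ means precisely that every element of $\fo$ is invertible (as a square matrix over $D$). Two preliminary remarks will be applied repeatedly. First, the matrix $\epsilon$ defining $\theta$ is diagonal, hence lies in $M_0(F)$ and therefore in $M_Q(F)$ for every semi-standard parabolic $Q$ of $G$; consequently $\theta$ normalises $Q$, $M_Q$ and $N_Q$, so that for $X\in(\fq\cap\fs)(F)$ both components in the decomposition $\fq=\fm_Q\oplus\fn_Q$ lie in $\fs$, i.e. $X=X_M+X_N$ with $X_M\in(\fm_Q\cap\fs)(F)$ and $X_N\in(\fn_Q\cap\fs)(F)$. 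Second, by (the proof of) Proposition \ref{propofpipar1} the reduced characteristic polynomial of $X$ coincides with that of $X_M$; in particular $\pi(X)=\pi(X_M)$, so $X_M$ lies in the same class as $X$, and $X$ is invertible if and only if $X_M$ is.

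The combinatorial heart of the proof is the observation: \emph{if $Q$ is a relatively standard parabolic of $G$ and $Y\in(\fm_Q\cap\fs)(F)$ is invertible, then $Q$ is $\omega$-stable.} Indeed, under $M_Q\simeq GL_{p_1+q_1,D}\times\cdots\times GL_{p_l+q_l,D}$ the element $Y$ decomposes as $\bigoplus_i Y^{(i)}$ with $Y^{(i)}$ in the $i$-th factor, and $Y$ invertible forces each $Y^{(i)}$ invertible; but $(\fm_{GL_{p_i+q_i,D}}\cap\fs)(F)$ consists of the matrices $\mat(0,A_i,B_i,0)$ with $A_i\in\Mat_{p_i\times q_i}(D)$ and $B_i\in\Mat_{q_i\times p_i}(D)$ (and is reduced to $\{0\}$ when $p_iq_i=0$), and such a matrix can be invertible only when $p_i=q_i$. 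Hence $p_i=q_i$ for all $i$, which by the description in Section \ref{defomgstb} says that $Q$ is $\omega$-stable. Granting this, $(1)\Rightarrow(2)$ is immediate: if $\fo\in\CO^\times$ and $Q$ is relatively standard with $\fo\cap\fq(F)\neq\emptyset$, pick $X\in\fo\cap\fq(F)$; then $X$ is invertible, hence so is its component $X_M\in(\fm_Q\cap\fs)(F)$ by the second preliminary remark, and the observation applies.

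The implication $(2)\Rightarrow(3)$ is immediate because $\fm_Q\subseteq\fq$. For $(3)\Rightarrow(1)$ I argue by contraposition. Assume $\fo\notin\CO^\times$ and choose $X=\mat(0,A,B,0)\in\fo$; then $AB$ is non-invertible, so at least one of $A,B$ fails to be invertible. Say $B$ is non-invertible (the case of $A$ is handled symmetrically, using the line $\langle f_1\rangle_D\subseteq W$). Conjugating by a suitable element of $GL(V)_D(F)\subseteq H(F)$ — any nonzero vector of $V$ can be moved to $e_1$ — we may assume $e_1\in\ker(B)$; then $X$ annihilates the $D$-line $\langle e_1\rangle_D$, hence belongs to $\fq(F)$, where $Q$ is the relatively standard parabolic stabilising the flag $0\subsetneq\langle e_1\rangle_D\subsetneq V\oplus W$. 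This $Q$ has block data $(p_1,q_1)=(1,0)$ and $(p_2,q_2)=(n-1,n)$, so it is not $\omega$-stable. By the two preliminary remarks the component $X_M$ of $X$ lies in $(\fm_Q\cap\fs)(F)$ and in $\fo$, so $\fo\cap\fm_Q(F)\neq\emptyset$ with $Q$ not $\omega$-stable, contradicting $(3)$. The only points requiring care are the two preliminary remarks — that $\theta$ normalises every relatively standard parabolic (so that passing to Levi components respects both $\fs$ and the class $\fo$) and that this passage leaves $\pi$ unchanged (Proposition \ref{propofpipar1}) — together with the harmless bookkeeping with degenerate blocks $p_iq_i=0$ in the observation; there is no genuinely hard step.
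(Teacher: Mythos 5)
Your proof is correct, and in fact two of the three implications are argued by a genuinely different (and somewhat more elementary) route than the paper's.

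For $(1)\Rightarrow(2)$, the paper works directly in the full $\fq(F)$: it picks the minimal index $k$ with $\sum_{i\leq k}p_i\neq\sum_{i\leq k}q_i$ and exhibits an explicit zero block in the $A$-component of any $\mat(0,A,B,0)\in\fo\cap\fq(F)$, forcing $A$ to be singular. You instead pass to the Levi component $X_M$ (justified by the $\theta$-equivariance of $\fq=\fm_Q\oplus\fn_Q$ and by $\Prd_X=\Prd_{X_M}$ from Proposition \ref{propofpipar1}), and then note that in each factor $\mat(0,\Mat_{p_i\times q_i,D},\Mat_{q_i\times p_i,D},0)$ an off-square antidiagonal block matrix cannot be invertible; the degenerate cases $p_iq_i=0$ contribute zero blocks and are handled uniformly. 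Both arguments are sound; yours isolates the rank obstruction more cleanly and dovetails directly with the product description of $\fm_Q\cap\fs$ from Section \ref{pdtform1}.

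For $(3)\Rightarrow(1)$, the paper factors the reduced characteristic polynomial $\Prd_{AB}=\lambda^d R(\lambda)$ by citing \cite[Proposition 5]{yu2013} and writes down the canonical element $X(C)$ of Proposition \ref{ss1} lying in the Levi of a suitable (non-$\omega$-stable) $Q$. You avoid the citation entirely: since $p=q=n$ and $\Nrd(AB)=\Nrd(A)\Nrd(B)=0$, one of $A,B$ is singular, say $B$; conjugating by $GL(V)_D(F)\subseteq H(F)$ (transitivity on nonzero vectors of the free right $D$-module $V$) puts a nonzero kernel vector at $e_1$, so $X\in\fq(F)$ for the stabiliser $Q$ of $0\subsetneq\langle e_1\rangle_D\subsetneq V\oplus W$, which is non-$\omega$-stable with data $(p_1,q_1)=(1,0)$, $(p_2,q_2)=(n-1,n)$; taking the Levi component then produces an element of $\fo\cap\fm_Q(F)$, contradicting (3). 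This is shorter, purely linear-algebraic, and sidesteps the factorisation result. The remaining implication $(2)\Rightarrow(3)$ is handled the same way in both (it is immediate from $\fm_Q\subseteq\fq$); note the paper also derives $(3)\Rightarrow(2)$ directly from Proposition \ref{propofpipar1}, which your cycle makes unnecessary.
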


\begin{proof}
The direction (2)$\Rightarrow$(3) is trivial. We actually have (2)$\Leftrightarrow$(3) from Proposition \ref{propofpipar1}. 

Next, we prove the direction (1)$\Rightarrow$(2). We assume that $\fo\in\CO^\times$ and that $\fo\cap\fq(F)\neq\emptyset$ for some relatively standard parabolic subgroup $Q$ of $G$. If $Q$ is not $\omega$-stable, let $k$ be the minimal integer such that $1\leq k\leq l-1$ and that 
$$ \sum_{1\leq i\leq k}p_i-\sum_{1\leq i\leq k}q_i\neq 0. $$
Without loss of generality, we may assume that 
$$ \sum_{1\leq i\leq k}p_i-\sum_{1\leq i\leq k}q_i< 0. $$
Let $\mat(0,A,B,0)\in\fo\cap\fq(F)$. Then $A\in \fg\fl_n(D)$ is in the form of $\mat(\ast,\ast,0,\ast)$, where the size of the zero matrix in the lower left corner is at least $\bigg(\sum\limits_{k+1\leq i\leq l} p_i\bigg)\times \bigg(1+\sum\limits_{k+1\leq i\leq l} p_i\bigg)$. Therefore, $A$ is not invertible, which contradicts with $\fo\in\CO^\times$. This establishes (1)$\Rightarrow$(2). 

Finally, we prove the direction (3)$\Rightarrow$(1). We assume (3). Suppose that $\fo\notin\CO^\times$. Let $P(\lambda):=\Prd_{AB}(\lambda)$, where $\mat(0,A,B,0)$ is any element in $\fo$. By \cite[Proposition 5]{yu2013}, there exists $1\leq m\leq n$ such that $P(\lambda)=\lambda^{dm} R(\lambda)$, where $R(\lambda)=\Prd_C(\lambda)$ for some $C\in GL_{n-m}(D)$. 
Let $Q$ be the relative standard parabolic subgroup of $G$ with $l=2$, $p_1=q_1=n-m$ and $p_2=q_2=m$. 
Then $\left( \begin{array}{cccc}
0 & 0 & 1_{n-m} & 0 \\
0 & 0 & 0 & 0 \\
C & 0 & 0 & 0 \\
0 & 0 & 0 & 0 \\
\end{array} \right)\in\fo\cap\fm_Q(F)$, which contradicts with (3). This shows (3)$\Rightarrow$(1). 
\end{proof}

Denote by $\rho_{Q,+}$ the half of the sum of weights (with multiplicities) for the action of $A_0$ on $\fn_Q\cap\fs$. We see that $\rho_{Q,+}=\rho_Q-\rho_{Q_H}$ and that for $Q\subseteq R$ a pair of relatively standard parabolic subgroup of $G$, the restriction of $(2\rho_{Q,+}-2\rho_{Q_H})\big|_{\fa_Q}$ to $\fa_R$ equals $(2\rho_{R,+}-2\rho_{R_H})\big|_{\fa_R}$. 

\begin{lem}\label{balanced1}
 Assume that $p=q=n$. Let $Q$ be a relatively standard parabolic subgroup of $G$. For all $\varpi^\vee\in\wh{\Delta}_Q^\vee$, we have $(2\rho_{Q,+}-2\rho_{Q_H})(\varpi^\vee)\geq 0$. Moreover, $2\rho_{Q,+}-2\rho_{Q_H}$ viewed as an element of $(\fa_Q^G)^\ast$ is zero if and only if $Q$ is $\omega$-stable.
\end{lem}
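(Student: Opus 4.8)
The plan is to compute $2\rho_{Q,+}-2\rho_{Q_H}$ completely explicitly as a functional on $\fa_Q$ in terms of the block sizes of $Q$, after which both assertions drop out by elementary algebra with partial sums. Since $\epsilon\in M_0\subseteq Q$, the involution $\theta$ stabilises $Q$, so $\fn_Q=(\fn_Q\cap\fh)\oplus(\fn_Q\cap\fs)$ with $\fn_{Q_H}=\fn_Q\cap\fh$; as $\rho_Q,\rho_{Q_H},\rho_{Q,+}$ are all half-sums of $A_0$-weights, $2\rho_{Q,+}-2\rho_{Q_H}$ is literally the sum of the $A_0$-weights occurring in $\fn_Q\cap\fs$ minus the sum of those occurring in $\fn_Q\cap\fh$. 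Writing $M_Q\simeq GL_{p_1+q_1,D}\times\cdots\times GL_{p_l+q_l,D}$ with $\sum_i p_i=\sum_i q_i=n$ and letting $B_1,\dots,B_l$ be the corresponding blocks ($B_i$ containing $p_i$ of the lines $\langle e_j\rangle_D$ and $q_i$ of the $\langle f_j\rangle_D$), I would identify $\fa_Q=\{(t_1,\dots,t_l)\}$ via the cocharacter lattice of $A_Q$, so that the inclusion $\fa_Q\hookrightarrow\fa_0$ assigns the value $t_i$ to each coordinate of $B_i$, the $(a,b)$-block of $\fg$ (of $F$-dimension $d^2=\dim_F D$) carries the $A_0$-weight $x_a-x_b$ which restricts on $\fa_Q$ to $t_i-t_j$ for $a\in B_i$, $b\in B_j$, the subspace $\fa_Q^G$ is $\{(t_r):\sum_r(p_r+q_r)t_r=0\}$, and the simple roots are $\alpha_k\colon(t_r)\mapsto t_k-t_{k+1}$ for $1\le k\le l-1$.

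Next I would carry out the weight count. For $i<j$ the part of $\fn_Q$ coming from the pair $(B_i,B_j)$ is a sum of $(a,b)$-blocks, all restricting to the single $A_Q$-weight $t_i-t_j$; its intersection with $\fs$ has total $F$-dimension $d^2(p_iq_j+q_ip_j)$ and its intersection with $\fh$ has total $F$-dimension $d^2(p_ip_j+q_iq_j)$, the difference being $-d^2(p_i-q_i)(p_j-q_j)$. Hence, with $\delta_i:=p_i-q_i$, the functional $2\rho_{Q,+}-2\rho_{Q_H}$ restricted to $\fa_Q$ sends $(t_r)$ to $-d^2\sum_{1\le i<j\le l}\delta_i\delta_j(t_i-t_j)$. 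Setting $D_k:=\delta_1+\cdots+\delta_k$, so that $D_0=0$ and $D_l=\sum_i(p_i-q_i)=0$, a one-line rearrangement (using $D_l=0$) turns this into
\[
  (2\rho_{Q,+}-2\rho_{Q_H})(t_1,\dots,t_l)=d^2\sum_{j=1}^{l}\bigl(D_j^2-D_{j-1}^2\bigr)\,t_j .
\]
In particular it vanishes on $\fa_G=\{(t,\dots,t)\}$, so it does descend to an element of $(\fa_Q^G)^*$, as the statement presupposes.

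For the inequality I would evaluate this on the coweights. The element $\varpi_{\alpha_k}^\vee\in\fa_Q^G$ dual to $\alpha_k$ (i.e. $\wh\Delta_Q^\vee=\{\varpi_{\alpha_1}^\vee,\dots,\varpi_{\alpha_{l-1}}^\vee\}$) is the point with $t_1=\cdots=t_k=c$ and $t_{k+1}=\cdots=t_l=c-1$, where $c$ is fixed by $\sum_r(p_r+q_r)t_r=0$. Plugging into the displayed formula and using the telescoping identities $\sum_{j\le k}(D_j^2-D_{j-1}^2)=D_k^2$ and $\sum_{j>k}(D_j^2-D_{j-1}^2)=D_l^2-D_k^2=-D_k^2$ collapses everything to $d^2\bigl(cD_k^2-(c-1)D_k^2\bigr)=d^2D_k^2\ge 0$, which is the first claim.

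For the characterisation of vanishing, $2\rho_{Q,+}-2\rho_{Q_H}=0$ in $(\fa_Q^G)^*$ means the functional $(t_r)\mapsto\sum_j(D_j^2-D_{j-1}^2)t_j$ vanishes on the hyperplane $\sum_r(p_r+q_r)t_r=0$, hence equals $\kappa\cdot\sum_r(p_r+q_r)t_r$ for some $\kappa\in\BR$; summing the coefficients over $j$, the left side telescopes to $D_l^2-D_0^2=0$ while the right side is $2n\kappa$, forcing $\kappa=0$. Thus $D_j^2=D_{j-1}^2$ for all $j$, and since $D_0=0$ this gives $D_j=0$, i.e. $\delta_j=p_j-q_j=0$, for every $j$ — precisely the $\omega$-stability of $Q$; conversely if $Q$ is $\omega$-stable then all $\delta_j=0$ and the displayed formula is identically zero. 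I do not expect any genuine obstacle here: the whole content is the elementary manipulation of the partial sums $D_k$ together with $D_0=D_l=0$, and the only point demanding real care is pinning down, inside $\fa_Q$, the correct description of $\fa_Q^G$ and of the coweights $\varpi_{\alpha_k}^\vee$ (in particular how the block sizes $p_r+q_r$ enter), since an error there would propagate through the rest.
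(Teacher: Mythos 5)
Your proof is correct and, once the reorganisation via the partial sums $D_k$ is unwound, it is essentially the paper's own argument: you both write $2\rho_{Q,+}-2\rho_{Q_H}$ explicitly on $\fa_Q$ with coefficient $\dim_F(D)$ times block-size data, identify $\varpi_k^\vee$ as the $\fa_Q^G$-point with $t_1=\cdots=t_k$ and $t_{k+1}=\cdots=t_l$ differing by $1$, and extract $(2\rho_{Q,+}-2\rho_{Q_H})(\varpi_k^\vee)=\dim_F(D)\bigl(\sum_{i\le k}(p_i-q_i)\bigr)^2\ge 0$, whence vanishing forces $p_i=q_i$ for all $i$. The only stylistic difference is that the paper evaluates $(h_i^\ast-h_j^\ast)(\varpi_k^\vee)$ pair by pair while you first telescope $-\sum_{i<j}\delta_i\delta_j(t_i-t_j)$ into $\sum_j(D_j^2-D_{j-1}^2)t_j$ before plugging in, which is an equivalent bookkeeping choice.
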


\begin{proof}
  Put $e_i^\ast\in\fa_0^\ast$ (resp. $f_i^\ast\in\fa_0^\ast$) to be the character of the action of $A_0$ on $e_i$ (resp. $f_i$). Write $e_i^\vee\in\fa_0$ (resp. $f_i^\vee\in\fa_0$) to be the dual basis, i.e., $e_i^\ast(e_j^\vee)=\delta_{ij}$ (resp. $f_i^\ast(f_j^\vee)=\delta_{ij}$) for $1\leq i,j\leq n$. A basis of $\fa_Q$ is given by $h_i^\vee:=e_{p_1+\cdot\cdot\cdot+p_{i-1}+1}^\vee+\cdot\cdot\cdot+e_{p_1+\cdot\cdot\cdot+p_i}^\vee+f_{q_1+\cdot\cdot\cdot+q_{i-1}+1}^\vee+\cdot\cdot\cdot+f_{q_1+\cdot\cdot\cdot+q_i}^\vee$ for $1\leq i\leq l$. Write $h_i^\ast\in(\fa_Q)^\ast$ to be the dual basis. Denote
  $$ \varpi_k^\vee:=\frac{\sum\limits_{i=k+1}^{l}(p_i+q_i)}{2n}(h_1^\vee+\cdot\cdot\cdot+h_k^\vee)
  -\frac{\sum\limits_{i=1}^{k}(p_i+q_i)}{2n}(h_{k+1}^\vee+\cdot\cdot\cdot+h_l^\vee). $$
  Recall that
  $$ (\wh{\Delta}_Q^G)^\vee=\{\varpi_k^\vee|1\leq k\leq l-1\} $$
  is a basis of $\fa_Q^G$. We can also see that
  $$ 2\rho_{Q,+}\big|_{\fa_Q}=\dim_F(D)\sum_{1\leq i< j\leq l} (p_i q_j+q_i p_j)(h_i^\ast-h_j^\ast) $$
  and that
  $$ 2\rho_{Q_H}\big|_{\fa_Q}=\dim_F(D)\sum_{1\leq i< j\leq l} (p_i p_j+q_i q_j)(h_i^\ast-h_j^\ast), $$
  so
  $$ (2\rho_{Q,+}-2\rho_{Q_H})\big|_{\fa_Q}=\dim_F(D)\sum_{1\leq i< j\leq l} (p_i-q_i)(q_j-p_j)(h_i^\ast-h_j^\ast). $$
  Since $\sum\limits_{i=1}^l p_i=\sum\limits_{i=1}^l q_i=n$, we have
  $$ (h_i^\ast-h_j^\ast)(\varpi_k^\vee)=
  \begin{cases}
    0, & \text{if $k+1\leq i< j\leq l$ or $1\leq i< j\leq k$;} \\
    1, & \text{if $1\leq i\leq k$ and $k+1\leq j\leq l$.}
  \end{cases} $$
  Then
  \[\begin{split}
    (2\rho_{Q,+}-2\rho_{Q_H})(\varpi_k^\vee)&=\dim_F(D)\sum_{\substack{1\leq i\leq k \\ k+1\leq j\leq l}} (p_i-q_i)(q_j-p_j) \\
    &=\dim_F(D)\left(\sum_{1\leq i\leq k}p_i-\sum_{1\leq i\leq k}q_i\right) \left(\sum_{k+1\leq j\leq l}q_j-\sum_{k+1\leq j\leq l}p_j\right) \\
    &=\dim_F(D)\left(\sum_{1\leq i\leq k}p_i-\sum_{1\leq i\leq k}q_i\right)^2\geq 0.
  \end{split}\]
  It is clear that $(2\rho_{Q,+}-2\rho_{Q_H})(\varpi_k^\vee)=0$ for all $1\leq k\leq l-1$ if and only if $p_i=q_i$ for all $1\leq i\leq l$.
\end{proof}

\subsection{Exponential polynomials}

Let $T_1,T_2\in\fa_0$. Following \cite[\S2]{MR625344}, define $\Gamma_P(T_1,T_2)\in\BR$ inductively on $\dim(A_P/A_{G})$ by setting
$$ \wh{\tau}_P^G(T_1-T_2)=\sum_{\{Q:P\subseteq Q\}} (-1)^{\dim(A_Q/A_{G})} \wh{\tau}_P^Q(T_1) \Gamma_Q(T_1,T_2) $$
for any relatively standard parabolic subgroup $P$ of $G$. This definition can be explicitly given by \cite[(2.1) in p. 13]{MR625344} and only depends on the projections of $T_1,T_2$ onto $\fa_P^{G}$. 
For $T=(t_1,...,t_{p+q})\in\fa_0$, we denote $\Sigma_1(T):=t_1+...+t_p$. If we use the notation in Section \ref{relstdpar1} and put $e_i^\ast\in\fa_0^\ast$ (resp. $f_i^\ast\in\fa_0^\ast$) to be the character of the action of $A_0$ on $e_i$ (resp. $f_i$), it is equivalent to say that $\Sigma_1=\sum_{1\leq i\leq p} e_i^\ast$. 
For $T_2\in\fa_Q$ and $s\in\BC$, write
\begin{equation}\label{equation1.3}
 p_{Q,s}(T_2):=\int_{\fa_Q^G} e^{(2\rho_{Q,+}-2\rho_{Q_H}+s\Sigma_1)(T_1)} \Gamma_Q(T_1,T_2) dT_1. 
\end{equation}
When $p=q=n, s=0$ and $Q$ is $\omega$-stable, it is reduced to
$$ p_{Q,0}(T_2)=\int_{\fa_Q^G} \Gamma_Q(T_1,T_2) dT_1 $$
by Lemma \ref{balanced1}. 

For $Q\subseteq R$ a pair of relatively standard parabolic subgroups of $G$, denote by $\BZ(\wh{\Delta}_Q^R)^\vee$ the lattice generated by $(\wh{\Delta}_Q^R)^\vee$ in $\fa_Q^R$ and by $\BZ(\Delta_R^G)^\vee$ the lattice generated by $(\Delta_R^G)^\vee$ in $\fa_R^G$. Following \cite[\S2]{MR625344}, for $\lambda\in\fa_{Q,\BC}^\ast:=\fa_Q^\ast\otimes_\BR \BC$, define
$$ \wh{\theta}_Q^R(\lambda):=\vol(\fa_Q^R\big/\BZ(\wh{\Delta}_Q^R)^\vee)^{-1} \prod_{\varpi^\vee\in(\wh{\Delta}_Q^R)^\vee} \lambda(\varpi^\vee)  $$
and
$$ \theta_R^G(\lambda):=\vol(\fa_R^G\big/\BZ(\Delta_R^G)^\vee)^{-1} \prod_{\alpha^\vee\in(\Delta_R^G)^\vee} \lambda(\alpha^\vee). $$

\begin{prop}\label{propofp_Q,s}
  Let Q be a relatively standard parabolic subgroup of $G$, $T_2\in\fa_{Q}$ and $s\in\BC$. The function $T_1\mapsto\Gamma_Q(T_1,T_2)$ is compactly supported on $\fa_Q^{G}$. Moreover, the function $T_2\mapsto p_{Q,s}(T_2)$ is an exponential polynomial in $T_2$; more precisely, there exists a polynomial $p_{Q,R,s}$ (not necessarily unique) on $\fa_R^G$ of degree $\leq \dim(A_Q/A_G)$ for each relatively standard parabolic subgroup $R$ containing $Q$ such that
  $$ p_{Q,s}(T_2)=\sum_{\{R:Q\subseteq R\}} e^{(2\rho_{R,+}-2\rho_{R_H}+s\Sigma_1)(T_{2,R}^G)} p_{Q,R,s}(T_{2,R}^G), $$
where we write $T_{2,R}^G$ for the projection of $T_2\in\fa_Q$ in $\fa_R^G$ via the decomposition $\fa_Q=\fa_Q^R\oplus\fa_R^G\oplus\fa_G$. When $p=q=n$ and $s=0$, the purely polynomial term of $p_{Q,0}(T_2)$ is given by
  $$ \sum_{\{R: Q\subseteq R, \omega R\omega^{-1}=R\}} p_{Q,R,0}(T_{2,R}^G), $$
  which is a homogeneous polynomial in $T_2$ of degree $\dim(A_{\ov{Q}^{\omega\text{-st}}}/A_G)$;
  in particular, if $Q$ is $\omega$-stable, then $p_{Q,0}(T_2)$ is a homogeneous polynomial in $T_2$ of degree $\dim(A_Q/A_G)$.
\end{prop}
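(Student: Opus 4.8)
The plan is to reduce the whole statement to Arthur's analysis of the function $\Gamma_Q$ in \cite[\S2]{MR625344}, adapted as in its Lie algebra variant \cite{MR1893921} to the weight $2\rho_{Q,+}-2\rho_{Q_H}+s\Sigma_1$. Recall first (this is internal to the vector space $\fa_Q^G$ and independent of the symmetric space) that for fixed $T_2\in\fa_Q$ the function $T_1\mapsto\Gamma_Q(T_1,T_2)$ is compactly supported on $\fa_Q^G$; in particular the integral (\ref{equation1.3}) defining $p_{Q,s}(T_2)$ converges absolutely, which is the first assertion. Put $\Lambda:=(2\rho_{Q,+}-2\rho_{Q_H}+s\Sigma_1)\big|_{\fa_Q^G}\in\fa_{Q,\BC}^*$. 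Arthur's explicit evaluation of the Fourier--Laplace transform of $\Gamma_Q(\cdot,T_2)$ reads, for $\mu\in\fa_{Q,\BC}^*$ in general position,
$$ \int_{\fa_Q^G} e^{\mu(T_1)}\,\Gamma_Q(T_1,T_2)\,dT_1 =\sum_{\{R:Q\subseteq R\}}(-1)^{\dim(A_R/A_G)}\,\frac{e^{\mu(T_{2,R}^G)}}{\wh{\theta}_Q^R(\mu)\,\theta_R^G(\mu)}. $$
Since $\Gamma_Q(\cdot,T_2)$ has compact support, the left-hand side is entire in $\mu$, so the meromorphic sum on the right has only removable singularities and $p_{Q,s}(T_2)$ is recovered by letting $\mu\to\Lambda$ through generic values inside the sum.

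Grouping the summands according to the limiting exponent $\mu(T_{2,R}^G)\to\Lambda(T_{2,R}^G)$ yields the exponential-polynomial form. Indeed $T_{2,R}^G\in\fa_R^G\subseteq\fa_Q^G$, and by the compatibility recorded in Section \ref{defomgstb} the restriction of $2\rho_{Q,+}-2\rho_{Q_H}$ to $\fa_R$ equals $2\rho_{R,+}-2\rho_{R_H}$; hence this exponent is $(2\rho_{R,+}-2\rho_{R_H}+s\Sigma_1)(T_{2,R}^G)$, exactly as in the statement. The polynomial $p_{Q,R,s}$ multiplying it is produced by the Laurent expansion of $(\wh{\theta}_Q^R(\mu)\theta_R^G(\mu))^{-1}$ at $\mu=\Lambda$: the pole there has order at most $|\wh{\Delta}_Q^R|+|\Delta_R^G|=\dim(A_Q/A_R)+\dim(A_R/A_G)=\dim(A_Q/A_G)$, so $\deg p_{Q,R,s}\leq\dim(A_Q/A_G)$ (when several $R$ share one exponent one simply adds their contributions). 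This settles the first two assertions.

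Assume now $p=q=n$ and $s=0$, so $\Lambda=(2\rho_{Q,+}-2\rho_{Q_H})\big|_{\fa_Q^G}$. By Lemma \ref{balanced1}, $\Lambda\big|_{\fa_R^G}=0$ precisely when $R$ is $\omega$-stable, and $\Lambda$ vanishes on $\fa_{\ov{Q}^{\omega\text{-st}}}^G$ while, via the identity $(2\rho_{Q,+}-2\rho_{Q_H})(\varpi_k^\vee)=\dim_F(D)\bigl(\sum_{i\leq k}p_i-\sum_{i\leq k}q_i\bigr)^2$ from the proof of Lemma \ref{balanced1}, it is non-degenerate on $\fa_Q^{\ov{Q}^{\omega\text{-st}}}$. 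The purely polynomial part of $p_{Q,0}(T_2)$ is thus the sum of the terms with vanishing exponent, i.e. those indexed by $\omega$-stable $R\supseteq Q$; since such $R$ are exactly those containing $\ov{Q}:=\ov{Q}^{\omega\text{-st}}$, this is $\sum_{\{R:\,Q\subseteq R,\,\omega R\omega^{-1}=R\}}p_{Q,R,0}(T_{2,R}^G)$. For its degree, decompose $\fa_Q^G=\fa_Q^{\ov{Q}}\oplus\fa_{\ov{Q}}^G$: all coroots in $(\wh{\Delta}_Q^R)^\vee\cup(\Delta_R^G)^\vee$ annihilated by $\Lambda$ lie in the $\fa_{\ov{Q}}^G$-directions, and one counts that for every $R\supseteq\ov{Q}$ the pole order of $\wh{\theta}_Q^R\theta_R^G$ at $\mu=\Lambda$ equals $\dim(A_{\ov{Q}}/A_R)+\dim(A_R/A_G)=\dim(A_{\ov{Q}}/A_G)=:N'$, independent of $R$; a scaling argument in the $\fa_{\ov{Q}}^G$-variable (send $\mu$ to $t\mu$ there and $T_2$ to $t^{-1}T_2$ there, each relevant term scaling by $t^{-N'}$) then forces the extracted polynomial to be homogeneous of degree $N'$, nonzero by inspection of the $R=\ov{Q}$ leading term. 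When $Q$ itself is $\omega$-stable one has $\ov{Q}=Q$ and $\Lambda=0$, and the cleanest route is direct: $\wh{\tau}_Q^R(cT)=\wh{\tau}_Q^R(T)$ for $c>0$ gives, by induction on $\dim(A_Q/A_G)$ in the defining relation, $\Gamma_Q(cT_1,cT_2)=\Gamma_Q(T_1,T_2)$, whence $p_{Q,0}(cT_2)=c^{\dim(A_Q/A_G)}p_{Q,0}(T_2)$.

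The first two assertions are a routine transcription of \cite[\S2]{MR625344} (or of the parallel passages of \cite{MR1893921}) to the present weight, so the real work is in the last step: establishing the uniform pole-order $N'$ and the homogeneity extraction. The delicate points there are the compatible splitting of $(\wh{\Delta}_Q^R)^\vee$ with respect to $\fa_Q^R=\fa_Q^{\ov{Q}}\oplus\fa_{\ov{Q}}^R$ — needed to see that exactly $\dim(A_{\ov{Q}}/A_R)$ of its elements are killed by $\Lambda$ — and a careful treatment of the limit $\mu\to\Lambda$ when $\Lambda$ sits on several walls simultaneously, keeping track of which poles cancel within the full sum over $R\supseteq Q$. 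I expect this bookkeeping, rather than any conceptual difficulty, to be the main obstacle.
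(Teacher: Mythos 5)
Your first half — compact support of $\Gamma_Q$, Arthur's Fourier--Laplace formula, the generic deformation $\mu=\Lambda+t\varepsilon$, regrouping by exponent, and the pole-order bound $\deg p_{Q,R,s}\le\dim(A_Q/A_G)$ — is exactly the paper's route, so there is nothing to add there.

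For the case $p=q=n$, $s=0$ you identify the right ingredients but leave the two genuinely delicate points as ``bookkeeping'', and they are precisely where the paper does real work. The identification $N_1=\dim(A_{\ov{Q}^{\omega\text{-st}}}/A_R)$ is established in the paper by writing $(\wh{\Delta}_Q^R)^\vee$ as projections to $\fa_Q^R$ of the $\varpi_k^\vee\in(\wh{\Delta}_Q^G)^\vee$, then observing that since $R$ is $\omega$-stable the form $\Lambda=2\rho_{Q,+}-2\rho_{Q_H}$ restricted to $\fa_R$ equals $2\rho_{R,+}-2\rho_{R_H}$, which is zero on $\fa_R^G$ by Lemma~\ref{balanced1}; therefore $\Lambda$ takes the same value on a coweight and on its projection, and the vanishing coweights are exactly the $\varpi_k^\vee$ at the ``balanced'' indices $\sum_{i\le k}p_i=\sum_{i\le k}q_i$, i.e.\ the walls of $\ov{Q}^{\omega\text{-st}}$ not in $R$. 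You flag the ``compatible splitting'' as the obstacle; this is the observation that dissolves it.

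Your treatment of homogeneity is where you depart from the paper. For $Q$ $\omega$-stable, the argument you give — $\Gamma_Q(cT_1,cT_2)=\Gamma_Q(T_1,T_2)$ hence $p_{Q,0}(cT_2)=c^{\dim(A_Q/A_G)}p_{Q,0}(T_2)$ — is clean and correct, and in fact more self-contained than the paper's bare citation. For general $Q$, however, your ``partial scaling in the $\fa_{\ov{Q}}^G$-variable'' cannot proceed as written: $\Gamma_Q$ is not invariant under a scaling that acts only on the $\fa_{\ov{Q}}^G$-component, so the substitution you indicate does not preserve the defining integral. What would be needed is a scaling of $\varepsilon$ (not of $\Lambda+t\varepsilon$) in the $\fa_{\ov{Q}}^G$-direction together with a direct examination of how the Laurent coefficients transform — which is essentially more Laurent-series bookkeeping, not a shortcut around it. Be aware that the paper's own argument for this part is also thin: it only computes $\deg p_{Q,R,0}=N_1+N_2$ for each summand (a count which, incidentally, over-states the degree when $R=G$, since $T_{2,G}^G=0$ forces $p_{Q,G,0}(T_{2,G}^G)$ to be a constant) and does not separately verify that the sum is homogeneous. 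So you have correctly isolated the weak joint in the proof, but your proposed repair, as sketched, does not yet close it.
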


\begin{proof}
  The first statement is \cite[Lemmas 2.1]{MR625344}. First let us prove the second one.

  From \cite[Lemma 2.2]{MR625344}, we know that the integral
  $$ \int_{\fa_Q^G} e^{\lambda (T_1)} \Gamma_Q(T_1,T_2) dT_1 $$
  is an entire function in $\lambda\in\fa_{Q,\BC}^\ast$, and its value is given by
  $$ \sum_{\{R:Q\subseteq R\}} (-1)^{\dim(A_Q/A_R)} e^{\lambda(T_{2,R}^G)} \wh{\theta}_Q^R(\lambda)^{-1} \theta_R^G(\lambda)^{-1} $$
  when the latter expression makes sense.

  Fix $\varepsilon\in\fa_{Q,\BC}^\ast$ such that $\wh{\theta}_Q^R(\varepsilon)\neq 0$ and $\theta_R^G(\varepsilon)\neq 0$ for all relatively standard parabolic subgroups $R$ containing $Q$. Then for $t\in\BR^\times$ whose absolute value is small enough, we also have $\wh{\theta}_Q^R(2\rho_{Q,+}-2\rho_{Q_H}+s\Sigma_1+t\varepsilon)\neq 0$ and $\theta_R^G(2\rho_{Q,+}-2\rho_{Q_H}+s\Sigma_1+t\varepsilon)\neq 0$ for all relatively standard parabolic subgroups $R$ containing $Q$. Let $\lambda=2\rho_{Q,+}-2\rho_{Q_H}+s\Sigma_1+t\varepsilon$ in the formula above, and we obtain
  \[\begin{split}
  p_{Q,s}(T_2)=&\lim_{t\mapsto 0} \sum_{\{R:Q\subseteq R\}} (-1)^{\dim(A_Q/A_R)} e^{(2\rho_{Q,+}-2\rho_{Q_H}+s\Sigma_1+t\varepsilon)(T_{2,R}^G)} \wh{\theta}_Q^R(2\rho_{Q,+}-2\rho_{Q_H}+s\Sigma_1+t\varepsilon)^{-1} \\
  &\cdot \theta_R^G(2\rho_{Q,+}-2\rho_{Q_H}+s\Sigma_1+t\varepsilon)^{-1}. 
  \end{split}\]

  Since the restriction of $2\rho_{Q,+}-2\rho_{Q_H}+s\Sigma_1$ to $\fa_R$ equals $2\rho_{R,+}-2\rho_{R_H}+s\Sigma_1$, we get
  $$ e^{(2\rho_{Q,+}-2\rho_{Q_H}+s\Sigma_1)(T_{2,R}^G)}=e^{(2\rho_{R,+}-2\rho_{R_H}+s\Sigma_1)(T_{2,R}^G)}. $$
  We can put $p_{Q,R,s}(T_{2,R}^G)$ to be the constant term of the Laurent series development around $t=0$ of
  $$ t\mapsto (-1)^{\dim(A_Q/A_R)} e^{(t\varepsilon)(T_{2,R}^G)} \wh{\theta}_Q^R(2\rho_{Q,+}-2\rho_{Q_H}+s\Sigma_1+t\varepsilon)^{-1} \theta_R^G(2\rho_{Q,+}-2\rho_{Q_H}+s\Sigma_1+t\varepsilon)^{-1}. $$
  Then $p_{Q,R,s}(T_{2,R}^G)$ is a polynomial in $T_{2,R}^G$ of degree $\leq\dim(A_Q/A_G)$. Hence we prove the existence in the second statement.

  Now let $p=q=n$ and $s=0$. From Lemma \ref{balanced1}, we know that the purely polynomial term of $p_{Q,0}(T_2)$ is given by
  $$ \sum_{\{R: Q\subseteq R, \omega R\omega^{-1}=R\}} p_{Q,R,0}(T_{2,R}^G). $$
  Next we compute the degree of $p_{Q,R,0}$ that we chose above for each $\omega$-stable parabolic subgroup $R$ containing $Q$. Denote
  $$ N_1:=\sharp\{\varpi^\vee\in(\wh{\Delta}_Q^R)^\vee:(2\rho_{Q,+}-2\rho_{Q_H})(\varpi^\vee)=0\} $$
  and
  $$ N_2:=\sharp\{\alpha^\vee\in(\Delta_R^G)^\vee:(2\rho_{Q,+}-2\rho_{Q_H})(\alpha^\vee)=0\}, $$
where $\sharp$ means the cardinality of a finite set. Then
  $$ \deg (p_{Q,R,0})=N_1+N_2. $$
  Recall that both of $(\wh{\Delta}_R^G)^\vee$ and $(\Delta_R^G)^\vee$ are bases of $\fa_R^G$. Since $R$ is relatively standard and $\omega$-stable, by Lemma \ref{balanced1}, we have
  $$ N_2=\dim(A_R/A_G). $$
  Keep the notation as in the proof of Lemma \ref{balanced1} for $Q$. Since $R$ is relatively standard and $\omega$-stable, by Proposition \ref{omegapq}, we may suppose that $R$ is the stabiliser in $G$ of the flag
  $$ 0\subsetneq\langle e_1,\cdot\cdot\cdot,e_{r_1},f_1,\cdot\cdot\cdot,f_{r_1}\rangle_D
    \subsetneq\langle e_1,\cdot\cdot\cdot,e_{r_1+r_2},f_1,\cdot\cdot\cdot,f_{r_1+r_2}\rangle_D
    \subsetneq...\subsetneq\langle e_1,\cdot\cdot\cdot,e_{r_1+\cdot\cdot\cdot+r_{l'}},f_1,\cdot\cdot\cdot,f_{r_1+\cdot\cdot\cdot+r_{l'}}\rangle_D. $$
  The fact that $Q\subseteq R$ tells us that both of the partitions $(p_1,\cdot\cdot\cdot,p_l)$ and $(q_1,\cdot\cdot\cdot,q_l)$ are refinements of the partition $(r_1,\cdot\cdot\cdot,r_{l'})$ of $n$, and that every $r_i$ is divided into the same number of segments in these two refinements. Then
  \[\begin{split}
    (\wh{\Delta}_Q^R)^\vee&=\left\{\text{projection of $\varpi_k^\vee\in(\wh{\Delta}_Q^G)^\vee$ to $\fa_Q^R$} \bigg| 1\leq k\leq l-1, \sum_{i=1}^k (p_i+q_i)\neq \sum_{i=1}^j 2r_i \forall 1\leq j\leq l'-1\right\} \\
    &=\left\{\text{projection of $\varpi_k^\vee\in(\wh{\Delta}_Q^G)^\vee$ to $\fa_Q^R$} \bigg| 1\leq k\leq l-1, \nexists 1\leq j\leq l'-1 s.t. \sum_{i=1}^k p_i=\sum_{i=1}^k q_i=\sum_{i=1}^j r_i\right\}.
  \end{split}\]
  Because the restriction of $2\rho_{Q,+}-2\rho_{Q_H}$ to $\fa_R$ equals $2\rho_{R,+}-2\rho_{R_H}$ and $R$ is relatively standard and $\omega$-stable, by Lemma \ref{balanced1}, we do not need the projection, i.e.,
  $$ (2\rho_{Q,+}-2\rho_{Q_H})\left(\text{projection of $\varpi_k^\vee\in(\wh{\Delta}_Q^G)^\vee$ to $\fa_Q^R$}\right)=(2\rho_{Q,+}-2\rho_{Q_H})\left(\varpi_k^\vee\in(\wh{\Delta}_Q^G)^\vee\right). $$
  From the proof of Lemma \ref{balanced1}, for any $1\leq k\leq l$, we have $(2\rho_{Q,+}-2\rho_{Q_H})(\varpi_k^\vee)=0$ if and only if $\sum\limits_{i=1}^k p_i=\sum\limits_{i=1}^k q_i$. We can also see that $\ov{Q}^{\omega\text{-st}}$ is the $\omega$-stable parabolic subgroup $R$ containing $Q$ with maximal $l':=\dim(A_R)$. To sum up, we have
  $$ N_1=\dim(A_{\ov{Q}^{\omega\text{-st}}}/A_R). $$
  Hence for each $\omega$-stable parabolic subgroup $R$ containing $Q$,
  $$ \deg (p_{Q,R,0})=N_1+N_2=\dim(A_{\ov{Q}^{\omega\text{-st}}}/A_R)+\dim(A_R/A_G)=\dim(A_{\ov{Q}^{\omega\text{-st}}}/A_G). $$
  The assertion about the particular case where $Q$ is $\omega$-stable is \cite[Lemma 2.2]{MR625344} combined with Lemma \ref{balanced1}; it can also be read from the results above that we have proved. 
\end{proof}

\subsection{Quantitive behaviour in $T$}

For a relatively standard parabolic subgroup $Q$ of $G$, let $\{s^Q_i\}_{1\leq i\leq l}\in\BZ^l$ be the explicit constants determined by
\begin{equation}\label{explicitcst}
 \prod_{1\leq i\leq l}|\Nrd (x_{i,1})|_{\BA}^{s^Q_i}=e^{(2\rho_{Q,+}-2\rho_{Q_H})(H_{Q_H}(x))}
\end{equation}
for all $x\in M_{Q_H}(\BA)\cap M_Q(\BA)^1$, 
where we write $x=(x_1,...,x_l)\in GL_{p_1+q_1,D}(\BA)\times\cdot\cdot\cdot\times GL_{p_l+q_l,D}(\BA)$ and $x_i=(x_{i,1},x_{i,2})\in GL_{p_i,D}(\BA)\times GL_{q_i,D}(\BA)$. If $p_i q_i=0$ for some $1\leq i\leq l$, we shall take $|\Nrd (x_{i,1})|_{\BA}^{s^Q_i}=1$ and $s^Q_i=0$ by convention. Then such constants are unique. 

\begin{prop}\label{propexplicitcst}
Let $Q$ be a relatively standard parabolic subgroup of $G$. If $p_i q_i\neq 0$ for some $1\leq i\leq l$, then
$$ s^Q_i=2d\left(\sum_{k<i}(p_k-q_k)+\sum_{k>i}(q_k-p_k)\right). $$
When $p=q=n$, if $Q$ is $\omega$-stable, then $s^Q_i=0$ for all $1\leq i\leq l$. 
\end{prop}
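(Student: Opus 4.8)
The plan is to read $s^Q_i$ off the character $2\rho_{Q,+}-2\rho_{Q_H}$ of $M_{Q_H}$. First observe that (\ref{explicitcst}) is an identity between continuous characters of $M_{Q_H}(\BA)\cap M_Q(\BA)^1$, and that whenever $p_iq_i\neq 0$ the quantities $|\Nrd(x_{i,1})|_\BA$ $(1\le i\le l)$ vary freely over that group, the only relations inherited from $M_Q(\BA)^1$ being $|\Nrd(x_{i,1})|_\BA\,|\Nrd(x_{i,2})|_\BA=1$ for each $i$. Hence the exponents $s^Q_i$ are uniquely pinned down, and it suffices to (a) compute the left-hand character explicitly in terms of $\{|\Nrd(x_{i,1})|_\BA,|\Nrd(x_{i,2})|_\BA\}$, and (b) substitute $|\Nrd(x_{i,2})|_\BA=|\Nrd(x_{i,1})|_\BA^{-1}$.

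For step (a) I would work in $\fa_0^\ast$ with the coordinates $e^\ast_a,f^\ast_b$ used in the proof of Lemma \ref{balanced1}, and use the relation $\rho_{Q,+}=\rho_Q-\rho_{Q_H}$ recorded there, so that $2\rho_{Q,+}-2\rho_{Q_H}=2\rho_Q-4\rho_{Q_H}$. Both $2\rho_Q$ (the modular character of $Q\subseteq G=GL_{p+q,D}$) and $2\rho_{Q_H}$ (the modular character of $Q_H\subseteq H=GL_{p,D}\times GL_{q,D}$) are elementary to write down once one remembers that each $D$-valued matrix entry is an $A_0$-weight space of multiplicity $\dim_F D=d^2$: for an index $a$ in the $e$-part of the $i$-th block, the coefficient of $e^\ast_a$ in $2\rho_Q$ is $d^2\big(\sum_{k>i}(p_k+q_k)-\sum_{k<i}(p_k+q_k)\big)$ (and likewise for $f^\ast_b$), while the coefficient of $e^\ast_a$ in $2\rho_{Q_H}$ is $d^2\big(\sum_{k>i}p_k-\sum_{k<i}p_k\big)$ and that of $f^\ast_b$ is $d^2\big(\sum_{k>i}q_k-\sum_{k<i}q_k\big)$. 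Subtracting gives, for every index in the $i$-th block, coefficient $d^2\gamma_i$ on the $e$-part and $-d^2\gamma_i$ on the $f$-part, where $\gamma_i:=\sum_{k<i}(p_k-q_k)+\sum_{k>i}(q_k-p_k)$; being constant on each block, $2\rho_{Q,+}-2\rho_{Q_H}$ indeed lies in $\fa_{Q_H}^\ast$, as it must for the pairing with $H_{Q_H}(x)$ to make sense. (One may instead compute $2\rho_{Q,+}$ by decomposing $\fn_Q\cap\fs$ into its $\Mat_{\ast\times\ast}(D)$-blocks lying above the block diagonal; the bookkeeping is slightly longer but gives the same answer.)

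To pass to reduced-norm coordinates, note that on the split torus the reduced norm of the $i$-th $GL_{p_i,D}$-factor restricts to $d\sum_{a\in\text{block }i}e^\ast_a$ — the factor $d$ because $\Nrd(t\cdot 1)=t^d$ for a scalar $t\in F$ — and analogously for $GL_{q_i,D}$ with the $f^\ast_b$. Therefore $(2\rho_{Q,+}-2\rho_{Q_H})(H_{Q_H}(x))=\sum_i d\gamma_i\big(\log|\Nrd(x_{i,1})|_\BA-\log|\Nrd(x_{i,2})|_\BA\big)$, and imposing $|\Nrd(x_{i,2})|_\BA=|\Nrd(x_{i,1})|_\BA^{-1}$ (step (b)) turns this into $\sum_i 2d\gamma_i\log|\Nrd(x_{i,1})|_\BA$. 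Comparing with (\ref{explicitcst}) yields $s^Q_i=2d\gamma_i$, which is the stated formula; and when $p=q=n$ and $Q$ is $\omega$-stable one has $p_k=q_k$ for all $k$, so every $\gamma_i=0$ and $s^Q_i=0$.

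Nothing here is deep; the only points demanding care are keeping the two independent factors of $d$ straight — the multiplicity $\dim_F D=d^2$ of $D$-entries as $A_0$-weight spaces versus the $d$-th power by which $\Nrd$ acts on scalars — and verifying that the character $2\rho_{Q,+}-2\rho_{Q_H}$ genuinely descends to $M_{Q_H}$ before rewriting it in terms of reduced norms. I would also flag that when $p_iq_i=0$ the identity (\ref{explicitcst}) does not determine $s^Q_i$, since the corresponding $|\Nrd(x_{i,1})|_\BA$ is then forced to equal $1$ on $M_Q(\BA)^1$; this is precisely why the value $s^Q_i=0$ is fixed by convention in that case and no formula is asserted there.
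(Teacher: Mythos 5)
Your proof is correct and takes essentially the same route as the paper: both reduce to computing the character $2\rho_{Q,+}-2\rho_{Q_H}$ explicitly (you via $2\rho_Q-4\rho_{Q_H}$ in the coordinates $e^\ast_a,f^\ast_b$; the paper by listing the four block-contributions directly in $\Nrd$-coordinates, which is what you mention as the alternative in your parenthetical), and then substitute $|\Nrd(x_{i,2})|_\BA=|\Nrd(x_{i,1})|_\BA^{-1}$ from the constraint $M_Q(\BA)^1$. The bookkeeping of the two factors of $d$ (weight-space multiplicity $d^2$ vs.\ degree $d$ of $\Nrd$) is carried out correctly, and your remark about the $p_iq_i=0$ convention matches the paper's.
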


\begin{proof}
Assume that $p_i q_i\neq 0$ for some $1\leq i\leq l$. Let $x\in M_{Q_H}(\BA)$. We have
\begin{enumerate}[\indent (1)]
\item the contribution of $x_{i,1}$ to $e^{2\rho_{Q,+}(H_{Q_H}(x))}$ is the $d\Big(\sum\limits_{k>i}q_k-\sum\limits_{k<i}q_k\Big)$-th power of $|\Nrd (x_{i,1})|_{\BA}$; 
\end{enumerate}
\begin{enumerate}[\indent (2)]
\item the contribution of $x_{i,1}$ to $e^{2\rho_{Q_H}(H_{Q_H}(x))}$ is the $d\Big(\sum\limits_{k>i}p_k-\sum\limits_{k<i}p_k\Big)$-th power of $|\Nrd (x_{i,1})|_{\BA}$; 
\end{enumerate}
\begin{enumerate}[\indent (3)]
\item the contribution of $x_{i,2}$ to $e^{2\rho_{Q,+}(H_{Q_H}(x))}$ is the $d\Big(\sum\limits_{k>i}p_k-\sum\limits_{k<i}p_k\Big)$-th power of $|\Nrd (x_{i,2})|_{\BA}$; 
\end{enumerate}
\begin{enumerate}[\indent (4)]
\item the contribution of $x_{i,2}$ to $e^{2\rho_{Q_H}(H_{Q_H}(x))}$ is the $d\Big(\sum\limits_{k>i}q_k-\sum\limits_{k<i}q_k\Big)$-th power of $|\Nrd (x_{i,2})|_{\BA}$. 
\end{enumerate}
In sum, the contribution of $x_i$ to $e^{(2\rho_{Q,+}-2\rho_{Q_H})(H_{Q_H}(x))}$ is the product of the $d\Big(\sum\limits_{k<i}(p_k-q_k)+\sum\limits_{k>i}(q_k-p_k)\Big)$-th power of $|\Nrd (x_{i,1})|_{\BA}$ and the $d\Big(\sum\limits_{k<i}(q_k-p_k)+\sum\limits_{k>i}(p_k-q_k)\Big)$-th power of $|\Nrd (x_{i,2})|_{\BA}$. 

Now let $x\in M_{Q_H}(\BA)\cap M_Q(\BA)^1$. Then $|\Nrd (x_{i,1})\Nrd (x_{i,2})|_{\BA}=|\Nrd (x_{i})|_{\BA}=1$. Therefore, the contribution of $x_i$ to $e^{(2\rho_{Q,+}-2\rho_{Q_H})(H_{Q_H}(x))}$ is the $2d\Big(\sum\limits_{k<i}(p_k-q_k)+\sum\limits_{k>i}(q_k-p_k)\Big)$-th power of $|\Nrd (x_{i,1})|_{\BA}$. We have proved the first statement. 

The second statement is nothing but a special case of the first one, since we have $p_k=q_k$ for $1\leq k\leq l$ in this case. 
\end{proof}

\begin{thm}\label{exppol1}
  Let $T'$ be sufficiently regular, $\fo\in\CO$ and $f\in\CS(\fs(\BA))$. Then for all sufficiently regular $T$ and $s\in\BC$, we have
  $$ J_\fo^{G,T}(\eta,s,f)=\sum_{\{Q:\wt{P}_0\subseteq Q\}} p_{Q,s}(T_Q-T'_Q) e^{(2\rho_{Q,+}-2\rho_{Q_H}+s\Sigma_1)((T')_Q^G)} J_\fo^{Q,T'}(\eta,\{s^Q_i+s\},f_Q^\eta), $$
where we write $(T')_Q^G$ for the projection of $T'_Q\in\fa_Q$ in $\fa_Q^G$ via the decomposition $\fa_Q=\fa_Q^G\oplus\fa_G$, the distributions $J_\fo^{G,T}$ and $J_\fo^{Q,T'}$ are defined by the formulae (\ref{Jo1}) and (\ref{LeviJo1}) respectively, and $f_Q^\eta$ and $p_{Q,s}$ are defined by the formulae (\ref{equation1.2}) and (\ref{equation1.3}) respectively. 
\end{thm}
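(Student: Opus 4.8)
My plan is to run the relative--symmetric--space analogue of Arthur's ``change of truncation parameter'' computation (\cite[\S2]{MR625344}, in its Lie algebra incarnation \cite[\S1.9--1.10]{MR1893921}). Fix once and for all a sufficiently regular $T'\in\fa_{P_0}^+$. The proof proceeds in three stages: (i) rewrite $k_{f,\fo}^T(x)$ as a sum over relatively standard parabolic subgroups $Q\supseteq\wt{P}_0$ of a coefficient $\Gamma_Q(H_Q(x)-T'_Q,\,T_Q-T'_Q)$ — which carries all the $T$-dependence — times a Levi-level truncated kernel attached to $T'$; (ii) integrate this identity over $H(F)\bs H(\BA)\cap G(\BA)^1$, unfold the parabolic sum, and decompose the measure along $Q_H$ so that the integral over the central directions $A_Q^{G,\infty}\simeq\fa_Q^G$ separates off; (iii) evaluate that central integral via (\ref{equation1.3}) and Proposition \ref{propofp_Q,s}, and recognise the remaining integral over $M_{Q_H}(F)\bs M_{Q_H}(\BA)\cap M_Q(\BA)^1$ as $J_\fo^{Q,T'}(\eta,\{s^Q_i+s\},f_Q^\eta)$.

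For stage (i), substitute into (\ref{deftruncation1}) the defining relation of $\Gamma_Q$ applied with $T_1=H_P(\delta x)-T'_P$ and $T_2=T_P-T'_P$ (so that $T_1-T_2=H_P(\delta x)-T_P$):
$$\wh\tau_P^G(H_P(\delta x)-T_P)=\sum_{\{Q:P\subseteq Q\}}(-1)^{\dim(A_Q/A_{G})}\,\wh\tau_P^Q(H_P(\delta x)-T'_P)\,\Gamma_Q\big(H_P(\delta x)-T'_P,\,T_P-T'_P\big).$$
Bring $Q$ to the outside (all sums are finite for fixed $x$, by \cite[Lemma 5.1]{MR518111} and the compact support of $T_1\mapsto\Gamma_Q(T_1,T_2)$ from Proposition \ref{propofp_Q,s}), and for each such $Q$ write $\delta\in P_H(F)\bs H(F)$ as $\delta_1\delta_2$ with $\delta_1\in P_H(F)\bs Q_H(F)$, $\delta_2\in Q_H(F)\bs H(F)$. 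Since $H_Q$, $\Gamma_Q$ and $k_{f,P,\fo}$ are left $Q_H(F)$-invariant while $\wh\tau_P^Q$ sees only the $\fa_P^Q=\fa_{P\cap M_Q}^{M_Q}$-component, the inner sum over $(\delta_1,P)$ with $\wt{P}_0\subseteq P\subseteq Q$ reassembles into the Levi-level truncated kernel of \S\ref{pdtform1}: here one uses the sign identity $(-1)^{\dim(A_P/A_G)}(-1)^{\dim(A_Q/A_G)}=(-1)^{\dim(A_{P\cap M_Q}/A_{M_Q})}$ (the two copies of $(-1)^{\dim(A_Q/A_G)}$ cancelling), Corollary \ref{orbit1} together with $\fo\cap\fm_Q(F)=\bigsqcup_j\fo_j$ to split the sum over $\fm_P(F)\cap\fo$, the factorisation $\fn_P\cap\fs=(\fn_Q\cap\fs)\oplus(\fn_{P\cap M_Q}\cap\fs)$ to split the nilpotent integration, and the compatibility $(\varsigma_Q T')_{P\cap M_Q}=T'_P$ of the truncation parameters. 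Finally, carrying out the $(\fn_Q\cap\fs)$-integration — after the $K_H$-average with the twist $\eta(\Nrd(\cdot))$ supplied by the outer integrand and a change of variables along $A_Q$ producing the Jacobian $e^{2\rho_{Q,+}(H_Q(a))}$ — turns $f$ into the constant term $f_Q^\eta$ of (\ref{equation1.2}), so that the inner object is exactly $\sum_j k_{f_Q^\eta,\fo_j}^{Q,\varsigma_Q T'}$ evaluated on the $M_{Q_H}$-component.

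For stages (ii)--(iii), unfold $\delta_2$ (the domain becoming $Q_H(F)\bs H(\BA)\cap G(\BA)^1$) and decompose $x=nmak$ with $n\in N_{Q_H}(\BA)$, $m\in M_{Q_H}(\BA)\cap M_Q(\BA)^1$, $a\in A_Q^{G,\infty}$, $k\in K_H$, using the measures of \S\ref{BSandHaar1}. Since $\eta$ is trivial on unipotents and on $A_Q^\infty$ one has $\eta(\Nrd(x))=\eta(\Nrd(m))$ (the $K_H$-contribution having been absorbed into $f_Q^\eta$), while $|\Nrd(x_1)|_\BA^s=|\Nrd(m_1)|_\BA^s\,e^{s\Sigma_1(H_Q(a))}$; and $\Gamma_Q(H_Q(x)-T'_Q,\,T_Q-T'_Q)$ depends only on $H_Q(a)\in\fa_Q^G$. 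Collecting the modular factor $e^{(2\rho_{Q,+}-2\rho_{Q_H})(H_Q(a))}$ (the $e^{-2\rho_{Q_H}}$ from $dx$ on $H$ together with the $e^{2\rho_{Q,+}}$ above), the $A_Q^{G,\infty}$-integral decouples and equals $\int_{\fa_Q^G}e^{(2\rho_{Q,+}-2\rho_{Q_H}+s\Sigma_1)(H)}\Gamma_Q(H-(T')_Q^G,\,T_Q-T'_Q)\,dH$, which by the shift $H\mapsto H+(T')_Q^G$ and (\ref{equation1.3}) equals $e^{(2\rho_{Q,+}-2\rho_{Q_H}+s\Sigma_1)((T')_Q^G)}\,p_{Q,s}(T_Q-T'_Q)$. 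The surviving integral over $m\in M_{Q_H}(F)\bs M_{Q_H}(\BA)\cap M_Q(\BA)^1$ of $\big(\sum_j k_{f_Q^\eta,\fo_j}^{Q,\varsigma_Q T'}(m)\big)\,\eta(\Nrd(m))\,|\Nrd(m_1)|_\BA^s\,e^{(2\rho_{Q,+}-2\rho_{Q_H})(H_{Q_H}(m))}$, where the last factor equals $\prod_i|\Nrd(m_{i,1})|_\BA^{s^Q_i}$ by (\ref{explicitcst}) and $|\Nrd(m_1)|_\BA^s=\prod_i|\Nrd(m_{i,1})|_\BA^s$, is by definition $\sum_j J_{\fo_j}^{Q,T'}(\eta,\{s^Q_i+s\},f_Q^\eta)=J_\fo^{Q,T'}(\eta,\{s^Q_i+s\},f_Q^\eta)$. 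Summing over $Q$ gives the identity; Theorem \ref{convergence1} and the compact support of $\Gamma_Q$ justify all interchanges of sums and integrals.

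The step I expect to be the main obstacle is the combinatorial reorganisation of stage (i): one must verify that, for each chain $\wt{P}_0\subseteq P\subseteq Q$, the splitting of the sum over $\fm_P(F)\cap\fo$ and of the integral over $(\fn_P\cap\fs)(\BA)$ into an ``$M_Q$-part'' and a ``central/$N_Q$-part'' is exactly compatible with the re-summation over $(\delta_1,P)$ forced by the $\Gamma_Q$-relation, that the signs match after the cancellation above, and — the genuinely delicate point — that the Levi-level object one obtains is literally $\sum_j k_{f_Q^\eta,\fo_j}^{Q,\varsigma_Q T'}$ for the function $f_Q^\eta$ defined in (\ref{equation1.2}), not merely an object of the same shape; this rests on checking that the $(\fn_Q\cap\fs)$-integration and the $\eta$-twisted $K_H$-average of $f$ reproduce precisely (\ref{equation1.2}). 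Once stage (i) is in place, stages (ii)--(iii) are the routine relative analogue of \cite[pp. 13--15]{MR625344}.
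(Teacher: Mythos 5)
Your proposal is correct and is essentially the paper's own proof: both substitute $T_1=H_P(\delta x)-T'_P$, $T_2=T_P-T'_P$ into the defining relation of $\Gamma_Q$, swap the $P,Q$-sums, unfold $Q_H(F)\bs H(F)$ into the integral, perform the Iwasawa decomposition along $Q_H$, separate off the $A_Q^{G,\infty}$-integral to produce $e^{(2\rho_{Q,+}-2\rho_{Q_H}+s\Sigma_1)((T')^G_Q)}p_{Q,s}(T_Q-T'_Q)$, and recognise the $M_{Q_H}$-integral as $J_\fo^{Q,T'}(\eta,\{s^Q_i+s\},f_Q^\eta)$ via (\ref{explicitcst}). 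One small inaccuracy: $k_{f,P,\fo}$ is only left $P_H(F)N_{P_H}(\BA)$-invariant, not $Q_H(F)$-invariant as you state when justifying the split $\delta=\delta_1\delta_2$; but that split is a straightforward unfolding and needs no such invariance, so the slip is harmless.
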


\begin{coro}\label{polynomial1}
  Let $\fo\in\CO$, $f\in\CS(\fs(\BA))$ and $s\in\BC$. Then the functions $T\mapsto J_\fo^{G,T}(\eta,s,f)$ and $T\mapsto J^{G,T}(\eta,s,f)$ are the restriction of exponential polynomials in $T$, so we can extend them to all $T\in\fa_{0}$. When $p=q=n$ and $s=0$, their purely polynomial terms have degree $\leq n-1$; in particular, if $\fo\in\CO^\times$ (e.g., $\fo\in\CO_{rs}$), $T\mapsto J_\fo^{G,T}(\eta,0,f)$ is the restriction of a polynomial in $T$ of degree $\leq n-1$.
\end{coro}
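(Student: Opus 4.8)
The plan is to deduce everything from Theorem \ref{exppol1} together with the structural description of $p_{Q,s}$ given by Proposition \ref{propofp_Q,s}; once those two results are in hand the corollary is essentially bookkeeping.

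\emph{Step 1: exponential polynomiality and the extension in $T$.} Fix an auxiliary sufficiently regular $T'$. By Theorem \ref{exppol1}, for all sufficiently regular $T$,
\[
 J_\fo^{G,T}(\eta,s,f)=\sum_{\{Q:\wt{P}_0\subseteq Q\}} c_Q^\fo\cdot p_{Q,s}(T_Q-T'_Q),\qquad
 c_Q^\fo:=e^{(2\rho_{Q,+}-2\rho_{Q_H}+s\Sigma_1)((T')_Q^G)}\,J_\fo^{Q,T'}(\eta,\{s^Q_i+s\},f_Q^\eta),
\]
a finite sum in which each $c_Q^\fo$ is independent of $T$. Since $T\mapsto T_Q$ and the projections $\fa_Q\to\fa_R^G$ are linear in $T$, Proposition \ref{propofp_Q,s} shows that each $T\mapsto p_{Q,s}(T_Q-T'_Q)$ is an exponential polynomial in $T$ whose exponents lie in a fixed finite family of linear forms on $\fa_0$ (the restrictions of $2\rho_{R,+}-2\rho_{R_H}+s\Sigma_1$, $R$ relatively standard, composed with the projections to $\fa_R^G$) and whose polynomial coefficients have degree $\le\dim(A_Q/A_G)$. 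Hence on the cone of sufficiently regular $T$ the function $J_\fo^{G,T}(\eta,s,f)$ agrees with an exponential polynomial in $T$; because two exponential polynomials agreeing on a non-empty open cone coincide (their difference, restricted to a generic line, is a one-variable exponential polynomial with infinitely many zeros, hence vanishes identically), I would \emph{define} $J_\fo^{G,T}(\eta,s,f)$ for all $T\in\fa_0$ to be this exponential polynomial. For $J^{G,T}(\eta,s,f)=\sum_{\fo\in\CO}J_\fo^{G,T}(\eta,s,f)$, all the exponents above range over one finite set independent of $\fo$ and the degrees are bounded independently of $\fo$, so the absolute convergence in Theorem \ref{convergence1} lets me collect, for each fixed exponent, a convergent sum of polynomials of bounded degree — which is again a polynomial — and conclude that $J^{G,T}(\eta,s,f)$ is again an exponential polynomial in $T$.

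\emph{Step 2: degree of the polynomial part when $p=q=n$, $s=0$.} Put $p=q=n$ and $s=0$; then $\Sigma_1$ drops out of all exponents and each $c_Q^\fo$ stays constant, so extracting the purely polynomial part commutes with the sums over $Q$ and then over $\fo$. By Proposition \ref{propofp_Q,s}, the purely polynomial part of $p_{Q,0}(T_Q-T'_Q)$ is a polynomial of degree $\le\dim(A_{\ov{Q}^{\omega\text{-st}}}/A_G)$. The parabolic $\ov{Q}^{\omega\text{-st}}$ is relatively standard and $\omega$-stable, hence corresponds under the bijection of Section \ref{defomgstb} to a standard parabolic subgroup of $GL_{n,D}$; the smallest of these has split centre of dimension $n$, so $\dim(A_{\ov{Q}^{\omega\text{-st}}})\le n$ and $\dim(A_{\ov{Q}^{\omega\text{-st}}}/A_G)\le n-1$. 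Since this bound is uniform in $Q$ and in $\fo$, the purely polynomial terms of $J_\fo^{G,T}(\eta,0,f)$ and of $J^{G,T}(\eta,0,f)$ have degree $\le n-1$.

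\emph{Step 3: the case $\fo\in\CO^\times$ (in particular $\fo\in\CO_{rs}$).} Here I would use that $J_\fo^{Q,T'}$ is built out of $\fo\cap\fm_Q(F)$ (through the classes $\fo_1,\dots,\fo_t$ entering its definition), so $J_\fo^{Q,T'}(\eta,\{s^Q_i\},f_Q^\eta)=0$ whenever $\fo\cap\fm_Q(F)=\emptyset$; by Proposition \ref{propOinvert1}, for $\fo\in\CO^\times$ non-vanishing of this term forces $Q$ to be $\omega$-stable. For such $Q$ one has $s^Q_i=0$ (Proposition \ref{propexplicitcst}) and $(2\rho_{Q,+}-2\rho_{Q_H})|_{\fa_Q^G}=0$ (Lemma \ref{balanced1}), so the exponential prefactor in $c_Q^\fo$ is $1$, and by Proposition \ref{propofp_Q,s} the function $p_{Q,0}(T_Q-T'_Q)$ is already a genuine polynomial, of degree $\le\dim(A_Q/A_G)\le n-1$ (again via Section \ref{defomgstb}). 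Therefore $J_\fo^{G,T}(\eta,0,f)=\sum_{\{Q\ \omega\text{-stable}\}} J_\fo^{Q,T'}(\eta,\{0\},f_Q^\eta)\,p_{Q,0}(T_Q-T'_Q)$ is a polynomial in $T$ of degree $\le n-1$, and $\CO_{rs}\subseteq\CO^\times$ covers the regular semisimple case.

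\emph{Expected main obstacle.} All of the real content sits in Theorem \ref{exppol1} and Proposition \ref{propofp_Q,s}; within the present deduction the only delicate point is in Step 1 — legitimising the extension in $T$ for the \emph{infinite} sum $J^{G,T}=\sum_\fo J_\fo^{G,T}$, i.e. that a convergent sum of exponential polynomials with exponents in a fixed finite set and uniformly bounded degrees is itself an exponential polynomial. This is exactly the device used at the analogous point of \cite{MR1893921}, and I would simply invoke it.
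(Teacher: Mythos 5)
Your argument is correct and is exactly the route the paper intends: the paper's proof of this corollary is the one-line citation ``It results from Theorem \ref{exppol1}, Propositions \ref{propofp_Q,s} and \ref{propOinvert1}'', and your Steps 1--3 are precisely the bookkeeping needed to spell that out, including the identification of $\omega$-stable $Q$ with standard parabolics of $GL_{n,D}$ to get the degree bound $\dim(A_Q/A_G)\leq n-1$. The only point worth recording is the one you already flag: for $J^{G,T}=\sum_\fo J_\fo^{G,T}$ one needs either the finite-dimensionality device you describe, or the product-form analogue of Theorem~\ref{convergence1} (mentioned at the start of Section~\ref{pdtform1}) to justify interchanging $\sum_\fo$ and $\sum_Q$; both are available and standard.
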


\begin{proof}[Proof of Corollary \ref{polynomial1}]
It results from Theorem \ref{exppol1}, Propositions \ref{propofp_Q,s} and \ref{propOinvert1}. 
\end{proof}

\begin{remark}\label{LevibehT1}
We may extend our result to the product form in Section \ref{pdtform1} by similar argument. Let $Q$ be a relatively standard parabolic subgroup of $G$. Let $\fo\in\CO$, $f'\in\CS((\fm_Q\cap\fs)(\BA))$ and $\{s_i\}_{1\leq i\leq l}\in\BC^l$. Then the functions $J_\fo^{Q,T}(\eta,\{s_i\},f')$ and $J^{Q,T}(\eta,\{s_i\},f')$ are the restriction of exponential polynomials in $T$ independent of $T_Q$, so we can extend them to all $T\in\fa_{0}$. 
\end{remark}

\begin{proof}[Proof of Theorem \ref{exppol1}]
  Let $P$ be a relatively standard parabolic subgroup of $G$, $\delta\in P_H(F)\bs H(F)$ and $x\in H(\BA)\cap G(\BA)^1$. Substituting $T_1=H_P(\delta x)-T'_P$ and $T_2=T_P-T'_P$ in the definition of $\Gamma_P(T_1,T_2)$, we get
  $$ \wh{\tau}_P^G(H_P(\delta x)-T_P)=\sum_{\{Q:P\subseteq Q\}} (-1)^{\dim(A_Q/A_{G})} \wh{\tau}_P^Q(H_P(\delta x)-T'_P) \Gamma_Q(H_P(\delta x)-T'_P,T_P-T'_P). $$
  Then
  \[\begin{split}
    J_\fo^{G,T}(\eta,s,f)=&\int_{H(F)\bs H(\BA)\cap G(\BA)^1} \left(\sum_{\{P: \wt{P}_0\subseteq P\}} (-1)^{\dim(A_P/A_{G})} \sum_{\delta\in P_H(F)\bs H(F)} \wh{\tau}_P^G(H_{P}(\delta x)-T_P) \cdot k_{f,P,\fo}(\delta x)\right) \\ &\cdot \eta(\Nrd (x)) |\Nrd (x_1)|_\BA^s dx \\
    =&\int_{H(F)\bs H(\BA)\cap G(\BA)^1} \sum_{\{P: \wt{P}_0\subseteq P\}} (-1)^{\dim(A_P/A_{G})} \sum_{\delta\in P_H(F)\bs H(F)} \\ &\left(\sum_{\{Q:P\subseteq Q\}} (-1)^{\dim(A_Q/A_{G})} \wh{\tau}_P^Q(H_P(\delta x)-T'_P) \Gamma_Q(H_P(\delta x)-T'_P,T_P-T'_P)\right) k_{f,P,\fo}(\delta x) \\ &\cdot \eta(\Nrd (x)) |\Nrd (x_1)|_\BA^s dx.
  \end{split}\]
  Exchanging the order of two sums over $P$ and $Q$, and decomposing the sum over $P_H(F)\bs H(F)$ into two sums over $P_H(F)\bs Q_H(F)$ and $Q_H(F)\bs H(F)$, we have
  \[\begin{split}
    J_\fo^{G,T}(\eta,s,f)=&\sum_{\{Q:\wt{P}_0\subseteq Q\}} \int_{H(F)\bs H(\BA)\cap G(\BA)^1} \sum_{\{P:\wt{P}_0\subseteq P\subseteq Q\}} (-1)^{\dim(A_P/A_{Q})} \sum_{\delta'\in Q_H(F)\bs H(F)} \sum_{\delta\in P_H(F)\bs Q_H(F)}  \\
    &\wh{\tau}_P^Q(H_P(\delta\delta' x)-T'_P) \Gamma_Q(H_P(\delta\delta' x)-T'_P,T_P-T'_P) k_{f,P,\fo}(\delta\delta' x) \eta(\Nrd (x)) |\Nrd (x_1)|_\BA^s dx.
  \end{split}\]
  Combining the integral over $H(F)\bs H(\BA)\cap G(\BA)^1$ and the sum over $Q_H(F)\bs H(F)$ into an integral over $Q_H(F)\bs H(\BA)\cap G(\BA)^1$, and using the fact that
  $$ P_H(F)\bs Q_H(F)\simeq (P_H(F)\cap M_{Q_H}(F))\bs M_{Q_H}(F), $$
  we obtain
  \[\begin{split}
    J_\fo^{G,T}(\eta,s,f)=&\sum_{\{Q:\wt{P}_0\subseteq Q\}} \int_{Q_H(F)\bs H(\BA)\cap G(\BA)^1} \sum_{\{P:\wt{P}_0\subseteq P\subseteq Q\}} (-1)^{\dim(A_P/A_{Q})} \sum_{\delta\in (P_H(F)\cap M_{Q_H}(F))\bs M_{Q_H}(F)} \\
    &\wh{\tau}_P^Q(H_P(\delta x)-T'_P) \Gamma_Q(H_P(\delta x)-T'_P,T_P-T'_P) k_{f,P,\fo}(\delta x) \eta(\Nrd (x)) |\Nrd (x_1)|_\BA^s dx.
  \end{split}\]

  By Iwasawa decomposition and our choices of measures, the integral over $Q_H(F)\bs H(\BA)\cap G(\BA)^1$ can be decomposed as integrals over 
  $$ (n,a,m,k)\in N_{Q_H}(F)\bs N_{Q_H}(\BA)\times A_Q^{G,\infty}\times M_{Q_H}(F)\bs M_{Q_H}(\BA)\cap M_Q(\BA)^1\times K_H. $$
  Then
  \[\begin{split}
    J_\fo^{G,T}(\eta,s,f)=&\sum_{\{Q:\wt{P}_0\subseteq Q\}} \int_{K_H}\int_{M_{Q_H}(F)\bs M_{Q_H}(\BA)\cap M_Q(\BA)^1}\int_{A_Q^{G,\infty}}\int_{N_{Q_H}(F)\bs N_{Q_H}(\BA)} \sum_{\{P:\wt{P}_0\subseteq P\subseteq Q\}} (-1)^{\dim(A_P/A_{Q})} \\
    &\sum_{\delta\in (P_H(F)\cap M_{Q_H}(F))\bs M_{Q_H}(F)} \wh{\tau}_P^Q(H_P(\delta namk)-T'_P) \Gamma_Q(H_P(\delta namk)-T'_P,T_P-T'_P) \\
    &\cdot k_{f,P,\fo}(\delta namk) \eta(\Nrd (mk)) |\Nrd(a_1 m_1)|_\BA^s e^{-2\rho_{Q_H}(H_{Q_H}(am))}dndadmdk.
  \end{split}\]
  Notice that
  $$ \wh{\tau}_P^Q(H_P(\delta namk)-T'_P)=\wh{\tau}_P^Q(H_P(\delta m)+H_P(a)-T'_P)=\wh{\tau}_P^Q(H_P(\delta m)-T'_P), $$
  and that
  $$ \Gamma_Q(H_P(\delta namk)-T'_P,T_P-T'_P)=\Gamma_Q(H_Q(\delta namk)-T'_Q,T_Q-T'_Q)=\Gamma_Q(H_Q(a)-T'_Q,T_Q-T'_Q). $$
  In addition, by change of variables, we see that
  \[\begin{split}
    k_{f,P,\fo}(\delta namk)&=\sum_{X\in\fm_P(F)\cap\fo} \int_{(\fn_P\cap\fs)(\BA)} f((\delta namk)^{-1}(X+U)\delta namk) dU \\
    &=\sum_{X\in\fm_P(F)\cap\fo} \int_{(\fn_P\cap\fs)(\BA)} f((\delta a^{-1}namk)^{-1}(X+a^{-1}Ua)\delta a^{-1}namk) dU \\
    &=\sum_{X\in\fm_P(F)\cap\fo} \int_{(\fn_P\cap\fs)(\BA)} f((\delta a^{-1}namk)^{-1}(X+U)\delta a^{-1}namk) e^{2\rho_{Q,+}(H_{Q}(a))} dU \\
    &=e^{2\rho_{Q,+}(H_{Q}(a))}k_{f,P,\fo}(\delta a^{-1}namk).
  \end{split}\]
  Since $\delta a^{-1}na\delta^{-1}\in N_{Q_H}(\BA)\subseteq N_{P_H}(\BA)$ and $k_{f,P,\fo}$ is left invariant by $N_{P_H}(\BA)$, we deduce that
  $$ k_{f,P,\fo}(\delta namk)=e^{2\rho_{Q,+}(H_{Q}(a))}k_{f,P,\fo}(\delta mk). $$
  In sum, the integrand for the term indexed by $Q$ in the above formula for $J_\fo^{G,T}(\eta,s,f)$ is independent of $n\in N_{Q_H}(F)\bs N_{Q_H}(\BA)$. We can choose the Haar measure such that $\vol(N_{Q_H}(F)\bs N_{Q_H}(\BA))=1$. Then
  \[\begin{split}
    J_\fo^{G,T}(\eta,s,f)=&\sum_{\{Q:\wt{P}_0\subseteq Q\}} \left(\int_{A_Q^{G,\infty}} |\Nrd (a_1)|_\BA^s e^{(2\rho_{Q,+}-2\rho_{Q_H})(H_Q(a))} \Gamma_Q(H_Q(a)-T'_Q,T_Q-T'_Q) da\right) \\
    &\int_{M_{Q_H}(F)\bs M_{Q_H}(\BA)\cap M_Q(\BA)^1} \sum_{\{P:\wt{P}_0\subseteq P\subseteq Q\}} (-1)^{\dim(A_P/A_{Q})} \sum_{\delta\in (P_H(F)\cap M_{Q_H}(F))\bs M_{Q_H}(F)} \\
    &\wh{\tau}_P^Q(H_P(\delta m)-T'_P) \left(\int_{K_H} k_{f,P,\fo}(\delta mk) \eta(\Nrd (k)) dk\right) \eta(\Nrd (m)) |\Nrd (m_1)|_\BA^s e^{-2\rho_{Q_H}(H_{Q_H}(m))} dm.
  \end{split}\]

  By the definition of the Haar measure on $A_Q^{G,\infty}$, we have
  \[\begin{split}
    &\int_{A_Q^{G,\infty}} |\Nrd (a_1)|_\BA^s e^{(2\rho_{Q,+}-2\rho_{Q_H})(H_Q(a))} \Gamma_Q(H_Q(a)-T'_Q,T_Q-T'_Q) da \\
    :=&\int_{\fa_Q^{G}} e^{(2\rho_{Q,+}-2\rho_{Q_H}+s\Sigma_1)(T_1)} \Gamma_Q(T_1-T'_Q,T_Q-T'_Q) dT_1 \\
    =&e^{(2\rho_{Q,+}-2\rho_{Q_H}+s\Sigma_1)((T')_Q^G)} \int_{\fa_Q^{G}} e^{(2\rho_{Q,+}-2\rho_{Q_H}+s\Sigma_1)(T_1)} \Gamma_Q(T_1,T_Q-T'_Q) dT_1 \\
    =&e^{(2\rho_{Q,+}-2\rho_{Q_H}+s\Sigma_1)((T')_Q^G)} p_{Q,s}(T_Q-T'_Q).
  \end{split}\]
  Since $\fn_P=\fn_P^Q\oplus\fn_Q$, by change of variables, we see that
  \[\begin{split}
    k_{f,P,\fo}(\delta mk)&=\sum_{X\in\fm_P(F)\cap\fo} \int_{(\fn_P^Q\cap\fs)(\BA)} dU \int_{(\fn_Q\cap\fs)(\BA)} f((\delta mk)^{-1}(X+U+V)\delta mk) dV \\
    &=e^{2\rho_{Q,+}(H_{Q_H}(m))} \sum_{X\in\fm_P(F)\cap\fo} \int_{(\fn_P^Q\cap\fs)(\BA)} dU \int_{(\fn_Q\cap\fs)(\BA)} f(k^{-1}((\delta m)^{-1}(X+U)\delta m+V)k) dV,
  \end{split}\]
  so we can write
  \[\begin{split}
    \int_{K_H} k_{f,P,\fo}(\delta mk)\eta(\Nrd (k)) dk&=e^{2\rho_{Q,+}(H_{Q_H}(m))} \sum_{X\in\fm_P(F)\cap\fo} \int_{(\fn_P^Q\cap\fs)(\BA)} f_Q^\eta((\delta m)^{-1}(X+U)\delta m) dU \\
    &=e^{2\rho_{Q,+}(H_{Q_H}(m))} \sum_{j=1}^{t} k_{f_Q^\eta,P\cap M_Q,\fo_j}^{M_Q} (\delta m)
  \end{split}\]
  by (\ref{LevikfPo1}). Now we can draw our conclusion by noting that
  \[\begin{split}
    J_{\fo}^{Q,T'}(\eta,\{s^Q_i+s\},f_Q^\eta)=&\sum_{j=1}^{t} \int_{M_{Q_H}(F)\bs M_{Q_H}(\BA)\cap M_Q(\BA)^1} \sum_{\{P:\wt{P}_0\subseteq P\subseteq Q\}} (-1)^{\dim(A_{P\cap M_Q}/A_{M_Q})} \\
    &\sum_{\delta\in ((P\cap M_Q)(F)\cap M_{Q_H}(F))\bs M_{Q_H}(F)} \wh{\tau}_{P\cap M_Q}^{M_Q}(H_{P\cap M_Q}(\delta m)-(\varsigma_Q T')_{P\cap M_Q}) \\
    &\cdot k_{f_Q^\eta,P\cap M_Q,\fo_j}^{M_Q}(\delta m) \eta(\Nrd (m)) |\Nrd (m_1)|_\BA^s e^{(2\rho_{Q,+}-2\rho_{Q_H})(H_{Q_H}(m))} dm \\
    =&\int_{M_{Q_H}(F)\bs M_{Q_H}(\BA)\cap M_Q(\BA)^1} \sum_{\{P:\wt{P}_0\subseteq P\subseteq Q\}} (-1)^{\dim(A_P/A_{Q})} \sum_{\delta\in (P_H(F)\cap M_{Q_H}(F))\bs M_{Q_H}(F)} \\
    &\wh{\tau}_P^Q(H_P(\delta m)-T'_P) \left(\sum_{j=1}^{t} k_{f_Q^\eta,P\cap M_Q,\fo_j}^{M_Q}(\delta m)\right) \eta(\Nrd (m)) |\Nrd (m_1)|_\BA^s \\         &\cdot e^{(2\rho_{Q,+}-2\rho_{Q_H})(H_{Q_H}(m))} dm.
  \end{split}\]
\end{proof}

\subsection{Independence of constant terms}\label{indep1}

  Let $J_\fo^{G}(\eta,s,f)$ and $J^{G}(\eta,s,f)$ be the constant terms of $J_\fo^{G,T}(\eta,s,f)$ and $J^{G,T}(\eta,s,f)$ respectively. We fix a common minimal Levi subgroup $M_0$ of $H$ and $G$. 

Firstly, the distributions $J_\fo^{G}(\eta,s,f)$ and $J^{G}(\eta,s,f)$ are independent of the choice of the relatively standard minimal parabolic subgroup $P_0$ of $G$ at the very beginning of last section. In fact, let $P'_0$ be another relatively standard minimal parabolic subgroup of $G$ and $\sigma\in\Omega^G$ such that $P'_0=\sigma P_0$. Denote by $J_{P'_0,\fo}^{G,T}(\eta,s,f)$ and $J_{P'_0,\fo}^{G}(\eta,s,f)$ the distributions obtained starting from $P'_0$. Then if $T\in\fa_{P'_0}$, we have $J_{P'_0,\fo}^{G,T}(\eta,s,f)=J_\fo^{G,\sigma^{-1}T}(\eta,s,f)$, so $J_{P'_0,\fo}^{G}(\eta,s,f)=J_\fo^{G}(\eta,s,f)$. 

Secondly, the distributions $J_\fo^{G}(\eta,s,f)$ and $J^{G}(\eta,s,f)$ are independent of the choice of the minimal parabolic subgroup $\wt{P}_0$ of $H$. In fact, let $\wt{P}'_0$ be another minimal parabolic subgroup of $H$ and $\sigma\in\Omega^H$ such that $\wt{P}'_0=\sigma^{-1}\wt{P}_0$. Put $P'_0:=\sigma^{-1}P_0$. Denote by $J_{\wt{P}'_0,\fo}^{G,T}(\eta,s,f)$ and $J_{\wt{P}'_0,\fo}^{G}(\eta,s,f)$ the distributions obtained starting from $\wt{P}'_0$ and $P'_0$. We can apply the argument of \cite[Proposition 4.6]{MR1893921} after some minor modifications here to prove that $J_\fo^{G,T}(\eta,s,f)=J_{\wt{P}'_0,\fo}^{G,\sigma^{-1}T}(\eta,s,f)$, so $J_\fo^{G}(\eta,s,f)=J_{\wt{P}'_0,\fo}^{G}(\eta,s,f)$. 


\section{\textbf{Non-equivariance}}\label{nonequi}

Let $Q$ be a relatively standard parabolic subgroup of $G$, $s\in\BR$ and $y\in H(\BA)\cap G(\BA)^1$. For $f\in\CS(\fs(\BA))$, define $f_{Q,s,y}^\eta\in\CS((\fm_Q\cap\fs)(\BA))$ by
\begin{equation}\label{twconst1}
 f_{Q,s,y}^\eta(X):=\int_{K_H} \int_{(\fn_Q\cap\fs)(\BA)} f(k^{-1}(X+V)k) \eta(\Nrd (k))p_{Q,s}(-H_Q(ky)) dVdk
\end{equation}
for all $X\in(\fm_Q\cap\fs)(\BA)$, where $p_{Q,s}$ is defined by the formula (\ref{equation1.3}). 

\begin{prop}\label{nonequivariance1}
For $f\in\CS(\fs(\BA))$ and $y\in H(\BA)\cap G(\BA)^1$, we denote $f^y(x):=f(yxy^{-1})$. Then for all sufficiently regular $T$, $\fo\in\CO$ and $s\in\BR$, we have
$$ J_\fo^{G,T}(\eta,s,f^y)=\eta(\Nrd (y)) |\Nrd (y_1)|_\BA^s \sum_{\{Q:\wt{P}_0\subseteq Q\}} e^{(2\rho_{Q,+}-2\rho_{Q_H}+s\Sigma_1)(T_Q^G)} J_\fo^{Q,T}(\eta,\{s^Q_i+s\},f_{Q,s,y}^\eta), $$
where $J_\fo^{G,T}$ and $J_\fo^{Q,T}$ are defined by the formulae (\ref{Jo1}) and (\ref{LeviJo1}) respectively, $\{s^Q_i\}_{1\leq i\leq l}\in\BZ^l$ are the explicit constants determined by (\ref{explicitcst}), and we write $T_Q^G$ for the projection of $T_Q\in\fa_Q$ in $\fa_Q^G$ via the decomposition $\fa_Q=\fa_Q^G\oplus\fa_G$. 
\end{prop}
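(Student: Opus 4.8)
The plan is to re-run the computation that proves Theorem~\ref{exppol1}, specialising the auxiliary parameter there to $T'=T$, and to carry the conjugation by $y$ through it; the only genuinely new ingredient will be the bookkeeping of the Iwasawa shift produced by $y$.

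First I would record that, by the substitution $X+U\mapsto y(X+U)y^{-1}$ inside the integral over $(\fn_P\cap\fs)(\BA)$ defining the Levi kernel, one has $k_{f^y,P,\fo}(x)=k_{f,P,\fo}(xy^{-1})$ for every relatively standard parabolic $P$. Then I would unfold the sum over $\delta\in P_H(F)\bs H(F)$ against the outer integral (legitimate, since $H_P$, $\eta\circ\Nrd$ and $|\Nrd(\cdot)_1|_\BA$ are left $P_H(F)$-invariant) and perform the measure-preserving substitution $x\mapsto xy$ on $H(F)\bs H(\BA)\cap G(\BA)^1$, which is allowed because $y\in H(\BA)\cap G(\BA)^1$; this moves the twist into the argument of $\wh{\tau}_P^G$ and produces a prefactor $\eta(\Nrd(y))|\Nrd(y_1)|_\BA^s$, so that
$$ J_\fo^{G,T}(\eta,s,f^y)=\eta(\Nrd(y))|\Nrd(y_1)|_\BA^s\int_{H(F)\bs H(\BA)\cap G(\BA)^1}\sum_{\{P:\wt{P}_0\subseteq P\}}(-1)^{\dim(A_P/A_G)}\sum_{\delta\in P_H(F)\bs H(F)}\wh{\tau}_P^G(H_P(\delta xy)-T_P)\,k_{f,P,\fo}(\delta x)\,\eta(\Nrd(x))|\Nrd(x_1)|_\BA^s\,dx, $$
which is exactly the expression defining $J_\fo^{G,T}(\eta,s,f)$ with $\wh{\tau}_P^G(H_P(\delta x)-T_P)$ replaced by $\wh{\tau}_P^G(H_P(\delta xy)-T_P)$. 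Next I would set $\Delta_P(g):=H_P(gy)-H_P(g)$; an elementary Iwasawa computation gives $\Delta_P(g)=H_P(k_P(g)y)$, where $k_P(g)\in K$ is a $P$-Iwasawa $K$-component of $g$, and this depends only on $g$. Applying the defining recursion for $\Gamma_Q$ with $T_1=H_P(\delta x)-T_P$ and $T_2=-\Delta_P(\delta x)$ (so that $T_1-T_2=H_P(\delta xy)-T_P$) gives
$$ \wh{\tau}_P^G(H_P(\delta xy)-T_P)=\sum_{\{Q:P\subseteq Q\}}(-1)^{\dim(A_Q/A_G)}\wh{\tau}_P^Q(H_P(\delta x)-T_P)\,\Gamma_Q\bigl(H_P(\delta x)-T_P,\,-\Delta_P(\delta x)\bigr); $$
the point is that $\wh{\tau}_P^Q$ no longer involves $y$, so it is literally the factor occurring in the proof of Theorem~\ref{exppol1} with $T'=T$, and all the $y$-dependence has been pushed into the second slot of $\Gamma_Q$.

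From here I would repeat that proof step by step: interchange the sums over $P$ and $Q$; write $\delta=\delta\delta'$ with $\delta\in P_H(F)\bs Q_H(F)\cong(P_H(F)\cap M_{Q_H}(F))\bs M_{Q_H}(F)$ and $\delta'\in Q_H(F)\bs H(F)$, absorbing the $\delta'$-sum into $\int_{Q_H(F)\bs H(\BA)\cap G(\BA)^1}$; and Iwasawa-decompose $x=namk$ with $n\in N_{Q_H}(\BA)$, $a\in A_Q^{G,\infty}$, $m\in M_{Q_H}(\BA)\cap M_Q(\BA)^1$, $k\in K_H$. As there, $H_P(\delta x)=H_P(\delta m)+H_P(a)$ with $H_P(a)=H_Q(a)\in\fa_Q^G$, $H_Q(\delta m)=0$; the integrand is independent of $n$; $k_{f,P,\fo}(\delta namk)=e^{2\rho_{Q,+}(H_Q(a))}k_{f,P,\fo}(\delta mk)$; the modular and normalising factors collapse on the $A_Q^{G,\infty}$-integral into $e^{(2\rho_{Q,+}-2\rho_{Q_H}+s\Sigma_1)(H_Q(a))}$; and $\eta(\Nrd(a))=1$ since $A_Q^{G,\infty}$ is connected and archimedean. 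The new computation is that, for $x=namk$ and $\delta\in M_{Q_H}(F)$, the $P$-Iwasawa $K$-component of $\delta x$ is $\kappa_0 k$, where $\kappa_0\in K_H\cap M_{Q_H}(\BA)$ is the $(P\cap M_Q)$-Iwasawa $K$-component of $\delta m$ taken inside $M_{Q_H}$; hence $\Delta_P(\delta x)=H_P(\kappa_0 ky)$, and because $\Gamma_Q$ depends only on the $\fa_Q^G$-projections of its two arguments and $H_Q(\kappa_0)=0$,
$$ \Gamma_Q\bigl(H_P(\delta x)-T_P,\,-\Delta_P(\delta x)\bigr)=\Gamma_Q\bigl(H_Q(a)-T_Q^G,\,-H_Q(ky)\bigr). $$
Thus, by the definition~(\ref{equation1.3}) of $p_{Q,s}$ and the translation $T_1\mapsto T_1+T_Q^G$, the inner $A_Q^{G,\infty}$-integral equals $e^{(2\rho_{Q,+}-2\rho_{Q_H}+s\Sigma_1)(T_Q^G)}\,p_{Q,s}(-H_Q(ky))$; here $k$ is still the variable of the remaining $\int_{K_H}$, so $p_{Q,s}(-H_Q(ky))$ lands inside that integral, and together with the $(\fn_Q\cap\fs)(\BA)$-part of the $(\fn_P\cap\fs)(\BA)$-integral in $k_{f,P,\fo}(\delta mk)$ and the factor $\eta(\Nrd(k))$ it builds precisely $f_{Q,s,y}^\eta$ of~(\ref{twconst1}) in place of the $f_Q^\eta$ of~(\ref{equation1.2}) that appeared in Theorem~\ref{exppol1}. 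The leftover sum over $P\subseteq Q$ with signs $(-1)^{\dim(A_P/A_Q)}$, the factors $\wh{\tau}_P^Q(H_P(\delta m)-T_P)$ and the $(\fn_P^Q\cap\fs)(\BA)$-integrals reassemble (using Corollary~\ref{orbit1}) into $k_{f_{Q,s,y}^\eta,\fo_j}^{Q,\varsigma_Q T}$ summed over the classes $\fo_j$ making up $\fo\cap\fm_Q(F)$, while the remaining weight $\eta(\Nrd(m))\,e^{(2\rho_{Q,+}-2\rho_{Q_H})(H_{Q_H}(m))}|\Nrd(m_1)|_\BA^s$ equals $\eta(\Nrd(m))\prod_i|\Nrd(m_{i,1})|_\BA^{s_i^Q+s}$ by the identity~(\ref{explicitcst}) defining $\{s_i^Q\}$; comparison with~(\ref{LeviJo1}) then yields the asserted formula.

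The hard part will be exactly the identification $\Delta_P(\delta x)=H_P(\kappa_0 ky)$ and the verification that its $\fa_Q^G$-component coincides with that of $H_Q(ky)$ for $k$ the $K_H$-component of $x$: this is what forces $p_{Q,s}$ to occur inside the $K_H$-integral with argument $-H_Q(ky)$, and hence to produce $f_{Q,s,y}^\eta$ rather than merely a constant multiple of $f_Q^\eta$. Everything else — the measure normalisations, the vanishing $H_Q(\kappa_0)=0$ and $\eta(\Nrd(a))=1$, and the absolute convergence of all integrals involved (from Theorem~\ref{convergence1} applied to $f^y$, together with its analogue for the Levi factors recalled in §\ref{pdtform1}) — should be routine.
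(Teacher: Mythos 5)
Your proof is correct and follows essentially the same route as the paper's argument: change variables $x\mapsto xy$, apply the $\Gamma_Q$ recursion with second slot $-H_P(k_P(\delta x)y)$, then Iwasawa-decompose and observe (via left-invariance of $H_Q$ under $K\cap Q(\BA)$, which is the content of your $H_Q(\kappa_0 ky)=H_Q(ky)$ step) that the $\Gamma_Q$-factor collapses to $\Gamma_Q(H_Q(a)-T_Q,-H_Q(ky))$, so that the $A_Q^{G,\infty}$-integral produces $e^{(2\rho_{Q,+}-2\rho_{Q_H}+s\Sigma_1)(T_Q^G)}p_{Q,s}(-H_Q(ky))$ inside the $K_H$-integral and builds $f_{Q,s,y}^\eta$. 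The only cosmetic difference is that you make the $P$-Iwasawa $K$-component $\kappa_0 k$ explicit, whereas the paper passes directly from $k_P(\delta namk)$ to $k_Q(\delta namk)=k$ using that $\Gamma_Q$ sees only the $\fa_Q$-projections — same content, same conclusion.
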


For $\fo\in\CO$ and $f\in\CS(\fs(\BA))$ (resp. $f'\in \CS((\fm_Q\cap\fs)(\BA))$), thanks to Corollary \ref{polynomial1} (resp. Remark \ref{LevibehT1}), we may take the constant term $J_\fo^{G}(\eta,s,f)$ of $J_\fo^{G,T}(\eta,s,f)$ (resp. $J_\fo^{Q}(\eta,\{s_i\}, f')$ of $J_\fo^{Q,T}(\eta,\{s_i\},f')$) for $s\in\BC$ (resp. $\{s_i\}_{1\leq i\leq l}\in\BC^l$). When $s=0$ (resp. $s_i=0$ for all $1\leq i\leq l$), denote $J_\fo^{G}(\eta,f):=J_\fo^{G}(\eta,0,f)$ (resp. $J_\fo^{Q}(\eta,f'):=J_\fo^{Q}(\eta,\{0\}, f')$). 

\begin{coro}\label{corofnonequivar1}
Assume that $p=q=n$. Let $f\in\CS(\fs(\BA)), y\in H(\BA)\cap G(\BA)^1$ and $\fo\in\CO$. We have
$$ J_\fo^{G}(\eta,f^y)=\eta(\Nrd (y)) \sum_{\{Q:\wt{P}_0\subseteq Q, \omega Q \omega^{-1}=Q\}} J_\fo^{Q}(\eta,f_{Q,0,y}^\eta). $$
\end{coro}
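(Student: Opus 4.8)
The plan is to deduce the corollary from Proposition~\ref{nonequivariance1} by setting $s=0$ and then passing to constant terms in $T$. Specialising that proposition to $s=0$, and using $|\Nrd(y_1)|_\BA^0=1$, one obtains, for all sufficiently regular $T$ and hence as an identity of exponential polynomials in $T$,
$$ J_\fo^{G,T}(\eta,0,f^y)=\eta(\Nrd(y)) \sum_{\{Q:\wt{P}_0\subseteq Q\}} e^{(2\rho_{Q,+}-2\rho_{Q_H})(T_Q^G)}\, J_\fo^{Q,T}(\eta,\{s^Q_i\},f_{Q,0,y}^\eta). $$
By the definition of the constant term recalled in Section~\ref{indep1}, the constant term in $T$ of the left-hand side is $J_\fo^{G}(\eta,f^y)$, so it remains to compute, term by term, the constant term of the right-hand side. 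By Remark~\ref{LevibehT1}, each $J_\fo^{Q,T}(\eta,\{s^Q_i\},f_{Q,0,y}^\eta)$ is an exponential polynomial in $T$ that is independent of $T_Q$.

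I would first treat the $\omega$-stable $Q$. For such a $Q$, Lemma~\ref{balanced1} says that $2\rho_{Q,+}-2\rho_{Q_H}$ is zero as an element of $(\fa_Q^G)^\ast$, so the exponential factor equals $1$, and Proposition~\ref{propexplicitcst} says that $s^Q_i=0$ for all $i$. Thus the $Q$-summand is simply $J_\fo^{Q,T}(\eta,\{0\},f_{Q,0,y}^\eta)$, whose constant term in $T$ is $J_\fo^{Q}(\eta,f_{Q,0,y}^\eta)$ by definition; these are exactly the terms appearing in the corollary. It then remains to show that every non-$\omega$-stable $Q$ contributes $0$.

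For this — which is the heart of the matter — I would fix a Weyl element $\varsigma_Q$ with $\varsigma_Q P_0\subseteq Q$ and pass to the variable $S=\varsigma_Q T$; this is a linear isometry of $\fa_0$ and so does not affect the operation of taking constant terms. With respect to the decomposition $\fa_0=\fa_0^Q\oplus\fa_Q^G\oplus\fa_G$, the exponential factor $e^{(2\rho_{Q,+}-2\rho_{Q_H})(T_Q^G)}$ depends only on the $\fa_Q^G$-component of $S$, whereas, by Remark~\ref{LevibehT1} together with the description of the $T$-dependence in Section~\ref{pdtform1}, the factor $J_\fo^{Q,T}(\eta,\{s^Q_i\},f_{Q,0,y}^\eta)$ depends only on the $\fa_0^Q$-component of $S$. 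Since $Q$ is not $\omega$-stable, Lemma~\ref{balanced1} guarantees that $2\rho_{Q,+}-2\rho_{Q_H}$ is a \emph{nonzero} character on $\fa_Q^G$. Hence, expanded as an exponential polynomial in $S$, the product has no term whose character is trivial on the $\fa_Q^G$-summand, so its coefficient of the trivial character of $\fa_0$ vanishes, i.e.\ its constant term is $0$. (Consistently, when $\fo\in\CO^\times$ one may instead argue directly: Proposition~\ref{propOinvert1} forces $\fo\cap\fm_Q(F)=\emptyset$ for non-$\omega$-stable $Q$, hence $J_\fo^{Q,T}=0$.) Collecting the $\omega$-stable contributions then gives the claimed identity.

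I expect the main obstacle to be precisely this vanishing of the non-$\omega$-stable terms. It requires a careful analysis of the exact dependence of the Levi distributions $J_\fo^{Q,T}$ on $T$ — in particular the role of the auxiliary Weyl element $\varsigma_Q$ and the fact, from Section~\ref{pdtform1}, that these distributions do not see $T_Q$ — together with a clean statement of what the ``constant term'' of an exponential polynomial is (the value at the origin of the polynomial coefficient of the trivial character). With those points in place, the remainder of the argument is a straightforward specialisation and reorganisation of Proposition~\ref{nonequivariance1}.
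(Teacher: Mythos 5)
Your proof is correct and takes essentially the same route as the paper: specialise Proposition~\ref{nonequivariance1} to $s=0$, take constant terms, handle $\omega$-stable $Q$ via Lemma~\ref{balanced1} and Proposition~\ref{propexplicitcst}, and observe that non-$\omega$-stable $Q$ cannot contribute to the purely polynomial part because the nontrivial exponential $e^{(2\rho_{Q,+}-2\rho_{Q_H})(T_Q^G)}$ lives on $\fa_Q^G$ while $J_\fo^{Q,T}$ depends only on the $\fa_0^Q$-component — the paper states this vanishing in a single sentence, and you have correctly unpacked why it holds.
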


\begin{proof}[Proof of Corollary \ref{corofnonequivar1}]
We apply Proposition \ref{nonequivariance1} to the case $s=0$ and consider the constant terms of both sides. Because $J_\fo^{Q,T}$ is independent of $T_Q$, by Lemma \ref{balanced1}, only $\omega$-stable $Q$ contribute to the purely polynomial term. Then we apply Proposition \ref{propexplicitcst} to the case $p=q=n$ to conclude. 
\end{proof}

\begin{proof}[Proof of Proposition \ref{nonequivariance1}]
By definition, 
\[\begin{split}
    J_\fo^{G,T}(\eta,s,f^y)=&\int_{H(F)\bs H(\BA)\cap G(\BA)^1} \left(\sum_{\{P: \wt{P}_0\subseteq P\}} (-1)^{\dim(A_P/A_{G})} \sum_{\delta\in P_H(F)\bs H(F)} \wh{\tau}_P^G(H_{P}(\delta x)-T_P) \cdot k_{f^y,P,\fo}(\delta x)\right) \\ &\cdot \eta(\Nrd (x)) |\Nrd (x_1)|_\BA^s dx, 
  \end{split}\]
where
$$ k_{f^y,P,\fo}(\delta x)=\sum_{X\in\fm_P(F)\cap\fo}\int_{(\fn_P\cap\fs)(\BA)} f(y(\delta x)^{-1}(X+U)\delta xy^{-1}) dU=k_{f,P,\fo}(\delta xy^{-1}). $$
By change of variables, we have
\[\begin{split}
    J_\fo^{G,T}(\eta,s,f^y)=&\int_{H(F)\bs H(\BA)\cap G(\BA)^1} \left(\sum_{\{P: \wt{P}_0\subseteq P\}} (-1)^{\dim(A_P/A_{G})} \sum_{\delta\in P_H(F)\bs H(F)} \wh{\tau}_P^G(H_{P}(\delta xy)-T_P) \cdot k_{f,P,\fo}(\delta x)\right) \\ &\cdot \eta(\Nrd (xy)) |\Nrd (x_1 y_1)|_\BA^s dx. 
  \end{split}\]

For $x\in H(\BA)$ and $P$ a relatively standard parabolic subgroup of $G$, let $k_P(x)$ be an element in $K_H$ such that $xk_P(x)^{-1}\in P_H(\BA)$. Then
$$ \wh{\tau}_P^G(H_{P}(\delta xy)-T_P)=\wh{\tau}_P^G(H_{P}(\delta x)-T_P+H_P(k_P(\delta x)y)). $$
Substituting $T_1=H_{P}(\delta x)-T_P$ and $T_2=-H_P(k_P(\delta x)y)$ in the definition of $\Gamma_P(T_1,T_2)$, we get
  $$ \wh{\tau}_P^G(H_P(\delta xy)-T_P)=\sum_{\{Q:P\subseteq Q\}} (-1)^{\dim(A_Q/A_{G})} \wh{\tau}_P^Q(H_{P}(\delta x)-T_P) \Gamma_Q(H_{P}(\delta x)-T_P,-H_P(k_P(\delta x)y)). $$
Thus
\[\begin{split}
    J_\fo^{G,T}(\eta,s,f^y)=&\int_{H(F)\bs H(\BA)\cap G(\BA)^1} \sum_{\{P: \wt{P}_0\subseteq P\}} (-1)^{\dim(A_P/A_{G})} \sum_{\delta\in P_H(F)\bs H(F)} \\ & \left(\sum_{\{Q:P\subseteq Q\}} (-1)^{\dim(A_Q/A_{G})} \wh{\tau}_P^Q(H_{P}(\delta x)-T_P) \Gamma_Q(H_{P}(\delta x)-T_P,-H_P(k_P(\delta x)y))\right) \\ & \cdot k_{f,P,\fo}(\delta x) \eta(\Nrd (xy)) |\Nrd (x_1 y_1)|_\BA^s dx, 
  \end{split}\]
Exchanging the order of two sums over $P$ and $Q$, and decomposing the sum over $P_H(F)\bs H(F)$ into two sums over $P_H(F)\bs Q_H(F)$ and $Q_H(F)\bs H(F)$, we obtain
  \[\begin{split}
    J_\fo^{G,T}(\eta,s,f^y)=&\sum_{\{Q:\wt{P}_0\subseteq Q\}} \int_{H(F)\bs H(\BA)\cap G(\BA)^1} \sum_{\{P:\wt{P}_0\subseteq P\subseteq Q\}} (-1)^{\dim(A_P/A_{Q})} \sum_{\delta'\in Q_H(F)\bs H(F)} \sum_{\delta\in P_H(F)\bs Q_H(F)} \\
    &\wh{\tau}_P^Q(H_P(\delta\delta' x)-T_P) \Gamma_Q(H_P(\delta\delta' x)-T_P,-H_P(k_P(\delta\delta' x)y)) k_{f,P,\fo}(\delta\delta' x) \eta(\Nrd (xy)) \\ &\cdot |\Nrd (x_1y_1)|_\BA^s dx.
  \end{split}\]
Combining the integral over $H(F)\bs H(\BA)\cap G(\BA)^1$ and the sum over $Q_H(F)\bs H(F)$ into the integral over $Q_H(F)\bs H(\BA)\cap G(\BA)^1$, and using the fact that
  $$ P_H(F)\bs Q_H(F)\simeq (P_H(F)\cap M_{Q_H}(F))\bs M_{Q_H}(F), $$
  we have
  \[\begin{split}
    J_\fo^{G,T}(\eta,s,f^y)=&\sum_{\{Q:\wt{P}_0\subseteq Q\}} \int_{Q_H(F)\bs H(\BA)\cap G(\BA)^1} \sum_{\{P:\wt{P}_0\subseteq P\subseteq Q\}} (-1)^{\dim(A_P/A_{Q})} \sum_{\delta\in (P_H(F)\cap M_{Q_H}(F))\bs M_{Q_H}(F)} \\
    &\wh{\tau}_P^Q(H_P(\delta x)-T_P) \Gamma_Q(H_P(\delta x)-T_P,-H_P(k_P(\delta x)y)) k_{f,P,\fo}(\delta x) \eta(\Nrd (xy)) |\Nrd (x_1 y_1)|_\BA^s dx.
  \end{split}\]

By Iwasawa decomposition and our choices of measures, the integral over $Q_H(F)\bs H(\BA)\cap G(\BA)^1$ can be decomposed as integrals over 
  $$ (n,a,m,k)\in N_{Q_H}(F)\bs N_{Q_H}(\BA)\times A_Q^{G,\infty}\times M_{Q_H}(F)\bs M_{Q_H}(\BA)\cap M_Q(\BA)^1\times K_H. $$
  Then
  \[\begin{split}
    J_\fo^{G,T}(\eta,s,f^y)=&\sum_{\{Q:\wt{P}_0\subseteq Q\}} \int_{K_H}\int_{M_{Q_H}(F)\bs M_{Q_H}(\BA)\cap M_Q(\BA)^1}\int_{A_Q^{G,\infty}}\int_{N_{Q_H}(F)\bs N_{Q_H}(\BA)} \sum_{\{P:\wt{P}_0\subseteq P\subseteq Q\}} (-1)^{\dim(A_P/A_{Q})} \\
    &\sum_{\delta\in (P_H(F)\cap M_{Q_H}(F))\bs M_{Q_H}(F)} \wh{\tau}_P^Q(H_P(\delta namk)-T_P) \Gamma_Q(H_P(\delta namk)-T_P,-H_P(k_P(\delta namk)y)) \\
    &\cdot k_{f,P,\fo}(\delta namk) \eta(\Nrd (mky)) |\Nrd(a_1 m_1 y_1)|_\BA^s e^{-2\rho_{Q_H}(H_{Q_H}(am))}dndadmdk.
  \end{split}\]
As in the proof of Theorem \ref{exppol1}, we see that
$$ \wh{\tau}_P^Q(H_P(\delta namk)-T_P)=\wh{\tau}_P^Q(H_P(\delta m)-T_P), $$
and that
$$ k_{f,P,\fo}(\delta namk)=e^{2\rho_{Q,+}(H_{Q}(a))}k_{f,P,\fo}(\delta mk). $$
In addition, 
\[\begin{split}
\Gamma_Q(H_P(\delta namk)-T_P,-H_P(k_P(\delta namk)y))&=\Gamma_Q(H_Q(\delta namk)-T_Q,-H_Q(k_P(\delta namk)y)) \\
&=\Gamma_Q(H_Q(a)-T_Q,-H_Q(k_Q(\delta namk)y)) \\
&=\Gamma_Q(H_Q(a)-T_Q,-H_Q(ky)). 
\end{split}\]
To sum up, the integrand for the term indexed by $Q$ in the above formula for $J_\fo^{G,T}(\eta,s,f^y)$ is independent of $n\in N_{Q_H}(F)\bs N_{Q_H}(\BA)$. We can choose the Haar measure such that $\vol(N_{Q_H}(F)\bs N_{Q_H}(\BA))=1$. Then
  \[\begin{split}
    J_\fo^{G,T}(\eta,s,f^y)=&\sum_{\{Q:\wt{P}_0\subseteq Q\}} \int_{K_H}\int_{M_{Q_H}(F)\bs M_{Q_H}(\BA)\cap M_Q(\BA)^1}\int_{A_Q^{G,\infty}}\sum_{\{P:\wt{P}_0\subseteq P\subseteq Q\}} (-1)^{\dim(A_P/A_{Q})} \\
    &\sum_{\delta\in (P_H(F)\cap M_{Q_H}(F))\bs M_{Q_H}(F)} \wh{\tau}_P^Q(H_P(\delta m)-T_P) \Gamma_Q(H_Q(a)-T_Q,-H_Q(ky)) \\
    &\cdot e^{2\rho_{Q,+}(H_{Q}(a))}k_{f,P,\fo}(\delta mk) \eta(\Nrd (mky)) |\Nrd(a_1 m_1 y_1)|_\BA^s e^{-2\rho_{Q_H}(H_{Q_H}(am))}dadmdk.
  \end{split}\]
First, let us compute the integral on $A_Q^{G,\infty}$, which is
  \[\begin{split}
    &\int_{A_Q^{G,\infty}} |\Nrd (a_1)|_\BA^s e^{(2\rho_{Q,+}-2\rho_{Q_H})(H_Q(a))} \Gamma_Q(H_Q(a)-T_Q,-H_Q(ky)) da \\
    :=&\int_{\fa_Q^{G}} e^{(2\rho_{Q,+}-2\rho_{Q_H}+s\Sigma_1)(T_1)} \Gamma_Q(T_1-T_Q,-H_Q(ky)) dT_1 \\
    =&e^{(2\rho_{Q,+}-2\rho_{Q_H}+s\Sigma_1)(T_Q^G)} \int_{\fa_Q^{G}} e^{(2\rho_{Q,+}-2\rho_{Q_H}+s\Sigma_1)(T_1)} \Gamma_Q(T_1,-H_Q(ky)) dT_1 \\
    =&e^{(2\rho_{Q,+}-2\rho_{Q_H}+s\Sigma_1)(T_Q^G)} p_{Q,s}(-H_Q(ky)).
  \end{split}\]
Next, we consider the integral on $K_H$, which is
$$ \int_{K_H} k_{f,P,\fo}(\delta mk) \eta(\Nrd(k)) p_{Q,s}(-H_Q(ky)) dk. $$
As in the proof of Theorem \ref{exppol1}, we see that
$$ k_{f,P,\fo}(\delta mk)=e^{2\rho_{Q,+}(H_{Q_H}(m))} \sum_{X\in\fm_P(F)\cap\fo} \int_{(\fn_P^Q\cap\fs)(\BA)} dU \int_{(\fn_Q\cap\fs)(\BA)} f(k^{-1}((\delta m)^{-1}(X+U)\delta m+V)k) dV, $$
so we can write
  \[\begin{split}
    &\int_{K_H} k_{f,P,\fo}(\delta mk)\eta(\Nrd (k)) p_{Q,s}(-H_Q(ky)) dk \\
    =&e^{2\rho_{Q,+}(H_{Q_H}(m))} \sum_{X\in\fm_P(F)\cap\fo} \int_{(\fn_P^Q\cap\fs)(\BA)} f_{Q,s,y}^\eta((\delta m)^{-1}(X+U)\delta m) dU \\
    =&e^{2\rho_{Q,+}(H_{Q_H}(m))} \sum_{j=1}^{t} k_{f_{Q,s,y}^\eta,P\cap M_Q,\fo_j}^{M_Q} (\delta m)
  \end{split}\]
by (\ref{LevikfPo1}). 
Hence
  \[\begin{split}
    J_\fo^{G,T}(\eta,s,f^y)=&\eta(\Nrd(y))|\Nrd(y_1)|_\BA^s \sum_{\{Q:\wt{P}_0\subseteq Q\}} e^{(2\rho_{Q,+}-2\rho_{Q_H}+s\Sigma_1)(T_Q^G)}\int_{M_{Q_H}(F)\bs M_{Q_H}(\BA)\cap M_Q(\BA)^1} \\
    &\sum_{\{P:\wt{P}_0\subseteq P\subseteq Q\}} (-1)^{\dim(A_P/A_{Q})} \sum_{\delta\in (P_H(F)\cap M_{Q_H}(F))\bs M_{Q_H}(F)} \wh{\tau}_P^Q(H_P(\delta m)-T_P)  \\
    &\left(\sum_{j=1}^{t} k_{f_{Q,s,y}^\eta,P\cap M_Q,\fo_j}^{M_Q} (\delta m)\right) \eta(\Nrd (m)) |\Nrd (m_1)|_\BA^s e^{(2\rho_{Q,+}-2\rho_{Q_H})(H_{Q_H}(m))} dm.
  \end{split}\]
As in the proof of Theorem \ref{exppol1}, we notice that
  \[\begin{split}
    J_{\fo}^{Q,T}(\eta,\{s^Q_i+s\},f_{Q,s,y}^\eta)=&\int_{M_{Q_H}(F)\bs M_{Q_H}(\BA)\cap M_Q(\BA)^1} \sum_{\{P:\wt{P}_0\subseteq P\subseteq Q\}} (-1)^{\dim(A_P/A_{Q})} \sum_{\delta\in (P_H(F)\cap M_{Q_H}(F))\bs M_{Q_H}(F)} \\
    &\wh{\tau}_P^Q(H_P(\delta m)-T_P) \left(\sum_{j=1}^{t} k_{f_{Q,s,y}^\eta,P\cap M_Q,\fo_j}^{M_Q}(\delta m)\right) \eta(\Nrd (m)) |\Nrd (m_1)|_\BA^s \\         &\cdot e^{(2\rho_{Q,+}-2\rho_{Q_H})(H_{Q_H}(m))} dm.
  \end{split}\]
Then we finish the proof. 
\end{proof}


\section{\textbf{An infinitesimal trace formula for $\Mat_{p\times q, D}\oplus \Mat_{q\times p, D} // GL_{p, D}\times GL_{q, D}$}}\label{traceformula}

\begin{thm}\label{tf1}
  For $f\in\CS(\fs(\BA))$ and $s\in\BR$,
  $$ \sum_{\fo\in\CO}J_\fo^{G}(\eta,s,f)=\sum_{\fo\in\CO}J_\fo^{G}(\eta,s,\hat{f}), $$
  where $\hat{f}$ is the Fourier transform of $f$ defined by (\ref{fourier}), and $J_\fo^{G}(\eta,s,\cdot)$ denotes the constant term of $J_\fo^{G, T}(\eta,s,\cdot)$. 
\end{thm}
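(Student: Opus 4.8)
The plan is to deduce the identity from the Poisson summation formula, using as inputs the two analytic facts already established: Corollary~\ref{comparewithnaive1}, which says that $k_f^T$ differs from the cut-off naive kernel $F^G(\cdot,T)k_{f,G}$ by a term whose weighted integral is $O(e^{-N\|T\|})$, and Corollary~\ref{polynomial1}, which says that $J^{G,T}(\eta,s,f)$ is an exponential polynomial in $T$. The point is that the truncation is invisible to Poisson summation in the limit.

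First I would establish the pointwise identity $k_{f,G}(x)=k_{\hat f,G}(x)$ for every $x\in H(\BA)$. Fix $x$ and put $\phi(X):=f(x^{-1}Xx)\in\CS(\fs(\BA))$. The conjugation action of $H$ on $\fs\simeq\Mat_{p\times q,D}\oplus\Mat_{q\times p,D}$ has trivial determinant — a short reduced-norm computation gives $\det(\Ad(x)|_{\fs})=1$ on $\fs$, so no modulus appears — hence the substitution $X\mapsto xXx^{-1}$ preserves the Haar measure $dX$; combined with $\langle xYx^{-1},\wh X\rangle=\Trd(Y\cdot x^{-1}\wh X x)=\langle Y,x^{-1}\wh X x\rangle$ this gives $\hat\phi(\wh X)=\hat f(x^{-1}\wh X x)$. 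Since the pairing $\langle\cdot,\cdot\rangle$ is non-degenerate, $\Psi$ is trivial precisely on $F$, and $\vol(\fs(F)\bs\fs(\BA))=1$ is the measure self-dual for this pairing, the Poisson summation formula applies to $\fs(F)\subseteq\fs(\BA)$ and yields $\sum_{X\in\fs(F)}\phi(X)=\sum_{\wh X\in\fs(F)}\hat\phi(\wh X)$, that is $k_{f,G}(x)=k_{\hat f,G}(x)$.

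Consequently $F^G(x,T)k_{f,G}(x)=F^G(x,T)k_{\hat f,G}(x)$ for all $x$; since $\{x\in H(F)\bs H(\BA)\cap G(\BA)^1:F^G(x,T)=1\}$ has compact closure (reduction theory, as in the last step of the proof of Theorem~\ref{convergence1}), integrating this identity against $\eta(\Nrd(x))|\Nrd(x_1)|_\BA^s\,dx$ shows that the two ``naive truncated integrals'' for $f$ and for $\hat f$ coincide. Subtracting the estimates of Corollary~\ref{comparewithnaive1} applied to $f$ and to $\hat f\in\CS(\fs(\BA))$, we obtain, for every $N>0$ and all sufficiently regular $T$ with $\alpha(T)\geq\epsilon_0\|T\|$ for $\alpha\in\Delta_{P_0}^G$,
\[
 |J^{G,T}(\eta,s,f)-J^{G,T}(\eta,s,\hat f)|\leq C_N\, e^{-N\|T\|}.
\]
Now by Corollary~\ref{polynomial1} the function $D(T):=J^{G,T}(\eta,s,f)-J^{G,T}(\eta,s,\hat f)$ is an exponential polynomial in $T$, say $D(T)=\sum_\mu e^{\mu(T)}p_\mu(T)$ with distinct linear forms $\mu$ and polynomials $p_\mu$. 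Evaluating along a generic ray $T=tv$ inside the (full-dimensional) cone and letting $t\to+\infty$, the rapid-decay bound forces, ordering the terms by the value of $\mu(v)$, each polynomial $t\mapsto p_\mu(tv)$ to vanish identically, hence $p_\mu\equiv0$ for all $\mu$; therefore $D\equiv0$, i.e. $J^{G,T}(\eta,s,f)=J^{G,T}(\eta,s,\hat f)$ for every $T$. Taking the $T$-constant term of both sides — compatibly with the decomposition $J^{G,T}=\sum_{\fo\in\CO}J^{G,T}_\fo$, by the results of Section~\ref{indep1} — gives $\sum_{\fo}J_\fo^G(\eta,s,f)=\sum_{\fo}J_\fo^G(\eta,s,\hat f)$.

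The genuinely delicate points are, I expect, two. One is making the normalizations match exactly, so that the Fourier transform of $X\mapsto f(x^{-1}Xx)$ is literally $\wh X\mapsto\hat f(x^{-1}\wh X x)$ with no residual modulus or measure factor; this is precisely where $\det(\Ad(x)|_{\fs})=1$, the self-duality of $dX$, and the choice of $\Psi$ are used. The other is the bookkeeping needed to pass from the rapid-decay estimate for the total distribution to equality of constant terms class by class, i.e. checking that ``take the $T$-constant term'' commutes with the absolutely convergent but a priori infinite sum over $\fo\in\CO$; beyond that, all the hard analytic work has already been carried out in Sections~\ref{integrability}--\ref{dist1}, so this final step is short.
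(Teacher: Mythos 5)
Your argument is exactly the paper's: Poisson summation gives $k_{f,G}=k_{\hat f,G}$ pointwise, Corollary~\ref{comparewithnaive1} then bounds $|J^{G,T}(\eta,s,f)-J^{G,T}(\eta,s,\hat f)|$ by $Ce^{-N\|T\|}$, the exponential-polynomial property of Corollary~\ref{polynomial1} together with letting $N$ be large forces this difference to vanish identically, and one finishes by taking constant terms. The only content you add over the paper's proof is the (correct) verification that $\det(\Ad(x)|_{\fs})=1$ and the self-duality of the chosen measure, which the paper takes as read when invoking Poisson summation.
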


\begin{proof}
  From the Poisson summation formula, we know that for any $x\in H(\BA)$,
  $$ \sum_{X\in\fs(F)} f(x^{-1}Xx)=\sum_{X\in\fs(F)} \hat{f}(x^{-1}Xx), $$
  i.e.,
  $$ k_{f,G}(x)=k_{\hat{f},G}(x). $$

  Using Corollary \ref{comparewithnaive1}, for all sufficiently regular $T$ satisfying $\alpha(T)\geq\epsilon_0\parallel T\parallel$ for any $\alpha\in\Delta_{P_0}$, we have
  $$ \left|J^{G,T}(\eta,s,f)-\int_{H(F)\bs H(\BA)\cap G(\BA)^1}F^{G}(x,T)k_{f,G}(x)\eta(\Nrd (x))|\Nrd (x_1)|_\BA^s dx\right|\leq C_1 e^{-N\parallel T\parallel} $$
  and
  $$ \left|J^{G,T}(\eta,s,\hat{f})-\int_{H(F)\bs H(\BA)\cap G(\BA)^1}F^{G}(x,T)k_{\hat{f},G}(x)\eta(\Nrd (x))|\Nrd (x_1)|_\BA^s dx\right|\leq C_2 e^{-N\parallel T\parallel}. $$
  Thus
  $$ |J^{G,T}(\eta,s,f)-J^{G,T}(\eta,s,\hat{f})|\leq (C_1+C_2) e^{-N\parallel T\parallel}. $$

  By Corollary \ref{polynomial1}, we know that both of $J^{G,T}(\eta,s,f)$ and $J^{G,T}(\eta,s,\hat{f})$ are exponential polynomials in $T$. Because we can choose $N$ to be large enough, we deduce that
  $$ J^{G,T}(\eta,s,f)=J^{G,T}(\eta,s,\hat{f}). $$
  Since
  $$ J^{G,T}(\eta,s,f)=\sum_{\fo\in\CO}J_\fo^{G,T}(\eta,s,f) $$
  and
  $$ J^{G,T}(\eta,s,\hat{f})=\sum_{\fo\in\CO}J_\fo^{G,T}(\eta,s,\hat{f}), $$
we obtain
$$ \sum_{\fo\in\CO}J_\fo^{G,T}(\eta,s,f)=\sum_{\fo\in\CO}J_\fo^{G,T}(\eta,s,\hat{f}). $$
We may conclude by taking the constant terms of both sides. 
\end{proof}


\section{\textbf{The second modified kernel}}\label{secondmodifiedkernel}

In this section and the next, we shall focus on the case where $p=q=n$ in order to get better description for distributions associated to regular semi-simple orbits. We shall change our notation by denoting $G:=GL_{2n, D}$ and $H:=GL_{n,D}\times GL_{n,D}$ without further mention. 

Let $f\in\CS(\fs(\BA))$, $P$ be a relatively standard parabolic subgroup of $G$ and $\fo\in\CO_{rs}$ (see Section \ref{inv1}). For $x\in P_H(F)\bs H(\BA)$, define
$$ j_{f,P,\fo}(x):=\sum_{X\in\fm_P(F)\cap\fo} \sum_{n\in N_{P_H}(F)} f((nx)^{-1}Xnx). $$
Let $T\in\fa_0$. For $x\in H(F)\bs H(\BA)$, define
$$ j_{f,\fo}^T(x):=\sum_{\{P: \wt{P}_0\subseteq P\}} (-1)^{\dim(A_P/A_{G})} \sum_{\delta\in P_H(F)\bs H(F)} \wh{\tau}_P^G(H_{P}(\delta x)-T_P)\cdot j_{f,P,\fo}(\delta x). $$
By Lemma \ref{artlem51}, we know that the sum over $\delta\in P_H(F)\bs H(F)$ is finite. Recall that since $\fo\in\CO_{rs}\subseteq\CO^\times$, if $\fm_P(F)\cap\fo\neq\emptyset$, then $P$ is $\omega$-stable by Proposition \ref{propOinvert1}. Thus the above definitions only involve the relatively standard parabolic subgroups that are $\omega$-stable. 

\begin{lem}\label{regsslemma1}
  Let $P$ be a relatively standard parabolic subgroup of $G$ and $\fo\in\CO_{rs}$. For $X\in\fm_P(F)\cap\fo$, the map
  $$ N_{P_H}\ra\fn_P\cap\fs, n\mapsto n^{-1}Xn-X $$
  is an $F$-isomorphism of algebraic varieties and preserves the Haar measures on $\BA$-points. 
\end{lem}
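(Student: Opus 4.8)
The plan is to exhibit the inverse map explicitly using the fact that $X$ is elliptic regular semi-simple, hence its centraliser is as small as possible. First I would fix $X = X(A) \in \fm_P(F)\cap\fo$ with $A \in GL_n(D)$ regular semi-simple, using Proposition \ref{ss1}; since $\fo \in \CO_{rs}$ and $P$ is $\omega$-stable, one can normalise $X$ to be in block form adapted to the flag defining $P$. The map $\phi\colon n \mapsto n^{-1}Xn - X$ clearly lands in $\fn_P \cap \fs$: indeed $n^{-1}Xn$ and $X$ have the same image under $\pi$ (Proposition \ref{propofpipar1}), they differ by an element of $\fn_P$, and since both $n$ and $X$ are fixed by the involution defining $\fs$ up to sign, the difference lies in $\fn_P \cap \fs$. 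The content is that $\phi$ is bijective with polynomial inverse.

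The key step is a Lie-theoretic linear algebra argument, carried out over $\ov F$ (the statements about $F$-structure and Haar measures then follow by descent and by the change-of-variables formula once we know the map is an $F$-isomorphism). Write $N := N_{P_H}$ and $\fn_X := \fn_P \cap \fs$. I would show that $\ad(X)$ restricts to a bijection from $\fn_{P_H} = \Lie(N_{P_H})$ onto $\fn_P\cap\fs$. This is where regularity of $\fo$ is essential: because $A$ is regular semi-simple in $GL_n(D)$, the centraliser $H_X$ of $X$ in $H$ has minimal dimension, and an eigenvalue/weight computation (as in \cite[Lemma 2.1]{MR1394521}) shows that $\ad(X)$ has no kernel on $\fn_{P_H}$ and that $\ad(X)(\fn_{P_H}) = \fn_P \cap \fs$, both spaces having the same dimension. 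Granting this, the map $\phi$ is a morphism between affine spaces of the same dimension whose differential at the identity is the isomorphism $\ad(X)$; to upgrade this to a global isomorphism I would argue that $N_{P_H}$ is a successive extension of vector groups (a filtration by the descending central series, or by $A_0$-weights) and that on each graded piece $\phi$ is, up to lower-order unipotent corrections, given by $\ad(X)$, hence invertible; the inverse is then constructed stage by stage and is manifestly a polynomial (in fact $F$-rational) map. An alternative, cleaner route: observe that $n \mapsto n^{-1}Xn$ is a bijection from $N_{P_H}$ onto the orbit $N_{P_H}\cdot X \subseteq X + (\fn_P\cap\fs)$ because $H_X \cap N_{P_H}$ is trivial (again by regularity), and that this orbit is all of $X + (\fn_P \cap \fs)$ by a dimension count, so translating by $-X$ gives the claimed isomorphism onto $\fn_P \cap \fs$.

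For the measure-preservation statement: once $\phi$ is known to be an $F$-isomorphism of affine spaces, its Jacobian is a unit in the coordinate ring, hence a nonzero constant; on $\BA$-points the Haar measure on $N_{P_H}(\BA)$ was normalised so that $\vol(N_{P_H}(F)\backslash N_{P_H}(\BA)) = 1$ and likewise for $(\fn_P\cap\fs)(\BA)$ (Section \ref{BSandHaar1}), and since both sides are quotients of the same $F$-vector space structure matched by $\phi$, the constant Jacobian must have adelic absolute value $1$; concretely one checks at each place that the local Jacobian, being unipotent-triangular with respect to the weight filtration with $\ad(X)$ on the diagonal and $\Trd(A)$-type entries, has absolute value $1$, or more slickly one invokes the product formula.

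The main obstacle I expect is the bijectivity of $\phi$ itself, specifically verifying that $\ad(X)\colon \fn_{P_H} \to \fn_P\cap\fs$ is surjective (equivalently, that $H_X\cap N_{P_H}$ is trivial) — this is exactly the point where one must use that $\fo$ is \emph{regular} semi-simple and not merely semi-simple, and where the structure of the $\omega$-stable parabolic $P$ and the matching of the block decompositions of $X$ with the Levi of $P$ has to be used carefully. Everything else (the $F$-rationality, the triangularity of the Jacobian, the measure normalisation) is routine bookkeeping.
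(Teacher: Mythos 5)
Your proposal is correct but takes a more invariant route than the paper's. The paper works in explicit block coordinates: after writing $X$, $n$ and $Xn-nX$ in blocks adapted to $P$, it isolates the linear map $(C_{ij},D_{ij})\mapsto(A_iD_{ij}-C_{ij}A_j,\,B_iC_{ij}-D_{ij}B_j)$ on each block and reduces its injectivity, over $\ov{F}$, to the Sylvester equations $C_{ij}A_jB_j=A_iB_iC_{ij}$ and $D_{ij}B_jA_j=B_iA_iD_{ij}$, which have only the trivial solution because regular semi-simplicity of $X$ forces the $A_iB_i$ to have pairwise disjoint spectra; the passage from $Xn-nX$ to $n^{-1}Xn-X$ is then observed to be a filtration-lowering (unipotent triangular) correction. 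Your plan --- linearise at the identity to $\ad(X)$, prove injectivity on $\fn_{P_H}$ because a Cartan subalgebra meets a nilradical trivially, get surjectivity by dimension count, then globalise via the weight filtration --- is exactly this argument in invariant language, and it works. Two points you should make explicit: (i) the dimension equality $\dim\fn_{P_H}=\dim(\fn_P\cap\fs)$ uses that $P$ is $\omega$-stable, which holds here because $\fo\in\CO_{rs}\subseteq\CO^\times$ and $\fm_P(F)\cap\fo\neq\emptyset$ force it by Proposition \ref{propOinvert1}; this is where the hypothesis really enters. (ii) In your ``alternative, cleaner route'' a dimension count only shows that $N_{P_H}\cdot X$ is \emph{dense} in $X+(\fn_P\cap\fs)$; to get equality you also need closedness of orbits of a unipotent group acting on an affine variety (Kostant--Rosenlicht), which you should name. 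Finally, the measure-preservation via constant Jacobian in $F^\times$ plus the product formula is sound and somewhat cleaner than what the paper leaves implicit, but your ``concrete per-place check'' is off: the local Jacobian $|\det\ad(X)|_v$ is generally not $1$ at each place --- only the adelic product is --- so only the product-formula version of the argument is correct.
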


\begin{proof}
  Since $P$ is relatively standard and $\omega$-stable, we can suppose
  $$ P=\mat(\fp_{n,D},\fp_{n,D},\fp_{n,D},\fp_{n,D})^\times, $$
  where
  $$ P_{n,D}=
  \left( \begin{array}{cccc}
  GL_{n_1,D} & \Mat_{n_1\times n_2,D} & \cdots & \Mat_{n_1\times n_l,D} \\
                   & GL_{n_2,D}                  & \cdots & \Mat_{n_2\times n_l,D} \\
                   &                                   & \ddots & \vdots                \\
                   &                                   &           & GL_{n_l,D}            \\
  \end{array} \right). $$
   Then we have
   $$ \fm_P\cap\fs=\mat(,\fm_{P_{n,D}},\fm_{P_{n,D}},), N_{P_H}=\mat(N_{P_{n,D}},,,N_{P_{n,D}}), \fn_P\cap\fs=\mat(,\fn_{P_{n,D}},\fn_{P_{n,D}},). $$

Let $$X=
  \left( \begin{array}{cccccc}
        &           &        & A_1 &            &       \\
        &           &        &       & \ddots  &        \\
        &           &        &       &           & A_l  \\
  B_1 &          &        &       &            &         \\
        & \ddots &       &       &            &         \\
        &           & B_l &       &            &          \\
  \end{array} \right)\in\fm_P(F)\cap\fo, $$
where $A_i, B_i\in GL_{n_i}(D)$ for $1\leq i\leq l$, and $$n=
 \left( \begin{array}{cccccccc}
      1 &  C_{12}   &  \cdots  & C_{1l}   &    &            &           &      \\
        &          1    &  \cdots  &  C_{2l}  &     &           &           &     \\
        &               &   \ddots  &  \vdots  &     &            &           &       \\
        &               &              &  1         &     &            &            &      \\
        &               &              &             &  1  & D_{12} & \cdots & D_{1l}  \\
        &               &              &             &      &       1   &  \cdots & D_{2l} \\
        &               &              &             &      &            &  \ddots & \vdots \\
        &               &              &             &      &            &            & 1    \\
  \end{array} \right)\in N_{P_H}, $$
where $C_{ij}, D_{ij}\in \Mat_{n_i\times n_j,D}$ for $1\leq i< j\leq l$. Then $$Xn-nX=
 \left( \begin{array}{cccccccc}
        &                                         &              &                                 &    0  &   A_1 D_{12}-C_{12} A_2   &   \cdots    & A_1 D_{1l}-C_{1l} A_l  \\
        &                                         &              &                                 &        &                                   0    &   \cdots   & A_2 D_{2l}-C_{2l} A_l \\
        &                                        &                &                                &         &                                        &    \ddots  &                      \vdots   \\
        &                                        &                &                                &         &                                        &               &                             0    \\
    0  &   B_1 C_{12}-D_{12} B_2   &   \cdots    & B_1 C_{1l}-D_{1l} B_l &      &                                          &               &                                  \\
        &                                   0    &   \cdots   & B_2 C_{2l}-D_{2l} B_l &       &                                         &               &                                   \\
        &                                        &    \ddots  &                      \vdots   &       &                                        &                &                                   \\
        &                                        &               &                             0   &       &                                         &               &                                   \\
  \end{array} \right)\in \fn_P\cap\fs. $$

We claim that the morphism of $F$-affine spaces
\[\begin{split}
\Mat_{n_i\times n_j,D}\oplus \Mat_{n_i\times n_j,D} &\ra \Mat_{n_i\times n_j,D}\oplus \Mat_{n_i\times n_j,D} \\
(C_{ij}, D_{ij}) &\mapsto (A_i D_{ij}-C_{ij} A_j, B_i C_{ij}-D_{ij} B_j) \\
\end{split}\]
induces an $F$-linear isomorphism on $F$-points. In fact, since it gives an $F$-linear map between finite dimensional linear spaces of the same dimension, we only need to prove that this map is injective under base change to an algebraic closure of $F$. Then without loss of generality, it suffices to consider the case where $D=F$. If $A_i D_{ij}-C_{ij} A_j=B_i C_{ij}-D_{ij} B_j=0$, then $C_{ij} A_j B_j=A_i D_{ij} B_j=A_i B_i C_{ij}$ and $D_{ij} B_j A_j=B_i C_{ij} A_j=B_i A_i D_{ij}$. Since $X$ is regular semi-simple, $A_i B_i$ and $A_j B_j$ (resp. $B_i A_i$ and $B_j A_j$) have no common eigenvalue. By the classical theory of Sylvester equation \cite{zbMATH02701590}, we know that $C_{ij}=D_{ij}=0$ and conclude.

From this claim, we know that the map
$$ N_{P_H}\ra\fn_P\cap\fs, n\mapsto Xn-nX $$
is an $F$-isomorphism of algebraic varieties and preserves the Haar measures on $\BA$-points. Notice that $n^{-1}Xn-X=n^{-1}(Xn-nX)$. It is not hard to check that here $n^{-1}$ functions as some translation $A_i D_{ij}-C_{ij} A_j\mapsto A_i D_{ij}-C_{ij} A_j+(\text{a polynomial of } C_{i'j'} \text{and } D_{i'j'}, i'>i, j'\leq j \text{ or } i'\geq i, j'<j)$, so an analogous assertion still holds for the map $n\mapsto n^{-1}Xn-X$. 
\end{proof}

\begin{thm}\label{secondkernel1}
  For all sufficiently regular $T$, all $s\in\BR$ and $\fo\in\CO_{rs}$,
  $$ \int_{H(F)\bs H(\BA)\cap G(\BA)^1} |j_{f,\fo}^T(x)| |\Nrd(x_1)|_\BA^s dx < \infty, $$
  where we write $x=(x_1,x_2)\in GL_{n,D}(\BA)\times GL_{n,D}(\BA)$. Moreover, for $s\in\BC$, 
  $$ J_\fo^{G,T}(\eta,s,f)=\int_{H(F)\bs H(\BA)\cap G(\BA)^1} j_{f,\fo}^T(x) \eta(\Nrd (x)) |\Nrd(x_1)|_\BA^s  dx. $$
\end{thm}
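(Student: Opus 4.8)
The plan is to exploit Lemma \ref{regsslemma1} to convert, for each relevant parabolic subgroup, the summation over $N_{P_H}(F)$ defining $j_{f,P,\fo}$ into a lattice summation in $\fn_P\cap\fs$, and thereby to reduce the whole statement to a single ``Riemann sum versus integral'' estimate which is a simplified avatar of those already carried out in Section \ref{integrability}. Throughout set $\Theta(x):=\eta(\Nrd(x))|\Nrd(x_1)|_\BA^s$, a left $H(F)$-invariant function. First I would record the consequences of Lemma \ref{regsslemma1}. Let $P$ be relatively standard; since $\fo\in\CO_{rs}\subseteq\CO^\times$, both $j_{f,P,\fo}$ and $k_{f,P,\fo}$ vanish unless $\fm_P(F)\cap\fo\neq\emptyset$, which by Proposition \ref{propOinvert1} forces $P$ to be $\omega$-stable; in that case, for $X\in\fm_P(F)\cap\fo$ the map $n\mapsto n^{-1}Xn-X$ identifies $N_{P_H}$ with $\fn_P\cap\fs$ compatibly with Haar measures on $\BA$-points, so
\[ j_{f,P,\fo}(x)=\sum_{X\in\fm_P(F)\cap\fo}\ \sum_{U\in(\fn_P\cap\fs)(F)}f(x^{-1}(X+U)x)=\sum_{Y\in\fo\cap(\fp\cap\fs)(F)}f(x^{-1}Yx) \]
by Corollary \ref{orbit1}; in particular $j_{f,P,\fo}$ is left $P_H(F)$-invariant, $j_{f,\fo}^T$ is well defined, and dividing out the fibres of the same isomorphism (using $\vol(N_{P_H}(F)\bs N_{P_H}(\BA))=1$) gives the averaging identity $k_{f,P,\fo}(x)=\int_{N_{P_H}(F)\bs N_{P_H}(\BA)}j_{f,P,\fo}(nx)\,dn$. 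Writing $D_P:=k_{f,P,\fo}-j_{f,P,\fo}$ and using that $k_{f,P,\fo}$ is left $N_{P_H}(\BA)$-invariant, this yields the crucial relation $\int_{N_{P_H}(F)\bs N_{P_H}(\BA)}D_P(nx)\,dn=0$ for all $x$.

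Next I would prove the key estimate: for every relatively standard $P$,
\[ \int_{P_H(F)\bs H(\BA)\cap G(\BA)^1}\wh\tau_P^G(H_P(x)-T_P)\,|D_P(x)|\,|\Nrd(x_1)|_\BA^{\Re(s)}\,dx<\infty. \]
Applying the Poisson summation formula to the $(\fn_P\cap\fs)$-variables, the zero-frequency term reproduces $k_{f,P,\fo}$, whence
\[ D_P(x)=-\sum_{X\in\fm_P(F)\cap\fo}\ \sum_{\wh U\in(\ov\fn_P\cap\fs)(F)\setminus\{0\}}\ \int_{(\fn_P\cap\fs)(\BA)}f(x^{-1}(X+U)x)\,\Psi(\langle U,\wh U\rangle)\,dU . \]
On the support of $\wh\tau_P^G(H_P(\cdot)-T_P)$ the $A_P^\infty$-component of $x$ lies in a positive chamber, so conjugation by it stretches the $\ov\fn_P$-directions and the partial Fourier transforms above decay rapidly in that component, exactly as in the proof of Proposition \ref{proppfofconv1} (integration by parts, the bounds of \cite[\S4]{MR1893921} transported to $\fs$, and \cite[Corollary 6.2]{MR518111}); meanwhile the sum over $X$ is a finite union of $M_{P_H}(F)$-orbits of regular semi-simple elements of $\fm_P\cap\fs$, contributing convergent orbital integrals over the $M_{P_H}$- and $K_H$-directions. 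This is the main obstacle: although it is conceptually just the Riemann-sum tail of the Poisson summation already used in Proposition \ref{proppfofconv1}, one must check that the regular semi-simple hypothesis makes $\fm_P(F)\cap\fo/M_{P_H}(F)$ finite (so the $X$-sum is harmless) and that the decay in the $A_P^\infty$-direction genuinely dominates the exponential factors $e^{(2\rho_{P,+}-2\rho_{P_H}+s\Sigma_1)(H_P(a))}$ produced by the change of variables and the weight. Granting this, since there are finitely many $P$, $\int|k_{f,\fo}^T-j_{f,\fo}^T|\,|\Nrd(x_1)|_\BA^{\Re(s)}\,dx\le\sum_P\int\wh\tau_P^G(H_P(x)-T_P)|D_P(x)|\,|\Nrd(x_1)|_\BA^{\Re(s)}\,dx<\infty$, which together with Theorem \ref{convergence1} gives the asserted integrability of $j_{f,\fo}^T$.

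Finally I would deduce the identity. By $(\ref{Jo1})$ we have $J_\fo^{G,T}(\eta,s,f)=\int_{H(F)\bs H(\BA)\cap G(\BA)^1}k_{f,\fo}^T(x)\Theta(x)\,dx$, so it suffices to show $\int(k_{f,\fo}^T-j_{f,\fo}^T)\Theta=0$. From $k_{f,\fo}^T-j_{f,\fo}^T=\sum_{\{P:\wt P_0\subseteq P\}}(-1)^{\dim(A_P/A_G)}\sum_{\delta\in P_H(F)\bs H(F)}\wh\tau_P^G(H_P(\delta x)-T_P)D_P(\delta x)$ and the absolute convergence just established, I may treat each $P$ separately. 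Unfolding the sum over $\delta$ (note that $\Theta$, being left $H(F)$-invariant, may be moved inside) gives
\[ \int_{H(F)\bs H(\BA)\cap G(\BA)^1}\ \sum_{\delta\in P_H(F)\bs H(F)}\wh\tau_P^G(H_P(\delta x)-T_P)D_P(\delta x)\,\Theta(x)\,dx=\int_{P_H(F)\bs H(\BA)\cap G(\BA)^1}\wh\tau_P^G(H_P(x)-T_P)\,\Theta(x)\,D_P(x)\,dx, \]
and since $\wh\tau_P^G(H_P(\cdot)-T_P)$ and $\Theta$ are left $N_{P_H}(\BA)$-invariant while $D_P$ is left $P_H(F)$-invariant, fibering $P_H(F)\bs H(\BA)\cap G(\BA)^1$ over $N_{P_H}(\BA)M_{P_H}(F)\bs H(\BA)\cap G(\BA)^1$ (fibre $N_{P_H}(F)\bs N_{P_H}(\BA)$ of volume $1$) and invoking $\int_{N_{P_H}(F)\bs N_{P_H}(\BA)}D_P(ny)\,dn=0$ shows each such integral vanishes. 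Summing over $P$ with the signs yields $\int(k_{f,\fo}^T-j_{f,\fo}^T)\Theta=0$, i.e. $J_\fo^{G,T}(\eta,s,f)=\int_{H(F)\bs H(\BA)\cap G(\BA)^1}j_{f,\fo}^T(x)\,\eta(\Nrd(x))\,|\Nrd(x_1)|_\BA^s\,dx$; absolute convergence for all real $s$ gives it for all complex $s$, and as both sides are entire in $s$ the identity extends to all $s\in\BC$.
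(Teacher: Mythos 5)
Your algebraic skeleton is sound and in fact mirrors the paper's own proof: the identity
\[
\int_{N_{P_H}(F)\backslash N_{P_H}(\BA)} D_P(nx)\,dn=0,\qquad D_P:=k_{f,P,\fo}-j_{f,P,\fo},
\]
obtained from Lemma \ref{regsslemma1} together with $\vol(N_{P_H}(F)\backslash N_{P_H}(\BA))=1$, is exactly the mechanism the paper exploits (there phrased as ``replace $\sum_{n\in N_{P_H}(F)}$ by $\int_{N_{P_H}(\BA)}$'' in the unfolding for each $P$). If your ``key estimate'' were available, the rest of your deduction of the identity $J_\fo^{G,T}(\eta,s,f)=\int j_{f,\fo}^T\,\eta(\Nrd)|\Nrd(x_1)|_\BA^s$ would go through.

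The gap is precisely that key estimate. You propose to prove, for each relatively standard $P$,
\[
\int_{P_H(F)\backslash H(\BA)\cap G(\BA)^1}\widehat\tau_P^G(H_P(x)-T_P)\,|D_P(x)|\,|\Nrd(x_1)|_\BA^{\Re(s)}\,dx<\infty,
\]
and you justify it by appealing to the non-zero-frequency Poisson decay and ``convergent orbital integrals over the $M_{P_H}$-directions''. But $\widehat\tau_P^G$ constrains only the $A_P^{G,\infty}$-component of $x$ (and only from below: it is a characteristic function of the set where $\varpi(H_P(x)-T_P)>0$ for $\varpi\in\widehat\Delta_P^G$, with no upper bound and no constraint on the $M_{P_H}$-part). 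The factor $M_{P_H}(F)\backslash M_{P_H}(\BA)\cap M_P(\BA)^1$ in the Iwasawa decomposition of $P_H(F)\backslash H(\BA)\cap G(\BA)^1$ is non-compact once $P\neq G$, and the Fourier decay of $D_P$ is in the $\overline\fn_P$-direction, i.e.\ it controls large $H_P(x)$, not large $m\in M_{P_H}$. Moreover $\fm_P(F)\cap\fo$ is not a single closed $M_{P_H}(F)$-orbit with anisotropic (mod $A_P$) stabiliser: it contains elements that are not elliptic in $\fm_P\cap\fs$, for which the centraliser in $M_{P_H}$ has positive split rank and $\vol\bigl((M_{P_H})_X(F)\backslash(M_{P_H})_X(\BA)\cap M_P(\BA)^1\bigr)$ is infinite. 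The ``orbital integrals over $M_{P_H}$'' you invoke are therefore not obviously (and I believe not actually) absolutely convergent against the long integration over $m$. This is exactly the difficulty the paper's convergence arguments are designed to suppress: in Proposition \ref{proppfofconv1} the compactness of the $M_{P_{1,H}}$-integration is supplied by the factor $F^{P_1}(\cdot,T)$ inside $\chi_{P_1,P_2}^T$ (via Corollary \ref{zydcor2.5}), which is precisely what your $\widehat\tau_P^G$-only bound is missing.

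What the paper does instead is to apply, to $j_{f,\fo}^T$ itself, the same $\chi_{P_1,P_2}^T$-decomposition used for $k_{f,\fo}^T$ in the proof of Theorem \ref{convergence1} (resting on Lemma \ref{combinatoriclemma1}); after simplifying $j_{P_1,P_2,\fo}$ with Lemma \ref{regsslemma1} and \cite[Proposition 1.1]{MR518111}, it reduces the integrability of $j_{f,\fo}^T$ to the already-established convergence of \eqref{equation1.1} with $R=P_2$, and then derives the identity by the $\vol(N_{P_H}(F)\backslash N_{P_H}(\BA))=1$ unfolding \emph{inside} this decomposition. To repair your proof you would have to replace the single function $\widehat\tau_P^G$ by the full truncation $\chi_{P_1,P_2}^T$ before trying to bound $|D_P|$; at that point you are essentially reproducing the paper's argument.
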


\begin{proof}
  As in the proof of Theorem \ref{convergence1}, using the left invariance of $j_{f,P,\fo}$ by $P_H(F)$, we reduce ourselves to proving
  $$ \int_{P_{1,H}(F)\bs H(\BA)\cap G(\BA)^1} \chi_{P_1, P_2}^T(x) |j_{P_1, P_2, \fo}(x)| |\Nrd(x_1)|_\BA^s dx < \infty, $$
  where $P_1\subsetneq P_2$ are a pair of relatively standard parabolic subgroups of $G$ and for $x\in P_{1,H}(F)\bs H(\BA)$, we put
  $$ j_{P_1, P_2, \fo}(x):=\sum_{\{P:P_1\subseteq P\subseteq P_2\}} (-1)^{\dim(A_P/A_{G})} j_{f,P,\fo}(x). $$
  In addition,
  $$ j_{f,P,\fo}(x)=\sum_{\{R:P_1\subseteq R\subseteq P\}} \sum_{\xi\in\wt{\fm}_{P_1}^R(F)\cap\fo} \sum_{X\in(\fn_R^P\cap\fs)(F)} \sum_{n\in N_{P_H}(F)} f((nx)^{-1}(\xi+X)nx), $$
  where we use the notations $\wt{\fm}_{P_1}^R$ and $\fn_R^P$ in the proof of Proposition \ref{proppfofconv1}. 

  Applying Lemma \ref{regsslemma1}, we get
  \[\begin{split}
    j_{f,P,\fo}(x)&=\sum_{\{R:P_1\subseteq R\subseteq P\}} \sum_{\xi\in\wt{\fm}_{P_1}^R(F)\cap\fo} \sum_{X\in(\fn_R^P\cap\fs)(F)} \sum_{u\in (\fn_P\cap\fs)(F)} f(x^{-1}(\xi+X+u)x) \\
    &=\sum_{\{R:P_1\subseteq R\subseteq P\}} \sum_{\xi\in\wt{\fm}_{P_1}^R(F)\cap\fo} \sum_{X\in(\fn_R\cap\fs)(F)} f(x^{-1}(\xi+X)x).
  \end{split}\]
  Hence
  \[\begin{split}
    j_{P_1, P_2, \fo}(x)&=\sum_{\{P:P_1\subseteq P\subseteq P_2\}} (-1)^{\dim(A_P/A_{G})} \left(\sum_{\{R:P_1\subseteq R\subseteq P\}} \sum_{\xi\in\wt{\fm}_{P_1}^R(F)\cap\fo} \sum_{X\in(\fn_R\cap\fs)(F)} f(x^{-1}(\xi+X)x)\right) \\
    &=\sum_{\{R:P_1\subseteq R\subseteq P_2\}} \sum_{\xi\in\wt{\fm}_{P_1}^R(F)\cap\fo} \left(\sum_{\{P:R\subseteq P\subseteq P_2\}} (-1)^{\dim(A_P/A_{G})}\right) \sum_{X\in(\fn_R\cap\fs)(F)} f(x^{-1}(\xi+X)x).
  \end{split}\]
  By \cite[Proposition 1.1]{MR518111}, we have
  $$ j_{P_1, P_2, \fo}(x)=(-1)^{\dim(A_{P_2}/A_{G})} \sum_{\xi\in\wt{\fm}_{P_1}^{P_2}(F)\cap\fo} \sum_{X\in(\fn_{P_2}\cap\fs)(F)} f(x^{-1}(\xi+X)x). $$
  Applying Lemma \ref{regsslemma1} again, we obtain
  $$ j_{P_1, P_2, \fo}(x)=(-1)^{\dim(A_{P_2}/A_{G})} \sum_{\xi\in\wt{\fm}_{P_1}^{P_2}(F)\cap\fo} \sum_{n_2\in N_{P_{2,H}}(F)} f((n_2 x)^{-1}\xi n_2 x), $$
where we denote $P_{2,H}:=P_2\cap H$.

  Decomposing the integral over $x\in P_{1,H}(F)\bs H(\BA)\cap G(\BA)^1$ into double integrals $n_1\in N_{P_{1,H}}(F)\bs N_{P_{1,H}}(\BA)$ and $y\in M_{P_{1,H}}(F)N_{P_{1,H}}(\BA)\bs H(\BA)\cap G(\BA)^1$, and using the fact that $\chi_{P_1, P_2}^T(x)$ is left invariant under $N_{P_{1,H}}(\BA)$, we have
  \[\begin{split}
    &\int_{P_{1,H}(F)\bs H(\BA)\cap G(\BA)^1} \chi_{P_1, P_2}^T(x) |j_{P_1, P_2, \fo}(x)||\Nrd(x_1)|_\BA^s dx \\
    =&\int_{M_{P_{1,H}}(F)N_{P_{1,H}}(\BA)\bs H(\BA)\cap G(\BA)^1} \int_{N_{P_{1,H}}(F)\bs N_{P_{1,H}}(\BA)} \chi_{P_1, P_2}^T(n_1 y) \\
    &\cdot \left|\sum_{\xi\in\wt{\fm}_{P_1}^{P_2}(F)\cap\fo} \sum_{n_2\in N_{P_{2,H}}(F)} f((n_2 n_1 y)^{-1}\xi n_2 n_1 y)\right| |\Nrd(y_1)|_\BA^s dn_1 dy \\
    \leq&\int_{M_{P_{1,H}}(F)N_{P_{1,H}}(\BA)\bs H(\BA)\cap G(\BA)^1} \chi_{P_1, P_2}^T(y) \sum_{\xi\in\wt{\fm}_{P_1}^{P_2}(F)\cap\fo} \\
    &\left(\int_{N_{P_{1,H}}(F)\bs N_{P_{1,H}}(\BA)} \sum_{n_2\in N_{P_{2,H}}(F)} |f((n_2 n_1 y)^{-1}\xi n_2 n_1 y)| dn_1\right) |\Nrd(y_1)|_\BA^s dy.
  \end{split}\]
  Since $P_{1,H}\subseteq P_{2,H}$ and $\vol(N_{P_{2,H}}(F)\bs N_{P_{2,H}}(\BA))=1$, we see that
  \[\begin{split}
    &\int_{N_{P_{1,H}}(F)\bs N_{P_{1,H}}(\BA)} \sum_{n_2\in N_{P_{2,H}}(F)} |f((n_2 n_1 y)^{-1}\xi n_2 n_1 y)| dn_1 \\
    =&\int_{N_{P_{1,H}}(F)\bs N_{P_{1,H}}(\BA)} \int_{N_{P_{2,H}}(F)\bs N_{P_{2,H}}(\BA)} \sum_{n_2\in N_{P_{2,H}}(F)} |f((n_2 n n_1 y)^{-1}\xi n_2 n n_1 y)| dn dn_1 \\
    =&\int_{N_{P_{1,H}}(F)\bs N_{P_{1,H}}(\BA)} \int_{N_{P_{2,H}}(\BA)} |f((n n_1 y)^{-1}\xi n n_1 y)| dn dn_1 \\
    =&\int_{N_{P_{1,H}}(F)\bs N_{P_{1,H}}(\BA)} \int_{(\fn_{P_2}\cap\fs)(\BA)} |f((n_1 y)^{-1}(\xi+U) n_1 y)| dU dn_1,
  \end{split}\]
  where we have applied Lemma \ref{regsslemma1} in the last equality.
  Therefore
  \[\begin{split}
    &\int_{P_{1,H}(F)\bs H(\BA)\cap G(\BA)^1} \chi_{P_1, P_2}^T(x) |j_{P_1, P_2, \fo}(x)| |\Nrd(x_1)|_\BA^s dx \\
    \leq&\int_{M_{P_{1,H}}(F)N_{P_{1,H}}(\BA)\bs H(\BA)\cap G(\BA)^1} \chi_{P_1, P_2}^T(y) \sum_{\xi\in\wt{\fm}_{P_1}^{P_2}(F)\cap\fo} \\
    &\left(\int_{N_{P_{1,H}}(F)\bs N_{P_{1,H}}(\BA)} \int_{(\fn_{P_2}\cap\fs)(\BA)} |f((n_1 y)^{-1}(\xi+U) n_1 y)| dU dn_1\right) |\Nrd(y_1)|_\BA^s dy \\
    =&\int_{P_{1,H}(F)\bs H(\BA)\cap G(\BA)^1} \chi_{P_1, P_2}^T(x) \sum_{\xi\in\wt{\fm}_{P_1}^{P_2}(F)\cap\fo} \left(\int_{(\fn_{P_2}\cap\fs)(\BA)} |f(x^{-1}(\xi+U)x)| dU\right) |\Nrd(x_1)|_\BA^s dx,
  \end{split}\]
  whose convergence results from that of the formula \eqref{equation1.1} when $R=P_2$.

  Now we begin to prove the second statement. From the first statement, now we have the right to write
\[\begin{split}
&\int_{H(F)\bs H(\BA)\cap G(\BA)^1} j_{f,\fo}^T(x) \eta(\Nrd (x)) |\Nrd(x_1)|_\BA^s dx \\ =&\sum_{\{P_1,P_2:\wt{P}_0\subseteq P_1\subseteq P_2\}} \int_{P_{1,H}(F)\bs H(\BA)\cap G(\BA)^1} \chi_{P_1, P_2}^T(x) j_{P_1,P_2,\fo}(x) \eta(\Nrd (x)) |\Nrd(x_1)|_\BA^s dx, \\
\end{split}\]
  where
  \[\begin{split}
    j_{P_1,P_2,\fo}(x)&=\sum_{\{P:P_1\subseteq P\subseteq P_2\}} (-1)^{\dim(A_P/A_{G})} j_{f,P,\fo}(x) \\
    &=\sum_{\{P:P_1\subseteq P\subseteq P_2\}} (-1)^{\dim(A_P/A_{G})} \left(\sum_{X\in\fm_P(F)\cap\fo} \sum_{n\in N_{P_H}(F)} f((nx)^{-1}Xnx)\right).
  \end{split}\]
  Decompose the integral over $x\in P_{1,H}(F)\bs H(\BA)\cap G(\BA)^1$ into double integrals over $n_1\in N_{P_{1,H}}(F)\bs N_{P_{1,H}}(\BA)$ and $y\in M_{P_{1,H}}(F)N_{P_{1,H}}(\BA)\bs H(\BA)\cap G(\BA)^1$. Since $N_{P_{1,H}}(F)\bs N_{P_{1,H}}(\BA)$ is compact, by Lemma \ref{regsslemma1} and \cite[\S41]{MR0165033},
  $$ \sum_{X\in\fm_P(F)\cap\fo} \sum_{n\in N_{P_H}(F)} |f((n n_1 y)^{-1}X n n_1 y)|=\sum_{X\in\fm_P(F)\cap\fo} \sum_{u\in(\fn_P\cap\fs)(F)} |f((n_1 y)^{-1}(X+u) n_1 y)| $$
  is bounded independently of $n_1\in N_{P_{1,H}}(F)\bs N_{P_{1,H}}(\BA)$. Then using the fact that $\chi_{P_1, P_2}^T(x)$ is left invariant under $N_{P_{1,H}}(\BA)$, we have
  \[\begin{split}
    &\int_{H(F)\bs H(\BA)\cap G(\BA)^1} j_{f,\fo}^T(x) \eta(\Nrd (x)) |\Nrd(x_1)|_\BA^s dx \\
    =&\sum_{\{P_1,P_2:\wt{P}_0\subseteq P_1\subseteq P_2\}} \int_{M_{P_{1,H}}(F)N_{P_{1,H}}(\BA)\bs H(\BA)\cap G(\BA)^1} \chi_{P_1, P_2}^T(y) \sum_{\{P:P_1\subseteq P\subseteq P_2\}} (-1)^{\dim(A_P/A_{G})} \\
    &\sum_{X\in\fm_P(F)\cap\fo} \left(\int_{N_{P_{1,H}}(F)\bs N_{P_{1,H}}(\BA)} \sum_{n\in N_{P_H}(F)} f((nn_1 y)^{-1}X nn_1 y) dn_1\right) \eta(\Nrd (y)) |\Nrd(y_1)|_\BA^s dy.
  \end{split}\]
  Since $P_{1,H}\subseteq P_H$ and $\vol(N_{P_H}(F)\bs N_{P_H}(\BA))=1$, we see that
  \[\begin{split}
    &\int_{N_{P_{1,H}}(F)\bs N_{P_{1,H}}(\BA)} \sum_{n\in N_{P_H}(F)} f((nn_1 y)^{-1}X nn_1 y) dn_1 \\
    =&\int_{N_{P_{1,H}}(F)\bs N_{P_{1,H}}(\BA)} \int_{N_{P_H}(F)\bs N_{P_H}(\BA)} \sum_{n\in N_{P_H}(F)} f((n n_2 n_1 y)^{-1}X n n_2 n_1 y) dn_2 dn_1 \\
    =&\int_{N_{P_{1,H}}(F)\bs N_{P_{1,H}}(\BA)} \int_{N_{P_H}(\BA)} f((n n_1 y)^{-1}X n n_1 y) dn dn_1 \\
    =&\int_{N_{P_{1,H}}(F)\bs N_{P_{1,H}}(\BA)} \int_{(\fn_{P}\cap\fs)(\BA)} f((n_1 y)^{-1}(X+U) n_1 y) dU dn_1,
  \end{split}\]
  where we have applied Lemma \ref{regsslemma1} in the last equality.
  Therefore
  \[\begin{split}
    &\int_{H(F)\bs H(\BA)\cap G(\BA)^1} j_{f,\fo}^T(x) \eta(\Nrd (x)) |\Nrd(x_1)|_\BA^s dx \\
    =&\sum_{\{P_1,P_2:\wt{P}_0\subseteq P_1\subseteq P_2\}} \int_{M_{P_{1,H}}(F)N_{P_{1,H}}(\BA)\bs H(\BA)\cap G(\BA)^1} \chi_{P_1, P_2}^T(y) \sum_{\{P:P_1\subseteq P\subseteq P_2\}} (-1)^{\dim(A_P/A_{G})} \\
    &\sum_{X\in\fm_P(F)\cap\fo} \left(\int_{N_{P_{1,H}}(F)\bs N_{P_{1,H}}(\BA)} \int_{(\fn_{P}\cap\fs)(\BA)} f((n_1 y)^{-1}(X+U) n_1 y) dU dn_1\right) \eta(\Nrd (y)) |\Nrd(y_1)|_\BA^s dy \\
    =&\sum_{\{P_1,P_2:\wt{P}_0\subseteq P_1\subseteq P_2\}} \int_{P_{1,H}(F)\bs H(\BA)\cap G(\BA)^1} \chi_{P_1, P_2}^T(x) \sum_{\{P:P_1\subseteq P\subseteq P_2\}} (-1)^{\dim(A_P/A_{G})} \\
    &\cdot \left(\sum_{X\in\fm_P(F)\cap\fo} \int_{(\fn_{P}\cap\fs)(\BA)} f(x^{-1}(X+U)x) dU\right) \eta(\Nrd (x)) |\Nrd(x_1)|_\BA^s dx \\
    =&\sum_{\{P_1,P_2:\wt{P}_0\subseteq P_1\subseteq P_2\}} \int_{P_{1,H}(F)\bs H(\BA)\cap G(\BA)^1} \chi_{P_1, P_2}^T(x) k_{P_1,P_2,\fo}(x) \eta(\Nrd (x)) |\Nrd(x_1)|_\BA^s dx.
  \end{split}\]
  From the proof of Theorem \ref{convergence1}, we are authorised to write
  \[\begin{split}
    J_\fo^{G,T}(\eta,s,f)&=\int_{H(F)\bs H(\BA)\cap G(\BA)^1} k_{f,\fo}^T(x) \eta(\Nrd (x)) |\Nrd(x_1)|_\BA^s dx \\
    &=\sum_{\{P_1,P_2:\wt{P}_0\subseteq P_1\subseteq P_2\}} \int_{P_{1,H}(F)\bs H(\BA)\cap G(\BA)^1} \chi_{P_1, P_2}^T(x) k_{P_1,P_2,\fo}(x) \eta(\Nrd (x)) |\Nrd(x_1)|_\BA^s dx,
  \end{split}\]
  which completes the proof.
\end{proof}


\section{\textbf{Weighted orbital integrals}}\label{weightedorbitalintegral}

As in the last section, we shall assume that $p=q=n$ in the following discussion. Moreover, we shall suppose that $s=0$ in the orbital integral for convenience, since $|\Nrd(x_1)|_\BA^s$ is not invariant under the translation by $A_G^\infty$. Recall that for $\fo\in\CO$ and $f\in\CS(\fs(\BA))$, we denote by $J_\fo^G(\eta, f)$ the constant term of $J_\fo^{G,T}(\eta, 0, f)$. 

\subsection{Weyl groups}

From Section \ref{indep1}, we may choose $P_0$ to be the stabiliser in $G$ of the flag
  $$ 0\subsetneq\langle e_1\rangle_D\subsetneq\langle e_1,f_1\rangle_D\subsetneq\langle e_1,f_1,e_2\rangle_D\subsetneq\langle e_1,f_1,e_2,f_2\rangle_D\subsetneq\cdot\cdot\cdot\subsetneq\langle e_1,f_1\cdot\cdot\cdot,e_n,f_n\rangle_D=V\oplus W $$
by the notation in Section \ref{relstdpar1}. Then all $\omega$-stable relatively standard parabolic subgroups of $G$ contain $P_0$. Denote by $\msp_0$ the stabiliser in $G$ of the flag
  $$ 0\subsetneq\langle e_1,f_1\rangle_D\subsetneq\langle e_1,f_1,e_2,f_2\rangle_D\subsetneq\cdot\cdot\cdot\subsetneq\langle e_1,f_1\cdot\cdot\cdot,e_n,f_n\rangle_D=V\oplus W. $$
It is the minimal $\omega$-stable relatively standard parabolic subgroup of $G$. A parabolic subgroup $P$ of $G$ is relatively standard and $\omega$-stable if and only if $\msp_0\subseteq P$. Let $\msp_{0,n}$ be the group of upper triangular matrices in $GL_{n,D}$. We can talk about positive roots for $G, H$ and $GL_{n,D}$ with respect to $P_0, \wt{P}_0$ and $\msp_{0,n}$ respectively. 

\begin{lem}\label{lemid1}
Let $P_1=\mat(\fp_{1,n},\fp_{1,n},\fp_{1,n},\fp_{1,n})^\times$ and $P_2=\mat(\fp_{2,n},\fp_{2,n},\fp_{2,n},\fp_{2,n})^\times$ be a pair of $\omega$-stable relatively standard parabolic subgroups of $G$, where $P_{1,n}$ and $P_{2,n}$ are standard parabolic subgroups of $GL_{n,D}$. 

1) The map $s_n\mapsto s=\mat(s_n,,,s_n)$ induces a bijection from
\begin{enumerate}[\indent a)]
\item the set of representatives $s_n$ of $\Omega^{GL_{n,D}}(\fa_{P_{1,n}},\fa_{P_{2,n}})$ in $\Omega^{GL_{n,D}}$ such that $s_n^{-1}\alpha>0$ for all $\alpha\in\Delta_{\msp_{0,n}}^{P_{2,n}}$
\end{enumerate}
to
\begin{enumerate}[\indent b)]
\item the set of representatives $s$ of $\Omega^G(\fa_{P_1},\fa_{P_2})$ in $\Omega^G$ such that $s^{-1}\alpha>0$ for all $\alpha\in\Delta_{P_0}^{P_2}$.
\end{enumerate}

2) The map $s_n\mapsto s=\mat(s_n,,,s_n)$ induces a bijection from
\begin{enumerate}[\indent a)]
\item the set of representatives $s_n$ of $\Omega^{GL_{n,D}}(\fa_{P_{1,n}}; P_{2,n})$ in $\Omega^{GL_{n,D}}$ such that $s_n(\fa_{P_{1,n}})\supseteq\fa_{P_{2,n}}$ and $s_n^{-1}\alpha>0$ for all $\alpha\in\Delta_{\msp_{0,n}}^{P_{2,n}}$
\end{enumerate}
to
\begin{enumerate}[\indent b)]
\item the set of representatives $s$ of $\Omega^G(\fa_{P_1}; P_2)$ in $\Omega^G$ such that $s(\fa_{P_1})\supseteq\fa_{P_2}$ and $s^{-1}\alpha>0$ for all $\alpha\in\Delta_{P_0}^{P_2}$.
\end{enumerate}
\end{lem}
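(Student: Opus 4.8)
The plan is to reduce everything to the structure of Weyl groups of $GL_{n,D}$ and $GL_{2n,D}$ together with the explicit description, given in Section \ref{defomgstb}, of the $\omega$-stable relatively standard parabolic subgroups of $G$ as images $P_n\mapsto\matrixx{\fp_n}{\fp_n}{\fp_n}{\fp_n}^\times$ of standard parabolic subgroups of $GL_{n,D}$. The first thing I would do is record, once and for all, the behaviour of the various combinatorial objects under the doubling map $s_n\mapsto s:=\matrixx{s_n}{}{}{s_n}$. Using the choice of $P_0$ made at the start of Section \ref{weightedorbitalintegral} (the fully alternating flag in $e_i,f_i$) and of $\msp_0$ (the minimal $\omega$-stable parabolic), one checks directly that $\fa_{P_i}$ is canonically identified with $\fa_{P_{i,n}}$ (each block $GL_{p_j+q_j,D}=GL_{2r_j,D}$ of $M_{P_i}$ contributes a single coordinate, matching the block $GL_{r_j,D}$ of $M_{P_{i,n}}$), that this identification carries $\Delta_{P_0}^{P_2}$ bijectively onto $\Delta_{\msp_{0,n}}^{P_{2,n}}$ and coroots to coroots, and that the doubling map sends $\Omega^{GL_{n,D}}$ injectively into $\Omega^G$ with $s(\fa_{P_{1,n}})=\fa_{P_1}$, $s|_{\fa_{P_1}}$ corresponding to $s_n|_{\fa_{P_{1,n}}}$, and $s^{-1}\alpha>0 \iff s_n^{-1}\alpha>0$ for $\alpha$ in the relevant simple root set. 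Granting these compatibilities, the map in both 1) and 2) is well-defined and injective, and preserves the defining inequalities and the conditions ``$s(\fa_{P_1})\supseteq\fa_{P_2}$''.

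The substance is surjectivity, i.e.\ that \emph{every} $s\in\Omega^G$ representing an element of $\Omega^G(\fa_{P_1},\fa_{P_2})$ (resp.\ $\Omega^G(\fa_{P_1};P_2)$) with the stated normalisation actually comes from $GL_{n,D}$. Here is where I would use that $P_1$ and $P_2$ are $\omega$-\emph{stable}. An element $s\in\Omega^G(\fa_{P_1},\fa_{P_2})$ is a linear isomorphism $\fa_{P_1}\to\fa_{P_2}$ obtained by restricting a permutation matrix; since $\fa_{P_i}$ is spanned by the ``balanced'' vectors $h_j^\vee$ (sum of an $e$-block and the matching $f$-block of the same size, in the notation of Lemma \ref{balanced1}), $s$ must permute these balanced vectors, hence is forced to commute with $\omega$ on $\fa_{P_1}$; the normalisation condition $s^{-1}\alpha>0$ on $\Delta_{P_0}^{P_2}$ (which is $\omega$-invariant as a set, since $\omega\msp_0\omega^{-1}=\msp_0$ and more generally $\omega$ fixes each relevant simple root by construction of $P_0$) then pins down a \emph{canonical} representative in $\Omega^G$ which is itself $\omega$-conjugation-invariant, i.e.\ lies in the image of the doubling map. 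The same argument applies in part 2), with $\Omega^G(\fa_{P_1};P_2)$ viewed inside $\Omega^{M_{P_2}}\backslash\Omega^G/\Omega^{M_{P_1}}$ and the normalised representatives of \cite[Lemme 1.3.6 and Lemme 1.3.7]{MR3026269} used to identify both sides; I would cite these lemmas to get the uniqueness of representatives on the $G$-side and on the $GL_{n,D}$-side, then match them through the doubling map.

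I expect the main obstacle to be purely bookkeeping rather than conceptual: namely, verifying carefully that the chosen $P_0$ (the alternating flag) and $\msp_0$ make $\Delta_{P_0}^{P_2}$, its coroots, and the Weyl-group normalisations all $\omega$-equivariant, so that the canonical representatives on the $G$-side literally are the doubled canonical representatives on the $GL_{n,D}$-side. In particular one must be attentive that $\Omega^G$ is taken to be the group of permutation matrices (as fixed in Section \ref{notation}), so that ``doubling'' is an honest group homomorphism $\Omega^{GL_{n,D}}\to\Omega^G$ and not merely a map of sets, and that the block structure of an $\omega$-stable $M_{P_i}$ is exactly $\prod_j GL_{2r_j,D}$ with $\fa_{P_i}\cong\BR^{l}$ indexed by the blocks. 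Once these identifications are set up, both bijections follow formally, and I would present the argument for 1) in detail and remark that 2) is entirely parallel, replacing $\Omega^G(\fa_{P_1},\fa_{P_2})$ and \cite[Lemme 1.3.6]{MR3026269} by $\Omega^G(\fa_{P_1};P_2)$ and \cite[Lemme 1.3.7]{MR3026269} throughout.
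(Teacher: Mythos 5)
Your reduction of well-definedness and injectivity to bookkeeping is fine: under the identifications you describe, $\Delta_{P_0}^{P_2}$ decomposes as the union of the ``intra-pair'' roots $e_i^*-f_i^*$ (which all lie in $\Delta_{P_0}^{\msp_0}\subseteq\Delta_{P_0}^{P_2}$) and the ``inter-pair'' roots, which are in canonical bijection with $\Delta_{\msp_{0,n}}^{P_{2,n}}$, and it is straightforward to check the doubling map respects the normalisations. The surjectivity step, however, has a genuine gap. You assert that $\Delta_{P_0}^{P_2}$ is $\omega$-invariant ``since $\omega\msp_0\omega^{-1}=\msp_0$ and more generally $\omega$ fixes each relevant simple root by construction of $P_0$''. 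This is false: $P_0$ stabilises the alternating flag and is \emph{not} $\omega$-stable, and $\omega$ sends the intra-pair simple root $e_i^*-f_i^*$ to $f_i^*-e_i^*$, which is negative. Consequently the inference that the normalised representative $s$ commutes with $\omega$ is not justified by your argument. Moreover, even the implication ``$\omega_s$ commutes with $\omega$'' $\Rightarrow$ ``$s$ is in the image of the doubling map'' is not correct in general: for $n=2$, $P_1=P_2=\msp_0$, the $4$-cycle $(e_1\,e_2\,f_1\,f_2)$ commutes with $\omega=(e_1\,f_1)(e_2\,f_2)$ yet mixes $e$'s and $f$'s, so is not of the form $\matrixx{s_n}{}{}{s_n}$. (What saves the canonical representative is precisely the positivity of $s^{-1}(e_i^*-f_i^*)$ for all $i$ --- the very roots that are not $\omega$-invariant --- which rules out any $e\leftrightarrow f$ mixing; but your proposal does not make this step, and it cannot be read off from the alleged $\omega$-invariance.)

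The paper avoids these issues entirely by a more pedestrian route: writing the parabolics via partitions $(n_1,\ldots,n_l)$ of $n$ and $(2n_1,\ldots,2n_l)$ of $2n$, it invokes the explicit description of $\Omega^{GL_m}(\fa_{P},\fa_{P'})$ (and of $\Omega^{GL_m}(\fa_P;P')$) as sets of permutations in $S_l$ satisfying certain block-matching (resp.\ refinement and increasing) conditions, and observes that multiplying every block size by $2$ does not change these conditions, so the two parametrisations agree term-by-term. If you want to salvage a structural argument along the lines you propose, you would need to argue directly that the normalised representative maps $\Delta_{P_0}^{\msp_0}$ into $\Delta_{P_0}^{\msp_0}$ (e.g.\ because it is order-preserving on each $M_{P_1}$-block, so carries the alternating $e,f,e,f,\ldots$ pattern to itself), rather than appealing to a nonexistent $\omega$-invariance of the normalisation data.
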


\begin{proof}
Suppose that $P_{1,n}$ and $P_{2,n}$ correspond to the partitions $(n_1,\cdot\cdot\cdot,n_l)$ and $(n'_1,\cdot\cdot\cdot,n'_{l'})$ respectively of $n$. Then $P_1$ and $P_2$ correspond to the partitions $(2n_1,\cdot\cdot\cdot,2n_l)$ and $(2n'_1,\cdot\cdot\cdot,2n'_{l'})$ respectively of $2n$. For an integer $m>0$, denote by $S_m$ the symmetric group of degree $m$. 

1) From \cite[p. 33]{MR2192011}, the set $\Omega^{GL_{n,D}}(\fa_{P_{1,n}},\fa_{P_{2,n}})$ is empty unless $l=l'$, in which case
\begin{equation}\label{lemideq1}
 \Omega^{GL_{n,D}}(\fa_{P_{1,n}},\fa_{P_{2,n}})\simeq\{s_n\in S_l: \forall 1\leq i\leq l, n'_i=n_{s_n(i)}\}. 
\end{equation}
Similarly, the set $\Omega^G(\fa_{P_1},\fa_{P_2})$ is empty unless $l=l'$, in which case
\begin{equation}\label{lemideq2}
 \Omega^G(\fa_{P_1},\fa_{P_2})\simeq\{s\in S_l: \forall 1\leq i\leq l, 2n'_i=2n_{s(i)}\}. 
\end{equation}
The map in the lemma is induced by the obvious bijection between the right hand sides of (\ref{lemideq1}) and (\ref{lemideq2}). 

2) From \cite[p. 59]{MR2192011}, the set a) is identified with the set of $s_n\in S_l\subseteq S_n$ such that $(n_{s_n(1)},\cdot\cdot\cdot,n_{s_n(l)})$ is finer than $(n'_1,\cdot\cdot\cdot,n'_{l'})$, and such that $s_n^{-1}(i)<s_n^{-1}(i+1)$ for any $1\leq i\leq n-1$ that is not of the form $n'_1+\cdot\cdot\cdot+n'_k$ for some $1\leq k\leq l'$. Similarly, the set b) is identified with the set of $s\in S_l\subseteq S_{2n}$ such that $(2n_{s(1)},\cdot\cdot\cdot,2n_{s(l)})$ is finer than $(2n'_1,\cdot\cdot\cdot,2n'_{l'})$, and such that $s^{-1}(i)<s^{-1}(i+1)$ for any $1\leq i\leq 2n-1$ that is not of the form $2n'_1+\cdot\cdot\cdot+2n'_k$ for some $1\leq k\leq l'$. The map in the lemma is induced by the obvious bijection between these two sets. 
\end{proof}

For $P_1$ and $P_2$ a pair of $\omega$-stable relatively standard parabolic subgroups of $G$, denote by $\Omega^H(\fa_{P_1},\fa_{P_2})$ the (perhaps empty) set of distinct isomorphisms from $\fa_{P_1}$ to $\fa_{P_2}$ obtained by restriction of elements in $\Omega^H$. It is a subset of $\Omega^G(\fa_{P_1},\fa_{P_2})$ {\it{a priori}}. However, since the image of the map in Lemma \ref{lemid1}.1) is contained in $\Omega^H$, we actually have $\Omega^H(\fa_{P_1},\fa_{P_2})=\Omega^G(\fa_{P_1},\fa_{P_2})$ (cf. \cite[Lemme 2.8.1]{MR3026269}). 
Denote by $\Omega^{H}(\fa_{P_1};P_2)$ the set of $s\in \bigcup\limits_{\fa_{Q}} \Omega^{H}(\fa_{P_1},\fa_{Q})$ such that $s(\fa_{P_1})\supseteq\fa_{P_2}$ and $s^{-1}\alpha>0$ for each $\alpha\in\Delta_{Q_H}^{P_2\cap H}$, where the union takes over all $\fa_{Q}$ associated to some $\omega$-stable relatively standard parabolic subgroup $Q$ of $G$. Then $\Omega^{H}(\fa_{P_1};P_2)=\Omega^G(\fa_{P_1}; P_2)$ by Lemma \ref{lemid1}.2). 

\subsection{Regular semi-simple terms}\label{regssterms1}

Let $\fo\in\CO_{rs}$ (see Section \ref{inv1}). It is possible to choose an element $X_1\in\fo$ and a relatively standard parabolic subgroup $P_1$ of $G$ such that $X_1\in\fm_{P_1}(F)$ (thus $P_1$ is $\omega$-stable by Proposition \ref{propOinvert1}) but $X_1$ cannot be $H(F)$-conjugated to an element in the Lie algebra of any relatively standard parabolic subgroup $R\subsetneq P_1$. We call such $X_1$ an elliptic element in $(\fm_{P_1}\cap\fs)(F)$. 

Let $P_1=\mat(\fp_{1,n},\fp_{1,n},\fp_{1,n},\fp_{1,n})^\times$ be an $\omega$-stable relatively standard parabolic subgroup of $G$, where $P_{1,n}$ is a standard parabolic subgroup of $GL_{n,D}$. Let $X_1=\mat(0,A_1,B_1,0)\in(\fm_{P_1}\cap\fs)(F)$ be a regular semi-simple element in $\fs$. Then $X_1$ is elliptic in $(\fm_{P_1}\cap\fs)(F)$ if and only if $A_1B_1$ is elliptic in $\fm_{P_{1,n}}(F)$ in the usual sense, i.e., $\Prd_{A_1 B_1}$ is irreducible (see \cite[Proposition 5]{yu2013} for example). Let $H_{X_1}$ be the centraliser of $X_1$ in $H$. Then $X_1$ is elliptic in $(\fm_{P_1}\cap\fs)(F)$ if and only if the maximal $F$-split torus in $H_{X_1}$ is $A_{P_1}$. 

\begin{thm}\label{woi1}
  Let $\fo\in\CO_{rs}, P_1$ be a relatively standard parabolic subgroup of $G$ and $X_1\in\fo$ be an elliptic element in $(\fm_{P_1}\cap\fs)(F)$. For $f\in\CS(\fs(\BA))$, we have
  $$ J_\fo^{G}(\eta,f)=\vol(A_{P_1}^\infty H_{X_1}(F)\bs H_{X_1}(\BA)) \cdot \int_{H_{X_1}(\BA)\bs H(\BA)} f(x^{-1}X_1 x) v_{P_1}(x) \eta(\Nrd (x)) dx, $$
  where $v_{P_1}(x)$ is left-invariant under $H_{X_1}(\BA)$ and equals the volume of the projection onto $\fa_{P_1}^G$ of the convex hull of $\{-H_Q(x)\}$, where $Q$ runs over all semi-standard parabolic subgroups of $G$ with $M_Q=M_{P_1}$. 
\end{thm}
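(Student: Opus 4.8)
The plan is to start from the expression for $J_\fo^{G,T}(\eta,0,f)$ as an absolutely convergent integral of the second modified kernel $j_{f,\fo}^T$ established in Theorem \ref{secondkernel1}, and to unfold it into a single geometric orbital integral by exploiting that $\fo$ is a single regular semi-simple $H(F)$-orbit. Concretely, since $\fo\in\CO_{rs}$, for each $\omega$-stable relatively standard $P$ the set $\fm_P(F)\cap\fo$ is either empty or a single $M_{P_H}(F)$-orbit; using Proposition \ref{propOinvert1} and the characterisation of elliptic elements in Section \ref{regssterms1}, I would pick the fixed representative $X_1\in(\fm_{P_1}\cap\fs)(F)$ and observe that $\fm_P(F)\cap\fo\neq\emptyset$ precisely when $P\supseteq$ a conjugate of $P_1$; more precisely, as in Arthur's treatment, the pair $(P,X)$ with $X\in\fm_P(F)\cap\fo$ is governed by the double cosets indexed by $\Omega^H(\fa_{P_1};P)$, which by Lemma \ref{lemid1} and the subsequent remarks equals $\Omega^G(\fa_{P_1};P)$. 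This is the combinatorial input that lets me collapse the sum over $P$ into a weight factor.

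The key steps, in order, are: (1) rewrite $j_{f,\fo}^T(x)=\sum_{\{P:\wt P_0\subseteq P\}}(-1)^{\dim(A_P/A_G)}\sum_{\delta\in P_H(F)\bs H(F)}\wh\tau_P^G(H_P(\delta x)-T_P)\,j_{f,P,\fo}(\delta x)$ and use Lemma \ref{regsslemma1} to replace the inner sum over $N_{P_H}(F)$ with an integral over $(\fn_P\cap\fs)(\BA)$, so that $j_{f,P,\fo}(x)=\sum_{X\in\fm_P(F)\cap\fo}\int_{(\fn_P\cap\fs)(\BA)}f(x^{-1}(X+U)x)\,dU$; (2) parametrise $\fm_P(F)\cap\fo$ by $M_{P_H}(F)/M_{P_1,H}(F)$-type cosets carried over by Weyl elements, and push the sum over $\delta$ and over $X$ together into a single sum over $H_{X_1}(F)\bs H(F)$, converting the whole integral over $H(F)\bs H(\BA)\cap G(\BA)^1$ into one over $H_{X_1}(F)\bs H(\BA)\cap G(\BA)^1$ of $f(x^{-1}X_1x)$ against a combinatorial weight $w^T(x):=\sum_P(-1)^{\dim(A_P/A_G)}\sum_{\ldots}\wh\tau_P^G(\ldots)$; (3) identify $w^T(x)$: after restricting to the $\omega$-stable $P$ with $M_{P_1}\subseteq M_P$ and using that $A_{P_1}^\infty H_{X_1}(F)\bs H_{X_1}(\BA)$ is compact (ellipticity), the integral over $H_{X_1}(\BA)^1\bs H(\BA)^1$ factors the volume $\vol(A_{P_1}^\infty H_{X_1}(F)\bs H_{X_1}(\BA))$ out front; (4) compute the remaining $\fa_{P_1}^G$-integral of the alternating sum of $\wh\tau$'s. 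This last computation is exactly Arthur's: $\int_{\fa_{P_1}^G}\sum_{\{P:M_{P_1}\subseteq M_P\}}(-1)^{\dim(A_P/A_G)}\wh\tau_P^G(H-T_P)\,dH$ equals, after shifting by $-H_Q(x)$ with $M_Q=M_{P_1}$ ranging over the conjugates, the volume $v_{P_1}(x)$ of the convex hull of $\{-H_Q(x)\}$ projected to $\fa_{P_1}^G$; I would cite the standard lemma (Arthur's $(G,M)$-family / Lemma 3.2 of \cite{MR518111}, or the volume interpretation in \cite{MR625344}) that $\sum_Q(-1)^{\dim(A_Q/A_G)}\Gamma$-type alternating sums of hyperplane characteristic functions integrate to the volume of a polytope. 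Taking the constant term in $T$ then kills all the non-polynomial pieces and leaves exactly $v_{P_1}(x)$, which is $T$-independent.

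The main obstacle I expect is step (2)–(3): making rigorous the passage from the double sum $\sum_{\delta\in P_H(F)\bs H(F)}\sum_{X\in\fm_P(F)\cap\fo}$ to a single sum over $H_{X_1}(F)\bs H(F)$ while keeping track of exactly which $P$ contribute and with which Weyl representatives, and checking that the resulting weight depends only on the $H_{X_1}(\BA)$-coset of $x$ (i.e. $v_{P_1}(x)$ is left $H_{X_1}(\BA)$-invariant) — this uses that $H_{X_1}$ has maximal split torus $A_{P_1}$ and that the different semi-standard $Q$ with $M_Q=M_{P_1}$ are permuted compatibly. The $\eta$-twist and the reduced-norm character ride along without difficulty since $\eta(\Nrd(\cdot))$ is a character trivial on $H_{X_1}(F)$ when $X_1$ is chosen appropriately, and $s=0$ avoids the non-invariance of $|\Nrd(x_1)|_\BA^s$ under $A_G^\infty$. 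Once the weight is identified as a polytope volume, convergence of the resulting orbital integral $\int_{H_{X_1}(\BA)\bs H(\BA)}f(x^{-1}X_1x)v_{P_1}(x)\eta(\Nrd(x))\,dx$ follows from the estimates already proved in Section \ref{integrability} together with the polynomial growth of $v_{P_1}$, so no new analytic input is needed there.
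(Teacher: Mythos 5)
Your proposal follows essentially the same route as the paper: start from the second modified kernel $j_{f,\fo}^T$ via Theorem~\ref{secondkernel1}, unfold the sum over $\fm_P(F)\cap\fo$ using the Weyl-set parametrisation (for which Lemma~\ref{lemid1} and $\Omega^H(\fa_{P_1};P)=\Omega^G(\fa_{P_1};P)$ are exactly the combinatorial input), collapse the double sum over $\delta\in P_H(F)\bs H(F)$ and $X\in\fm_P(F)\cap\fo$ into a single sum over $H_{X_1}(F)\bs H(F)$, factor out $\vol(A_{P_1}^\infty H_{X_1}(F)\bs H_{X_1}(\BA))$ using ellipticity, and identify the remaining alternating sum of $\wh\tau$'s as Arthur's weight restricted to $H(\BA)$, then take constant terms in $T$.

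One slip worth flagging in step (1): you assert that Lemma~\ref{regsslemma1} lets you rewrite $j_{f,P,\fo}(x)$ as $\sum_{X\in\fm_P(F)\cap\fo}\int_{(\fn_P\cap\fs)(\BA)} f(x^{-1}(X+U)x)\,dU$, i.e.\ that $j_{f,P,\fo}=k_{f,P,\fo}$ pointwise. That is false: Lemma~\ref{regsslemma1} gives a bijection on $F$-points, so $j_{f,P,\fo}(x)=\sum_{X\in\fm_P(F)\cap\fo}\sum_{u\in(\fn_P\cap\fs)(F)} f(x^{-1}(X+u)x)$, still a discrete sum. The identity between the truncated \emph{integrals} $\int j_{f,\fo}^T$ and $\int k_{f,\fo}^T$ is the content of Theorem~\ref{secondkernel1}, not a pointwise statement. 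In fact the paper's proof of the present theorem does not invoke Lemma~\ref{regsslemma1} at all: it keeps $j_{f,P,\fo}$ in its sum-over-$N_{P_H}(F)$ form, parametrises $\fm_P(F)\cap\fo$ by pairs $(s,m)$, and directly merges $M_{P_H,\omega_s X_1\omega_s^{-1}}(F)\bs M_{P_H}(F)$ with $N_{P_H}(F)$ into $M_{P_H,\omega_s X_1\omega_s^{-1}}(F)\bs P_H(F)$ --- this is precisely what the second modified kernel was built for. Your steps (2)--(4) are then sound; you only need the discrete form in step (1) to feed them.
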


\begin{proof}
  Consider a relatively standard parabolic subgroup $P$ of $G$ and $X\in \fm_{P}(F)\cap\fo$ (thus $P$ is $\omega$-stable by Proposition \ref{propOinvert1}). There exists an $\omega$-stable relatively standard parabolic subgroup $P_2\subseteq P$ and $X_2\in(\fm_{P_2}\cap\fs)(F)$ such that $X_2$ is conjugate to $X$ via an element in $M_{P_H}(F)$ and the maximal $F$-split torus in $H_{X_2}$ is $A_{P_2}$. Then any element in $H(F)$ which conjugates $X_1$ and $X_2$ will conjugate $A_{P_1}$ and $A_{P_2}$. It follows that there exists $s\in\Omega^{H}(\fa_{P_1},\fa_{P_2})$ and $m\in M_{P_H}(F)$ such that
  $$ X=m\omega_{s} X_1 \omega_{s}^{-1}m^{-1}. $$
  Suppose that $P_3\subseteq P$ is another relatively standard parabolic subgroup, $s'\in\Omega^{H}(\fa_{P_1},\fa_{P_3})$ and $m'\in M_{P_H}(F)$ such that
  $$ X=m'\omega_{s'} X_1 \omega_{s'}^{-1}{m'}^{-1}. $$
  Then there is $\zeta\in H_{X}(F)$ such that
  $$ m'\omega_{s'}=\zeta m\omega_{s}. $$
  Since $H_{X}\subseteq M_{P_H}$, we see that
  $$ \omega_{s'}=\xi\omega_{s} $$
  for some $\xi\in M_{P_H}(F)$. 
In sum, for any given $P$ a relatively standard parabolic subgroup of $G$ and $X\in \fm_{P}(F)\cap\fo$, there is a unique $s\in \Omega^{H}(\fa_{P_1};P)$ such that $X=m\omega_{s} X_1 \omega_{s}^{-1}m^{-1}$ for some $m\in M_{P_H}(F)$.

  For $x\in P_H(F)\bs H(\BA)$, we obtain
  \[\begin{split}
    j_{f,P,\fo}(x)&=\sum_{X\in\fm_P(F)\cap\fo} \sum_{n\in N_{P_H}(F)} f((nx)^{-1}X n x) \\
    &=\sum_{s\in\Omega^H(\fa_{P_1};P)} \sum_{m\in M_{P_H, \omega_{s}X_1\omega_{s}^{-1}}(F)\big\backslash M_{P_H}(F)} \sum_{n\in N_{P_H}(F)} f((m nx)^{-1}\omega_s X_1\omega_s^{-1} m nx) \\
    &=\sum_{s\in\Omega^H(\fa_{P_1};P)} \sum_{m\in M_{P_H, \omega_{s}X_1\omega_{s}^{-1}}(F)\big\backslash P_H(F)} f((m x)^{-1}\omega_s X_1\omega_s^{-1} m x),
  \end{split}\]
  where $M_{P_H, \omega_{s}X_1\omega_{s}^{-1}}$ denotes the centraliser of $\omega_{s}X_1\omega_{s}^{-1}$ in $M_{P_H}$. For $T\in\fa_0$ and $x\in H(F)\bs H(\BA)$, we have
  \[\begin{split}
    j_{f,\fo}^T(x)=&\sum_{\{P:\msp_0\subseteq P\}} (-1)^{\dim(A_P/A_{G})} \sum_{\delta\in P_H(F)\bs H(F)} \wh{\tau}_P^G(H_{P}(\delta x)-T_P)\cdot j_{f,P,\fo}(\delta x) \\
    =&\sum_{\{P:\msp_0\subseteq P\}} (-1)^{\dim(A_P/A_{G})} \sum_{\delta\in P_H(F)\bs H(F)} \wh{\tau}_P^G(H_{P}(\delta x)-T_P) \\
    &\cdot \left(\sum_{s\in\Omega^H(\fa_{P_1};P)} \sum_{m\in M_{P_H, \omega_{s}X_1\omega_{s}^{-1}}(F)\big\backslash P_H(F)} f((m \delta x)^{-1}\omega_s X_1\omega_s^{-1} m \delta x)\right) \\
    =&\sum_{\{P:\msp_0\subseteq P\}} (-1)^{\dim(A_P/A_{G})} \sum_{s\in\Omega^H(\fa_{P_1};P)} \sum_{\delta\in M_{P_H, \omega_{s}X_1\omega_{s}^{-1}}(F)\big\backslash H(F)} \wh{\tau}_P^G(H_{P}(\delta x)-T_P) \\
    &\cdot f((\delta x)^{-1}\omega_s X_1\omega_s^{-1} \delta x).
  \end{split}\]
  Notice that the centraliser of $\omega_s X_1\omega_s^{-1}$ in $H$ is actually contained in $M_{P_H}$. We deduce that
  \[\begin{split}
    j_{f,\fo}^T(x)=&\sum_{\{P:\msp_0\subseteq P\}} (-1)^{\dim(A_P/A_{G})} \sum_{s\in\Omega^H(\fa_{P_1};P)} \sum_{\delta\in H_{\omega_{s}X_1\omega_{s}^{-1}}(F)\big\backslash H(F)} \wh{\tau}_P^G(H_{P}(\delta x)-T_P) \cdot f((\delta x)^{-1}\omega_s X_1\omega_s^{-1} \delta x) \\ 
    =&\sum_{\{P:\msp_0\subseteq P\}} (-1)^{\dim(A_P/A_{G})} \sum_{s\in\Omega^H(\fa_{P_1};P)} \sum_{\delta\in H_{X_1}(F)\backslash H(F)} \wh{\tau}_P^G(H_{P}(\omega_s \delta x)-T_P) \cdot f((\delta x)^{-1}X_1 \delta x). \\ 
  \end{split}\]
  For $y\in H(\BA)$, write
  $$ \chi_T(y):=\sum_{\{P:\msp_0\subseteq P\}} (-1)^{\dim(A_P/A_{G})} \sum_{s\in\Omega^H(\fa_{P_1};P)} \wh{\tau}_P^G(H_{P}(\omega_s y)-T_P). $$
  Then
  $$ j_{f,\fo}^T(x)=\sum_{\delta\in H_{X_1}(F)\backslash H(F)} f((\delta x)^{-1}X_1 \delta x)\cdot \chi_T(\delta x). $$

  For sufficiently regular $T$, using Theorem \ref{secondkernel1} and the fact that $j_{f,\fo}^T(x)\eta(\Nrd (x))$ is left invariant by $A_{G}^\infty$, we have
  \[\begin{split}
    J_\fo^{G,T}(\eta,0,f)&=\int_{H(F)\bs H(\BA)\cap G(\BA)^1} j_{f,\fo}^T(x) \eta(\Nrd(x)) dx \\
    &=\int_{A_{G}^\infty H(F)\bs H(\BA)} \left(\sum_{\delta\in H_{X_1}(F)\backslash H(F)} f((\delta x)^{-1}X_1 \delta x)\cdot \chi_T(\delta x)\right) \eta(\Nrd (x)) dx. \\
  \end{split}\]
Hence, 
\begin{equation}\label{lastequ1}
    J_\fo^{G,T}(\eta,0,f)=\vol(A_{P_1}^\infty H_{X_1}(F)\bs H_{X_1}(\BA))\cdot \int_{H_{X_1}(\BA)\bs H(\BA)} f(x^{-1}X_1 x) v_{P_1}(x,T) \eta(\Nrd(x)) dx.
\end{equation}
  where
  $$ v_{P_1}(x,T):=\int_{A_{G}^\infty\bs A_{P_1}^\infty} \chi_T(ax) da. $$
Here we have cheated by assuming that $v_{P_1}(x,T)$ is well-defined and left-invariant under $H_{X_1}(\BA)$ in the last equality, which is explained below along with its geometric interpretation. 

Let $Q$ be a parabolic subgroup of $G$ containing $P_0$. Since $\msp_0\subseteq P_1$, by the charaterisation in \cite[p. 59]{MR2192011}, $\Omega^G(\fa_{P_1}; Q)$ is empty unless $\msp_0\subseteq Q$, in which case we have $\Omega^G(\fa_{P_1}; Q)=\Omega^{H}(\fa_{P_1};Q)$ by Lemma \ref{lemid1}.2). Therefore, we have
$$ \chi_T(y)=\sum_{\{Q:P_0\subseteq Q\}} (-1)^{\dim(A_Q/A_{G})} \sum_{s\in\Omega^G(\fa_{P_1};Q)} \wh{\tau}_Q^G(H_{Q}(\omega_s y)-T_Q). $$
Compared to \cite[p. 951]{MR518111}, $v_{P_1}(x,T)$ is nothing but the restriction to $H(\BA)$ of Arthur's weight for $G(\BA)$. It is shown in \cite[Corollary 3.3]{MR0412348} that the integral over $a$ can be taken over a compact subset. From \cite[Corollary 3.5]{MR0412348}, $v_{P_1}(x,T)$ equals the volume  of the projection onto $\fa_{P_1}^G$ of the convex hull of $\{T_Q-H_Q(x)\}$, where $Q$ takes over all semi-standard parabolic subgroups of $G$ with $M_Q=M_{P_1}$. For $y\in H_{X_1}(\BA)\subseteq M_{P_1\cap H}(\BA)$, the convex hull associated to $v_{P_1}(yx, T)$ is a translation of that associated to $v_{P_1}(x, T)$, so they have the same volume, i.e., $v_{P_1}(yx, T)=v_{P_1}(x, T)$. By taking constant terms of both sides of (\ref{lastequ1}), we obtain the theorem. 
\end{proof}

\begin{remark}
As mentioned in the proof of Theorem \ref{woi1}, the weights we get for regular semi-simple orbits are the restriction to $H(\BA)$ of Arthur's weights (see \cite[p. 951]{MR518111}) for $G(\BA)$. They are also the same as those (see \cite[p. 131]{MR3026269}) appearing in the twisted trace formula for $(GL_{n,D}\times GL_{n,D})\rtimes \sigma$, where $\sigma$ acts on $GL_{n,D}\times GL_{n,D}$ by $\sigma(x,y):=(y,x)$. For $P_n$ a standard parabolic subgroup of $GL_{n,D}$ and $P=\mat(\fp_n,\fp_n,\fp_n,\fp_n)^\times$ an $\omega$-stable relatively standard parabolic subgroup of $G$, we may identify $\fa_P$ with the $\sigma$-invariant subspace of $\fa_{P_n\times P_n}$. The $\omega$-stable relatively standard parabolic subgroups of $G$ here play the role of the $\sigma$-stable standard parabolic subgroups of $GL_{n,D}\times GL_{n,D}$, which correspond to the standard parabolic subsets of $(GL_{n,D}\times GL_{n,D})\rtimes \sigma$ in the sense of \cite[\S2.7]{MR3026269}. However, we need more (namely relatively standard) parabolic subgroups in our truncation to deal with $\fo\notin\CO^\times$. 
\end{remark}


\bibliography{References}

\begin{thebibliography}{10}

\bibitem{MR0412348}
James Arthur.
\newblock The characters of discrete series as orbital integrals.
\newblock {\em Invent. Math.}, 32(3):205--261, 1976.

\bibitem{MR518111}
James Arthur.
\newblock A trace formula for reductive groups. {I}. {T}erms associated to
  classes in {$G({\bf Q})$}.
\newblock {\em Duke Math. J.}, 45(4):911--952, 1978.

\bibitem{MR625344}
James Arthur.
\newblock The trace formula in invariant form.
\newblock {\em Ann. of Math. (2)}, 114(1):1--74, 1981.

\bibitem{MR2192011}
James Arthur.
\newblock An introduction to the trace formula.
\newblock In {\em Harmonic analysis, the trace formula, and {S}himura
  varieties}, volume~4 of {\em Clay Math. Proc.}, pages 1--263. Amer. Math.
  Soc., Providence, RI, 2005.

\bibitem{MR1893921}
Pierre-Henri Chaudouard.
\newblock La formule des traces pour les alg\`ebres de {L}ie.
\newblock {\em Math. Ann.}, 322(2):347--382, 2002.

\bibitem{10.1007/978-3-319-94833-1_3}
Pierre-Henri Chaudouard.
\newblock Sur une variante des troncatures d'{A}rthur.
\newblock In Werner M{\"u}ller, Sug~Woo Shin, and Nicolas Templier, editors,
  {\em Geometric Aspects of the Trace Formula}, pages 85--120, Cham, 2018.
  Springer International Publishing.

\bibitem{MR2121678}
Anton Deitmar.
\newblock {\em A first course in harmonic analysis}.
\newblock Universitext. Springer-Verlag, New York, second edition, 2005.

\bibitem{MR3805647}
Brooke Feigon, Kimball Martin, and David Whitehouse.
\newblock Periods and nonvanishing of central {$L$}-values for {${\rm
  GL}(2n)$}.
\newblock {\em Israel J. Math.}, 225(1):223--266, 2018.

\bibitem{MR1241129}
Solomon Friedberg and Herv\'{e} Jacquet.
\newblock Linear periods.
\newblock {\em J. Reine Angew. Math.}, 443:91--139, 1993.

\bibitem{MR1382478}
Jiandong Guo.
\newblock On a generalization of a result of {W}aldspurger.
\newblock {\em Canad. J. Math.}, 48(1):105--142, 1996.

\bibitem{MR3331229}
Brian Hall.
\newblock {\em Lie groups, {L}ie algebras, and representations}, volume 222 of
  {\em Graduate Texts in Mathematics}.
\newblock Springer, Cham, second edition, 2015.
\newblock An elementary introduction.

\bibitem{MR3334892}
Atsushi Ichino and Shunsuke Yamana.
\newblock Periods of automorphic forms: the case of
  {$(\text{GL}_{n+1}\times\text{GL}_n,\text{GL}_n)$}.
\newblock {\em Compos. Math.}, 151(4):665--712, 2015.

\bibitem{MR868299}
Herv{\'e} Jacquet.
\newblock Sur un r\'esultat de {W}aldspurger.
\newblock {\em Ann. Sci. \'Ecole Norm. Sup. (4)}, 19(2):185--229, 1986.

\bibitem{MR1394521}
Herv{\'e} Jacquet and Stephen Rallis.
\newblock Uniqueness of linear periods.
\newblock {\em Compositio Math.}, 102(1):65--123, 1996.

\bibitem{MR3026269}
Jean-Pierre Labesse and Jean-Loup Waldspurger.
\newblock {\em La formule des traces tordue d'apr\`es le {F}riday {M}orning
  {S}eminar}, volume~31 of {\em CRM Monograph Series}.
\newblock American Mathematical Society, Providence, RI, 2013.
\newblock With a foreword by Robert Langlands [dual English/French text].

\bibitem{MR1363072}
Jason Levy.
\newblock A truncated {P}oisson formula for groups of rank at most two.
\newblock {\em Amer. J. Math.}, 117(6):1371--1408, 1995.

\bibitem{2018arXiv180307553L}
Qirui {Li}.
\newblock {An intersection number formula for CM cycles in Lubin-Tate towers}.
\newblock {\em arXiv e-prints}, page arXiv:1803.07553, Mar 2018.

\bibitem{zbMATH00007924}
V.~L. {Popov} and Eh.~B. {Vinberg}.
\newblock {Invariant theory.}
\newblock {Algebraic geometry. IV: Linear algebraic groups, invariant theory,
  Encycl. Math. Sci. 55, 123-278 (1994); translation from Itogi Nauki Tekh.,
  Ser. Sovrem. Probl. Mat., Fundam. Napravleniya 55, 137-309 (1989).}, 1989.

\bibitem{zbMATH02701590}
J.~J. {Sylvester}.
\newblock {Sur l'\'equation en matrices \(px=xq\)}.
\newblock {\em {C. R. Acad. Sci., Paris}}, 99:67--71, 115--116, 1884.

\bibitem{MR783511}
J.-L. Waldspurger.
\newblock Sur les valeurs de certaines fonctions {$L$} automorphes en leur
  centre de sym\'etrie.
\newblock {\em Compositio Math.}, 54(2):173--242, 1985.

\bibitem{MR0165033}
Andr\'e Weil.
\newblock Sur certains groupes d'op\'erateurs unitaires.
\newblock {\em Acta Math.}, 111:143--211, 1964.

\bibitem{MR1344916}
Andr\'{e} Weil.
\newblock {\em Basic number theory}.
\newblock Classics in Mathematics. Springer-Verlag, Berlin, 1995.
\newblock Reprint of the second (1973) edition.

\bibitem{yu2013}
Chia-Fu Yu.
\newblock Characteristic polynomials of central simple algebras.
\newblock {\em Taiwanese J. Math.}, 17(1):351--359, 01 2013.

\bibitem{MR3299843}
Chong Zhang.
\newblock On linear periods.
\newblock {\em Math. Z.}, 279(1-2):61--84, 2015.

\bibitem{MR3414387}
Chong Zhang.
\newblock On the smooth transfer for {G}uo-{J}acquet relative trace formulae.
\newblock {\em Compos. Math.}, 151(10):1821--1877, 2015.

\bibitem{MR3835522}
Micha{\l} Zydor.
\newblock La variante infinit\'{e}simale de la formule des traces de
  {J}acquet-{R}allis pour les groupes lin\'{e}aires.
\newblock {\em J. Inst. Math. Jussieu}, 17(4):735--783, 2018.

\bibitem{MR4195660}
Micha{\l} Zydor.
\newblock Les formules des traces relatives de {J}acquet-{R}allis grossi\`eres.
\newblock {\em J. Reine Angew. Math.}, 762:195--259, 2020.

\end{thebibliography}
\bibliographystyle{plain}

\medskip

\begin{flushleft}
Universit\'{e} de Paris, Sorbonne Universit\'{e}, CNRS, Institut de Math\'{e}matiques de Jussieu-Paris Rive Gauche, F-75013 Paris, France \\
\medskip
E-mail: huajie.li@imj-prg.fr \\
\end{flushleft}

\end{document}